\theoremstyle{definition}
\newtheorem{defn}{Definition}[section]
\newtheorem{exmp}{Example}[section]
\newtheorem{thm}{Theorem}[section]
\newtheorem{lem}[thm]{Lemma}
\newtheorem*{cor}{Corollary}
\newtheorem*{res}{Result}
\theoremstyle{remark}
\newtheorem{rem}{Remark}[chapter]
\newtheorem*{note}{Note}
\begin{document}
\newpage
\thispagestyle{empty}
\pagenumbering{roman}
\titlepage

\begin{titlepage}
 \begin{center}
\LARGE
\textbf{NEVANLINNA THEORY AND NORMAL FAMILIES } 
\end{center}
\normalsize
$$\textbf{A Dissertation Submitted to  the  University of Delhi}$$
$$\textbf{in Partial Fulfilment of the Requirements}$$
$$\textbf{for the Award of the Degree of}$$
\large
$$\textbf{MASTER OF PHILOSOPHY}$$
\normalsize
$$\textbf{IN}$$
\large
$$\textbf{MATHEMATICS}$$
\large
\\
$$\textbf{By}$$
\large
$$\textbf{MANISHA SAINI}$$
\\

\vskip6em
\normalsize
$$\textbf{DEPARTMENT\,OF\,MATHEMATICS\,}$$
$$\textbf{UNIVERSITY\,OF\,\,DELHI}$$
$$\textbf{DELHI-110007}$$
$$\textbf{January 2015}$$
\end{titlepage}

\thispagestyle{empty}
\begin{center}{\textbf{DECLARATION}}
\end{center}

\vskip2em
\fontsize{13}{23}
\selectfont
\quad This dissertation entitled \textquotedblleft NEVANLINNA THEORY AND NORMAL FAMILIES \textquotedblright , a critical survey of the work done by various authors in this area, has been prepared by me under the supervision of Dr. Sanjay Kumar, Associate Professor, Department of Mathematics, Deen Dayal Upadhyaya College, University of Delhi for the submission
to the University of Delhi as partial fulfilment for the award of the degree of \textbf{Master of Philosophy in Mathematics}.

I, hereby also declare that, to the best of my knowledge, the work presented in this dissertation, either in part or full, has not been submitted to any other University or elsewhere for the award of any degree
 or diploma.
\vskip3em
\noindent
\rightline{\textbf{(\,Manisha Saini\,) \,}}
\vskip9em
\noindent\baselineskip16pt
\textbf{Dr. Sanjay Kumar}            \hspace{5cm} \textbf{\,Prof. Ajay Kumar }\\
Supervisor  \& Associate Professor  \hspace{3cm}              Head \\
Department of Mathematics   \hspace{3.9cm}         Department of Mathematics \\
Deen Dayal Upadhyaya College \hspace{3.3cm}        University of Delhi\\
University of Delhi    \hspace{5.6cm}                        Delhi-110007                                                                                                                           

\baselineskip24pt
\newpage
\vskip4em
\thispagestyle{empty}

\begin{center}{\textbf{\Large{ACKNOWLEDGEMENT}}}
\end{center}

\fontsize{12}{20}
\selectfont

\vskip1em
\quad It is a great pleasure for me to express intense gratitude to my M.Phil. dissertation advisor Dr. Sanjay Kumar for his keen interest in my topic of dissertation,
 guidance, encouragement and patience throughout the writing of the dissertation. Despite of his  busy schedule, he has given me enough time. I am indeed indebted to him.
 
I am delighted to take the opportunity to express my sincere thanks to Prof. Ajay Kumar, Head, Department of Mathematics, University of Delhi, for providing the necessary facilities without any delay in work. 

I would like to thank my parents for their constant support. I give my heartfelt thanks to my friends and seniors  for their timely help and the moral support they provided me during my dissertation work. I am specially thankful to Gopal Datt, Naveen Gupta and Kushal Lalwani for their help and support. I am thankful to UGC for providing me fellowship.

I would like to thank  the staff of Central Science Library for being cooperative.
Finally, I thank the office staff of Department of Mathematics for being so helpful. 

\vskip3em \fontsize{12}{20} \selectfont \noindent
\;\;\;\;\;\;\hfill{\textbf{ (\,Manisha Saini\,) \,}} \eject
\fontsize{12}{19.5}
\selectfont
\newpage
\newpage
\vskip4em

\tableofcontents
\newpage
\pagenumbering{arabic}
\thispagestyle{plain}
\chapter{Introduction}
\section{About Nevanlinna Theory}

\quad The Fundamental Theorem of Algebra says that a polynomial of degree $k$ in one complex variable will take on every complex value precisely $k$ times, counting according to its multiplicities. Picard in 1879 generalized the Fundamental Theorem of Algebra by proving that a transcendental entire function which is a sort of polynomial of infinite degree, must take on all except atmost one complex value infinitely many times. A decade after Picard\textquoteright s theorem, work of J. Hadamard in 1892 proved a strong connection between the growth order of an entire function and the distribution of the function\textquoteright s zeros. E. Borel in 1897 then proved a connection between the growth rate of the maximum modulus of an entire function and the asymptotic frequency with which it must attend all but except atmost one complex value. Finally Rolf Nevanlinna in 1925 found the right way to measure the growth of a meromorphic function and developed the theory of value distribution which also known as Nevanlinna Theory. Other main contributors to this field in the first half of the twentieth century were the leading mathematicians Lars Ahlfors, Andr$\acute{e }$ Bloch, Henri Cartan, Otto Frostman, Frithiof Nevanlinna, Tatsujiro Shimizu, Oswald Teichmuller and Georges Valiron.

In Nevanlinna theory the most important mathematical tools for a meromorphic function $f $ in $\mid z \mid \leq r$, are its proximity function
$$m(r,f)=\frac{1}{2\pi}\int_{0}^{2\pi}\log^{+}\mid f(re^{\iota\theta})\mid \,d\theta,$$
counting function $n(r,f)$ which counts the number of poles of $f$ in $\mid z \mid \leq r$, integrated counting function $$N(r,f)=\int_{0}^{r} \frac{n(r,f)-n(0,f)}{t} \, dt +n(0,f) \log r$$ and the characteristic function $$T(r,f)=m(r,f)+N(r,f).$$
\quad The first fundamental theorem and second fundamental theorem are main parts of the theory. The first main theorem gives upper bound for the counting function $N\left(r,\frac{1}{f-a}\right)$ for any $a \in \mathbb{C}_\infty$ and for large $r$. Whereas second main theorem provides a lower bound on the sum of any finite collection of counting functions $N\left(r,\frac{1}{f-a_j}\right)$ where $a_j \in \mathbb{C}$ for certain large radii $r$. Almost immediately after R. Nevanlinna proved the second main theorem, F. Nevanlinna, R. Nevanlinna\textquoteright s brother gave a geometric proof of the second main theorem. Both the \textquotedblleft geometric\textquotedblright and \textquotedblleft logarithmic derivative\textquotedblright approaches to the second main theorem  has certain advantages and disadvantages. Despite the geometric investigation and interpretation of the main terms in Nevanlinna\textquoteright s theorems the so called \textquotedblleft error term\textquotedblright  in Nevanlinna\textquoteright s second main theorem was largely ignored. Motivated by an analogy between Nevanlinna theory and diophantine approximation theory, discovered independently by C.F. Osgood,P. Vojta and S. Lang recognized that the careful study of the error term in Nevanlinna\textquoteright s main theorem would be of interest in itself. Because of the connection between Nevanlinn theory and diophantine approximation, diophantine approximation has been the motivation for so much of the current work in Nevanlinna theory. Ofcourse, it is not only \textquotedblleft classical \textquotedblright complex analysis that uses Nevanlinna theory - author such as Rippon and Stallard have used the theory in their investigations of complex dynamics. Nevanlinna\textquoteright s theory provides a new look at some old theorems, such as Picard\textquoteright s theorem which we have involved in our work. This theory also has account of applications in complex differential equation. 

The outcomes of this theory are many important generalization such as Ahlfors theory of covering surface, Hayman\textquoteright s alternative, Milloux theory etc., which we have not dissuced in this work.       
\section{A short note on normal families}

\quad The seeds of notion of normal family lie in the Bolzano-Weierstrass property: every bounded infinite set of points has a limit point. Paul Montel initiated his study of normal families in 1907 and proved the first of many sufficient conditions for a normal family. His definition of normal families of holomorphic functions may be stated as follows:

A family of holomorphic functions in a domain $\Omega$ is said to be normal if every sequence of functions $\{\,f_n, n=1,2,3,\ldots \,\}$ of the family posses a subsequence $\{\,f_{n_k}, k=1,2,3,\ldots \,\}$ which converges locally uniformly on $\Omega$ to a holomorphic function or to $\infty.$

In order to extend this definition to the case of family of meromorphic functions it is necessary to have a definition of locally uniformly convergence of a sequence of meromorphic functions in a given domain. Landau and Carath$ \acute{e}$odory first gave a such definition. Later Ostrowski pointed out that it can be equivalently defined by means of the spherical distance between two points of the extended complex plane $\mathbb{C}_\infty$. A locally uniformly convergent sequence of meromorphic function in domain $\Omega$ with respect to spherical distance has limit function which is a meromorphic function in $\Omega$ or constant $\infty$. So definition of normal families of meromorphic functions in $\Omega$ can be defined as:

A family of meromorphic function in a domain $\Omega$ is said to be normal if every sequence of meromorphic functions $\{\,f_n, n=1,2,3,\ldots \,\}$ from the family posses a subsequence $\{\, f_{n_k}, k=1,2,3,\ldots \,\}$ which is locally uniformly convergent on $\Omega$ with respect to the spherical distance.

In particular for a family of holomorphic functions in a domain this definition is equivalent to definition given for holomorphic functions.

P. Montel did pioneering work in this field. One of his most important theorem named after Montel states that a family of meromorphic functions on a domain is normal if it omits three distinct points of the complex plane, infact it has been coined fundamental normality test by Schiff ~\cite{sch}. The famous E. Picard\textquoteright s theorem was proved by using elliptic modular function theory, at that time. Its proof using Montel\textquoteright s theorem is given in Shiff ~\cite{sch}. Montel subsequently discovered the important connection between normal families and celebrated theorem of Schottky, Landau, Vitali - Porter, Lind$\acute{e}$l$\ddot{o}$f and Julia. Montel\textquoteright s final words on the subject was written in 1946. After Montel many normality criterion have been developed such as Marty\textquoteright s theorem which states that a family of meromorphic functions is normal if and only if its spherical derivative is locally bounded. In 1975, Zalcman gave a result of far reaching consequences of non - normality of family of meromorphic functions. Robinson - Zalcman Heuristic principle is another useful normality criterion. A very common application of normal families is in the usual proof of the Riemann mapping theorem, whose original proof was based upon the Dirichlet principle. A significant application of the theory of normal families was made by Fatou and Julia in their deep study of complex dynamical systems. A more recent application has been in the theory of normal functions, developed in order to study the boundary behaviour of meromorphic functions.     

Our present work explores interplay between normal families of holomorphic, of meromorphic functions and Nevanlinna Theory, many normality criterion have been proved using Nevanlinna Theory. Most important thing here to be pointed out is that we give several results which justifies Bloch\textquoteright s principle which says that if a family of meromorphic functions on a domain $\Omega$ satisfies a property which reduces a meromorphic function to a constant on complex plane $\mathbb{C}$ then that family is normal on that domain $\Omega$.  
\section{Summary}
\quad This dissertation is divided into six chapters followed by bibliography at the end. A summary of these chapters is given as follows:

\textbf{Chapter 1} entitled \textquotedblleft Introduction\textquotedblright give introduction to Nevanlinna theory and normal families.
\\

 \textbf{Section 1.1} discusses the origin and development of Nevanlinna theory where as in \textbf{Section 1.2} we have presented for normal families.
\\

In \textbf{Section 1.3} chapterwise organization of the dissertation is presented.
\\

\textbf{Chapter 2} entitled \textquotedblleft Nevanlinna Theory \textquotedblright presents the basic tools of Nevanlinna theory. We begin Nevanlinna theory from its starting point \textquotedblleft Poisson-Jensen formula \textquotedblright.
\\
 
In \textbf{Section 2.1} some basic definitions are given which are very well known to us. A proof of Poisson-Jensen formula is presented in this section.
\\

In \textbf{Section 2.2} we define basic tools of Nevanlinna theory such as positive part of $\log$ function $\log^{+}$, proximity function $m(r,f)$, counting function $N(r,f)$ and characteristic function $T(r,f)$ for a meromorphic function defined in $\mid z \mid \leq r$. Properties of these functions related to each other also proved here. Some examples to understand these functions are given
\\

In \textbf{Section 2.3} we define order of growth of a meromorphic function and measure order for some functions. Out of two main theorems of Nevanlinna theory, first fundamental theorem is proved in this section. A proof of Nevanlinna\textquoteright s Inequality is also given.
\\

In \textbf{Section 2.4} by proving a result by H. Cartan, namely Cartan\textquoteright s Identity it has been shown that characteristic function $T(r,f)$ is a increasing convex function of $r$. 
\\

In \textbf{Section 2.5} second fundamental theorem of Nevanlinna theory is stated without proof. An alternative proof of Picard\textquoteright s theorem using Nevanlinna theory is given. Finally Nevanlinna\textquoteright s Estimate and Hiong\textquoteright s Estimate are stated for using them in later work.       
\\

\textbf{Chapter 3} entitled \textquotedblleft Normal Families \textquotedblright gives basic information about normal families. First important definitions, required in defining normal families are developed.
\\

\textbf{Section 3.1} starts with stereographic projection followed by chordal metric and spherical metric. 
\\

\textbf{Section 3.2} includes different types of convergence a sequence of function. Weierstrass theorem, Hurwitz theorem etc. useful results of normal families are proved in this section. 
\\

In \textbf{Section 3.2} we define normal family of analytic (meromorphic) functions. Various results and normality criterion for these families are given.
\\

\textbf{Chapter 4} entitled \textquotedblleft Basic Normality Criterion \textquotedblright contains many normality criterion which has been proved using Nevanlinna theory. An important result of normal families namely, Montel\textquoteright s theorem is proved here. 
\\

\textbf{Section 4.1} starts with a proof of Marty\textquoteright s theorem which give a necessary and sufficient condition for a family $\mathcal{F}$ of meromorphic functions to be normal. We have proved Montel\textquoteright s theorem using Nevanlinna theory. Here we define a composition $f_\alpha$ of $f \in \mathcal{F}$ with $\phi_\alpha$ where $$\phi_\alpha(z)= \frac{z+\alpha}{1+\bar{\alpha}z}, \mid \alpha \mid <1. $$ This composition is used frequently in this dissertation. A generalization of Montel\textquoteright s theorem is also proved.
\\

In \textbf{Section 4.2} a new composition of $f \in \mathcal{F}$ with $\psi_\alpha$ where $$\psi_\alpha(z)= r^2 \frac{z-\alpha}{r^2-\bar{\alpha}z}, \mid \alpha <1$$ which has been very useful in proving many inequality for not normal families of holomorphic functions. These inequalities are not true for family of meromorphic functions. 
\\

\textbf{Chapter 5} entitled \textquotedblleft Non-vanishing Families of Holomorphic Functions \textquotedblright discusses the generalization of Montel-Miranda theorem due to Chuang ~\cite{sch} in 1940. Firstly from a non-vanishing family of holomorphic functions a new family is constructed and relation of normality of these two families is developed. Then generalization of Montel-Miranda theorem is proved. In next section we have tried to drop the condition of non-vanishing families. 
\\  

\textbf{Chapter 6} entitled \textquotedblleft A Result due to Hayman \textquotedblright includes an idea for normality of a family of holomorphic function given by Hayman ~\cite{haym}. Here also we construct a new family from a family of holomorphic function and follow Hayman to show dependence of normality of the initial family on normality of new family. 

\chapter{Nevanlinna Theory}

\quad In this chapter basic tools of Nevanlinna theory will be developed. Here we shall discuss fundamental properties of these notions and two main theorems of Nevanlinna theory in brief.
\section{Poisson-Jensen Formula}

\quad Poisson-Jensen formula is starting point of the Nevanlinna theory. We will first prove Poission-Jensen formula and then describe Jensen\textquoteright s formula which will be used to define Nevanlinna functionals. Here our function of interest are meromorphic functions. Before moving to Poisson-Jensen formula let us recall some definitions and facts.
\begin{defn} 
\textbf{Meromorphic Function}:

A meromorphic function is an analytic function at each point of its domain except on isolated points which are its poles. 
\end{defn} 
  Meromorphic functions can be consider as the quotient of two analytic functions which has poles at zeros of denominator and zeros at zeros of numerator. For example, $\tan z$ is meromorphic function has zeros at zeros of $\sin z$ that is at $z=n \pi, n \in \mathbb{Z}$ and poles at zeros of $\cos z$ that is at $z=(2n+1) \frac{\pi}{2} $, where $ n \in  \mathbb{Z}$.
\\
\begin{note}
Let $f$ and $g$ be two meromorphic functions having pole (or zero) at same point of order $j$ and $k$ then $f.g$ has pole (or zero) at the same point of order $j+k$. If $f$ has pole of order $j$ and $g$ has zero of order $k$ at $z_0$ then 
\\
$f(z)=\frac{h(z)}{(z-z_0)^k}$ and $g(z)= (z-z_0)^{k} l(z)$ where $h$ and $l$ are analytic functions at $z_0$ and $l(z_0) \neq 0$.   
\\
This gives $f.g$ has zero at $z_0$ of order $\max(0,k-j)$ or pole of order $\max(0,j-k)$.
\\
Also if $j=k$ then $f.g$ neither has zero nor pole at $z_0$.
\end{note}
\begin{defn}
For given $z_0 \in \mathbb{C}$ and $f \not \equiv 0$ analytic on $z_0$, order or multiplicity of zero of $f$ at $z=z_0$ is defined as least positive integer $n$ such that in Taylor expansion of $f$ about $z=z_0$  the coefficient of $(z-z_0)^n$ is non-zero.

In a similar way, order of a pole at $z_0$ of $f$ is defined as the multiplicity of the zero at $z_0$ of $\frac{1}{f}$. Pole with multiplicity one is called simple pole.    
\end{defn}
For example $\sec z$ has simple pole at $(2n+1) \frac{\pi}{2}, n \in \mathbb{Z}$ and has no zero. $\sin^{2} z$ has zero of order 2 at $z=n \pi$ and no pole.  

It is clear from Taylor expansion of $f$ that if $f$ has zero at $z_0$ of order $m$ then $f\textquoteright$ has zero at $z_0$ of order $m-1$ and if $f$ has pole of order $p$ at $z_0$ then $f$ has pole of order $p+1$ at $z_0$. 
\begin{defn}
\textbf{Green\textquoteright s Function}

Let $G$ be a region in $\mathbb{C}$ and $y \in G$, Green\textquoteright s function of $G$ with singularity at $y$ is a function $$ g(-,y):G \rightarrow \mathbb{R}$$ with the properties:
\\
\item[(i)] $g(x,y)$ is a harmonic function of $x$ within G, except at $x=y$ where it is positively infinite so that the function
$$g(x,y)+\log \mid x-y \mid$$
is  also    harmonic for $x=y$.
\\
\item[(ii)] $g(x,y)$ vanishes  at  every  boundary  point  $x=\zeta$  of  the  region  G. 
\end{defn} 
\begin{defn}
\textbf{Poisson Integral formula}

Let $u$ be harmonic in the open disc $D(R)$, $R < \infty$ and continuous on the closed disc $\overline{D(R)}$. Let $z$ be a  point  inside  the  disc  then
$$u(z)=\frac{1}{2\pi} \int_{0}^{2 \pi}u(Re^{\iota \theta})P(z,Re^{\iota\theta}) \,d{\theta},$$
where $$P(z,\zeta)=\frac{ \mid \zeta \mid^{2}- \mid z \mid^{2}}{\mid \zeta-z \mid^{2}}, \mid z \mid< \mid \zeta \mid$$
is called Poisson kernal.
\end{defn}
\begin{thm} 
\textbf{Poisson-Jensen formula}

Suppose that $f(z)$ is meromorphic in $\mid {z} \mid \leq R$ $(0 < R < \infty)$ and that $a_ \mu$; $ \mu = 1$  to $ M$ are zeros and $b_ \nu$;  $ \nu = 1$  to $ N$ are poles of $f(z)$ in $ \mid {z} \mid < R$. If $z=re^{ \iota \phi}$; $0 \leq r<R$ and $f(z) \neq 0, \infty$ then we have ,
\begin{align*}
\log{ \mid f(z) \mid} &= \frac{1}{2 \pi} \int_{0}^{2 \pi} \log \mid f(Re^{ \iota \theta}) \mid \frac{R^2-r^2}{R^2-2Rr \cos({ \theta- \phi})+r^2} \, d{ \phi} \\
&+ \sum_{ \mu=1}^{M} \log \big | \frac{R(z-a_ \mu)}{R^2- \bar{a_ \mu}z} \big |  - \sum_{ \nu =1}^{N} \log \big | \frac{R(z-b_ \nu)}{R^2- \bar{b_\nu}z} \big |
\end{align*}
\end{thm}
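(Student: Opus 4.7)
The plan is to reduce the statement, by multiplying through by Blaschke-type factors, to a situation where the function is holomorphic and non-vanishing on the closed disc, so that $\log|F|$ is harmonic and the Poisson integral formula from the preceding definition can be applied directly.

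First, I would introduce the auxiliary function
\[
F(z)=f(z)\,\prod_{\mu=1}^{M}\frac{R^{2}-\bar{a}_{\mu}z}{R(z-a_{\mu})}\,\prod_{\nu=1}^{N}\frac{R(z-b_{\nu})}{R^{2}-\bar{b}_{\nu}z}.
\]
For each zero $a_{\mu}$ of $f$ in $|z|<R$, the factor $\bigl(R^{2}-\bar{a}_{\mu}z\bigr)/\bigl(R(z-a_{\mu})\bigr)$ has a pole at $a_{\mu}$ of matching multiplicity, so it cancels the zero; similarly, the factor $R(z-b_{\nu})/(R^{2}-\bar{b}_{\nu}z)$ cancels the pole at $b_{\nu}$. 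Repeating zeros/poles in the product according to their multiplicities, $F$ becomes holomorphic and non-vanishing in an open neighbourhood of $|z|\leq R$. The key calculation is that each Blaschke factor $B_{c}(z)=R(z-c)/(R^{2}-\bar{c}z)$ satisfies $|B_{c}(z)|=1$ whenever $|z|=R$, which follows from the identity $|R(z-c)|^{2}=|R^{2}-\bar{c}z|^{2}$ on the circle (a direct expansion). Consequently $|F(z)|=|f(z)|$ for $|z|=R$.

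Next, since $F$ is holomorphic and non-vanishing on a neighbourhood of $\overline{D(R)}$, the function $\log|F|$ is harmonic on that set, and continuous on $\overline{D(R)}$. Applying the Poisson integral formula to $\log|F|$ at the interior point $z=re^{i\phi}$ gives
\[
\log|F(z)|=\frac{1}{2\pi}\int_{0}^{2\pi}\log|F(Re^{i\theta})|\,\frac{R^{2}-r^{2}}{R^{2}-2Rr\cos(\theta-\phi)+r^{2}}\,d\theta,
\]
and because $|F|=|f|$ on the boundary, $|F(Re^{i\theta})|$ can be replaced by $|f(Re^{i\theta})|$ under the integral. Finally, taking logarithms of absolute values in the definition of $F$ yields
\[
\log|f(z)|=\log|F(z)|+\sum_{\mu=1}^{M}\log\Big|\frac{R(z-a_{\mu})}{R^{2}-\bar{a}_{\mu}z}\Big|-\sum_{\nu=1}^{N}\log\Big|\frac{R(z-b_{\nu})}{R^{2}-\bar{b}_{\nu}z}\Big|,
\]
and substituting the integral expression for $\log|F(z)|$ gives the claimed formula.

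The main technical point I expect to be delicate is the case in which $f$ has zeros or poles on the boundary circle $|z|=R$, because then the boundary integral $\int \log|f(Re^{i\theta})|\,d\theta$ has logarithmic singularities and the direct application of the Poisson formula is unjustified. The remedy is a limiting argument: prove the identity for $R'<R$ avoiding any boundary zero/pole of $f$, and then let $R'\to R$, using the integrability of $\log|f(Re^{i\theta})|$ (which is an improper integral of logarithmic type and thus convergent) together with dominated/monotone convergence to pass the integral to the limit. A secondary bookkeeping issue is to make sure that each $a_{\mu}$ and $b_{\nu}$ is listed in the products with its full multiplicity, so that $F$ is both zero-free and pole-free; this is routine but must be stated explicitly so the sums in the final formula are interpreted with multiplicities.
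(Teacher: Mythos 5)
Your proposal is correct and is essentially the paper's own argument: the paper subtracts Green's functions $g(z,w)=\log\big|\frac{\rho^2-\bar w z}{\rho(z-w)}\big|$ from $\log|f|$ to obtain a harmonic function, which is exactly your $\log|F|$ since $g(z,w)=-\log|B_w(z)|$, and then applies the Poisson integral formula, using that the Green's functions vanish on the boundary circle (your observation that $|F|=|f|$ on $|z|=R$). The boundary zeros/poles are handled the same way in both versions, by first taking a radius $\rho<R$ avoiding them and then passing to the limit by continuity.
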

\begin{proof}
Since $f$ is meromorphic in $ \mid z \mid \leq R$ for the same values of $z$, $ \log \mid f \mid$ has positive and negative logarithmic poles respectively so that the difference,
$$ \log \mid f \mid-k_ \nu \log \mid \frac{1}{z-b_\nu} \mid $$ and $$ \log \mid f \mid-h_ \mu \log \mid z-a_ \mu \mid$$
are harmonic for $z=a_ \mu$ respectively where $k_ \nu$ represents the multiplicity of the pole $b_ \nu$ and $h_ \mu$ that of the zero $a_ \mu$.

Suppose that the number $ \rho, 0< \rho<R $ is determined so that the function $f$ is non-zero and has no pole on the circle $ \mid z \mid= \rho$. 
\\
Let $$g(z,w)= \log \mid \frac{ \rho^2- \bar{w}z}{ \rho(z-w)} \mid$$
be the Green\textquoteright s function for the disk $ \mid z \mid \leq \rho$.
\\
At each pole $b_ \nu$ and at each zero $a_ \mu$ in the  disk $ \mid z \mid< \rho$ the expressions
\begin{equation}\label{1}
 g(z,b_ \nu)- \log \big | \frac{1}{z-b_ \nu} \big |
\end{equation} and \begin{equation} \label{2}
g(z,a_ \mu)+ \log \mid z-a_ \mu \mid 
\end{equation} 
\\
are harmonic.
 \\
 Now the expression
 \begin{equation}\label{3}
 \log \mid f(z) \mid- \sum_{ \mid b_ \nu \mid < \rho} g(z,b_ \nu)+ \sum_{ \mid a_ \mu \mid < \rho}g(z,a_ \mu)
\end{equation}
\\ 
from (\ref{1}) and (\ref{2}) consequently defines a function that is harmonic for $ \mid z \mid \leq \rho,$ where the sums are over all the poles and zeros in $ \mid z \mid < \rho$ taking into account the corresponding multiplicities.

Applying the Poisson  Integral formula to (\ref{3}) we get for $z=re^{ \iota \phi}$, $r< \rho$ 
\begin{align*}
& \log \mid f(z) \mid - \sum_{ \mid b_ \nu \mid < \rho} g(z, b_\nu) + \sum_{\mid a_ \mu \mid < \rho} g(z, a_\mu) \\ 
&= \frac{1}{2 \pi} \int_{0}^{2 \pi} \left[ \log \mid f( \rho e^{\iota \theta})- \sum_{\mid b_ \nu \mid < \rho} g(\rho e^{\iota \phi}, b_ \nu)  + \sum_{\mid a_ \mu \mid < \rho} g(\rho e^{\iota \phi}, a_\mu) \right] \frac{\rho^2 - r^2}{\mid \rho e ^{\iota \theta}- re^{\iota \phi} \mid ^2} \, d {\theta}.
\end{align*}
Since Green\textquoteright s function vanishes for $ \mid z \mid = \rho$ we have
\begin{align}
\log \mid f(z) \mid  &= \frac{1}{2 \pi} \int_{0}^{2 \pi} \log \mid f( \rho e^{ \iota \theta}) \mid \frac{ \rho^2-r^2}{ \mid \rho e^{ \iota \theta} \mid^2+ \mid re^{ \iota \phi} \mid^2-2 \rho r \cos( \theta-\phi)} \, d{ \theta} \notag \\
&+ \sum_{ \mid b_ \nu \mid < \rho}g(z,b_ \nu)- \sum_{ \mid a_ \mu \mid < \rho}g(z,a_ \mu),\label{4}
\end{align} where $ z=re^{ \iota \phi}; \quad r< \rho. $
\\
Since
\begin{align*}
\mid \rho e^{\iota \theta}-re^{ \iota \phi} \mid^2 &=\left( \rho e^{ \iota \theta}-re^{ \iota \phi} \right) \overline{ \left( \rho e^{ \iota \theta}-re^{ \iota \phi} \right)} \\
&=\left( \rho e^{ \iota \theta}-re^{ \iota \phi} \right) \left( \rho e^{- \iota \theta}o-re^{- \iota \phi} \right) \\
& =\rho^2- \rho r \left( \cos( \theta- \phi)+ \iota \sin( \theta- \phi)+ \cos( \theta- \phi) - \iota \sin( \theta- \phi) \right) \\
&+r^2 \\
&=\rho^2+r^2-2 \rho r \cos( \theta- \phi).
\end{align*}

This formula was deduced under the assumption that no poles $b_ \nu$, nor zeros $a_ \mu$ are to be found on the circle $ \mid z \mid= \rho$. It is clear that all the terms in (\ref{4}) are continuous even for the excluded values of $ \rho$, from which one concludes that the equality holds for all $ \rho<R$.
\\
Thus we get
\begin{align}
\log\mid f(z)\mid &= \frac{1}{2 \pi}\int_{0}^{2 \pi} \log\mid f(Re^{ \iota \theta})\mid\frac{R^2-r^2}{R^2+r^2-2rR \cos( \theta- \phi)} \, d{ \theta} \\
&+ \sum_{\nu=1}^{M}\log \big |\frac{R^2- \bar{b_\nu}z}{R(z-b_\nu)} \big |  -\sum_{\mu=1}^{N}\log\big | \frac{R^2-\bar{a_\mu}z}{R(z-a_\mu)}\big |, \label{5}
\end{align}
for $z=re^{ \iota \phi}$ and $f(z) \neq 0, \infty$, where $\nu=1$ to $M$ and $\mu=1$ to $N$ are respectively number of poles and zeros of $f$ in $\mid z\mid <\rho$.

In particular, if one sets $z=0$ then (\ref{5}) becomes
\\
$$\log\mid f(0)\mid=\frac{1}{2 \pi} \int_{0}^{2\pi}\log\mid f(Re^{\iota\theta})\mid \, d{\theta}+\sum_{\nu=1}^{M}\log\frac{R}{\mid b_\nu\mid}-\sum_{\mu=1}^{N}\log\frac{R}{\mid a_\mu\mid}$$
\\
this expression is known as \textbf{Jensen\textquoteright s formula}.
\end{proof} 
Charles Picard in 1876 gave a very important result about meromorphic functions called \textbf{Picard\textquoteright s theorem} which states that a meromorphic function $f$ can omit atmost two values.
\\
Picard proved this using modular function theory but we will later see that this is just a consequence of Nevanlinna Theory.
\section{Characteristic Function}
\quad This section is completely devoted to the development of basic tools of Nevanlinna theory. Following Nevanlinna we proceed to rewrite Jensen\textquoteright s formula. We define
\begin{defn}  
\begin{equation}
  \log^{+}x=
 \begin{cases}
   \log x,   &\text{if $x \geq 1 $}\\ 
   0         &\text{if $0 \leq x < 1$}
   \end{cases}
   \notag 
\end{equation}
Or simply $\log^{+}x= \max \{\,\log x,0 \, \}$.
\end{defn}
We now develop some properties of $ \log^{+}x$.
\begin{thm} \label{ch2,sec2,t1}
\textbf{Properties}
\\
Let $ \alpha, \beta, \alpha_1, \alpha_2, . . . ,\alpha_p \geq 0$ be any real numbers then ,
\\
\item[(i)] $\log^{+}\alpha \geq \log \alpha$
\\
\item[(ii)]$\log^{+}\alpha \leq \log^{+} \beta$, if  $\alpha \leq \beta$
\\
\item[(iii)]$\log \alpha = \log^{+} \alpha- \log^{+} {1/ \alpha}$, for  any $\alpha > 0$
\\ 
\item[(iv)]$\log^{+} \left(\prod_{i=1}^{p} \alpha_{i} \right) \leq \sum_{i=1}^{p} \log^{+} \alpha_i$
\\
\item[(v)]$\log^{+}\left( \sum_{i=1}^{p} \alpha_i \right) \leq \log p+ \sum_{i=1}^{p} \log^{+}\alpha_i $
\end{thm}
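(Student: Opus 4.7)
The plan is to verify each of the five properties directly from the definition $\log^{+}x=\max\{\log x,\,0\}$, handling (i)--(iii) by short case analyses on whether the argument lies in $[0,1)$ or $[1,\infty)$, and then bootstrapping (iv) and (v) from the earlier items.

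For (i), I would note that the definition gives $\log^{+}\alpha\geq\log\alpha$ immediately, since $\log^{+}\alpha$ is the maximum of $\log\alpha$ and $0$. For (ii), I would split the range of $\beta$: if $\beta<1$ then $\alpha<1$ as well and both sides vanish; if $\alpha<1\leq\beta$ then the left side is $0$ while the right side is $\log\beta\geq 0$; if $1\leq\alpha\leq\beta$ both quantities reduce to ordinary logarithms and the inequality follows from monotonicity of $\log$. For (iii), I would consider separately $\alpha\geq 1$ and $0<\alpha<1$: in each case exactly one of $\log^{+}\alpha$ and $\log^{+}(1/\alpha)$ is zero while the other equals $|\log\alpha|$ with the sign dictated by the position of $\alpha$ relative to $1$, so their difference recovers $\log\alpha$.

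For (iv), the key step is to rewrite
\[
\log^{+}\Bigl(\prod_{i=1}^{p}\alpha_{i}\Bigr)=\max\Bigl\{\sum_{i=1}^{p}\log\alpha_{i},\,0\Bigr\}.
\]
Now $\sum\log\alpha_{i}\leq\sum\log^{+}\alpha_{i}$ by (i) applied term by term, and $\sum\log^{+}\alpha_{i}\geq 0$ since every summand is non-negative; hence both candidates inside the $\max$ are bounded by $\sum\log^{+}\alpha_{i}$, which proves (iv).

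For (v), I would use the crude bound $\sum_{i=1}^{p}\alpha_{i}\leq p\,\max_{i}\alpha_{i}$, apply (ii) to pass to $\log^{+}(p\,\max_{i}\alpha_{i})$, and then invoke (iv) on the product of the two factors $p$ and $\max_{i}\alpha_{i}$ to split it as $\log^{+}p+\log^{+}\max_{i}\alpha_{i}$. Since $p\geq 1$ we have $\log^{+}p=\log p$, and since $\log^{+}\max_{i}\alpha_{i}$ equals one of the non-negative terms $\log^{+}\alpha_{j}$, it is bounded by $\sum_{i}\log^{+}\alpha_{i}$, completing the inequality. No step is genuinely hard; the only thing to be careful about is keeping the case splits in (iii) clean and remembering to use the non-negativity of $\log^{+}$ when comparing sums in (iv) and (v).
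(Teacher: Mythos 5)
Your proposal is correct and follows essentially the same route as the paper: (i)--(iii) by direct case analysis on the position of the argument relative to $1$, (iv) by comparing $\log^{+}$ of the product with $\sum\log^{+}\alpha_i$ using (i) and the non-negativity of $\log^{+}$ (your $\max$ formulation is just a compact restatement of the paper's two-case split), and (v) via the bound $\sum\alpha_i\leq p\max_i\alpha_i$ followed by (ii) and (iv). No gaps; if anything your handling of the case $\prod\alpha_i<1$ in (iv) is cleaner than the paper's, which needlessly asserts that only one factor can be less than $1$.
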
 
\begin{proof}
\item[(i)] As $ \log^{+}$ is positive part of $ \log$ therefore (i) is true. 
\\ 
\item[(ii)] Since $\log$ is an increasing function so is $\log^{+}$.
\\
\item[(iii)]As if $ \alpha \geq 1$ then $\log^{+}\left(1/ \alpha \right)=0$ as ${1/ \alpha} \leq 1 $
therefore, $\log \alpha= \log^{+}{ \alpha}-\log^{+}\left( 1/ \alpha \right).$
\\
If $ \alpha < 1$ then
$ \log^{+}{ \alpha}=0 $ and $ \log^{+} \left( 1/\alpha \right) \neq 0$ as $ 1/ \alpha > 1$, $ \log^{+}\left( 1/\alpha \right)>0$ but $ \log \alpha < 0$ 
thus $$ \log \alpha = -\log^{+} \frac{1}{\alpha}  =   \log^{+} \alpha - \log^{+} \frac{1}{ \alpha}.$$
\\ 
\item[(iv)] For  $$ \prod_{i=1}^{p} \alpha_i < 1$$ we have
$$\log^{+} \prod_{i=1}^{p} \alpha_i = 0$$ 
\\ 
and $ \prod_{i=1}^{p} \alpha_i < 1\quad \mbox{gives } \alpha_k <1 $ for atleast one $1 \leq k \leq p $ and $ \alpha_j \geq 1, j \neq k$ which implies $\log^{+} \alpha_j \geq  0 $ for all $j \neq k $ therefore $$\sum_{i=1}^{p} \log^{+} \alpha_i \geq 0= \log^{+}\left( \prod_{i=1}^{p} \alpha_i \right).$$
\\
For $$\prod_{i=1}^{p} \alpha_i \geq 1$$ 
$$\log^{+} \left( \prod_{i=1}^{p} \alpha_i \right) = \log \left( \prod_{i=1}^{p} \alpha_i\right) =\sum_{i=1}^{p} \log \alpha_i \leq \sum_{i=1}^{p} \log^{+} \alpha_i\quad \mbox{( by (i))}$$
\\
\item[(v)]Since $$\sum_{i=1}^{p} \alpha_i \leq p \max_{1 \leq i \leq p} \alpha_i$$ so
\begin{align*}
 \log^{+} \sum_{i=1}^{p} \alpha_i &\leq \log^{+} \left \{ p \max_{1 \leq i \leq p} \alpha_i \right \} \quad \mbox{by } (ii) \\
&\leq \log^{+}p+ \log^{+} \max_{1 \leq i \leq p} \alpha_i \quad \mbox{by }(iv) \\
&=\log p + \log^{+} \alpha_j, \quad \mbox
{where}  \max_{1 \leq i \leq p}{ \alpha_i}= \alpha_j \quad \mbox{for some } 1 \leq j \leq p \\
& \leq \log p+ \sum_{i=1}^{p} \log^{+} \alpha_i.
\end{align*}
\end{proof} 
\begin{defn} 
\textbf{Proximity function}
\\
For any meromorphic function $f$ in $\mid z \mid \leq R$
$$ m(r,f)=\frac{1}{2 \pi} \int_{0}^{2 \pi} \log^{+} \mid f(re^{\iota \theta}) \mid \, d {\theta},$$ where $r<R$.
m$(r,f)$ gives the logarithmic average value of $f$ over the circle $\mid z \mid = r$.
\end{defn}
\begin{defn}
\textbf{Counting function}

For meromorphic function $f$ in $\mid z \mid \leq R$, $n(R,f)=n(R,\infty)$ denote the number of poles of $f$ in $\mid z \mid \leq R$ counting according to multiplicities.
\\
$n\left(R,\frac{1}{f} \right)=n(R,0)$ denote the number of zeros of $f$ in $\mid z \mid \leq R$ counting according to multiplicities.
\\
$n\left(R,\frac{1}{f-a}\right)= n(R,a)$ denote these number of $a$-points in $\mid z \mid \leq R$ counting according to multiplicities.
\end{defn}
\begin{rem}\label{rem1}
Now let us consider $\sum_{\mu =1}^{N} \log \frac{R}{\mid a_{\mu}\mid}$ from the Jensen\textquoteright s formula. We have
\begin{align*}
\sum_{\mu =1}^{N} \log \frac{R}{\mid a_{\mu}\mid}&=\sum_{\mu=1}^{N} \int_{\mid a_{\mu}\mid}^{R} \frac{1}{t} \, dt=\sum_{\mu=1}^{N} \int_{0}^{R} \frac{\chi_{\mu}(t)}{t} \, dt \\
& = \int_{0}^{R} \frac{1}{t} \sum_{k=1}^{n} \chi_{k}(t) \, dt = \int_{0}^{R} \frac{n(t,0)}{t} \, dt \end{align*}  
where $\chi_{k}(r)$ is characteristic function of the disc $r \leq \mid a_k \mid$.
\\
Similarly, $$\sum_{\nu=1}^{M} \log \frac{R}{\mid b_{\nu} \mid} = \int_{0}^{R} \frac{n(t,\frac{1}{f})}{t} \, dt.$$
\end{rem}
\begin{defn}
\textbf{Integrated Counting function}

For $f$ meromorphic in $\mid z \mid \leq R$
$$N(r,f)= \int_{0}^{r}\frac{n(t,f)-n(0,f)}{t} \, dt+n(0,f) \log r,\quad \mbox{where } r<R$$
denotes the integrated counting function.
Similarly 
$$N \left( r,\frac{1}{f}\right)=\int_{0}^{r} \frac{n\left(t,\frac{1}{f}\right) -n \left(0,\frac{1}{f}\right)}{t} \, dt+n\left(0,\frac{1}{f}\right) \log r. $$
\end{defn}
\begin{defn} 
For meromorphic function $f$ in $\mid z \mid \leq R$ and $a \in \mathbb{C}$
$$\bar{n}(r,f)= n(r,f)-n(r,f\textquoteright)$$ denotes distinct number of poles in $\mid z \mid \leq r$.
Similarly $\bar{n}(r,a)$ denotes the number of distinct $a$-points in $\mid z \mid \leq r$ and
 $$ \bar{N}(r,a)= \int_{0}^{r} \frac{ \bar{n}(t,a)- \bar{n}(0,a)}{t} \, dt+ \bar{n}(0,a) \log r .$$  
\end{defn}
\begin{rem} \label{rem2}
Now in Jensen\textquoteright s formula we have 
\begin{align*}
\log \mid f(0) \mid &= \frac{1}{2\pi} \int_{0}^{2\pi} \log^{+} \mid f(Re^{\iota\theta}) \mid \, d{\theta}-\frac{1}{2\pi} \int_{0}^{2\pi} \log^{+}\big | \frac{1}{f(Re^{\iota\theta})} \big | \, d{\theta}  \\
&+\sum_{\nu=1}^{M} \log \frac{R}{\mid b_\nu \mid} -\sum_{\mu=1}^{N} \log \frac{R}{\mid a_\mu \mid}
\end{align*}
Since  we have $f(0)\neq 0,\infty$, this gives $n \left(0, \frac{1}{f}\right)=0$ and $n(0,f)=0$ that is number of zeros and poles at the origin is zero.
\\
Thus we get 
$$N(R,f)=N(R,\infty)=\int_{0}^{R}\frac{n(t,f)}{t} \, dt=\sum_{\nu=1}^{M} \log \frac{R}{\mid b_\nu \mid}$$
and
$$N\left(R,\frac{1}{f}\right)=N(R,0)=\int_{0}^{R} \frac{n(t,0)}{t} \,dt=\sum_{\mu=1}^{N} \log \frac{R}{\mid a_\mu \mid}.$$
Thus from definition of proximity function, remark (\ref{rem1}) and remark (\ref{rem2}); Jensen\textquoteright s formula becomes
$$\log \mid f(0)\mid= m(R,f)-m\left(R,\frac{1}{f}\right)+N(R,f)-N\left(R,\frac{1}{f}\right).$$
The important thing here to notice is that left hand side is constant even when $R \rightarrow \infty$. This allows to measure the growth of $f$ effectively.
\end{rem}
\begin{defn}
\textbf{Characteristic function}

The characteristic function of a meromorphic function $f$ is $\mid z \mid 
\leq R$ is defined as  $$T(r,f)= m(r,f)+N(r,f),\quad \mbox{for} r<R.$$ 
\end{defn}
\begin{rem} \label{rem3}
Jensen\textquoteright s formula thus turns out to be
\[ \log\mid f(0) \mid= T(R,f)-T\left(R,\frac{1}{f}\right) \]
thus we get
$$T(R,f)=T\left(R,\frac{1}{f}\right)+ \log \mid f(0) \mid.$$
\end{rem}
\begin{rem} \label{rem4}
From properties of $\log^{+}$ we observe that by applying the inequalities to meromorphic functions $f_1, f_2,...,f_p$ we get
$$m\left(r,\sum_{i=1}^{p}f_i \right)\leq \sum_{i=1}^{p}m(r,f_i)+\log p$$ 
and
$$m\left(r,\prod_{i=1}^{p}f_i \right)\leq \sum_{i=1}^{p}m(r,f_i)$$ 
these inequalities are easy to observe.
\end{rem}

Next, let $f$ and $g$ be two meromorphic functions. Suppose without loss of generality that $f$ has pole at origin. If $g$ also has pole at origin then $n(0,f.g)=n(0,f)+n(0,g)$. If $g$ has neither pole nor zero at origin then $n(0,f.g)=n(0,f)+n(0,g)\quad [n(0,g)=0]$. If $g$ has zero at origin then either $f.g$ has no pole at origin or pole of lower multiplicity than $f$ at origin. Thus 
$$n(0,fg) \leq n(0,f)+n(0,g).$$
	  
Thus for any given pole of $f.g$ at any point with multiplicity $n$, the sum of the multiplicities of the poles of $f$ and $g$ at that point are atleast $n$. Thus $$ n(r,fg) \leq n(r,f)+n(r,g)$$
which gives  $$ N(r,f.g) \leq N(r,f) +N(r,g).$$

On a similar way if $f+g$ has pole at origin (say) then atleast one of $f$ or $g$ has a pole at origin and $$n(0,f+g) = \max \lbrace n(0,f),n(0,g) \rbrace \leq n(0,f)+n(0,g).$$
\\
This is true for any pole of $f+g$ thus $$ n(r,f+g) \leq n(r,f) + n(r,g) $$ 
\\
thus $$N(r,f+g) \leq N(r,f)+N(r,g). $$
Now, if $f(z)$ is the sum or product of the functions $f_i$, then the order of a pole of $f(z)$ at a point $z_0$ is at most equal to the sum of the orders of the poles of the $f_i$ at $z_0$. Thus we get
$$N\left(r,\sum_{i=1}^{p}f_i \right) \leq \sum_{i=1}^{p} N(r,f_i)$$
and
$$N \left(r,\prod_{i=1}^{p} f_i \right) \leq \sum_{i=1}^{p} N(r,f_i).$$
Also since $T(r,f)=m(r,f)+N(r,f)$ we get
$$ T \left(r,\sum_{i=1}^{p} f_i \right) \leq \sum_{i=1}^{p}T(r,f_i) + \log p $$
and
$$ T \left(r,\prod_{i=1}^{p}f_i \right) \leq \sum_{i=1}^{p} T(r,f_i). $$
In particular, taking $p=2, f_1(z) =f(z)$, $f_2(z)=-a$ =constant, we get
$$T(r,f-a) \leq T(r,f)+T(r,-a)+ \log 2 = T(r,f)+ \log^{+} \mid a \mid + \log 2$$
and
$$T(r,f)=T(r,f-a+a) \leq T(r,f-a)+ \log^{+} \mid a \mid + \log 2.$$
We get finally
$$\mid T(r,f)-T(r,f-a) \mid \leq \log^{+} \mid a \mid + \log 2. $$

Further we give some examples for a better understanding of proximity function, counting function and characteristic function.
\begin{defn}

For $f$ and $g$ meromorphic functions in $\mid z \mid \leq R$ we write
$$f=g+O(r)$$ 
if $$\lim_{r\rightarrow \infty} \frac{f-g}{r}< \infty $$
and
$$f=g+o(r)$$
if $$\lim_{r\rightarrow \infty} \frac{f-g}{r} =0.$$
\end{defn}
\begin{exmp}
$\sin r=o(r)$. In particular, $f(z)=O(1)$ means that $f$ is bounded. 
\end{exmp}
\begin{exmp}
$f(z) = e^{z} ; z \in \mathbb{C}$ then $N(R,f)=0$ as $e^{z}$ never attain $\infty$.
\begin{align*}
m(r,f) &= \frac{1}{2\pi} \int_{- \frac{\pi}{2}}^{\frac{\pi}{2}} \log^{+}\mid e^{re^{\iota \theta}} \mid \, d{\theta} =\frac{1}{2\pi} \int_{-\frac{\pi}{2}}^{\frac{\pi}{2}} \log^{+}e^{\mid re^{\iota \theta}\mid} \, d{\theta} =  \frac{1}{2 \pi} \int_{- \frac{\pi}{2}}^{\frac{\pi}{2}} \log^{+} e^{r \cos\theta} \, d{\theta} \\
&=\frac{1}{2\pi}\int_{- \frac{\pi}{2}}^{\frac{\pi}{2}} r \cos \theta \, d \theta = \frac{1}{2\pi} r \sin \theta \mid_{- \frac{\pi}{2}}^{\frac{\pi}{2}} = \frac{r}{\pi}
\end{align*}
This gives $$T(r,f)=m(r,f)+N(r,f)= \frac{r}{\pi}.$$
Now
$N \left(r,\frac{1}{f}\right)=0$, as $f=e^z$ never attain $\infty$ 
\begin{align*}
m\left(r,\frac{1}{f} \right)&=\frac{1}{2\pi} \int_{\frac{\pi}{2}}^{\frac{3\pi}{2}} \log^{+} \big | \frac{1}{e^{re^{\iota \theta}}} \big | \, d \theta = \frac{1}{2\pi}\int_{\frac{\pi}{2}}^{\frac{3\pi}{2}}log^{+} \mid e^{-r e^{\iota \theta}} \mid \, d \theta  \\
&=\frac{1}{2\pi}\int_{\frac{\pi}{2}}^{\frac{3\pi}{2}} \log^{+} e^{\mid -re^{\iota \theta}\mid} \, d \theta = \frac{1}{2 \pi} \int_{\frac{\pi}{2}}^{\frac{3 \pi}{2}} \log^{+} e^{-r \cos \theta} \, d \theta \\ 
& = \frac{1}{2\pi}\int_{\frac{\pi}{2}}^{\frac{3\pi}{2}} -r \cos \theta \, d \theta = \frac{r}{2\pi} \sin \theta \mid_{\frac{\pi}{2}}^{\frac{3\pi}{2}}
=\frac{r}{\pi}
\end{align*}
thus $$T\left(r,\frac{1}{f}\right)= \frac{r}{\pi}.$$
\end{exmp}

In the next examples we try to develop relation between proximity function, counting function and characteristic function of two meromorphic functions $f$ and $g$.
\begin{exmp} 
Let $f$ be a meromorphic function in complex plane and let $g(z)= f(z^k)$ and $z_0$ be a pole of $f$ of order c then $g$ has pole of order $c$ at the points $z^k=z_0$ that means $k$ distinct points when $z_0 \neq 0$ and if $z_0=0$ then $g$ has a pole of order $ck$ at $z_0=0$. Thus for each pole of $f$ there corresponds $k$ poles of $g$ therefore $n(r,g)= k n(r^k,f)$.
\begin{align*}
N(r,g)&= \int_{0}^{r} \frac{n(t,g)-n(0,g)}{t} dt \, + n(0,g) \log r \\
&= \int_{0}^{r} \frac{k n(t^k,f)-kn(0,f)}{t} dt \, + kn(0,f) \log r  \\
&= \int_{0}^{r^k} \frac{n(s,f)-n(0,f)}{s} ds \, +n(0,f) \log r^k \qquad (\mbox{putting} \quad s=t^k)\\
&= N(r^k,f).
\end{align*}
Also
\begin{align*}
m(r,g) &=\frac{1}{2 \pi} \int_{0}^{2 \pi}                              \log^{+} \mid g(re^{\iota \theta}) \mid d\theta \, \\
&= \frac{1}{2 \pi} \int_{0}^{2 \pi} \log^{+} \mid f(r^k e^{\iota k \theta}) \mid d \theta \, \\
&= \frac{1}{2\pi} \int_{0}^{2 k \pi} \log^{+} \mid f(r^k e^{\iota \phi}) \mid \frac{d \phi}{k}\quad (\mbox{putting} \quad \phi= k \theta)  \\
&= \frac{k}{2 k \pi} \int_{0}^{2 \pi} \log^{+} \mid f(r^k e^{\iota \phi}) \mid d \phi \, \\
&= m(r^k,f).
\end{align*} 
Therefore $$T(r,g)=m(r^k,f)+N(r^k,f) =T(r^k,f)$$
so 
$$T(r,e^{z^k}) = T(r^{k},f)= r^k/ \pi.$$
\end{exmp}
\begin{exmp}
Let $g=f^k$ where $k$ is positive integer. Let $f$ has a pole at $z_0$ of order $c$ then $f^k$ has pole at $z_0$ of order $ck$. Thus $$n(r,f^k)=k n(r,f)$$ and 
\begin{align*}
N(r,f^k) &= \int_{0}^{r} \frac{n(t,f^k) -n(0,f^k)}{t} dt \, + n(0,f^k) \log r \\
&= \int_{0}^{r} \frac{k n(t,f)-k n(0,f)}{t} dt \, + k n(0,f) \log r \\
&= k N(r,f).
\end{align*}
Since $\log^{+} \mid f^k \mid = k \log^{+} \mid f \mid$ this gives $m(r,f^k) = k m(r,f)$ therefore $T(r,f^k) = k T(r,f)$. Thus $$T(r,e^{kz}) =k T(r,e^z) =\frac{kr}{ \pi}.$$
\end{exmp}
\begin{exmp}
Let $a$ be a finite non zero constant and $g(z)= f(az)$. If $f$ has pole at $z_0$ of order c then $g$ has pole at $z_0/a$ of order $c$ that means corresponding to each pole of $f, g$ also has only one pole of same order. Thus $n(r,g)= n(ar,f)$. Since $n(r,f)$ is a function of $r$ and $f$ it is invariant under rotation therefore $$n(r,g) = n(\mid a \mid r,f)$$ and 
\begin{align*}
N(r,g) &= \int_{0}^{r} \frac{n(t,g)-n(0,g)}{t} dt \, + n(0,g) \log r \\
& = \int_{0}^{r} \frac{n(\mid a \mid t,f) - n(0 ,f)}{t} dt \, + n(0,f) \log r \quad  \\
&= \int_{0}^{\mid a \mid r} \frac{n(s,f)-n(0,f)}{s} ds \, + n(0,f) \log r + n(0,f ) \log \mid a \mid  \\
&- n(0,f) \log \mid a \mid  \quad (\mbox{putting} \quad s= \mid a \mid t)\\
& = N(\mid a \mid r,f ) -n(0,f) \log \mid a \mid.
\end{align*}
Also since $m(r,f)$ is invariant under rotation therefore we can only perform substitution for $\theta$ to get $m(r,g) =m(\mid a \mid r,f)$ so $ T(r,g)=T(\mid a \mid r,f)- n(0,f) \log \mid a \mid$. Therefore $$T(r,e^{a z}) = T(\mid a \mid r,e^z) = \frac{\mid a \mid r}{\pi}$$ or $$T(r,e^{\iota z}) = \frac{r}{\pi}.$$
\end{exmp}
\begin{exmp}
Let $P(z)$ be a polynomial of degree k over $\mathbb{C}$;
$P(z) = a_kz^k+a_{k-1} z^{k-1}+...+ a_0$ then $e^{P(z)}$ is an entire function therefore $T(r,e^{P(z)})=m(r,e^{P(z)})$. Also
\begin{align*}
\log^{+} \mid e^{P(z)} \mid &= \log^{+} \mid e^{a_k z^k+...+a_0} \mid \\
&= \log^{+} \mid e^{a_kz^k} e^{a_{k-1}z^{k-1}}...e^{a_0} \mid \\
& \leq \sum_{j=0}^{k} \log^{+} \mid e^{a_j z^j} \mid 
\end{align*}
which gives
$ m(r,e^{P(z)}) \leq \sum_{j=0}^{k} m(r,e^{a_j z^j})$
and $ m(r,e^{a_j z^j}) =m(\mid a_j \mid r,e^{z^j}) = \frac{\mid a_j \mid r^j}{\pi} =O(r^j) $ by previous examples.
\\
Thus 
\begin{align*}
T(r,e^P) &\leq \sum_{j=0}^{k} \frac{\mid a_j \mid r^j}{\pi}  \\
&=\frac{\mid a_k \mid r^k}{ \pi}+ \sum_{j=0}^{j-1} \frac{\mid a_j\mid r^j}{\pi} \\
&= \frac{ \mid a_k \mid r^k}{ \pi} + O(r^{k-1}) 
\end{align*}
\end{exmp}
\section{First Fundamental Theorem }

\quad Nevanlinna theory has two main theorem called first fundamental theorem and second fundamental theorem. In this section we will state and prove first fundamental theorem of Nevanlinna theory for this we need Jensen\textquoteright s formula. By first fundamental theorem of Nevanlinna theory it follows that the sum $m+N$ is independent of $a \in \mathbb{C}$. 
\begin{defn}
\textbf{Order of Growth} 

Let $f$ be a meromorphic function in the complex plane then $\rho(f)$, the order of growth of $f$ is defined as 
$$\rho(f)= \limsup_{r\rightarrow \infty} \frac{\log^{+} T(r,f)}{\log r} $$.  
\\
$f$ is said to have finite order if $\rho(f) \leq \infty$ equivalently $T(r,f) =O(r)$. We saw earlier that $T(r,e^z)=r/\pi $, so $e^z$ has order 1 and similarly $\rho(e^{nz}) =n$.
\end{defn}
\begin{thm} \label{fft}
\textbf{First Fundamental Theorem}

if $f(z)$ is meromorphic in $\mid z \mid \leq R $ then for any $a \in \mathbb{C}$ and $0<r<R$, 
$$m\left(r,\frac{1}{f-a}\right)+N\left(r,\frac{1}{f-a}\right)=T(r,f)- \log \mid f(0)-a \mid +\epsilon(a,r),$$ where $\mid \epsilon(a,r) \mid \leq \log^{+} \mid a \mid + \log 2.$
\end{thm}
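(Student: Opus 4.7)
The plan is to reduce the First Fundamental Theorem to Jensen's formula (in the form derived in Remark \ref{rem3}) applied to the auxiliary function $g(z) = f(z) - a$, and then compare $T(r,g)$ with $T(r,f)$ using the inequality for $|T(r,f) - T(r,f-a)|$ that was established at the end of the previous subsection.

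First, I would observe that $g = f - a$ is itself meromorphic in $|z| \leq R$, so Remark \ref{rem3} applies directly to $g$, yielding
\[
T(r, f-a) = T\!\left(r, \frac{1}{f-a}\right) + \log\bigl|f(0) - a\bigr|,
\]
provided $f(0) \neq a$ (which is implicit in the formulation, since the term $\log|f(0)-a|$ appears on the right-hand side). Rearranging and using the definition $T(r,h) = m(r,h) + N(r,h)$ on the left-hand side of the target identity, this already gives
\[
m\!\left(r, \frac{1}{f-a}\right) + N\!\left(r, \frac{1}{f-a}\right) = T(r, f-a) - \log\bigl|f(0) - a\bigr|.
\]

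Next, I would introduce $\epsilon(a,r) := T(r, f-a) - T(r, f)$, so that the previous line becomes exactly the claimed identity
\[
m\!\left(r, \frac{1}{f-a}\right) + N\!\left(r, \frac{1}{f-a}\right) = T(r, f) - \log\bigl|f(0)-a\bigr| + \epsilon(a,r).
\]
It then remains to bound $|\epsilon(a,r)|$. This is handled by the inequality
\[
\bigl| T(r,f) - T(r, f-a) \bigr| \leq \log^{+}|a| + \log 2,
\]
which was derived in Remark \ref{rem4} from the subadditivity of $m(r,\cdot)$ and $N(r,\cdot)$ applied to the two decompositions $f - a = f + (-a)$ and $f = (f-a) + a$, together with $T(r, -a) = \log^{+}|a|$ (since constants have no poles and $m(r, -a) = \log^{+}|a|$).

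The only step that requires a moment of care is confirming that no poles or zeros are missed in the comparison $T(r, f-a) = T(r, 1/(f-a)) + \log|f(0)-a|$, that is, confirming that Remark \ref{rem3} legitimately applies to $g = f - a$ at the origin; since $f(0) \neq a$ ensures $g(0) \neq 0, \infty$, this is immediate. Thus there is no real obstacle, and the whole proof is essentially a bookkeeping exercise combining Jensen's formula with the comparison inequality for $T(r, f)$ under additive perturbation. I would expect the write-up to be short.
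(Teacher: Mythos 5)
Your proposal is correct and follows essentially the same route as the paper's own proof: apply the Jensen-formula identity $T(r,g)=T(r,1/g)+\log|g(0)|$ to $g=f-a$, and absorb the difference $T(r,f-a)-T(r,f)$ into $\epsilon(a,r)$ via the previously established bound $|T(r,f)-T(r,f-a)|\leq\log^{+}|a|+\log 2$. Your added remark that $f(0)\neq a$ is needed for the formula to make sense is a point the paper leaves implicit, but it does not change the argument.
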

\begin{proof}
Since 
\begin{align*}
m\left(r,\frac{1}{f-a}\right)+N\left(r,\frac{1}{f-a}\right) &=T\left(r,\frac{1}{f-a}\right) \\
&=T(r,f-a)-\log \mid (f-a)(0) \mid \quad \\
&= T(r,f)- \log \mid f(0)-a \mid + \epsilon(a,r),
\end{align*} where $\mid \epsilon(a,r) \mid \leq \log^{+} \mid a \mid + \log 2$.
\end{proof}
\begin{rem} \label{rem5}
If there is no confusion we can choose to write 
$$m\left(r,\frac{1}{f-a}\right)=m(r,a)$$ and $$N\left(r,\frac{1}{f-a}\right)=N(r,a),$$ if $a$ is finite.
\\
Thus, $m(r,a)+N(r,a)=T(r,f)+O(1)$, \quad where O(1) is bounded for a finite or infinity. This is just a restatement of first fundamental theorem of Nevanlinna theory, since $\epsilon(a,r) $ and $\log \mid f(0)-a \mid $ is bounded so is O(1).
\end{rem}
\begin{exmp}
Let $S$ be a M$\ddot{o}$bius transformation; $S(f) =\frac{af+b}{cf+d}, ad-bc \neq 0$ then by first fundamental theorem
\begin{align*}
T\left(r,S(f)\right) &= T\left(r,\frac{af+b}{cf+d}\right) =T(r,a'+ \frac{b'}{f+d'}) \\
&\leq T\left(r,\frac{1}{f+d'} \right) +O(1) = T(r,f)+O(1).
\end{align*}
\end{exmp}
\begin{thm} 
\label{nifpf}
\textbf{Nevanlinna\textquoteright s Inequality for proximity function}

Let $f$ be analytic function in $ \mid z \mid \leq R $ and 
$$M(r,f) = \max \{\,\ \mid f(z) \mid , \mid z \mid = r \,\} $$ then
$$T(r,f)\leq \log^{+}M(r,f)\leq \frac{R+r}{R-r}m(r,f),$$ where $ 0 \leq r <R $
\end{thm}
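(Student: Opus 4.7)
The plan is to split the double inequality into two separate estimates and handle each one using the tools developed so far. Since $f$ is analytic on $|z|\le R$, it has no poles, so $n(r,f)=0$ and hence $N(r,f)=0$. Consequently $T(r,f)=m(r,f)$, which collapses the characteristic function to the proximity function and makes the first inequality straightforward.

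For the first inequality $T(r,f)\le \log^{+} M(r,f)$, I would simply observe that for every $\theta$,
\[
\log^{+}|f(re^{\iota\theta})|\le \log^{+}\max_{|z|=r}|f(z)|=\log^{+}M(r,f),
\]
since $\log^{+}$ is monotone (Theorem~\ref{ch2,sec2,t1}(ii)). Averaging over $\theta$ yields $m(r,f)\le \log^{+}M(r,f)$, and combining with $T(r,f)=m(r,f)$ finishes this half.

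For the second inequality (where the right-hand side should be read as $m(R,f)$, i.e.\ the proximity function on the outer circle), the key tool is the Poisson--Jensen formula applied to $f$ on $|z|\le R$. Since $f$ has no poles, only the zero-Blaschke factors appear, and I would write, for $z=re^{\iota\phi}$ with $f(z)\ne 0$,
\[
\log|f(z)|=\frac{1}{2\pi}\int_{0}^{2\pi}\log|f(Re^{\iota\theta})|\,P(z,Re^{\iota\theta})\,d\theta+\sum_{\mu=1}^{M}\log\left|\frac{R(z-a_{\mu})}{R^{2}-\bar{a}_{\mu}z}\right|.
\]
Each Blaschke factor $R(z-a_{\mu})/(R^{2}-\bar{a}_{\mu}z)$ has modulus at most $1$ on $|z|\le R$, so the sum is $\le 0$ and can be dropped from the upper bound. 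Next, because the Poisson kernel is non-negative, replacing $\log|f(Re^{\iota\theta})|$ by $\log^{+}|f(Re^{\iota\theta})|$ only enlarges the integral. Finally, for $|z|=r$ we have $|Re^{\iota\theta}-z|\ge R-r$, so
\[
P(z,Re^{\iota\theta})=\frac{R^{2}-r^{2}}{|Re^{\iota\theta}-z|^{2}}\le \frac{R^{2}-r^{2}}{(R-r)^{2}}=\frac{R+r}{R-r}.
\]
Combining these three steps gives $\log|f(z)|\le \frac{R+r}{R-r}m(R,f)$; the right-hand side is non-negative, so the same bound holds for $\log^{+}|f(z)|$ at every point where $f(z)\ne 0$ (and trivially where $f(z)=0$). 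Taking the supremum over $|z|=r$ yields $\log^{+}M(r,f)\le \frac{R+r}{R-r}m(R,f)$.

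The main subtlety I anticipate is the step of passing from $\log$ to $\log^{+}$ inside the Poisson integral while simultaneously discarding the Blaschke sum: one must keep track of signs carefully (the Blaschke sum is non-positive and the Poisson kernel non-negative, so both modifications increase the integrand) and verify that the resulting estimate remains valid at points where $\log|f(z)|$ is itself negative, where $\log^{+}|f(z)|=0$ and the inequality becomes trivial.
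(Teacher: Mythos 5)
Your proposal is correct and follows essentially the same route as the paper's own proof: both halves rest on $N(r,f)=0$ together with monotone averaging of $\log^{+}$, and on the Poisson--Jensen formula with the non-positive Blaschke sum discarded and the Poisson kernel bounded by $\frac{R+r}{R-r}$. You are also right that the bound obtained is $\frac{R+r}{R-r}\,m(R,f)$ (proximity function on the outer circle), which is what the paper's proof actually establishes despite the $m(r,f)$ appearing in the theorem statement.
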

\begin{proof}
Since $f$ is analytic function in $\mid z \mid = R $ thus $N(r,f)=0$ for $0 \leq r < R $. Thus, 
$$T(r,f) = m(r,f) = \frac{1}{2 \pi} \int_{0}^{2\pi} \log^{+} \mid f(re^{\iota \theta}) \mid \, d \theta   \leq \log^{+} M(r,f).$$
Since $ \mid f(re^{\iota \theta}) \mid \leq M(r,f) $, for all $ 0 \leq \theta \leq 2 \pi $ this gives that 
$$\log^{+} \mid f(re^{\iota \theta}) \mid \leq \log^{+} M(r,f),$$ for all $ 0 \leq \theta \leq 2 \pi $.
\\
Now, $M(r,f) = \mid f(z_0) \mid $, for some $z_0 = re^{\iota \theta}$, where $0 \leq \theta \leq 2\pi.$ Let $a_1,a_2,a_3,\ldots,a_p $ be zeros of $f$ in $\mid z \mid < R $ then by Poisson-Jensen formula we get 
\begin{align*}
\log M(r,f) &= \log \mid f(z_0) \mid  \\
&=\frac{1}{2 \pi} \int_{0}^{2\pi} \log \mid f(re^{\iota \phi}) \mid \frac{R^2-r^2}{R^2-2rR\cos (\theta-\phi) +r^2}\, d \phi \\
&+\sum_{i=1}^{p} \log \big | \frac{R(z-a_i)}{R^2-\bar{a_i} z} \big | \\
&\leq\frac{1}{2 \pi} \int_{0}^{2\pi} \log^{+} \mid f(re^{\iota \phi}) \mid \frac{R^2-r^2}{R^2-2rR\cos (\theta-\phi) +r^2}\, d \phi  \\
&\leq \frac{R+r}{R-r} m(R,f).  
\end{align*}
Since $\cos (\theta - \phi) \leq 1 $ this gives $ -2Rr \cos(\theta - \phi) \geq -2Rr  $ which implies $ R^2 - 2Rr \cos(\theta- \phi) +r ^2 \geq R^2 - 2Rr + ^2 = (R-r)^2.$ Also as $M(r,f) \geq 0$ therefore $\log M(r,f) =\log^{+} M(r,f)$. So $$T(r,f) \leq \log^{+} M(r,f) \leq \frac{R+r}{R-r} T(R,f).$$ 
\end{proof}
\hspace{\fill} \\
\section{Cartan\textquoteright s Identity and Convexity Theorem}

\quad H. Cartan gave an useful identity for the characteristic function which explained behaviour of characteristic function. We start by proving Cartan\textquoteright s identity theorem.
\begin{thm} 
Suppose that $f(z)$ is meromorphic in $\mid z \mid \leq R $ then
$$T(r,f)=\frac{1}{2\pi}\int_{0}^{2\pi}N(r,e^{\iota \theta}) \,d \theta + \log^{+} \mid f(R e^{\iota \theta}) \mid, $$ where $ 0<r<R.$
\end{thm}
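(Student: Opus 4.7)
The plan is to apply Jensen's formula to the meromorphic function $g_\theta(z) = f(z) - e^{\iota \theta}$ for each fixed $\theta \in [0, 2\pi]$, and then to average the resulting identity over $\theta$. First I would observe that $g_\theta$ has the same poles as $f$ in $|z| < r$, while its zeros are precisely the $e^{\iota\theta}$-points of $f$, so that $N(r, g_\theta) = N(r, f)$ and $N(r, 1/g_\theta) = N(r, 1/(f - e^{\iota\theta}))$, abbreviated $N(r, e^{\iota\theta})$ in accordance with the notation following the First Fundamental Theorem. Assuming provisionally that $f(0)$ is neither a pole of $f$ nor equal to $e^{\iota\theta}$, Jensen's formula applied to $g_\theta$ reads
$$\log|f(0) - e^{\iota\theta}| = \frac{1}{2\pi}\int_0^{2\pi} \log|f(re^{\iota\phi}) - e^{\iota\theta}| \, d\phi + N(r, f) - N(r, e^{\iota\theta}).$$

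Next I would integrate both sides with respect to $\theta$ over $[0, 2\pi]$, divide by $2\pi$, and appeal to Fubini's theorem to exchange the order of integration in the double integral that appears on the right. The key computational ingredient is the classical identity
$$\frac{1}{2\pi}\int_0^{2\pi} \log|a - e^{\iota\theta}| \, d\theta = \log^{+}|a|, \qquad a \in \mathbb{C},$$
which can itself be derived by applying Jensen's formula to the polynomial $z \mapsto z - a$ on the unit disc, splitting into the cases $|a| < 1$ (where $a$ contributes an interior zero) and $|a| \geq 1$ (no interior zero). Both cases produce the stated value.

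Applying this identity on the left with $a = f(0)$, and on the inner integral of the right with $a = f(re^{\iota\phi})$ for each fixed $\phi$, the double integral collapses to
$$\frac{1}{2\pi}\int_0^{2\pi} \log^{+}|f(re^{\iota\phi})| \, d\phi = m(r, f).$$
Combining and rearranging, I obtain
$$T(r, f) = m(r, f) + N(r, f) = \frac{1}{2\pi}\int_0^{2\pi} N(r, e^{\iota\theta}) \, d\theta + \log^{+}|f(0)|,$$
which is Cartan's identity as stated (reading the last term in the theorem statement as $\log^{+}|f(0)|$, since the $\theta$ is a bound integration variable everywhere else).

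The main subtlety will be justifying Fubini and handling the exceptional sets on which some of the logarithms blow up: the set of $\theta$ with $f(0) = e^{\iota\theta}$ has measure zero, and for each fixed $\phi$ the set of $\theta$ with $f(re^{\iota\phi}) = e^{\iota\theta}$ is likewise a single point, so local integrability of $\log|\cdot|$ near its singularities keeps every integral finite. I expect the trickiest step to be the verification of the singular integral identity $\frac{1}{2\pi}\int_0^{2\pi}\log|a - e^{\iota\theta}|\,d\theta = \log^{+}|a|$, together with a small limiting argument (e.g.\ replacing $r$ by $r - \varepsilon$ and using continuity of $N(\cdot, e^{\iota\theta})$ from the left) to cover the degenerate case in which $f$ happens to take a value on the unit circle at some boundary point of $|z| = r$.
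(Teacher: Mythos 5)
Your proposal is correct and follows essentially the same route as the paper: apply Jensen's formula to $f(z)-e^{\iota\theta}$, integrate in $\theta$, interchange the order of integration, and invoke the identity $\frac{1}{2\pi}\int_0^{2\pi}\log|a-e^{\iota\theta}|\,d\theta=\log^{+}|a|$ (which the paper likewise derives from Jensen's formula applied to $z\mapsto a-z$ on the unit disc). You are also right that the final term in the stated identity should read $\log^{+}|f(0)|$, which is what the paper's own proof actually produces.
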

\begin{proof}
Since $$ \log \mid f(0) \mid = \frac{1}{2\pi} \int_{0}^{2\pi} \log \mid f(Re^{\iota \theta}) \mid \, d \theta + N(R,f)-N\left(R,\frac{1}{f}\right)$$
is the Jensen\textquoteright s formula for meromorphic function $f,$ applying this for $f(z)=a-z$ and $R=1$ we get since $f$ is polynomial therefore has pole at $ \infty$ that is $N(R,f) =0$ and
\\
\begin{equation}
N(R,\frac{1}{f})= 
\begin{cases}
  1 ,    & \text{if $\mid a \mid < 1$} \\
  0 ,    & \text{if $\mid a \mid \geq 1$}
\end{cases} 
\notag 
\end{equation}
thus
\begin{align*}
\frac{1}{2\pi} \int_{0}^{2 \pi} \log \mid f(Re^{\iota \theta}) \mid \, d \theta &= \frac{1}{2\pi} \int_{0}^{2\pi} \log \mid a-e^{\iota\theta} \mid \, d \theta \\
& =
   \begin{cases}
 \log \mid a \mid     & \text{ if $\mid a \mid \geq 1$} \\
 \log \mid a \mid - \log \mid a \mid   & \text{ if $\mid a \mid <1$}
\end{cases}
\end{align*}
therefore
\begin{align} \label{6}
\frac{1}{2\pi} \int_{0}^{2\pi} \log \mid a-e^{\iota \theta} \mid \, d \theta = \log^{+} \mid a \mid.  
\end{align}
Now apply Jensen\textquoteright s formula for $ f(z)-e^{\iota \theta},$ we get
$$\log \mid f(0)-e^{\iota \theta} \mid =\frac{1}{2\pi} \int_{0}^{2\pi} \log \mid f(re^{\iota \phi})-e^{\iota \theta} \mid \, d \phi +N(r,f-e^{\iota \theta})-N(r,\frac{1}{f-e^{\iota \theta}}).$$
Integrating both sides w.r.t. $\theta$
\begin{align*}
\int_{0}^{2\pi} \log \mid f(0)-e^{\iota \theta} \mid \, d \theta &= \frac{1}{2\pi} \int_{0}^{2\pi} \left( \int_{0}^{2\pi} \log \mid f(re^{\iota \phi})-e^{\iota \theta}\mid \, d \phi \right) \, d \theta \\
&+N(r,\infty)- \int_{0}^{2\pi} N(r,e^{ \iota \theta}) \, d \theta.
\end{align*}

 As double integral is absolutely convergent we can change the order of integration in double integration and using (\ref{6}) we get
$$2\pi \log^{+}\mid f(0) \mid =\int_{0}^{2\pi} \log^{+} \mid f(re^{\iota \phi}) \mid \, d \phi + N(r,\infty) - \int_{0}^{2\pi}N(r, e^{\iota \theta}) \, d \theta$$ 
which implies
$$\log^{+} \mid f(0) \mid = \frac{1}{2\pi} \int_{0}^{2\pi} \log^{+} \mid f(re^{\iota \phi})\mid \, d \phi + N(r,f-e^{\iota \theta})-\frac{1}{2\pi} \int_{0}^{2\pi} N(r,r^{\iota \theta}) \, d \theta$$
which further implies
$$\log^{+} \mid f(0) \mid + \frac{1}{2\pi} \int_{0}^{2\pi}N(r,e^{\iota \theta}) \, d \theta = m(r,f)+N(r, f) = T(r,f).$$ 
\end{proof}
\begin{cor} 
The characteristic function $T(r,f)$ is an increasing convex function of $\log r$; for $0<r<R$.
\end {cor}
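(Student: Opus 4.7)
The plan is to pass through Cartan's identity and reduce the corollary to the claim that each integrated counting function $N(r,a)$ is increasing and convex as a function of $\log r$; once that is established, averaging over $a = e^{\iota\theta}$ and adding the constant $\log^+|f(0)|$ preserves both properties.

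First, by the theorem just proved,
\begin{equation*}
T(r,f) \;=\; \frac{1}{2\pi}\int_{0}^{2\pi} N(r,e^{\iota\theta})\, d\theta \;+\; \log^{+}|f(0)|,
\end{equation*}
so it suffices to check that, for every fixed $a \in \mathbb{C}_\infty$, the map $r \mapsto N(r,a)$ is non-decreasing and convex in $\log r$ on $(0,R)$. Substituting $u = \log r$, introduce $\phi_a(u) = N(e^u, a)$. From the definition
\begin{equation*}
N(r,a) \;=\; \int_{0}^{r} \frac{n(t,a)-n(0,a)}{t}\, dt \;+\; n(0,a)\,\log r,
\end{equation*}
a direct differentiation gives $\tfrac{d}{dr}N(r,a) = n(r,a)/r$, whence $\phi_a'(u) = n(e^u,a)$.

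Next I would observe that the counting function $n(t,a)$ is integer-valued, non-negative, and non-decreasing in $t$, because enlarging the disc $|z|\le t$ can only add $a$-points, never remove them. Consequently $\phi_a'(u) = n(e^u,a)$ is itself non-negative and non-decreasing in $u$, which means $\phi_a$ is non-decreasing and convex in $u = \log r$. (For rigor, since $\phi_a'$ is monotone rather than everywhere differentiable, convexity is taken in the usual sense that the difference quotient is monotone increasing; no second-derivative assumption is needed.)

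Finally, averaging the family $\{\phi_a\}_{a=e^{\iota\theta}}$ over $\theta$ and adding the constant $\log^{+}|f(0)|$ preserves both monotonicity and convexity in $\log r$, which by Cartan's identity gives the claim for $T(r,f)$. The only mildly delicate step is the interchange of differentiation and the $\theta$-integral together with the justification that $n(t,a)$ is genuinely monotone in $t$ at jump points, but this is immediate from the definition as a cardinality counted with multiplicity. No further estimates are required.
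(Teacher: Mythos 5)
Your proposal is correct and follows essentially the same route as the paper: both deduce the result from Cartan's identity together with the fact that each $N(r,e^{\iota\theta})$ is increasing and convex in $\log r$, and that averaging over $\theta$ plus adding the constant $\log^{+}\mid f(0)\mid$ preserves these properties. The only difference is that you actually justify the key fact via $\frac{d}{d(\log r)}N(r,a)=n(r,a)\geq 0$ being non-decreasing, whereas the paper simply asserts it; your added detail is a welcome filling-in rather than a new argument.
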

\begin{proof}
Since $N(r,e^{\iota \theta})$ is increasing convex function of $ \log r $. We deduce at once that from the Cartan\textquoteright s identity that $T(r,f)$ has the same property.  
\end{proof}
\begin{note}
It is evident that $T(r,a)$ is a convex increasing function of $ \log r $. On the other hand $m(r,a)$ be neither convex nor increasing. For example consider, $f(z)=\frac{z}{1-z^2}$ then
 $\mid f(z) \mid= \mid \frac{z}{1-z^2} \mid \leq \frac{\mid z \mid}{1-\mid z \mid ^2} < 1$ for $\mid z \mid < \frac{1}{2}$. This gives that $ \log^{+} \mid f(z) \mid = 0 $ for $ \mid z \mid < \frac{1}{2}$ which gives $  m(r,f)=0$ for $r < \frac{1}{2} $ and $\mid f(z) \mid < 1$ for $\mid z \mid > 2$. This also gives that $\log^{+} \mid f(z) \mid =0 $ which implies $  m(r,f)=0$ for $r > 2.$ On the other hand since $ f(\pm 1) = \infty \Rightarrow \log^{+} f(e^{\iota \theta}) = \infty \Rightarrow m(1,f) > 0$. Thus $m(1,f)$ is neither increasing nor convex.
 \end{note}
\section{Second Fundamental Theorem}
\quad Second fundamental theorem is an important result in Nevanlinna theory. Using this result a short proof of Picard\textquoteright s theorem can be given. Here we will state and prove second fundamental theorem. Firstly a short explanation of $S(r,f)$ and $N_1(r,f)$ is given, where $S(r,f)$ is known as the error term of the second fundamental theorem and $N_1(r,f)$ is another term in this main theorem.
\\
\begin{defn} Let $f$ be a meromorphic function in $\mid z \mid < R_0 \leq + \infty$ 
\begin{align*}
N_1(r,f) &=2N(r,f)-N(r,f')+N \left(r,\frac{1}{f'}\right) \\ 
&= N(r,f) -\bar{N}(r,f)+N\left(r,\frac{1}{f'}\right)
\end{align*}
and $S(r,f)$ is given such that if 
\\
\item[(i)] If $ R_0 =+\infty$, $$ S(r,f) = O(\log T(r,f) + O(\log r),$$
as $ r \rightarrow \infty$ through all values if $f(z)$ has finite order and as $r \rightarrow \infty$ outside a set E of finite linear measure otherwise.
\\
\item[(ii)] If $0 < R_0 < + \infty$, $$ S(r,f) =O\lbrace \log^{+}
 T(r,f)+ \log \frac{1}{R_0-r}\rbrace, $$
as $r \rightarrow R_0$ outside a set E such that $$ \int_{E} \frac{1}{R_0-r} \, dr < + \infty$$
\end{defn}  
\begin{thm} \label{sft}
\textbf{Second Fundamental Theorem} (see [~\cite{hay}])

Suppose that $f(z)$ is non-constant meromorphic function in $\mid z \mid \leq r$. Let $a_1, a_2,\ldots, a_q, q>2$ be distinct complex numbers, $\delta > 0$ and suppose that 
$\mid a_\mu-a_\nu \mid \geq \delta \quad \mbox{for } 1 \leq \mu <\nu \leq q $ then
$$m(r,f)+ \sum_{\nu=1}^{q}m(r,a_\nu) \leq 2 T(r,f)-N_1(r)+S(r),$$
where $N_1(r)$ is positive and is given by 
\\
$N_1(r)= N\left(r,\frac{1}{f\textquoteright}\right)+ 2 N(r,f)-N(r,f\textquoteright)$
and 
\\
$S(r)=m\left(r,\frac{f\textquoteright}{f}\right)+m\left(r,\sum_{\nu=1}^{q}\frac{f\textquoteright}{f-a_\nu}\right)+q \log^{+} \frac{3q}{\delta}+\log2 +\log \mid \frac{1}{f\textquoteright(0)} \mid $
\\
with modifications if $f(0)=\infty$ or $f\textquoteright(0)=0$.
\\
This result is also well known as \textbf{Nevanlinna\textquoteright s fundamental inequality}.
\end{thm}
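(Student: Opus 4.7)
The plan is to bound the left side by comparing $\sum_{\nu=1}^{q}m(r,a_\nu)$ with the proximity function of the auxiliary sum $F(z):=\sum_{\nu=1}^{q}\frac{1}{f(z)-a_\nu}$, then to factor $F$ through $f'$ and apply the First Fundamental Theorem (Remark~\ref{rem3}) to $f'$ in order to return to $T(r,f)$. The counting-function terms will reorganise automatically into $N_1(r)$, while the leftover ``derivative'' proximities and constants are precisely the terms listed in $S(r)$; so for this statement no separate smallness estimate on $S(r)$ is required.

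The first and hardest step is a pointwise comparison. Fix $z$ on $|z|=r$ with $f(z)\neq a_\nu$ for every $\nu$, and let $\mu=\mu(z)$ be an index minimising $|f(z)-a_\mu|$. If $|f(z)-a_\mu|\ge \delta/(3q)$, then every summand of $\sum_{\nu}\log^{+}\frac{1}{|f(z)-a_\nu|}$ is at most $\log^{+}(3q/\delta)$. Otherwise the separation $|a_\mu-a_\nu|\ge \delta$ forces $|f(z)-a_\nu|\ge 2\delta/3$ for all $\nu\ne\mu$; the single $\mu$-term then dominates $F$ strongly enough to yield $|F(z)|\ge \frac{1}{2|f(z)-a_\mu|}$, while the other $q-1$ terms in the log-sum are controlled crudely by $(q-1)\log^{+}(3/(2\delta))$. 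Combining the two cases and integrating in $\theta$ gives
$$\sum_{\nu=1}^{q} m(r,a_\nu)\;\le\; m(r,F)+q\log^{+}\tfrac{3q}{\delta}+\log 2.$$

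Next, write $F=\frac{1}{f'}\sum_{\nu=1}^{q}\frac{f'}{f-a_\nu}$; subadditivity of $m$ (Remark~\ref{rem4}) yields $m(r,F)\le m(r,1/f')+m\bigl(r,\sum_{\nu} f'/(f-a_\nu)\bigr)$. Applying the First Fundamental Theorem to $f'$ produces $m(r,1/f')=T(r,f')-N(r,1/f')+\log|1/f'(0)|$. To trade $T(r,f')$ for $T(r,f)$, use the decomposition $f'=f\cdot(f'/f)$, which gives $m(r,f')\le m(r,f)+m(r,f'/f)$, whence
$$T(r,f')\;\le\; T(r,f)-N(r,f)+N(r,f')+m(r,f'/f).$$

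Substituting these estimates into the bound on $\sum_\nu m(r,a_\nu)$, then adding $m(r,f)$ to both sides and rewriting in terms of $T(r,f)=m(r,f)+N(r,f)$, one finds that the counting contributions collapse exactly to $-N_1(r)=-[N(r,1/f')+2N(r,f)-N(r,f')]$, while the remaining proximity-of-derivative pieces together with the additive constants reassemble into $S(r)$ term by term. The principal obstacle in this outline is the ``close-to-$a_\mu$'' case of the pointwise comparison: showing that $F$ faithfully reflects the reciprocal spike of $1/(f-a_\mu)$ requires the separation constant $\delta$ to be used precisely in the triangle inequality so that the remaining $q-1$ terms cannot swamp the dominant one. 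Everything else is organised bookkeeping using tools already developed in the excerpt. (The deeper content of the theorem --- the smallness bound $S(r)=O(\log T(r,f)+\log r)$ outside a small exceptional set --- requires the Lemma on the Logarithmic Derivative, a separate and substantially more delicate ingredient that is not needed for the inequality as stated.)
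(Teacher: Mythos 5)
The paper states this theorem without proof, deferring to Hayman \cite{hay}, so there is no in-paper argument to compare against; your sketch is exactly the standard proof from that reference. The two cases of the pointwise comparison are handled correctly (in the close case $|f(z)-a_\mu|<\delta/(3q)$ the separation hypothesis gives $|f(z)-a_\nu|\ge\delta-\delta/(3q)>2\delta/3$ for $\nu\ne\mu$, whence $|F(z)|\ge\tfrac{1}{2|f(z)-a_\mu|}$), the factorisation $F=\frac{1}{f'}\sum_\nu\frac{f'}{f-a_\nu}$ combined with the First Fundamental Theorem for $f'$ and $T(r,f')\le T(r,f)-N(r,f)+N(r,f')+m(r,f'/f)$ is the right reduction, and after adding $m(r,f)=T(r,f)-N(r,f)$ the counting terms do collapse to $-N_1(r)$ with the leftover terms matching the stated $S(r)$ exactly. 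You are also right that the deeper estimate $S(r)=O(\log T(r,f)+\log r)$ is a separate matter (the lemma on the logarithmic derivative) not required for the inequality as stated.
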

\textbf{Alternative proof of Picard\textquoteright s Theorem}
\\
Suppose $f$ is an meromorphic function such that $f$ omits $0,1 $ and $\infty $ and by first fundamental theorem \ref{fft}
 $$ m(r,a)=T(r,f) +O(1)$$ for $a= 0,1$ and $\infty $ so that $$ m(r,0)+m(r,1) +m(r,\infty) = 3 T(r,f) +O(1) $$ 
which is a contradiction to second fundamental theorem \ref{sft} of Nevanlinna theory. This gives Picard\textquoteright s theorem. 

Next we state Nevanlinna\textquoteright s Estimate ~\cite{hay} and Hiong\textquoteright s Estimate ~\cite{hio} which will be used in later chapters.
\begin{res}
Suppose that $f(z)$ is meromorphic in $\mid z \mid \leq R $, $0<r<R$ and $f(0) \neq 0, \infty $  
\begin{align*}
m\big(r,\frac{f\textquoteright}{f}\big)<& 4 \log^{+} T(R,f)+4 \log^{+} \log^{+}\big| \frac{1}{f(0)} \big|+ 5 \log^{+} R \\
&+ 6 \log^{+} \frac{1}{R-r} + \log^{+} \frac{1}{r}+14.
\end{align*}
\ \ \ \ \ \ \ \ \ \ \ \ \ \ \ \ \ \ \ \ \ \ \ \ \ \ \ \ \ \ \ \ \ \ \ \ \ \ \ \ \ \ \ \ \ \ \ \ \ \ \ \ (Nevanlinna Estimate) \label{nev est}
\begin{align*}
m\left(r,\frac{f^{(k)}}{f}\right)<C \left\lbrace \log^{+}T(R,f)+\log^{+} \frac{1}{R-r}+ \log^{+} \frac{1}{r} \log^{+}R+\log^{+}\log^{+} \big | \frac{1}{f(0)} \big |+1 \right\rbrace
\end{align*}
\ \ \ \ \ \ \ \ \ \ \ \ \ \ \ \ \ \ \ \ \ \ \ \ \ \ \ \ \ \ \ \ \ \ \ \ \ \ \ \ \ \ \ \ \ \ \ \ \ \ \ \ (Hiong Estimate)\label{hio est}
\end{res}
\chapter{Normal Families}

\quad Concept of normal families came in light after P. Montel\textquoteright s remarkable works on normal families. Here a brief discussion of normal families is given so that we become familiar with definition, terms, results etc. to be helpful in later chapters. 

\section{Spherical Metric}

\quad In this section we describe spherical metric on extended complex plane, an important metric that we have to use through all this present work. We start with  stereographic projection.
\\
\\
\textbf{Stereographic Projection}

In stereographic projection we wish to identify extended complex plane $\mathbb{C}_ \infty $ by a unit sphere in $\mathbb{R}^3$. For this, we wish to introduce a distance function on $\mathbb{C}_ \infty $.

Let $S=\{ \, (a,b,c) \in \mathbb{R}^3; a^2+b^2+(c-\frac{1}{2})^2=\frac{1}{4} \,\}$ be sphere in $\mathbb{R}^3$ called Riemann sphere so that $XY$- plane  is tangent to $S$ at origin representing the complex plane $\mathbb{C}$ and $N=(0,0,1)$ is the north pole. Any point $z \in \mathbb{C}$ has its co-ordinates as $(a,b)$ in $XY$- plane and the line joining $N$ and $z$ cuts the sphere $S$ exactly at one point say $Z$ with co-ordinates $(a_1,b_1,c_1)$. Equation of line joining $(a,b,0) $ and $(a_1,b_1,c_1)$ is 
\begin{equation}
\frac{a_1-0}{a-0}=\frac{b_1-0}{b-0} = \frac{c_1-1}{0-1} =t
\notag
\end{equation} 
which gives $$a_1=at, b_1=bt \quad \mbox{and } c_1=1-t.$$
Since $Z$ lies on $S$ therefore $(a_1,b_1,c_1)$ satisfies equation of $S$ so we get 
$$ t^2 \mid z \mid^2+ (1-t-\frac{1}{2})^2 = \frac{1}{4}$$ implying $t=0 $ or $t=\frac{1}{\mid z \mid ^2+1}.$

For $t=0$ we get north pole $N$ corresponding to $\infty$ and for $t= \frac{1}{ \mid z \mid ^2+1}$ we have $$ a_1= \frac{a}{\mid z \mid^2+1}, b_1=\frac{b}{\mid z \mid^2+1} \quad \mbox{and } c_1= \frac{\mid z \mid}{\mid z \mid^2+1}.$$

In this way corresponding to each point in extended complex plane $\mathbb{C}_\infty$ we can find exactly one point in $S.$

Let us now define distance function on the extended  plane in the following manner:

 for $z=(a,b)$, $z_1=(a\textquoteright,b\textquoteright)$ in $\mathbb{C}_\infty$ define distance between z and $z_1$ written as $ \chi(z, z_1)$, to be the Euclidean distance between the corresponding points Z and $Z_1$ in $\mathbb{R}^3$. If $Z=(a_1,b_1,c_1)$ and $Z_1 =(a_2,b_2,c_2)$ then 
$$\chi(z,z_1) = [(a_1-a_2)^2+(b_1-b_2)^2+(c_1-c_2)^2]^ \frac{1}{2}$$
which on simplification gives $$\chi(z,z_1)= \frac{\mid z-z_1 \mid}{\sqrt{1+\mid z \mid^2} \sqrt{1+\mid z_1\mid^2}}$$ 
we define $\chi(z,z_1)$ as chordal distance between $z$ and $z_1$. Clearly $\chi (z,z_1) = \chi (\frac{1}{z}, \frac{1}{z_1})$  and $ \chi (z,z_1) \leq \mid z-z_1 \mid $.

An arc or curve say $\gamma  $  in $\mathbb{C}$ under stereographic projection has spherical arc length element
\[ ds= \frac{\mid dz \mid}{1+z^2} \]
on the Riemann sphere.
\\
Thus the spherical length of $\gamma$ is given by
\[ L(\gamma)= \int_{\gamma} \frac{\mid dz \mid}{1+z^2}. \]

Now we have another metric on S define as,
\[ \sigma(z,z_1)= inf  \{\,L(\gamma) \,\} \] 
where infimum is taken over all differentiable curve $\gamma $  over S which join $z$ and $z_1$.
\\
This metric $\sigma $ is termed as \textbf{spherical metric} on S. However, these two metrics on S are equivalent as 

$$\chi(z,z_1) \leq \sigma (z,z_1) \leq \frac{\pi}{2}\chi(z,z_1).$$
Thus we can treat these two metrics as the one metric.
\hspace*{\fill} \\
\\
\textbf{Spherical Derivative}

Let $f(z)$ be meromorphic on a domain $\Omega$ if $ z \in \Omega $ is not a pole the derivative in the spherical metric, called the spherical derivative, is given by,
\begin{align*}
 f^{\sharp}(z)&= \lim_{z\textquoteright \rightarrow z } \frac{\chi (f(z),f(z\textquoteright))}{ \mid z-z\textquoteright \mid} \\
 &  = \lim_{z\textquoteright\rightarrow z } \frac{\mid f(z)-f(z\textquoteright) \mid}{\mid z -z\textquoteright \mid} \frac{1}{\sqrt{1+\mid f(z) \mid ^2}} \frac{1}{\sqrt{1+\mid f(z\textquoteright) \mid ^2}} \\
 &= \frac{\mid f\textquoteright(z) \mid}{1+ \mid f(z) \mid ^2}
\end{align*}
and if $z$ is a pole of $f$ then define
$$f^{\sharp}(z) = \lim_{w \rightarrow z } \frac{\mid f\textquoteright(w) \mid}{1+\mid f(w) \mid ^2}.$$
Thus $f^{\sharp}(z)$ is continuous. Also $f^{\sharp}(z)=\left( \frac{1}{f(z)} \right)^{\sharp}$ is easy to verify.

Let $\gamma$ be a differentiable arc or curve in $\Omega$ then the image of $ \gamma$ on the Riemann sphere has arc length element \[ ds= f^{\sharp}(z) \mid dz \mid.
\]
Spherical length of $\gamma$ is given by
\[ \int_{\gamma} f^{\sharp}(z) \, \mid dz \mid. \]
In particular, let $f(z)$ be meromorphic in $\mid z \mid \leq r $ denote 
\[ L(r)=\int_{\mid z \mid =r} f^{\sharp}(z) \, \mid dz \mid = \int_{0}^{2\pi} \frac{\mid f\textquoteright(re^{\iota \theta}) \mid r}{1+ \mid f(re^{\iota \theta})\mid ^2 } \, d \theta \]
is the length of the image of the circle $\mid z \mid = r $ on the Riemann sphere.
\section{Types of Convergence}

\quad Depending on different types of metrics on $\mathbb{C}$ convergence of a family of functions is distinguished here.
\begin{defn} 
\textbf{Uniform Convergence}

A sequence of function ${f_n}$ converges uniformly to $f$ on a set $\Omega \subseteq \mathbb{C}$ if for $\epsilon > 0$ there exist a number $n_0$ such that 
$$\mid f_n(z)- f(z) \mid < \epsilon, $$ for all $n > n_0 \in \mathbb{N} $ and for all $z \in \Omega. $
\end{defn} 
\begin{defn}
\textbf{Spherical uniform Convergence} 

A sequence of functions ${f_n}$ converges spherically uniformly to $f$ on a set $\Omega \subseteq \mathbb{C}$ if for $\epsilon>0$ there exist a number $n_0 \in \mathbb{N}$ such that 
$$\chi (f_n (z), f(z)) < \epsilon, $$ for all $ n > n_0 $ and for all $z \in \Omega$.
\end{defn}
\begin{note}
In spherical uniform convergence we use chordal metric instead of spherical metric as these two metrics are equivalent.

Also note that if ${f_n}$ converges uniformly to $f$ on $\Omega \subseteq \mathbb{C}$ then it converges spherically uniformly to $f$ on $\Omega$  but not conversely. 
\end{note}
\begin{lem} (see ~\cite{sch}) \label{f bdd}
Suppose that ${f_n}$ be a sequence of function on $\Omega$ and $f_n \rightarrow f $ spherically uniformly on $\Omega$ then $f_n \rightarrow f $ uniformly if $f$ is bounded on $\Omega$.
\end{lem}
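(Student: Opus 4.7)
The plan is to unpack the definition of the chordal distance $\chi$ and exploit the identity
\[
|f_n(z)-f(z)| \;=\; \chi(f_n(z),f(z))\,\sqrt{1+|f_n(z)|^2}\,\sqrt{1+|f(z)|^2},
\]
which comes directly from the formula for $\chi$ derived earlier. If I can show that the two square-root factors on the right are uniformly bounded in $z\in\Omega$ (once $n$ is large enough), then the right-hand side is at most a constant multiple of $\chi(f_n(z),f(z))$, and spherical uniform convergence will immediately upgrade to ordinary uniform convergence.

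The first (and main) step is to promote the hypothesis that $f$ is bounded on $\Omega$ into an eventual uniform bound on the $f_n$. Suppose $|f(z)|\le M$ for all $z\in\Omega$. Using $\chi(w,\infty)=1/\sqrt{1+|w|^2}$, this gives $\chi(f(z),\infty)\ge 1/\sqrt{1+M^2}$ uniformly on $\Omega$. I apply the triangle inequality for $\chi$ (which is a metric on $\mathbb{C}_\infty$):
\[
\chi(f_n(z),\infty) \;\ge\; \chi(f(z),\infty)-\chi(f_n(z),f(z)).
\]
Pick $\varepsilon_0:=\tfrac{1}{2\sqrt{1+M^2}}$. By spherical uniform convergence there is an $n_0$ such that $\chi(f_n(z),f(z))<\varepsilon_0$ for all $n\ge n_0$ and all $z\in\Omega$. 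Then $\chi(f_n(z),\infty)\ge \tfrac{1}{2\sqrt{1+M^2}}$, which rearranges to $\sqrt{1+|f_n(z)|^2}\le 2\sqrt{1+M^2}$, i.e.\ $|f_n(z)|\le M':=\sqrt{3+4M^2}$ uniformly in $z$ for every $n\ge n_0$.

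The second step is now routine. For $n\ge n_0$,
\[
|f_n(z)-f(z)| \;\le\; \chi(f_n(z),f(z))\cdot 2\sqrt{1+M^2}\cdot\sqrt{1+M^2} \;=\; 2(1+M^2)\,\chi(f_n(z),f(z)),
\]
so given any $\varepsilon>0$, choosing $n$ so large that $\chi(f_n(z),f(z))<\min\{\varepsilon_0,\varepsilon/(2(1+M^2))\}$ uniformly on $\Omega$ yields $|f_n(z)-f(z)|<\varepsilon$ uniformly on $\Omega$, which is exactly the uniform convergence required.

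The one genuine obstacle is the first step: without the bound on $f$ the $f_n$ could drift toward $\infty$ in a way invisible to $\chi$ (e.g.\ $f_n(z)=n$ converges spherically to $\infty$ but blows up in modulus), and this is precisely why the hypothesis ``$f$ is bounded'' is needed. The triangle inequality argument above is exactly the mechanism that rules this out when the spherical limit is finite and bounded.
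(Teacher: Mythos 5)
Your proof is correct and complete. The paper itself offers no proof of this lemma (it is stated with a reference to Schiff), but your argument is exactly the standard one: use the triangle inequality for $\chi$ together with $\chi(w,\infty)=1/\sqrt{1+|w|^2}$ to convert the bound $|f|\le M$ into an eventual uniform bound $|f_n|\le\sqrt{3+4M^2}$ (which in particular forces the $f_n$ to be finite-valued for large $n$), and then read off uniform convergence from the identity $|f_n-f|=\chi(f_n,f)\sqrt{1+|f_n|^2}\sqrt{1+|f|^2}$. The constants check out, and your closing remark correctly identifies why the boundedness hypothesis is essential.
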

\begin{defn} 
A sequence of function ${f_n}$ converges \textbf{ uniformly} (\textbf {spherically uniformly}) on compact subsets of a domain $\Omega$ to a function $f(z)$ if for any compact subset $ K \subseteq 
\Omega$ and $\epsilon > 0$ there exist a number $n_0 \in \mathbb{N}$ and $n_0 = n_0 (K, \epsilon)$ such that
$$\mid f_n(z)-f(z) \mid < \epsilon \quad ( \chi (f_n(z),f(z)) < \epsilon ),
$$ 
for all $n > n_0 $  and for all $z \in K $. 
\end{defn}
\begin{defn} \label{ch3,sec2,def4}
\textbf{Normal Convergence}

A sequence of function ${f_n}$ converges uniformly or spherically uniformly on compact subsets of a domain $\Omega$, then we say that the sequence converges normally in $\Omega$. 
\end{defn}
\begin{thm} \label{weierstrass}
\textbf{Weierstrass Theorem}

Let ${f_n}$ be a sequence of analytic functions on a domain $\Omega$ which converges uniformly on compact subsets of $\Omega$ to a function $f$ then $f$ is analytic in $\Omega$ and sequence of derivatives ${f^{(k)}_n}$ converges uniformly on compact subsets of $\Omega$ to $f^{(k)}$; $k=1,2,3,\ldots $.
\end{thm}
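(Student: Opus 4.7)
The plan is to first establish analyticity of $f$ via Morera's theorem, then use Cauchy's integral formula for derivatives to push uniform convergence from $f_n$ to $f_n^{(k)}$.

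First I would show $f$ is continuous on $\Omega$: since each $f_n$ is continuous and $f_n\to f$ uniformly on every compact subset of $\Omega$, the limit is continuous on each such compact set, hence on $\Omega$. Next, to prove analyticity, pick any closed triangle $T$ lying in $\Omega$. Since each $f_n$ is analytic, Cauchy's theorem gives $\int_{\partial T} f_n(z)\,dz = 0$. Because $\partial T$ is compact and $f_n \to f$ uniformly on $\partial T$, I can interchange limit and integral to conclude $\int_{\partial T} f(z)\,dz = \lim_{n\to\infty}\int_{\partial T} f_n(z)\,dz = 0$. By Morera's theorem, $f$ is analytic in $\Omega$.

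For the convergence of derivatives, I would argue locally and then patch over compact sets. Fix $z_0 \in \Omega$ and choose $r>0$ with $\overline{D(z_0,2r)} \subset \Omega$. For any $z \in \overline{D(z_0,r)}$ and each $n$, Cauchy's integral formula for the $k$th derivative yields
\[
f_n^{(k)}(z) - f^{(k)}(z) = \frac{k!}{2\pi\iota}\oint_{|w-z_0|=2r} \frac{f_n(w)-f(w)}{(w-z)^{k+1}}\,dw.
\]
On the circle $|w-z_0|=2r$ we have $|w-z|\geq r$, so standard $ML$ estimation gives
\[
\bigl|f_n^{(k)}(z) - f^{(k)}(z)\bigr| \leq \frac{k!\cdot 2r}{r^{k+1}}\,\sup_{|w-z_0|=2r}|f_n(w)-f(w)|,
\]
uniformly in $z \in \overline{D(z_0,r)}$. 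Since $\{|w-z_0|=2r\}$ is compact in $\Omega$, the supremum on the right tends to $0$, so $f_n^{(k)} \to f^{(k)}$ uniformly on $\overline{D(z_0,r)}$.

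Finally, given any compact $K \subset \Omega$, cover $K$ by finitely many such closed discs $\overline{D(z_j,r_j)}$, on each of which we have uniform convergence of $f_n^{(k)}$ to $f^{(k)}$; taking the max of the finitely many estimates gives uniform convergence on $K$. I do not expect a serious obstacle here: the only care needed is the choice of the enlarged disc $D(z_0,2r)$ sitting inside $\Omega$ so that Cauchy's formula applies and a positive lower bound $|w-z|\geq r$ is available — a straightforward consequence of $\Omega$ being open.
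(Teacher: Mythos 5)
Your proof is correct. The derivative part is essentially the paper's argument: both express the difference of $k$th derivatives as a Cauchy integral of $f_n-f$ over a circle, use the lower bound $\mid w-z\mid \geq$ (a fixed positive radius) for $z$ in a smaller concentric disc, apply the $ML$ estimate, and finish with a finite subcover of an arbitrary compact set. Where you differ is in establishing analyticity of $f$: you invoke Morera's theorem (vanishing of $\int_{\partial T}f$ over triangles, obtained by passing the uniform limit through the integral), whereas the paper never uses Morera. Instead it defines candidate limit functions $F_k(z)=\frac{k!}{2\pi\iota}\int_{C_r}\frac{f(w)}{(w-z)^{k+1}}\,dw$ directly from the (merely continuous) limit $f$, shows $f_n^{(k)}\to F_k$ uniformly for every $k\geq 0$, and then identifies $F_0=f$ from the $k=0$ case; since $F_0$ is given by a Cauchy-type integral it is differentiable, so $f$ is analytic as a byproduct. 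The paper's route handles analyticity and derivative convergence in one uniform computation and avoids needing Cauchy's formula for $f^{(k)}$ itself; your route is more modular and uses only the standard, already-justified Cauchy formula for the analytic function $f$, at the cost of a separate Morera step. Both are complete and standard.
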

\begin{proof}
For any arbitrary $ z_0 \in \Omega $, choose a disk $ D(z_0,r)$  in $ \Omega $ and
\\
$ C_r = \{\, z ,  \mid z_0 - z \mid = r \, \} $. By the hypothesis we have that for given $\epsilon > 0 $ there exist $n_0 \in \mathbb{N}$ such that if $n \geq n_0 $ then
$$\mid f_n(w) - f(w) \mid < \epsilon \quad \mbox{for all }  w \in C_r. $$
Define
$$F_k(z)=\frac{k!}{2 \pi \iota} \int_{C_r}\frac{f(w)}{(w-z)^{k+1}} \, dw,  k= 1,2,3,\ldots$$
for $z \in d(z_0, \frac{r}{2})$. By Cauchy Integral formula
$$f_n^{(k)}(z)=\frac{k!}{2 \pi \iota} \int_{C_r} \frac{f_n(w)}{(w-z)^{k+1}} \, dw, k=1,2,3, \ldots. $$ 
Then
\begin{align*}
\mid F_k(z)-f_n^{(k)}(z) \mid & \leq \frac{k!}{2 \pi} \int_{C_r} \frac{\mid f(w)-f_n(w) \mid}{\mid w-z \mid ^{k+1}} \, \mid dw \mid  \\
& <  \frac{k!  \epsilon}{2 \pi} \int_{C_r} \frac{1}{\mid w -z \mid ^{k+1}} \, \mid dw \mid \\  
&= \frac{k!}{2\pi} \int_{C_r}\frac{1}{\mid w-z_0-(z-z_0)\mid^{k+1}} \, \mid dw \mid  \\
&\leq \frac{k!  \epsilon}{2 \pi}\int_{C_r} \frac{1}{(\mid w-z_0 \mid -\mid z-z_0 \mid)^{k+1}} \, \mid d w\mid.
\end{align*}
Since $w-z_0 = re^{\iota \theta}$, therefore $dw-dz_0=r \iota e^{\iota \theta}.$
Also $dz_0=0 $ and by taking absolute value on both side we get
\\
$\mid dw \mid = r $ 
\\
$\mid w-z_0 \mid = r $ and $\mid z-z_0 \mid < \frac{r}{2}$
\\
$\mid w-z_0 \mid - \mid z-z_0 \mid < r-\frac{r}{2}=\frac{r}{2}.$
\\
Thus
$$\mid F_k(z)-f_n^{(k)}(z) \mid \leq \frac{k! \epsilon r 2^{k+1}}{r^{k+1}} =\frac{k! \epsilon 2^{k+1}}{r^k}, $$  for $n \geq n_0, \quad \mbox{forall } z \in D(z_0,\frac{r}{2})$ that means $$f_n^{(k)} (z) \rightarrow F_k (z)$$ uniformly on $D(z_0, \frac{r}{2})$ for $k=0,1,2,3, \ldots$.
\\
For $k=0$
$$f_n(z)\rightarrow F_0 (z)$$ uniformly on $D(z_0, \frac{r}{2})$
and $$ f_n (z) \rightarrow f(z) $$ uniformly on $ D(z_0, \frac{r}{2})$ which gives $ f(z)=F_0(z) $ on $D(z_0, \frac{r}{2})$ and $F_0(z)$ is analytic on  $D(z_0, \frac{r}{2})$ as  $F_0(z)$ is differentiable on $D(z_0, \frac{r}{2})$.

Thus $f(z)$ is analytic on $D(z_0, \frac{r}{2})$ and by compactness argument we can generalize our result to whole $\Omega$.
\end{proof}
\begin{thm} \label{rouche}
\textbf{Rouche\textquoteright s Theorem} (see ~\cite{con})

Let $f \mbox{and} g$ meromorphic in a neighbourhood of $\overline D(z_0,r) $ with no zeros or poles on the circle $ \gamma = \{\, z, \mid z-z_0 \mid = r \, \}$. if $ Z_f,Z_g $ (and $P_f, P_g $) are number of zeros (and poles) of $f \mbox{and} g$ inside $\gamma$ counted according to their multiplicities and if $$\mid f(z)+g(z) \mid < \mid f(z) \mid $$ on $\gamma$, then
\begin{center}
$Z_f-P_f = Z_g-P_g$.
\end{center} 
\end{thm}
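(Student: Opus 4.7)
The plan is to prove Rouché's theorem via the argument principle applied to the quotient $h = g/f$. Recall that for a meromorphic function $h$ holomorphic and non-vanishing on $\gamma$, the argument principle (which is itself a direct consequence of the Jensen-type reasoning already developed in Chapter 2) states that $\frac{1}{2\pi\iota}\int_\gamma \frac{h'(z)}{h(z)}\,dz = Z_h - P_h$, where the zeros and poles are counted with multiplicity in the interior of $\gamma$. So my goal is to show that the logarithmic integrals of $f$ and $g$ around $\gamma$ agree, and then read off the conclusion $Z_f - P_f = Z_g - P_g$.

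First, I would observe that because $f$ and $g$ have neither zeros nor poles on $\gamma$, the quotient $h(z) = g(z)/f(z)$ is holomorphic and non-vanishing in a neighbourhood of $\gamma$. The hypothesis $|f(z)+g(z)| < |f(z)|$ on $\gamma$ rewrites, upon dividing by $|f(z)|$, as
\[
\bigl|1 + h(z)\bigr| < 1 \quad \text{for every } z \in \gamma.
\]
Hence $h(\gamma)$ is contained in the open disc $D(-1,1)$ in the $w$-plane, and in particular avoids the non-negative real axis $[0,\infty)$.

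The next step is to exploit this to show $\int_\gamma h'/h\,dz = 0$. Since $D(-1,1)$ is simply connected and omits $0$, the principal branch of $\log w$ (say, cut along $[0,\infty)$) is single-valued and holomorphic on $D(-1,1)$. Therefore $L(z) := \log h(z)$ is a single-valued holomorphic primitive of $h'/h$ in a neighbourhood of $\gamma$, and consequently
\[
\int_\gamma \frac{h'(z)}{h(z)}\,dz = \int_\gamma dL(z) = 0,
\]
because $\gamma$ is a closed curve and $L$ is single-valued.

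Finally, since $h = g/f$, the routine identity $h'/h = g'/g - f'/f$ gives
\[
0 = \int_\gamma \frac{h'(z)}{h(z)}\,dz = \int_\gamma \frac{g'(z)}{g(z)}\,dz - \int_\gamma \frac{f'(z)}{f(z)}\,dz,
\]
and applying the argument principle to both integrals yields $Z_g - P_g = Z_f - P_f$. The only real subtlety in the argument is the logarithm step: one must check that the image $h(\gamma)$ lies inside a simply connected region omitting $0$, which is precisely what the hypothesis $|1+h|<1$ guarantees. Everything else is a direct bookkeeping computation using results already available.
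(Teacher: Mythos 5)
Your proof is correct. The paper itself states Rouch\'e's theorem without proof, merely citing Conway, so there is no in-paper argument to compare against; your route --- passing to $h = g/f$, noting that the hypothesis $\mid 1+h\mid <1$ on $\gamma$ places $h(\gamma)$ inside the disc $D(-1,1)$, every point of which has negative real part, so that $0$ is omitted and a single-valued branch of $\log h$ serves as a primitive of $h'/h$ near $\gamma$, hence $\int_\gamma h'/h\,dz=0$, and then splitting $h'/h = g'/g - f'/f$ and applying the argument principle to $f$ and $g$ separately --- is the standard textbook proof, and every step checks out.
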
 
\begin{thm} \label{hurwitz}
\textbf{Hurwitz Theorem} 

Let ${f_n}$ be a sequence of analytic functions on a domain $\Omega$ which converges uniformly on compact subsets of $\Omega$ to a non-constant analytic function $f(z)$. If $f(z_0) = 0$ for some $z_0 \in \Omega $, then for each $r>0$  sufficiently small, there exist an $N=N(r)$, such that for all $ n > N $, $f_n(z)$ has the same number of zeros in $D(z_0, r)$ as does $f(z)$ (counting according to multiplicities).   
\end{thm}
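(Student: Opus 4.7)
The plan is to reduce Hurwitz's theorem to a direct application of Rouch\'{e}'s Theorem (Theorem \ref{rouche}), which is conveniently available just above in the excerpt. The key observation is that because $f$ is a non-constant analytic function with $f(z_0)=0$, the zero $z_0$ must be isolated, so on some small punctured disk $f$ does not vanish. This will let us obtain a strict positive lower bound for $|f|$ on a suitable circle, and the uniform convergence $f_n \to f$ will then force $|f_n - f| < |f|$ on that circle for all large $n$.

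In more detail, I would first fix $r_0 > 0$ small enough that $\overline{D(z_0,r_0)} \subset \Omega$ and $f(z) \neq 0$ for all $z \in \overline{D(z_0,r_0)} \setminus \{z_0\}$. Such an $r_0$ exists since the zeros of a non-constant analytic function are isolated. For any $0 < r \leq r_0$, the circle $\gamma_r = \{z : |z-z_0| = r\}$ is a compact set on which $|f|$ is continuous and strictly positive, so there exists $\delta > 0$ with $|f(z)| \geq \delta$ for all $z \in \gamma_r$. By the hypothesis that $f_n \to f$ uniformly on compact subsets of $\Omega$, I can choose $N = N(r) \in \mathbb{N}$ such that for all $n > N$,
\[
|f_n(z) - f(z)| < \delta \leq |f(z)| \quad \text{for all } z \in \gamma_r.
\]
In particular $f_n$ has no zeros on $\gamma_r$ (since $|f_n| \geq |f| - |f_n - f| > 0$ there), and of course $f$ has no zeros on $\gamma_r$ either by construction.

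Next I apply Rouch\'{e}'s Theorem with the pair $f$ and $g := -f_n$ on the disk $D(z_0, r)$. Both $f$ and $g$ are analytic (hence have no poles), so $P_f = P_g = 0$. The Rouch\'{e} hypothesis $|f(z) + g(z)| = |f(z) - f_n(z)| < |f(z)|$ holds on $\gamma_r$ by the choice of $N$. Therefore Rouch\'{e}'s theorem yields $Z_f = Z_g = Z_{f_n}$ inside $\gamma_r$, counted with multiplicities, which is exactly what Hurwitz's theorem claims.

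The only mildly delicate point, and what I would expect to be the main source of care rather than a true obstacle, is ensuring that all zeros are counted \emph{with multiplicity} in Rouch\'{e}'s theorem and that we have indeed shrunk $r$ enough so that the only zero of $f$ in $D(z_0,r)$ is $z_0$ itself (contributing its full multiplicity to $Z_f$). Once these set-up details are arranged, the application of Rouch\'{e} is immediate and the uniform convergence does the rest; no deeper machinery is needed.
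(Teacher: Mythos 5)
Your proof is correct and follows essentially the same route as the paper: isolate the zero $z_0$, bound $|f|$ below on the circle $|z-z_0|=r$, use uniform convergence to get $|f_n-f|<|f|$ there, and apply Rouch\'{e}'s Theorem (with $g=-f_n$, matching the paper's sign convention) to equate the zero counts. No substantive differences.
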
 
\begin{proof}
Let $z_0 \in \Omega$ be arbitrary and $D(z_0,r)$ be a disk so small in $\Omega$ so that $f(z)$ does not vanish in $\{\, z , \mid z-z_0 \mid \leq r \, \} = K(z_0,r)$, except at $z_0$.
\\
Then for some $m>0$, $ \mid f(z) \mid > m $ on $\mid z-z_0 \mid = r $ by the continuity of $f$. As $f_n \rightarrow f $ uniformly on $\mid z-z_0 \mid = r $ we get
$$ \mid f_n(z)-f(z) \mid < m < \mid f(z) \mid,$$  forall $z$ such that $\mid z-z_0 \mid = r $ for sufficiently large $n$.

Thus by Rouche\textquoteright s theorem \ref{rouche}, we get that for sufficiently large $n$ (or can say that there is an $N$ such that for $n>N$) , $f_n(z)$ has same number of zeros in $D(z_0,r)$ as does $f(z)$ counting according to multiplicities.  
\end{proof} 
\begin{defn}
\textbf{Locally Boundedness}

A family of functions $\mathcal{F}$ is locally bounded on a domain $\Omega$ if for each $z_0 \in \Omega$, there is a positive number $M=M(z_0)$ and a neighbourhood $D(z_0,r) \subseteq \Omega $  such that $$\mid f(z) \mid \leq M, $$  for all  $z \in D(z_0, r)$ and  for all $f \in \mathcal{F} $. 

Or we can say in other words that $\mathcal{F}$ is uniformly bounded in a neighbourhood of each point of $\Omega$ therefore we can termed this as local uniform boundedness.

Since K compact therefore it has an finite open cover of such disk, it follows that a locally bounded family $\mathcal{F}$ is uniformly bounded on compact subsets of $\Omega$.
\end{defn}
\begin{lem} \label{f`lcb}
If $\mathcal{F}$ is family of locally bounded analytic functions on a domain $\Omega$, then the family of derivatives $\mathcal{F\textquoteright} = \left\lbrace f\textquoteright, f \in \mathbb{F} \right\rbrace$ form a locally bounded family in $ \Omega.$
\end{lem}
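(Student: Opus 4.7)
The plan is to establish local boundedness of $\mathcal{F}'$ point by point, using Cauchy's integral formula for the derivative to transfer a bound on $f$ over a circle to a bound on $f'$ in a smaller disk. This is the standard technique, and the same Cauchy kernel estimate that powered the proof of Weierstrass's theorem (Theorem \ref{weierstrass}) is what I would use here.

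Fix an arbitrary $z_0 \in \Omega$. By the hypothesis that $\mathcal{F}$ is locally bounded, there exist $r > 0$ and $M = M(z_0) > 0$ such that $\overline{D(z_0, r)} \subseteq \Omega$ and $|f(w)| \leq M$ for every $w \in \overline{D(z_0, r)}$ and every $f \in \mathcal{F}$. (If the definition only gives a bound on an open neighborhood $D(z_0, r')$, I would first shrink by taking $r < r'$ so that the closed disk sits inside.) Next, I would restrict attention to the smaller disk $D(z_0, r/2)$; the purpose of shrinking is to keep $|w - z|$ bounded below by $r/2$ whenever $|w - z_0| = r$ and $z \in D(z_0, r/2)$, which is the key geometric fact enabling the estimate.

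Now I would apply Cauchy's integral formula for the derivative: for every $f \in \mathcal{F}$ and every $z \in D(z_0, r/2)$,
\[
f'(z) = \frac{1}{2\pi \iota} \int_{|w - z_0| = r} \frac{f(w)}{(w - z)^2}\,dw.
\]
Taking absolute values and using $|f(w)| \leq M$ on the contour together with $|w - z| \geq |w - z_0| - |z - z_0| > r - r/2 = r/2$, I get
\[
|f'(z)| \leq \frac{1}{2\pi} \cdot \frac{M}{(r/2)^2} \cdot 2\pi r = \frac{4M}{r}.
\]
Since the right-hand side depends only on $z_0$ and not on the particular $f \in \mathcal{F}$, this produces the required neighborhood $D(z_0, r/2)$ of $z_0$ on which $\mathcal{F}'$ is uniformly bounded by $M'(z_0) := 4M/r$.

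There is no real obstacle; the only mild care point is to make sure the disk over which the Cauchy bound is integrated lies inside $\Omega$ and simultaneously inside the neighborhood furnished by local boundedness, and to evaluate $f'$ only on a strictly smaller concentric disk so that $(w-z)^2$ in the denominator stays bounded away from zero. Since $z_0 \in \Omega$ was arbitrary, this verifies the definition of local boundedness for $\mathcal{F}'$, completing the proof.
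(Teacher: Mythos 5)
Your proof is correct and follows essentially the same route as the paper: both apply Cauchy's integral formula for the derivative over the circle $\mid w - z_0 \mid = r$, evaluate at points of the half-radius disk $D(z_0, r/2)$, and arrive at the same uniform bound $4M/r$. No gaps; your extra care about keeping the closed disk inside $\Omega$ is a sensible refinement of the paper's argument.
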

\begin{proof}
Since $\mathcal{F}$ is locally bounded therefore for any $z_0 \in \Omega$ there is a closed neighbourhood $K(z_0,r) = \left\lbrace z, \mid z-z_0 \mid \leq r \right\rbrace \subset \Omega$ and a constant $M=M(z_0)$ such that $\mid f(z) \mid \leq M, z \in K(z_0,r)$. Then for $z \in D(z_0,\frac{r}{2})$ and $w \in C_r=\left\lbrace z, \mid z-z_0 \mid =r \right\rbrace$, the Cauchy formula gives 
$$ \mid f\textquoteright (z) \mid \leq \frac{1}{2\pi} \int_{C_r} \frac{\mid f(w)\mid}{\mid w-z \mid ^2} \mid dw \mid < \frac{4M}{r},$$
for all $f\textquoteright \in \mathcal{F\textquoteright}$, so that $\mathcal{F\textquoteright}$ is locally bounded.
\end{proof}
Converse of above lemma is not true for this consider family $\mathcal{F} =\left\lbrace n: n=1,2, \ldots \right\rbrace$ which is not locally bounded whereas $\mathcal{F}\textquoteright= \left\lbrace \textbf{O} \right\rbrace$ is so. However a partial converse of above lemma does hold.
\begin{lem} \label{con f`lcb}
Let $\mathcal{F}$ be a family of analytic functions on $\Omega$ such that the family of derivatives $\mathcal{F\textquoteright}$ is locally bounded and suppose there is some $z_0 \in \Omega$ with $\mid f(z_0) \mid\leq M < \infty$ for all $f \in \mathcal{F}$. Then $\mathcal{F}$ is locally bounded.
\end{lem}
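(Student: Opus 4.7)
The goal is to upgrade the pointwise bound at $z_0$ to a local uniform bound at every point of $\Omega$, using the local boundedness of $\mathcal{F}'$ as the transport mechanism. The natural tool is the fundamental theorem of calculus: for any path $\gamma$ in $\Omega$ from $z_0$ to $z$,
\[
f(z) = f(z_0) + \int_{\gamma} f'(w)\,dw,
\]
which gives $|f(z)| \leq |f(z_0)| + (\sup_{\gamma}|f'|)\cdot L(\gamma)$. If $|f'|$ is bounded uniformly in $f \in \mathcal{F}$ along a neighborhood of $\gamma$, then so is $|f(z)|$. Since $\Omega$ is connected, every point can be reached from $z_0$ by such a path.

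The plan is to carry this out via a standard connectedness argument. Let
\[
U = \{\, w \in \Omega : \mathcal{F} \text{ is locally bounded at } w\,\}.
\]
By definition $U$ is open in $\Omega$. I would show $U$ is nonempty (it contains $z_0$) and closed in $\Omega$, so by connectedness $U = \Omega$, proving the lemma.

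For nonemptiness at $z_0$: using the hypothesis that $\mathcal{F}'$ is locally bounded, choose $r > 0$ and $K > 0$ with $\overline{D(z_0,r)} \subset \Omega$ and $|f'(w)| \leq K$ for all $w \in D(z_0,r)$ and all $f \in \mathcal{F}$. For any $z \in D(z_0,r)$, integrating $f'$ along the segment $[z_0,z]$ yields $|f(z)| \leq |f(z_0)| + Kr \leq M + Kr$, uniformly in $f$. So $z_0 \in U$.

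For closedness of $U$ in $\Omega$: let $w$ be a limit point of $U$ lying in $\Omega$. Using local boundedness of $\mathcal{F}'$, pick $r > 0$ and $K > 0$ with $\overline{D(w,2r)} \subset \Omega$ and $|f'| \leq K$ on $D(w,2r)$ for every $f \in \mathcal{F}$. Since $w$ is a limit point of $U$, choose some $w_0 \in U \cap D(w,r)$; by definition there exist a neighborhood $V$ of $w_0$ and a constant $M_0$ with $|f(w_0)| \leq M_0$ for all $f \in \mathcal{F}$ (in fact bounded on $V$, but the pointwise bound at $w_0$ suffices). Then for any $z \in D(w,r)$, the segment $[w_0,z]$ lies in $D(w,2r)$ and has length at most $|z-w| + |w-w_0| < 2r$, so
\[
|f(z)| \leq |f(w_0)| + K \cdot |z-w_0| \leq M_0 + 2Kr,
\]
uniformly in $f \in \mathcal{F}$. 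Hence $\mathcal{F}$ is locally bounded at $w$, giving $w \in U$.

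The only mildly delicate point is the closedness step, where one must be careful to choose the disk around $w$ first (to fix a uniform bound on $\mathcal{F}'$) and only then pick a nearby $w_0 \in U$ inside that disk; the ordering is what lets the single bound $K$ control $f'$ along every segment $[w_0, z]$ with $z$ ranging over the full neighborhood $D(w,r)$ of $w$. Once this is arranged, the estimate is routine.
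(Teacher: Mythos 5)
Your proof is correct and rests on the same key estimate as the paper's: $\mid f(z) \mid \leq \mid f(z_0) \mid + \int_{\gamma} \mid f\textquoteright(w) \mid \mid dw \mid$ together with the local bound on $\mathcal{F}\textquoteright$. The paper simply takes a path $C$ from $z_0$ to $z$ and bounds $\mid f\textquoteright \mid$ along it by one constant $M_1$ (implicitly using compactness of $C$ to merge the local bounds), whereas you package the same idea as an open--closed connectedness argument; your version is a bit more careful about how the bound propagates from $z_0$ to an arbitrary point, but the substance is identical.
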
 
\begin{proof}
Since $\mathcal{F\textquoteright}$ is locally bounded therefore there is bound for neighbourhood of each point. Let $z\in \Omega $ and $C \subset \Omega$ be a path from $z_0$ to $z$ then $$ \mid f(z) \mid \leq \mid f(z_0) \mid + \int_C \mid f\textquoteright(w) \mid \mid dw \mid \leq M+M_1. l(C),$$
$f\in \mathcal{F}$, where $M_1$ is bound for $\mathcal{F\textquoteright}$ in neighbourhood of each point on $C$. So $\mathcal{F}$ is locally bounded in $\Omega.$  
\end{proof}
\begin{defn}
\textbf{Spherical Continuous}

A function $f$ is spherical continuous on $z_0 \in \mathbb{C}$ if for every $\epsilon >0 $ there is $\delta > 0$ such that
$$\chi (f(z),f(z_0)) < \epsilon, $$ for $\mid z-z_0 \mid < \delta $.
\end{defn}
\begin{defn}
A family of functions $\mathcal{F}$ on $\Omega$ is said to be \textbf{equicontinuous} (\textbf{spherically equicontinuous}) at a point $z_0 \in \Omega$ if for all $\epsilon>0$ there is a $\delta > 0$ and $\delta=\delta (\epsilon , z_0)$ such that 
$$ \mid f(z),f(z_0) \mid < \epsilon \quad (\chi (f(z),f(z_0)) < \epsilon ),$$ whenever $\mid z-z_0 \mid < \delta \quad \mbox {for every } f \in \mathcal{F}$.
\end{defn}
\begin{lem} \label{f sph cts}
If $f$ is meromorphic on $\Omega \subseteq \mathbb{C}$
then $f$ is spherical continuous.
\end{lem}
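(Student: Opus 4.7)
The plan is to split into two cases according to whether $z_0$ is a pole of $f$ or not, and in each case reduce spherical continuity to ordinary continuity of an analytic function at $z_0$, exploiting the basic inequality $\chi(w,w') \leq |w - w'|$ (which was noted right after the definition of $\chi$) together with the inversion invariance $\chi(w,w') = \chi(1/w, 1/w')$.

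First, if $z_0 \in \Omega$ is not a pole, then $f$ is analytic, hence continuous, at $z_0$ in the Euclidean sense. Given $\epsilon > 0$, ordinary continuity gives $\delta > 0$ with $|f(z) - f(z_0)| < \epsilon$ whenever $|z - z_0| < \delta$. Since $\chi(f(z), f(z_0)) \leq |f(z) - f(z_0)|$, this immediately yields $\chi(f(z), f(z_0)) < \epsilon$, which is spherical continuity at $z_0$.

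Second, if $z_0$ is a pole of $f$, then $f(z_0) = \infty$, so $1/f$ has a zero at $z_0$ and extends analytically there with value $0$; in particular $1/f$ is Euclidean-continuous at $z_0$. Set $g = 1/f$, so $g(z_0) = 0$, and for any $\epsilon > 0$ pick $\delta > 0$ with $|g(z)| < \epsilon$ whenever $|z - z_0| < \delta$ (shrinking $\delta$ so the disk avoids other poles of $f$, which is possible since poles are isolated). Then using $\chi(f(z), f(z_0)) = \chi(1/f(z), 1/f(z_0)) = \chi(g(z), 0)$ and $\chi(g(z), 0) \leq |g(z) - 0| = |g(z)| < \epsilon$, we conclude spherical continuity at $z_0$.

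The only mild subtlety is the pole case, where one must invoke the inversion symmetry $\chi(w,w') = \chi(1/w, 1/w')$ to compare $f(z)$ with the point at infinity; the rest is a direct application of the bound $\chi \leq |\cdot|$ on the Euclidean distance. Since $z_0 \in \Omega$ was arbitrary, $f$ is spherically continuous on $\Omega$.
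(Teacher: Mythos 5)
Your proof is correct and follows essentially the same route as the paper: at a non-pole you bound $\chi(f(z),f(z_0))$ by the Euclidean distance and invoke ordinary continuity, and at a pole you pass to $1/f$ via the inversion invariance $\chi(w,w')=\chi(1/w,1/w')$ and repeat the argument. The only difference is that you spell out the reduction at a pole a bit more explicitly than the paper does.
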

\begin{proof}
For $z_0 \in \Omega $ if $f$ is analytic at $z_0 \in \Omega $ so $f$ is continuous at $z_0$ therefore for every $\epsilon > 0$ there is $\delta > 0 $ such that
$$\chi(f(z),f(z_0)) \leq \mid f(z)-f(z_0) \mid < \epsilon,$$  for  $\mid z-z_0 \mid < \delta $. Thus $f$ is spherical continuous on $z_0 \in \Omega $.

If $z_0$ is a pole of $f$ in $\Omega$ then $\frac{1}{f}$ is analytic at $z_0$ and as done above since $$\chi(f(z),f(z_0)) =\chi\left(\frac{1}{f(z)}, \frac{1}{f(z_0)}\right) $$ we get that $f$ is spherical continuous at $z_0$.
\end{proof}
\section{Normality of Analytic (Meromorphic) Functions}
\begin{defn} 
\textbf{Normal Family of analytic function}

A family of analytic functions $\mathcal{F}$ on a domain $\Omega \subseteq \mathbb{C}$ is said to be normal if every sequence of functions $ \{\, f_n \,\} \subseteq \mathcal{F}$ contains either a subsequence which converges to a limit function $ f,  \mbox{which is not identically equal to}$  $\infty $, uniformly on each compact subsets  of $\Omega$ or a subsequence which converges uniformly to $\infty$ on each compact subsets of $\Omega$.

By Weierstrass theorem \ref{weierstrass} $f$ is analytic function and for the second case we have that for any compact subset $K$ of $\Omega$ and constant $M>0$ $$ \mid f_n(z)\mid > M, $$ for all $z \in K $ and for sufficiently large $n$.
\end{defn} 
\begin{defn} 
\textbf{Normal Family of meromorphic function}

A family $\mathcal{F}$ of meromorphic functions in a domain $ \Omega$ is normal in $\Omega$ if every sequence in $\mathcal{F}$ has a subsequence converging to $f$ spherically uniformly on compact subsets of $\Omega$. 
\end{defn}
\begin{lem} \label{f uni sph cts}
Let $ \{\, f_n \,\}$ be a sequence of spherically continuous functions converging to $f$ spherically uniformly on compact subset of $\Omega$ then $f$ is uniformly spherically continuous in $\Omega$.
\end{lem}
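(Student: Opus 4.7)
The plan is to establish the conclusion by the standard three-$\epsilon$ argument, leveraging the triangle inequality for the chordal metric $\chi$ (which is a genuine metric, since it is the Euclidean distance between the stereographic images on the Riemann sphere). The point is that spherical uniform convergence on compacta lets us approximate the limit function $f$ by some $f_n$ uniformly, and each $f_n$ is already spherically continuous.

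First I would fix an arbitrary compact subset $K \subset \Omega$ and any $\epsilon > 0$. By the hypothesis of spherical uniform convergence on $K$, there exists $N = N(K,\epsilon)$ such that
\begin{equation*}
\chi(f_n(z), f(z)) < \frac{\epsilon}{3} \quad \text{for every } n \geq N \text{ and every } z \in K.
\end{equation*}
Next I would pin down a single index $n \geq N$. Since each $f_n$ is spherically continuous on $\Omega$ (by Lemma \ref{f sph cts}) and $K$ is compact, $f_n$ is in fact \emph{uniformly} spherically continuous on $K$ (this is the classical fact that a continuous map from a compact metric space into a metric space is uniformly continuous). Hence there is $\delta = \delta(K, \epsilon, n) > 0$ such that whenever $z, w \in K$ with $|z-w| < \delta$, one has $\chi(f_n(z), f_n(w)) < \epsilon/3$.

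Then I would apply the triangle inequality for $\chi$ to conclude that for all $z, w \in K$ with $|z-w| < \delta$,
\begin{equation*}
\chi(f(z), f(w)) \leq \chi(f(z), f_n(z)) + \chi(f_n(z), f_n(w)) + \chi(f_n(w), f(w)) < \frac{\epsilon}{3} + \frac{\epsilon}{3} + \frac{\epsilon}{3} = \epsilon,
\end{equation*}
which is the desired uniform spherical continuity on $K$. Since $K$ was an arbitrary compact subset of $\Omega$, this yields uniform spherical continuity on every compact subset of $\Omega$, matching the intended reading of the statement.

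There is essentially no hard step here; the one thing to be careful about is not accidentally claiming uniform spherical continuity on the whole (possibly non-compact) domain $\Omega$, which does not follow from mere continuity of the $f_n$. The proof really produces uniform control only on each compact piece, but this is enough because the hypothesis of convergence is itself only uniform on compacta. The triangle inequality for $\chi$ and the passage from continuous-on-compact to uniformly-continuous-on-compact are the two background facts that make the $\epsilon/3$ estimate work cleanly.
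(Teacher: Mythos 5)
Your proof is correct and follows essentially the same three-$\epsilon$ triangle-inequality argument as the paper, approximating $f$ by a single $f_n$ via the uniform convergence on compacta and then using the spherical continuity of that $f_n$. The one refinement you add is genuine: by invoking compactness to upgrade $f_n$ to \emph{uniform} spherical continuity on $K$, you obtain a $\delta$ valid uniformly over $K$, whereas the paper's proof fixes a point $z_0$ and produces only a $\delta$ depending on $z_0$ (i.e.\ pointwise spherical continuity), so your version more honestly justifies the word ``uniformly'' in the statement, correctly restricted to compact subsets.
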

\begin{proof}
We have to show that $f$ is uniformly spherically continuous in $\Omega$ therefore we have to show that $f$ is uniformly spherically continuous on each point of $\Omega$. For this let $z_0 \in \Omega $. 

By uniform spherical convergence of $ \{\,f_n \,\}$ to $f$ we get that for given $\epsilon > 0 $ and K ($z_0 \in K $) compact subset of $\Omega$ there is an $ n_0 \in \mathbb{N} $ such that $$ \chi(f_n(z),f(z)) < \frac{\epsilon}{3}, $$ for all $z \in K \subseteq \Omega $ and for all $ n \geq n_0 $.

By the spherical continuity of $f_n \quad \mbox{forall } n $ in particular of $f_{n_0}$ at $z_0$ we get that there is $\delta > 0$ such that $$\chi(f_{n_0}(z),f_{n_0}(z_0)) < \frac{\epsilon}{3},$$ whenever $z, z_0 \in \Omega $ and $\mid z-z_0 \mid < \delta $. 
\\
Now consider
\begin{align*}
\chi(f(z),f(z_0)) &\leq \chi(f(z),f_{n_0}(z)) +\chi(f_{n_0}(z),f_{n_0}(z_0))+\chi(f_{n_0}(z_0),f(z_0)) \\
& < \frac{\epsilon}{3} + \frac{\epsilon}{3} + \frac{\epsilon}{3} = \epsilon,
\end{align*}
whenever $z,z_0 \in \Omega $ and $\mid z-z_0 \mid < \delta $.

This gives that $f$ is uniformly spherically continuous on $z_0 \in \Omega $ which implies uniformly spherical continuity of $f$ on $\Omega $. 
\end{proof}
\begin{lem} \label{np}
Let $\{\,f_n \,\}$ be a sequence of meromorphic functions on $\Omega \subseteq \mathbb{C}$. Then $\{\,f_n \,\}$ converges to $f$ spherically uniformly on $\Omega$ if and only if about each point $z_0$ there is a closed disk $ \overline D(z_0,r)$ in $ \Omega$ such that 
\[ \mid f_n - f \mid \rightarrow 0 \]
or
\[ \big | \frac{1}{f_n}-\frac{1}{f} \big | \rightarrow 0 \]
uniformly on  $ \overline D(z_0,r)$.
\end{lem}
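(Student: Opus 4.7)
The plan is to prove the lemma by a local argument, exploiting two key facts about the chordal metric: the comparison $\chi(z,w)\le |z-w|$ and the inversion invariance $\chi(z,w)=\chi(1/z,1/w)$, together with Lemma \ref{f bdd}, which upgrades spherical uniform convergence to ordinary uniform convergence whenever the limit is bounded. In both directions it suffices to work on a suitable closed disk about an arbitrary $z_0\in\Omega$ and then patch, since closed disks in $\Omega$ are compact and any compact subset of $\Omega$ can be covered by finitely many such disks.

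For the reverse direction, fix a compact $K\subset\Omega$. By hypothesis, about each $z_0\in K$ pick a closed disk $\overline{D}(z_0,r_{z_0})\subset\Omega$ on which either $|f_n-f|\to 0$ or $|1/f_n-1/f|\to 0$ uniformly. In the first case $\chi(f_n,f)\le |f_n-f|\to 0$ uniformly on the disk; in the second case, since $\chi(f_n,f)=\chi(1/f_n,1/f)\le |1/f_n-1/f|$, we again get $\chi(f_n,f)\to 0$ uniformly. Extract a finite subcover of $K$ by the corresponding open half-radius disks and take the maximum of the associated thresholds to conclude spherical uniform convergence on $K$.

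For the forward direction, fix $z_0\in\Omega$ and split into two cases. If $f(z_0)\ne\infty$, then since $f$ is meromorphic its poles are isolated, so we may choose $r>0$ so small that $\overline{D}(z_0,r)\subset\Omega$ contains no pole of $f$; then $f$ is analytic, hence bounded, on $\overline{D}(z_0,r)$. Applying Lemma \ref{f bdd} on this disk yields $|f_n-f|\to 0$ uniformly there. If instead $f(z_0)=\infty$, set $g=1/f$ and $g_n=1/f_n$; by the inversion invariance of $\chi$, the sequence $\{g_n\}$ converges to $g$ spherically uniformly wherever $\{f_n\}$ does, and now $g(z_0)=0$, so the previous case applied to $\{g_n\}$ produces a closed disk $\overline{D}(z_0,r)$ on which $|1/f_n-1/f|=|g_n-g|\to 0$ uniformly.

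The main obstacle is the pole case of the forward direction: one must be careful that passing from $f_n$ to $1/f_n$ is legitimate even though individual $f_n$ may vanish or have poles inside the disk, so that the chordal identity $\chi(f_n,f)=\chi(1/f_n,1/f)$ is applied pointwise on the extended plane. Once this is clearly stated, the reduction to the analytic-limit situation and the appeal to Lemma \ref{f bdd} are immediate, and the rest of the argument is the routine compactness patching described above.
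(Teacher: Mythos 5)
Your reverse direction and the two-case skeleton of the forward direction match the paper's proof, but there is a genuine gap in the forward direction: you justify the existence of a pole-free disk around $z_0$ by saying ``since $f$ is meromorphic its poles are isolated.'' The limit $f$ is not assumed to be meromorphic in this lemma --- it is merely the spherically uniform limit of the $f_n$, a priori just a function into $\mathbb{C}_\infty$. Indeed, the fact that $f$ is meromorphic (or $\equiv\infty$) is precisely the content of the corollary that the paper \emph{deduces from} this lemma, so assuming it here is circular. The same unjustified assumption is inherited by your pole case, since you apply ``the previous case'' to $g=1/f$.

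The repair, which is what the paper actually does, is to get boundedness of $f$ near $z_0$ from continuity in the chordal metric rather than from meromorphy: each $f_n$ is spherically continuous (Lemma \ref{f sph cts}), and a spherically uniform limit of spherically continuous functions is spherically continuous (Lemma \ref{f uni sph cts}); hence for small $r$ one has $\chi\left(f(z),f(z_0)\right)<\epsilon$ on $\overline D(z_0,r)$, and since $f(z_0)\neq\infty$ this forces $f$ to be bounded there. From that point on your argument is fine: Lemma \ref{f bdd} upgrades the convergence to $|f_n-f|\to 0$ uniformly, and the inversion $\chi(f_n,f)=\chi(1/f_n,1/f)$ handles the case $f(z_0)=\infty$. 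Your reverse direction (chordal metric dominated by Euclidean distance, plus a finite subcover of any compact set by the given disks) is exactly the paper's argument and is correct as written.
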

\begin{proof}
If $ \mid f_n- f \mid \rightarrow 0$ or $\big | \frac{1}{f_n}- \frac{1}{f} \big | \rightarrow 0 $ on closed disk $ \overline D(z_0,r)$ around each $z_0 \in \Omega $.

Then since each compact subset of $\Omega$ can be consider as the finite union of such type of closed disks and $$\chi (f_n,f) \leq \mid f_n-f\mid $$ or $$\chi(f_n,f) = \chi\left(\frac{1}{f_n},\frac{1}{f}\right) \leq \big | \frac{1}{f_n}-\frac{1}{f} \big | $$
this gives that $\chi(f_n,f) \rightarrow 0 $ on each compact subset of $\Omega$ that mean $f_n \rightarrow f $ spherically uniformly on $\Omega$.

Conversely, suppose $f_n \rightarrow f $ spherically uniformly on $\Omega$. Then we have two cases: 
\\
\textbf{Case(i)}

There is an $z_0 \in \Omega$ such that $f(z_0) \neq \infty $ then by Lemma(\ref{f uni sph cts}) we get that $f$ is spherically continuous in $\Omega$, in particular at $z_0 \in \Omega $ thus we get for $ \epsilon > 0$ there is an $r>0 $ such that
\[ \chi \left(f(z), f(z_0)\right) < \epsilon \quad \mbox{whenever }\mid z -z_0 \mid < r. \]
 Therefore we can say that $f$ is bounded in a closed disk $ \overline D(z_0,r) \subseteq \Omega $. Thus by Lemma (\ref{f bdd}) we have that $f_n \rightarrow f$ uniformly on $ \overline D(z_0,r) \subseteq \Omega $. Here we can note that $f$ is analytic on $ D(z_0,r)$. 
\\
\textbf{Case(ii)}

There is an $ z_0 \in \Omega $ such that $ f(z_0) = \infty $. Then as done in case(i) for $f$ we get that there is a closed disk $ \overline D(z_0,r) \subseteq \Omega $ on which $\frac{1}{f}$ is bounded. Now since $\chi (\frac{1}{f_n}, \frac{1}{f}) = \chi (f_n, f)$ gives that $\frac{1}{f_n} \rightarrow \frac{1}{f}$ spherically uniformly on $\overline D(z_0,r)$. Again by Lemma (\ref{f bdd}) we get that $\frac{1}{f_n} \rightarrow \frac{1}{f}$ uniformly on $ \overline D(z_0,r)$. Clearly $\frac{1}{f}$ is analytic on $D(z_0,r)$.       
\end{proof}
\begin{cor} \label{f mero}
Let ${f_n}$ be a sequence of meromorphic function on $\Omega$ which converges spherically uniformly on compact subsets of  $\Omega$ to $f$ then $f$ is either meromorphic function on $\Omega$ or is identically equal to $\infty$. Here also we have that $\frac{1}{f}$ is analytic in $D(z_0,r)$.
\end{cor}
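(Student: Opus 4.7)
The plan is to localise the statement using Lemma \ref{np}, extract the analytic structure on small disks via Weierstrass' Theorem \ref{weierstrass}, and then glue the local information together by the connectedness of the domain $\Omega$.

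First I would fix an arbitrary point $z_0 \in \Omega$. By Lemma \ref{np} there exists a closed disk $\overline{D(z_0,r)} \subseteq \Omega$ on which either $|f_n - f| \to 0$ uniformly or $|1/f_n - 1/f| \to 0$ uniformly. In the first alternative, the limit $f$ is bounded on $\overline{D(z_0,r)}$; so for $n$ sufficiently large each $f_n$ satisfies $|f_n| \le |f| + 1$ there, is therefore bounded, and consequently has no poles in $D(z_0,r)$. Hence $f_n$ is eventually analytic on $D(z_0,r)$, and Theorem \ref{weierstrass} yields that $f$ itself is analytic on $D(z_0,r)$. In the second alternative, the same argument applied to the reciprocals shows that $1/f_n$ is eventually analytic on $D(z_0,r)$ and that $1/f$ is analytic on $D(z_0,r)$, which is precisely the second assertion of the corollary. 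Now either $1/f \not\equiv 0$ on $D(z_0,r)$, so that its zeros form an isolated set and $f$ is meromorphic on $D(z_0,r)$ with poles exactly at these zeros, or $1/f \equiv 0$ on $D(z_0,r)$, in which case $f \equiv \infty$ on that disk.

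To pass from local to global, I would set $A = \{ z \in \Omega : f \text{ is meromorphic on some neighbourhood of } z\}$ and $B = \{ z \in \Omega : f \equiv \infty \text{ on some neighbourhood of } z\}$. Both sets are open by construction, the local analysis above shows $\Omega = A \cup B$, and $A \cap B = \emptyset$ because a meromorphic function cannot be identically $\infty$ on any open set. Since $\Omega$ is connected, one of $A$, $B$ must be empty, yielding the stated dichotomy: $f$ is either meromorphic on all of $\Omega$ or $f \equiv \infty$ on all of $\Omega$. The main obstacle I anticipate is precisely this second branch, namely the case $1/f \equiv 0$ on a subdisk: one must accept that it genuinely occurs (rather than try to exclude it a priori), and the open-cover plus connectedness argument above is what forces it to propagate to all of $\Omega$ once it appears anywhere.
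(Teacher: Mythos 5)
Your proof is correct and follows essentially the same route as the paper's: both rest on the local dichotomy of Lemma \ref{np}, the identity theorem applied to $\frac{1}{f}$ to handle non-isolated poles, and a connectedness argument on $\Omega$. The only difference is organizational --- you decompose $\Omega$ directly into the two open sets $A$ and $B$, whereas the paper argues by contradiction with the single set $X=f^{-1}(\infty)$ shown to be open and closed; your packaging is if anything slightly cleaner.
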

\begin{proof} 
Suppose $f$ is not identically equal to $\infty$ then there is atleast one $z_0 \in \Omega $ such that $f(z_0) \neq \infty $  then by case(i) of above Lemma give that $f$ is analytic in disk $D(z_0,r)$. Thus we can say that $f$ is analytic at the point where it is finite.

Next, we have to show that the only singularity that $f$ has is isolated singularity. For this let $f(z_0) = \infty $ and let $z_n \rightarrow z_0 $ where $\{\,z_n \,\} \subseteq \Omega $ such that $f(z_n) = \infty $ for all $n$. By case(ii) of above lemma we have that $\frac{1}{f}$ is analytic in $D(z_0,r)$ and $\frac{1}{f(z_n)} = 0 $ for all n which gives by identity theorem that $\frac{1}{f} \equiv 0 $ in $D(z_0,r)$ or $f\equiv \infty $ in $D(z_0,r) $.

Suppose $ X = \{\, z \in \Omega , f(z) =\infty \,\} = f^{-1} (\infty)$. This X will be the union of disk  around each singular point of $f$ thus X is open set. And since $f$ is continuous then inverse image of closed set is closed so X is closed here. So by connectedness of $\Omega $ we have that $X= \Omega$ thus we get $f$ is identically equal to $\infty $ on $\Omega $, which gives a contradiction. Therefore $f$ is meromorphic in $\Omega $.      
\end{proof}
\begin{exmp}

Let $\mathcal{F}$ =$ \{\, nz , n=1,2,3,\ldots \,\} $ be a family of analytic functions on $\mathbb{C}$. If 
$$f_n(z)=nz $$ then $$f_n(z) \rightarrow \infty $$ as $n \rightarrow \infty $ uniformly on each compact subsets of $\mathbb{C}$           
and $$f_n \rightarrow 0 $$ as $ n \rightarrow \infty $ uniformly on each compact subsets of $\mathbb{C}$. 

Thus as $f_n \rightarrow f $ uniformly on compact subsets of $\mathbb{C}$ where 
\\ 
\begin{equation} 
f(z)=
\begin{cases}
   0         & \text{if $ z=0 $} \\
    \infty   & \text{if $z \neq 0 $}
\end{cases}
\notag
\end{equation}

This gives that $\mathcal{F}$ is not normal in $\mathbb{C}$ or we can say that $\mathcal{F}$ is not normal in a domain  which contain 0.
\end{exmp}
\begin{exmp}
Consider meromorphic functions family $$ \mathcal{F} = \{ \frac{n}{z}; n=1,2,3,\ldots \, \} $$ on $\Omega$ then $f_n \rightarrow \infty$ on $\Omega$.
\end{exmp}

Now here are some theorem with no proof, which we will use in later chapters.
\begin{thm} \label{sch montel}
If $\mathcal{F}$  is a locally bounded family of analytic functions on a domain $\Omega$  then $\mathcal{F}$ is a normal family in $\Omega$. 
\end{thm}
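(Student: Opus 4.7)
The plan is to apply the Arzel\`a--Ascoli theorem: establish equicontinuity on compact subsets and pointwise boundedness, extract a pointwise convergent subsequence on a countable dense set by diagonalization, and then use equicontinuity together with compactness to promote pointwise convergence to uniform convergence on compact subsets. Weierstrass' theorem then certifies that the limit is analytic and hence not identically $\infty$.

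For equicontinuity, I would invoke Lemma \ref{f`lcb}: local boundedness of $\mathcal{F}$ forces local boundedness of the derivative family $\mathcal{F}'$. Given a compact $K\subseteq\Omega$, a finite-cover argument produces a compact $K'$ with $K\subset\mathrm{int}(K')\subset K'\subset\Omega$ and a constant $M$ such that $|f'(\zeta)|\leq M$ for every $\zeta\in K'$ and every $f\in\mathcal{F}$. For $z,w\in K$ close enough that the segment $[w,z]$ lies in $K'$, integration along that segment gives
\[
|f(z)-f(w)|=\Big|\int_{[w,z]} f'(\zeta)\,d\zeta\Big|\leq M\,|z-w|,
\]
uniformly in $f\in\mathcal{F}$, so $\mathcal{F}$ is equicontinuous on $K$.

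Next, given any sequence $\{f_n\}\subseteq\mathcal{F}$ and a countable dense subset $\{z_k\}_{k\geq 1}\subseteq\Omega$ (for example, points with rational real and imaginary parts), local boundedness makes each scalar sequence $\{f_n(z_k)\}_n$ bounded in $\mathbb{C}$. A standard diagonal extraction then produces a subsequence $\{g_n\}$ of $\{f_n\}$ that converges at every $z_k$.

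It remains to upgrade this pointwise convergence on $\{z_k\}$ to uniform convergence on each compact $K\subseteq\Omega$---and this is the main obstacle, being the essential content of the Arzel\`a--Ascoli mechanism. Given $\epsilon>0$, equicontinuity supplies $\delta>0$ with $|f(z)-f(w)|<\epsilon/3$ whenever $z,w\in K$ and $|z-w|<\delta$, for every $f\in\mathcal{F}$. Cover $K$ by finitely many disks $D(z_{k_1},\delta),\ldots,D(z_{k_N},\delta)$ (possible because $K$ is compact and $\{z_k\}$ is dense). Choose $n_0$ so large that $|g_n(z_{k_j})-g_m(z_{k_j})|<\epsilon/3$ for all $n,m\geq n_0$ and $j=1,\ldots,N$. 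For any $z\in K$, picking $j$ with $|z-z_{k_j}|<\delta$, the triangle inequality yields $|g_n(z)-g_m(z)|<\epsilon$ for $n,m\geq n_0$. Thus $\{g_n\}$ is uniformly Cauchy on $K$, hence converges uniformly there to some function $f$. By Weierstrass' theorem (Theorem \ref{weierstrass}) the limit $f$ is analytic on $\Omega$, and in particular $f\not\equiv\infty$, so the definition of normality for families of analytic functions is met.
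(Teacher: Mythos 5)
Your proof is correct. There is nothing in the paper to compare it against: the theorem is listed among the results stated ``with no proof,'' with the reader referred to Schiff, so your argument actually fills a gap the paper leaves open. The route you take is the classical Arzel\`a--Ascoli one --- local boundedness of $\mathcal{F}$ gives local boundedness of $\mathcal{F}'$ via Lemma \ref{f`lcb}, hence equicontinuity on compacta; pointwise boundedness plus a diagonal extraction over a countable dense set gives a subsequence converging on that set; equicontinuity and a finite subcover promote this to uniform Cauchyness on each compact; Weierstrass (Theorem \ref{weierstrass}) makes the limit analytic, so it is not identically $\infty$ --- and this is precisely the standard proof found in the cited reference. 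Two routine points are worth making explicit: in the equicontinuity step the threshold $\delta$ must be chosen smaller than $\mathrm{dist}(K,\mathbb{C}\setminus K')$ so that the segment $[w,z]$ genuinely lies in $K'$; and the centres $z_{k_1},\ldots,z_{k_N}$ of the covering disks need not belong to $K$, so one should either pick the dense points inside $\Omega$ within $\delta/2$ of $K$ or run the equicontinuity estimate on the larger compact set containing those centres. Neither issue affects the validity of the argument.
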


Converse of this theorem is not true for example, let $f$ be a zero free analytic function in $\Omega$ and $\mathcal{F}=\{\, cf; c \in \mathbb{R}^+ \,\}$ then $\mathcal{F}$ has limit point as either finite or identically infinity so $\mathcal{F}$ is normal but it is not locally bounded.

However, it is possible to establish a partial converse of this theorem.
\begin{thm} \label{con montel}
Let $\mathcal{F}$ be a family of analytic functions on $\Omega$ such that every sequence of functions in $\mathcal{F}$ has a subsequence converging uniformly on compact subsets to an analytic function. Then $\mathcal{F}$ is locally bounded on $\Omega.$
\end{thm}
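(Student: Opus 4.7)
The plan is to argue by contradiction: suppose $\mathcal{F}$ is not locally bounded on $\Omega$. Negating the definition, there exists some $z_0 \in \Omega$ such that no neighborhood of $z_0$ admits a uniform bound for $\mathcal{F}$. Fix $r > 0$ small enough that $\overline{D(z_0, r)} \subset \Omega$. Then for each positive integer $n$, I can find $f_n \in \mathcal{F}$ and a point $z_n \in D(z_0, r)$ with $|f_n(z_n)| > n$. This is the only place the failure of local boundedness is used: it furnishes a sequence in $\mathcal{F}$ whose values blow up on the fixed compact disk $\overline{D(z_0, r)}$.

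Next I invoke the hypothesis on the sequence $\{f_n\}$: it has a subsequence $\{f_{n_k}\}$ converging uniformly on compact subsets of $\Omega$ to an analytic function $f$ on $\Omega$. In particular, the convergence is uniform on the compact set $\overline{D(z_0, r)}$, and since $f$ is continuous there, it is bounded, say $|f(z)| \leq M$ for every $z \in \overline{D(z_0, r)}$. By uniform convergence, for all sufficiently large $k$ we have $|f_{n_k}(z) - f(z)| < 1$ throughout $\overline{D(z_0, r)}$, and therefore
\[
|f_{n_k}(z)| < M + 1 \qquad \text{for all } z \in \overline{D(z_0, r)}.
\]
Applying this at the chosen point $z_{n_k} \in D(z_0, r)$ yields $|f_{n_k}(z_{n_k})| < M + 1$, which contradicts $|f_{n_k}(z_{n_k})| > n_k \to \infty$. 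This contradiction forces $\mathcal{F}$ to be locally bounded on $\Omega$.

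The argument is essentially one line once the setup is in place; the only subtle point is that the hypothesis explicitly assumes the limit functions are analytic, excluding the constant $\infty$ case. This exclusion is indispensable, as shown by the earlier example $\mathcal{F} = \{nz : n \in \mathbb{N}\}$, which is \emph{not} locally bounded at any $z \neq 0$ yet does have every sequence converging uniformly on compact subsets away from the origin — but to $\infty$. Hence the main conceptual step is recognizing that the hypothesis rules out exactly this pathology, after which the blow-up sequence argument closes the proof immediately. No appeal to deeper results such as Weierstrass's theorem or Hurwitz's theorem is needed here.
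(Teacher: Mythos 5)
Your proof is correct: the contradiction argument — extract $f_n$ and $z_n\in D(z_0,r)$ with $|f_n(z_n)|>n$, pass to a subsequence converging uniformly on $\overline{D(z_0,r)}$ to an analytic (hence bounded) limit, and derive a contradiction — is exactly the standard proof of this result; the paper itself states the theorem without proof and defers to Schiff, where the same argument appears. One trivial slip in your closing remark: the family $\{nz\}$ fails to be locally bounded at \emph{every} point of $\mathbb{C}$, including $z=0$ (local boundedness requires a uniform bound on a neighbourhood, not just at the point), but this does not affect the proof.
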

\begin{thm} \label{f sph equicts}
A family $\mathcal{F}$ of meromorphic functions in a domain $\Omega$ is normal in $\Omega$ if and only if $\mathcal{F}$ is spherically equicontinuous in $\Omega$.
\end{thm}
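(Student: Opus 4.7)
The plan is to prove the two directions separately, treating this as the spherical analogue of the classical Arzel\`a--Ascoli theorem.

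For the forward implication, I would assume $\mathcal{F}$ is normal and argue by contradiction. Suppose $\mathcal{F}$ fails to be spherically equicontinuous at some $z_0 \in \Omega$. Then there exist $\epsilon_0 > 0$, a sequence $z_n \to z_0$, and functions $f_n \in \mathcal{F}$ with $\chi(f_n(z_n), f_n(z_0)) \geq \epsilon_0$ for every $n$. By normality, some subsequence $f_{n_k}$ converges spherically uniformly on compact subsets of $\Omega$ to a limit $f$ (which is meromorphic or identically $\infty$ by Corollary \ref{f mero}). Choose a closed disk $\overline{D(z_0,r)} \subset \Omega$; each $f_{n_k}$ is spherically continuous on this disk by Lemma \ref{f sph cts}, and Lemma \ref{f uni sph cts} then gives uniform spherical continuity of $f$ on $\overline{D(z_0,r)}$. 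For $k$ large, $z_{n_k} \in \overline{D(z_0,r)}$, and the triangle inequality
\[
\chi(f_{n_k}(z_{n_k}), f_{n_k}(z_0)) \leq \chi(f_{n_k}(z_{n_k}), f(z_{n_k})) + \chi(f(z_{n_k}), f(z_0)) + \chi(f(z_0), f_{n_k}(z_0))
\]
makes the right-hand side arbitrarily small, contradicting $\chi(f_{n_k}(z_{n_k}), f_{n_k}(z_0)) \geq \epsilon_0$.

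For the converse, I would assume spherical equicontinuity on $\Omega$ and build a normally convergent subsequence from an arbitrary sequence $\{f_n\} \subset \mathcal{F}$. Pick a countable dense subset $\{w_j\}_{j \geq 1}$ of $\Omega$ (e.g., rational points in $\Omega$). Since the Riemann sphere is compact under $\chi$, the sequence $\{f_n(w_1)\}$ has a $\chi$-convergent subsequence; repeating at $w_2, w_3, \ldots$ and diagonalizing yields a subsequence $\{g_k\} = \{f_{n_k}\}$ for which $\{g_k(w_j)\}$ is $\chi$-Cauchy for every $j$. The key step is to promote this to spherically uniform convergence on every compact $K \subset \Omega$. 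Given $\epsilon > 0$, equicontinuity supplies at each $z \in K$ a disk $D(z,\delta_z)$ on which $\chi(f(w), f(z)) < \epsilon/3$ for every $f \in \mathcal{F}$; by compactness, finitely many such disks cover $K$, and density lets me pick one $w_j$ from each disk. Since $\{g_k(w_j)\}$ is Cauchy for these finitely many $j$'s, a standard $\epsilon/3$ argument gives $\chi(g_k(z), g_\ell(z)) < \epsilon$ uniformly for $z \in K$ and $k, \ell$ large. Hence $\{g_k\}$ is spherically uniformly Cauchy on $K$ and converges to some limit $f$.

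The main obstacle is the converse direction, specifically verifying that the diagonal subsequence actually converges spherically uniformly on compact sets; this is where equicontinuity does the real work, and it requires the careful three-term splitting above together with the fact that $(\mathbb{C}_\infty, \chi)$ is complete (so pointwise Cauchy implies pointwise convergent) and compact (so that the diagonal extraction is possible). Once uniform spherical convergence on compact subsets is established, the limit function is automatically meromorphic or identically $\infty$ by Corollary \ref{f mero}, which completes the verification that $\mathcal{F}$ is normal.
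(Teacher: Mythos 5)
Your proposal is correct; it is the standard Arzel\`a--Ascoli argument adapted to the chordal metric, which is precisely the proof in Schiff~\cite{sch} that the paper cites in lieu of giving one (the theorem is stated here without proof). The only nit is in the converse: your ``$\epsilon/3$'' splitting actually costs $5\epsilon/3$ once you route both $z$ and the chosen dense point $w_j$ through the common disk center at which equicontinuity is invoked, so start from $\epsilon/5$ (or phrase equicontinuity so that any two points of the covering disk are $\chi$-close under every $f \in \mathcal{F}$).
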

\begin{defn} 
$\mathcal{F}$ is normal of analytic (meromorphic) functions at a point $z_0$ of $\Omega$ if it is normal in a neighbourhood of $z_0$.
\end{defn}
\begin{thm} \label{aepn}
Let $\mathcal{F}$ be family of analytic (meromorphic) functions on $\Omega$ then $\mathcal{F}$ is normal in $\Omega$ if and only if it is normal at each point of $\Omega$.
\end{thm}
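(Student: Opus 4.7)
The forward direction is essentially tautological: if $\mathcal{F}$ is normal in $\Omega$, then given any $z_0 \in \Omega$ pick any open neighborhood $U \subseteq \Omega$ of $z_0$ (say a small disk), and note that a sequence $\{f_n\} \subseteq \mathcal{F}$ which has a subsequence converging normally on $\Omega$ also has that subsequence converging normally on $U$, since every compact subset of $U$ is a compact subset of $\Omega$. Hence $\mathcal{F}$ is normal at $z_0$.

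For the reverse direction, the plan is the standard countable-exhaustion plus diagonal-subsequence argument. First, exhaust $\Omega$ by an increasing sequence of compact sets $K_1 \subseteq K_2 \subseteq \cdots$ with $K_j \subseteq \operatorname{int}(K_{j+1})$ and $\bigcup_j K_j = \Omega$; such an exhaustion always exists for an open set in $\mathbb{C}$. By hypothesis, every $z \in \Omega$ has an open neighborhood $U_z$ in which $\mathcal{F}$ is normal; cover each compact $K_j$ by finitely many such neighborhoods $U_{z_1^{(j)}}, \ldots, U_{z_{m_j}^{(j)}}$. Given an arbitrary sequence $\{f_n\} \subseteq \mathcal{F}$, extract inductively on $j$ a nested chain of subsequences $\{f_n^{(j)}\}$, where $\{f_n^{(j)}\}$ is obtained from $\{f_n^{(j-1)}\}$ by successively passing to convergent subsequences on $U_{z_1^{(j)}}, \ldots, U_{z_{m_j}^{(j)}}$ using normality at each of these points. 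Then $\{f_n^{(j)}\}$ converges normally on $K_j$. The diagonal subsequence $g_n := f_n^{(n)}$ is, from index $j$ onward, a subsequence of $\{f_n^{(j)}\}$, so it converges normally on each $K_j$, hence on every compact subset of $\Omega$ (since every compact $K \subseteq \Omega$ lies in some $K_j$).

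It remains to check that the limit $f$ has the correct form globally. In the meromorphic case, apply Corollary \ref{f mero} on each $K_j$ to conclude that $f|_{\operatorname{int}K_j}$ is either meromorphic or identically $\infty$; in the analytic case use Weierstrass's theorem \ref{weierstrass} to get that on each neighborhood of normality the limit is either analytic or the convergence is uniformly to $\infty$. Let $A = \{z \in \Omega : |f(z)| < \infty\}$ and $B = \Omega \setminus A$. Both sets are open: if $z \in A$, then by the local dichotomy on a neighborhood of normality containing $z$ we get an analytic/meromorphic limit in a whole neighborhood, so that neighborhood lies in $A$; similarly if $z \in B$, the sequence tends to $\infty$ uniformly on a neighborhood of $z$, putting it in $B$. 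Since $\Omega$ is connected, either $A = \Omega$ (so $f$ is analytic/meromorphic on all of $\Omega$) or $B = \Omega$ (so $f \equiv \infty$), matching the definition of normality in $\Omega$.

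The only real subtlety lies in this last paragraph, the global patching: one must be careful that the single diagonal subsequence we extract does not converge to something finite on one portion of $\Omega$ and to $\infty$ on another, and the resolution is precisely the open-closed connectedness argument above. Everything else is routine bookkeeping with the exhaustion and diagonal procedure.
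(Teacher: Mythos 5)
The paper does not actually prove this theorem --- it is one of the results stated without proof at the end of Section 3.3, with a pointer to Schiff \cite{sch} --- so there is nothing to compare against except the standard argument, which is exactly what you give: exhaustion of $\Omega$ by compacta, finite subcovers by neighbourhoods of normality, nested subsequences, and a diagonal. That skeleton is correct, and your observation that the only delicate point is preventing the diagonal subsequence from having a finite limit on one part of $\Omega$ and limit $\infty$ on another is the right one.

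There is, however, one genuine imprecision in your patching step in the meromorphic case. You set $A=\{z\in\Omega: |f(z)|<\infty\}$ and claim $A$ and $B=\Omega\setminus A$ are both open. But if the local limit near some $z_0$ is a meromorphic function with a pole at $z_0$, then $z_0\in B$ even though the sequence does \emph{not} tend to $\infty$ uniformly near $z_0$; conversely a point of $A$ can have poles of $f$ accumulating arbitrarily close to it only if $f\equiv\infty$ nearby, but a neighbourhood of a point of $A$ on which $f$ is merely meromorphic need not lie in $A$, since it may contain isolated poles. So with your definition $A$ need not be open and $B$ need not be open. The fix is to run the connectedness argument on the sets $A'=\{z: f \text{ is meromorphic in some neighbourhood of } z\}$ and $B'=\{z: f\equiv\infty \text{ in some neighbourhood of } z\}$: these are open by definition, disjoint, and cover $\Omega$ by the local dichotomy of Corollary \ref{f mero}, so connectedness gives $A'=\Omega$ or $B'=\Omega$. (Alternatively, note that for meromorphic families the paper's definition of normality only demands spherically uniform convergence on compacta, so the global dichotomy is already supplied by Corollary \ref{f mero} applied to the diagonal subsequence on all of $\Omega$; the open--closed argument is then only needed in the analytic case, where your sets $A$ and $B$ do work because analytic functions are finite-valued.) With that repair the proof is complete.
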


For the proofs of these results one can refer ~\cite{sch}.  
\chapter{Basic Normality Criterion}

\quad From this chapter our whole work is done through  Nevanlinna theory. Here, we will prove Montel \textquoteright s theorem using Nevanlinna theory. We are considering family of functions on unit disk $\Delta$ instead of a arbitrary domain $\Omega \subseteq \mathbb{C}$. We are familiar with the terms, definitions, useful results by previous chapters.
\section{Montel\textquoteright s Theorem}

From onwards we will give several conformations of Bloch\textquoteright s principle which states that a family of analytic (meromorphic) functions which have a common property $P$ in $\Omega$ will in general be normal family if $P$ reduces an analytic (meromorphic) function in $\mathbb{C}$ to a constant. Next, the normality of a family  of meromorphic functions is characterized by a condition in which the spherical derivative is locally bounded, due to Marty. 
\begin{thm}  \label{lemmaA}
\textbf{Marty\textquoteright s theorem} 

$ \mathcal{F} $  is a family of meromorphic functions. $\mathcal{F}$ is normal if and only $$ \{\,f^{\sharp}  = \frac{ \mid f \textquoteright  \mid}{1+ \mid f \mid ^2} : f \in \mathcal{F}\,\} $$ is locally bounded that means for each $r_0 <1$, there is number $M(r_0) < \infty$ such that
\[ f^{\sharp}(z) = \frac{ \mid f \textquoteright (z) \mid}{1+ \mid f(z) \mid ^2} < M(r_0), \]for all $\mid z \mid < r_0 $ and for all $f \in \mathcal {F}.$
\end{thm}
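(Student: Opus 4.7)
The plan is to prove both implications separately, leveraging the equivalence between spherical uniform convergence on compacta and ordinary uniform convergence on small disks provided by Lemma \ref{np}, together with the Weierstrass theorem \ref{weierstrass} and the characterization of normality via spherical equicontinuity (Theorem \ref{f sph equicts}).

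For necessity ($\Rightarrow$), I would argue by contradiction. Suppose $\mathcal{F}$ is normal but $\{f^{\sharp}\}$ fails to be locally bounded near some $z_0 \in \Delta$; then one can extract $\{f_n\} \subset \mathcal{F}$ and points $z_n \to z_0$ with $f_n^{\sharp}(z_n) \to \infty$. By normality, pass to a subsequence (still denoted $\{f_n\}$) converging spherically uniformly on compacta to a limit $f$, which by Corollary \ref{f mero} is either meromorphic on $\Delta$ or identically $\infty$ (the latter case being handled identically after replacing $f_n$ by $1/f_n$). Choose a small closed disk $\overline{D(z_0,r)}$ on which, by Lemma \ref{np}, either $f_n \to f$ or $1/f_n \to 1/f$ uniformly. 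The Weierstrass theorem \ref{weierstrass} then yields uniform convergence of derivatives on a slightly smaller disk, and using the identity $f^{\sharp} = (1/f)^{\sharp}$ together with continuity of the map $(g,g') \mapsto |g'|/(1+|g|^2)$ away from the relevant singularities, one concludes $f_n^{\sharp} \to f^{\sharp}$ uniformly near $z_0$. Since $f^{\sharp}$ is continuous, the values $f_n^{\sharp}(z_n)$ must remain bounded, contradicting the hypothesis.

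For sufficiency ($\Leftarrow$), the plan is to invoke Theorem \ref{f sph equicts} and reduce normality to spherical equicontinuity. Fix $z_0 \in \Delta$, choose $r_0 < 1$ so that $\overline{D(z_0,r_0)} \subset \Delta$, and let $M = M(r_0)$ be the hypothesized bound for $f^{\sharp}$ on this disk. For $z \in D(z_0,r_0)$, take $\gamma$ to be the line segment from $z_0$ to $z$ and apply the definition of spherical arc length to obtain
\[ \chi(f(z), f(z_0)) \le \sigma(f(z), f(z_0)) \le \int_\gamma f^{\sharp}(w)\,|dw| \le M\,|z - z_0|. \]
Given $\varepsilon > 0$, the choice $\delta = \min\{r_0,\, \varepsilon/M\}$ yields $\chi(f(z),f(z_0)) < \varepsilon$ uniformly in $f \in \mathcal{F}$, establishing spherical equicontinuity and hence normality.

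The main obstacle lies in the necessity direction, namely translating spherical uniform convergence of $\{f_n\}$ into uniform convergence of the derivatives $\{f_n'\}$ (and hence of $\{f_n^{\sharp}\}$) even near a pole of the limit. The decisive trick is the identity $f^{\sharp} = (1/f)^{\sharp}$, which, combined with Lemma \ref{np}, reduces the problem locally to uniform convergence of analytic functions where the Weierstrass derivative theorem applies directly. The sufficiency direction, by contrast, is an almost immediate path-integration estimate once Theorem \ref{f sph equicts} is invoked.
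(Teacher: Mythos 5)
Your proposal is correct and follows essentially the same route as the paper: sufficiency via the path-integral estimate $\chi(f(z),f(z_0))\le\int_\gamma f^{\sharp}\,|dw|\le M|z-z_0|$ and Theorem \ref{f sph equicts}, and necessity by contradiction using Lemma \ref{np} to reduce locally to uniform convergence of $f_n$ or $1/f_n$, then Weierstrass and the identity $f^{\sharp}=(1/f)^{\sharp}$. The only cosmetic difference is that you localize the contradiction at a single point $z_0$ with $z_n\to z_0$, whereas the paper works on a compact set covered by finitely many such disks; the substance is the same.
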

\begin{proof}
First suppose that $ \{\,f^{ \sharp} , f \in \mathcal{F} \,\}$ is locally bounded on $\Omega $. We have to show that $\mathcal{F}$ is normal. For this we will show that $ \mathcal{F} $ is spherically equicontinuous $\left( \mbox{Theorem } (\ref{f sph equicts})\right)$ at each point $ z_0 \in \Omega $. Let   K = $ \overline D(z_0 ,r_0) \subseteq \Omega $ be a closed disk around $ z_0 $. Since $ \{\, f^{\sharp} , f \in \mathcal{F}\,\} $ is locally bounded thus there is $M=M(r_0) >0 $ such that 
$$ f^{\sharp} (z) \leq M, \quad \mbox{forall }  z \in K. $$  

For any $z \in K $ let $\gamma $ be a straight line in $\Omega $ joining $z_0$ and $z$. Then the spherical length of $\gamma $ on Riemann sphere is given by 
\[ \int_{\gamma}f^{\sharp} (z)\, \mid dz \mid. \] 

And since chordal metric $ \chi(f(z_0) , f(z))$ gives the euclidean distance between $f(z_0) $ and $f(z)$ thus  $$\chi(f(z_0),f(z)) \leq \int_{\gamma} f^{\sharp}(\zeta) \, \mid d \zeta \mid \leq M \mid z-z_0 \mid,$$ for all $f \in \mathcal{F} $.
\\
Thus $\mathcal{F}$ is spherically equicontinuous on $\Omega$. Hence $\mathcal{F}$ is normal on $\Omega$ by Theorem (\ref{f sph equicts}).

Next let us suppose that $\mathcal{F}$ is normal on $\Omega$. On contrary let us suppose that there is an compact subset K of $\Omega$ on which $ \{\, f^{\sharp} ,f \in \mathcal{F} \,\}$ is not bounded. Therefore we can consider a sequence $\{\, f^{\sharp}_n \,\} $ which has maximum $\infty$ on K. By the normality of $\mathcal{F}$ we get that there is a subsequence $ \{ \, f_{n_k} \,\}$ converging to $f$ on K. Then there is closed disk around each point of  K on which $f$ or $ 1/f $ is analytic by Lemma(\ref{np}). If $f$ is analytic then by spherical convergence of $\{\, f_{n_k} \,\} $ we get $f_{n_k}$ are analytic for large $n_k$ thus we can apply Weierstrass theorem (\ref{weierstrass}) to get $ f^{\sharp}_{n_k} \rightarrow f^{\sharp} $ uniformly on closed disks around each point of K. Since $f^{\sharp}$ is continuous thus $f^{\sharp}_{n_k} $ are bounded on the disks in K. In case $1/f$ is analytic and by $g^{\sharp} = (1/g)^{\sharp}$ as done above we get $f_{n_k}^{\sharp}$ is bounded on the disks around points of K. And since K is compact so it can be consider as finite union of such disks, thus we get contradiction to our assumption. So $\{\, f^{\sharp} , f \in \mathcal{F} \,\}$ is uniformly bounded on each compact subsets of $\Omega$.       
\end{proof} 
\begin{lem} \label{lemmaB}
$\mathcal{F}$ be a family of holomorphic functions on $\Delta$. If there is an increasing, finite  valued function $\sum(r)$ , $0 \leq r <1$ such that
\[ M(r,f) \leq \sum(r), \quad  r_1 \leq r < 1 \]  independent of $f \in \mathcal{F}$, then $\mathcal{F}$ is normal.
\end{lem}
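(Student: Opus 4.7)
The plan is to reduce the hypothesis to local boundedness of $\mathcal{F}$ on $\Delta$ and then invoke Theorem \ref{sch montel}, which states that a locally bounded family of analytic functions on a domain is normal. The content of the lemma is really just a bookkeeping exercise showing that a uniform bound on $M(r,f)$ propagates by the maximum modulus principle to a uniform bound on compact subsets.

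First I would fix an arbitrary point $z_0 \in \Delta$. Choose $\delta > 0$ small enough that $\overline{D(z_0,\delta)} \subset \Delta$, and pick any radius $r$ satisfying $\max(r_1,|z_0|+\delta) \leq r < 1$. This is possible because $|z_0|+\delta < 1$ and we are free to enlarge $r_1$ up toward $1$. Since every $f \in \mathcal{F}$ is holomorphic on $\Delta$, it is holomorphic on a neighbourhood of $\overline{D(0,r)}$, and the maximum modulus principle yields
\[
|f(z)| \leq M(r,f) \leq \sum(r) \qquad \text{for all } |z| \leq r.
\]
In particular this bound holds uniformly for $z \in \overline{D(z_0,\delta)}$ and all $f \in \mathcal{F}$, since our choice of $r$ forces $\overline{D(z_0,\delta)} \subset \overline{D(0,r)}$. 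Because $\sum(r)$ is finite by hypothesis and independent of $f$, this exhibits the constant $M(z_0) = \sum(r)$ and the neighbourhood $D(z_0,\delta)$ required by the definition of local boundedness stated just before Lemma \ref{f`lcb}.

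Since $z_0 \in \Delta$ was arbitrary, $\mathcal{F}$ is locally bounded on $\Delta$. Applying Theorem \ref{sch montel} directly concludes that $\mathcal{F}$ is normal in $\Delta$.

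There is essentially no obstacle here; the only subtlety is that the hypothesis $M(r,f) \leq \sum(r)$ is only assumed for $r \geq r_1$, so one must check that we can always choose $r$ in $[r_1,1)$ that still contains the chosen local neighbourhood inside $\overline{D(0,r)}$. This is immediate because any point of $\Delta$ lies strictly inside the unit disk, leaving room to enlarge $r$ past $r_1$ while remaining below $1$. The key inputs being reused are the maximum modulus principle (to promote a circle bound to a disk bound) and the version of Montel's theorem already recorded as Theorem \ref{sch montel}; Marty's Theorem \ref{lemmaA} is not needed for this argument since local boundedness of the holomorphic family is strictly stronger information than local boundedness of $f^{\sharp}$.
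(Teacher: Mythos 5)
Your proof is correct and follows essentially the same route as the paper: promote the circle bound $M(r,f)\leq\sum(r)$ to a disk bound via the maximum modulus principle, conclude that $\mathcal{F}$ is locally bounded, and invoke Theorem \ref{sch montel}. Your handling of the restriction $r\geq r_1$ is a small point the paper glosses over, but it changes nothing of substance.
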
 
\begin{proof}
Given condition implies that $\mathcal{F}$ is uniformly bounded in $\mid z \mid \leq r, \quad \mbox{for all } 0<r<1$ by maximum modulus principle which gives that $\mathcal{F }$ is locally bounded in $\Delta$. Thus by Theorem (\ref{sch montel}) $\mathcal{F}$ is normal in $\Delta$.
\end{proof} 
\begin{note}
From Nevalinna \textquoteright s inequality for proximity function we will replace $M(r,f)$ in  the sufficient condition of above Lemma (\ref{lemmaA}) by $m(r,f)$.
We can do this as follows :

Since $\mid f(re^{\iota \theta}) \mid \leq M(r,f)\leq \sum(r) $ for all $0 \leq \theta \leq 2 \pi $ by sufficient condition of above Lemma(\ref{lemmaA}). Now integrating on both side w.r.t. $\theta$ and dividing by $2\pi $ we get
\begin{align*}
 \frac{1}{2 \pi} \int_{0}^{2\pi} \log^{+} \mid f(re^{re^{\iota \theta}}) \mid \, d \theta & \leq  \log^{+} M(r,f) \\
&\leq \log^{+} \sum(r) \\
& \leq \sum(r) \qquad \mbox{ as } \log^{+}x \leq x 
 \end{align*}
 Thus $m(r,f) \leq \sum(r),\quad 0 \leq r <1 $ is sufficient condition for $\mathcal{F}$ to be normal in Lemma (\ref{lemmaA}).
 \end{note}
\begin{defn} 
We define for $\mid \alpha \mid \leq 1 $ a M$\ddot{o}$bius  transformation on $\Delta $ as follows: 
\[ \phi_ \alpha(z)= \frac{z- \alpha}{\bar{\alpha}z-1}, \quad \mbox{where } z\in \Delta \].
\end{defn}
\begin{rem}
M$\ddot{o}$bius transformation $ \phi_\alpha$ is one to one and onto transformation on $\Delta$.
\\
We can show this as :
\\
To show : $\phi_\alpha : \Delta \rightarrow \Delta $.
\\
For this we have to show that $\mid \phi_ \alpha (z) \mid <1$ or $\mid \frac{z-\alpha}{\bar{\alpha}z-1} \mid <1$ for all $z \in \Delta$. Now
\begin{equation}
\mid z- \alpha \mid \leq \mid z \mid + \mid \alpha \mid < \mid \alpha \mid + 1 \quad \mbox{as} \quad z \in \Delta 
\end{equation}
and since $\mid \frac{z-\alpha}{\bar{\alpha}z - 1} \mid <1$ is equivalent to show $$\mid z- \alpha \mid < \mid \bar{\alpha}z -1 \mid \leq \mid \bar{\alpha}z \mid +1 < \mid \alpha \mid +1, $$  as $z \in \Delta $ which is true by (4.1). Thus $\phi_\alpha$ for $\mid \alpha \mid \leq 1$ is M$\ddot{o}$bius transformation from $\Delta $ to $\Delta$.
\\
To show :  $\phi_\alpha$ for $\mid \alpha \mid \leq 1$ has inverse in $\Delta$ to $ \Delta$ so that  $\phi_\alpha$ for $\mid \alpha \mid \leq 1 $ is bijective map on $\Delta$.
\\
For this let $\phi_ \alpha (z) =w$, where $z,w \in \Delta $ or $\frac{z-\alpha}{\bar{\alpha}z-1} = w $ which implies
\\
$ z- \alpha = \bar{\alpha} zw-w$ $\Rightarrow w - \alpha = (\bar{\alpha}w -1)z$ implying
$z= \frac{w- \alpha}{\bar{\alpha}w-1}$
which again implies
$ \phi_ \alpha ^-1(w) = \frac{w- \alpha}{\bar{\alpha}w - 1}, $ forall $w \in \Delta $ and forall $\mid \alpha \mid < 1$.

Therefore $\{\, \phi_\alpha , \mid \alpha \mid < 1 \,\}$ is family of onto one to one M$\ddot{o}$bius transformation on $\Delta$.
\end{rem}
\begin{defn} 
For $f \in \mathcal{F}$ and for $\mid \alpha \mid \leq 1 $ we define 
$f_\alpha(z)= f(\phi _\alpha(z))$.
\\
We can easily verify that $$f_\alpha(0)= f(\alpha) \quad \mbox{and }  f_\alpha^{\textquoteright}(0) = f^{\textquoteright}(\alpha) \phi_\alpha ^{\textquoteright}(0).$$
\end{defn}
\begin{lem} \label{lemma1}
Suppose there is an $r_0<1$ such that corresponding to each $f \in \mathcal{F}$ there is an $\alpha= \alpha(f)$, $\mid \alpha \mid \leq r_0 $ with the property that the family $\{\, f_\alpha \,\}$ is normal. Then $\mathcal{F}$ is normal. 
\end{lem}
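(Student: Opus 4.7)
My plan is to deduce normality of $\mathcal{F}$ on $\Delta$ from Marty\textquoteright s theorem (Theorem \ref{lemmaA}) by showing that $\{f^\sharp : f \in \mathcal{F}\}$ is locally bounded on $\Delta$. More precisely, for each $r < 1$ I would produce a bound on $f^\sharp(y)$ valid for all $y \in \overline{D(0,r)}$ and all $f \in \mathcal{F}$, by transferring the bound that Marty\textquoteright s theorem gives for the hypothesized normal family $\{f_\alpha : f \in \mathcal{F}\}$.

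Since $\phi_\alpha$ is a M\"obius involution of $\Delta$ (as noted in the preceding remark, $\phi_\alpha^{-1} = \phi_\alpha$), differentiating the relation $f_\alpha(z) = f(\phi_\alpha(z))$ yields
\[ f_\alpha^\sharp(z) = f^\sharp(\phi_\alpha(z))\,|\phi_\alpha\textquoteright(z)|, \]
and substituting $z = \phi_\alpha(y)$ gives
\[ f^\sharp(y) = \frac{f_\alpha^\sharp(\phi_\alpha(y))}{|\phi_\alpha\textquoteright(\phi_\alpha(y))|} \]
for every $y \in \Delta$ and every $f \in \mathcal{F}$, with $\alpha = \alpha(f)$. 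I would then estimate the two factors separately. The key geometric input is the sharp bound
\[ |\phi_\alpha(y)| \leq \rho := \frac{r + r_0}{1 + r r_0} < 1 \]
for $|y| \leq r$ and $|\alpha| \leq r_0$, which says that $\phi_\alpha$ maps $\overline{D(0,r)}$ into the compact set $\overline{D(0,\rho)} \subset \Delta$, regardless of the particular $\alpha$. Applying Marty\textquoteright s theorem to the normal family $\{f_\alpha\}$ then furnishes $M = M(\rho)$ with $f_\alpha^\sharp(w) \leq M$ on $\overline{D(0,\rho)}$ uniformly in $f$. On the same region the lower bound $|\phi_\alpha\textquoteright(w)| = (1 - |\alpha|^2)/|1 - \bar{\alpha} w|^2 \geq (1 - r_0^2)/(1 + r_0 \rho)^2 > 0$ is immediate, and is independent of $f$. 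Combining these estimates bounds $f^\sharp(y)$ uniformly on $\overline{D(0,r)}$, and a final appeal to Marty\textquoteright s theorem delivers the normality of $\mathcal{F}$.

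The delicate step will be the uniform estimate $|\phi_\alpha(y)| \leq \rho < 1$: the naive triangle-inequality bound $(|y| + |\alpha|)/(1 - |y||\alpha|)$ can exceed $1$ and is therefore useless once $r$ is not small. Instead one must work with the exact identity
\[ |\phi_\alpha(y)|^2 = \frac{|y|^2 + |\alpha|^2 - 2\operatorname{Re}(\bar{\alpha} y)}{1 + |\alpha|^2|y|^2 - 2\operatorname{Re}(\bar{\alpha} y)}, \]
which, optimized over $\operatorname{Re}(\bar{\alpha} y)$ subject to $|y| \leq r$ and $|\alpha| \leq r_0$ (the maximum is attained when $\bar{\alpha} y$ is real and negative with $|y|=r$, $|\alpha|=r_0$), gives precisely the stated pseudo-hyperbolic bound. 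This is where the hypothesis $r_0 < 1$ is essential: it keeps the images $\phi_\alpha(\overline{D(0,r)})$ uniformly off $\partial \Delta$ and lets the Marty bound for $\{f_\alpha\}$ travel back to a bound on $\{f^\sharp\}$ without any loss of control near the boundary.
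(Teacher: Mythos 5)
Your proof is correct, but it takes a genuinely different route from the paper's. The paper disposes of the lemma in a few lines of soft analysis: it notes that $\{\phi_\alpha\}$ and $\{\phi_\alpha^{-1}\}$ are uniformly bounded, hence normal by Lemma \ref{lemmaB}, writes $f = f_\alpha \circ \phi_\alpha^{-1}$, and concludes by asserting that a composition of two normal families is normal. That assertion is stated without proof and is not true for arbitrary normal families; what rescues it here is precisely the hypothesis $|\alpha| \leq r_0 < 1$, which forces every subsequential limit of the inner maps $\phi_{\alpha}^{-1}$ to be again an automorphism of $\Delta$ carrying compacta to compacta, after which a diagonal argument closes the gap. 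Your argument replaces this under-justified step by a quantitative one: the transformation rule $f_\alpha^{\sharp}(z) = f^{\sharp}(\phi_\alpha(z))\,|\phi_\alpha'(z)|$, the pseudo-hyperbolic estimate $|\phi_\alpha(y)| \leq (r+r_0)/(1+rr_0) = \rho < 1$ for $|y| \leq r$ and $|\alpha| \leq r_0$, and the uniform lower bound $|\phi_\alpha'(w)| \geq (1-r_0^2)/(1+r_0\rho)^2$ on $\overline{D(0,\rho)}$, which together transport the Marty bound for the normal family $\{f_\alpha\}$ back to a uniform bound for $\{f^{\sharp}\}$ on $\overline{D(0,r)}$; a second appeal to Theorem \ref{lemmaA} finishes. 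All three estimates check out: your exact identity for $|\phi_\alpha(y)|^2$ is right, the ratio is decreasing in $\operatorname{Re}(\bar{\alpha}y)$, so the maximum occurs at $\operatorname{Re}(\bar{\alpha}y) = -|\alpha||y|$, yielding the stated bound, and you are right that the naive triangle-inequality bound would be useless here. What your approach buys is a self-contained proof that makes explicit where $r_0 < 1$ enters and that works verbatim for meromorphic families (the spherical-derivative identity extends to poles by continuity); what the paper's approach buys, once the composition step is properly argued, is brevity and the absence of any computation.
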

\begin{proof}
We can clearly say that $\{\, \phi_ \alpha \, \} $ and $\{\, \phi_\alpha^{-1} \, \}$ are uniformly bounded on $\Delta$. As for each $\alpha ; \mid \alpha \mid < 1 $ we have $\mid \phi_\alpha \mid < 1$ on $\Delta$.
\\
Similarly for $\{\,\phi_ \alpha^{-1} \,\}$. Thus $$M(r,\phi_\alpha) = max \{\, \mid \phi_ \alpha(z) \mid , \mid z \mid = r \,\} < 1 = \sum(r)$$ 
 independent of $\phi_\alpha$. Then by Lemma(\ref{lemmaB}) we get thus $\{\, \phi_ \alpha\,\}$ is normal in $\Omega$. On the same line we get $\{\, \phi_\alpha^{-1} , \mid \alpha \mid <1 \,\}$ is normal in $\Omega$. Now $\{\,f \in \mathcal{F} \,\} = \{\, f_\alpha o \phi_\alpha^{-1} , f \in \mathcal{F} \,\} $ and composition of two normal families is normal implies normality of $\mathcal{F}$.
\end{proof} 

Here is a local version of above Lemma is stated. As most of the time we required local normality of $\mathcal{F}$. 
\begin{lem} \label{lemma1`}
$\mathcal{F}$ is normal in a neighbourhood of $z_0 \in \Delta $ if there exist a sequence $\alpha_n \rightarrow z_0 $ such that $ \{\, f_{\alpha_n} \,\}$ is normal in some neighbourhood of the $z_0$. 
\end{lem}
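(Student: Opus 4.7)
The plan is to localize the argument of Lemma~\ref{lemma1}, exploiting two structural properties of the family $\{\phi_\alpha\}$: each $\phi_\alpha$ is an involutive biholomorphism of $\Delta$ (so $\phi_\alpha^{-1}=\phi_\alpha$) carrying $0$ to $\alpha$, and the assignment $\alpha\mapsto\phi_\alpha$ is continuous. Consequently, as $\alpha_n\to z_0\in\Delta$, the maps $\phi_{\alpha_n}$ converge uniformly on compact subsets of $\Delta$ to $\phi_{z_0}$, and for any prescribed neighbourhood of the origin, its image under $\phi_{\alpha_n}$ is an open set containing $\alpha_n$, which for $n$ large must contain a genuine neighbourhood of $z_0$.

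First I would fix a single large index $n$ for which (a) the hypothesis furnishes a neighbourhood $V$ on which $\mathcal{F}_n:=\{f_{\alpha_n}:f\in\mathcal{F}\}$ is normal, and (b) $\alpha_n$ is close enough to $z_0$ that the open set $\phi_{\alpha_n}(V)$, which contains $\alpha_n=\phi_{\alpha_n}(0)$, contains an open disc $U$ centred at $z_0$. The existence of such an $n$ follows from the continuity of $\alpha\mapsto\phi_\alpha$ combined with $\phi_{z_0}(0)=z_0$.

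Next, given any sequence $\{f_k\}\subset\mathcal{F}$, the sequence $g_k:=f_k\circ\phi_{\alpha_n}$ lies in $\mathcal{F}_n$. By the assumed normality of $\mathcal{F}_n$ on $V$, a subsequence $\{g_{k_j}\}$ converges spherically uniformly on compact subsets of $V$ to a limit $g$ that is either meromorphic on $V$ or identically $\infty$ (Corollary~\ref{f mero}). Using $\phi_{\alpha_n}^{-1}=\phi_{\alpha_n}$, we recover $f_{k_j}=g_{k_j}\circ\phi_{\alpha_n}$, and this converges spherically uniformly on compact subsets of $\phi_{\alpha_n}(V)\supseteq U$ to the function $f:=g\circ\phi_{\alpha_n}$. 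This yields normality of $\mathcal{F}$ on the neighbourhood $U$ of $z_0$.

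The only technical check, which I expect to be the main (but still routine) obstacle, is that spherical compact-convergence is preserved under right-composition by the fixed biholomorphism $\phi_{\alpha_n}$. This follows at once from the identity
\[
\sup_{z\in K}\chi\bigl(f_{k_j}(z),f(z)\bigr)=\sup_{w\in\phi_{\alpha_n}(K)}\chi\bigl(g_{k_j}(w),g(w)\bigr)
\]
valid for any compact $K\subset\phi_{\alpha_n}(V)$, since the chordal metric is computed in the target and $\phi_{\alpha_n}(K)$ is a compact subset of $V$ by continuity, on which the right-hand side tends to $0$.
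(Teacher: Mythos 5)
The paper offers no proof of this lemma (it is presented only as ``a local version'' of Lemma \ref{lemma1}), so your argument has to stand on its own, and it rests on a reading of the hypothesis that does not match the way the lemma is used. Throughout the paper the point $\alpha$ in $f_\alpha=f\circ\phi_\alpha$ is attached to the individual function: in Lemma \ref{lemma1} one has $\alpha=\alpha(f)$, and in the local version one has a sequence $f_n$ with $\alpha_n=\alpha_n(f_n)\to z_0$, the family asserted to be normal being the single sequence $\{f_{n,\alpha_n}\}_n=\{f_n\circ\phi_{\alpha_n}\}_n$; this is exactly the form in which the lemma is invoked in Case (i) of the generalization of Montel's theorem and in Theorem \ref{th3}, where what one actually proves is a bound $T(r,f_{n,\alpha_n})\le\Sigma(r)$. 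Your proof instead fixes one index $n$ and assumes that the whole family $\{f\circ\phi_{\alpha_n}:f\in\mathcal{F}\}$, with that one $\alpha_n$, is normal; under the intended hypothesis this gives control of the single function $f_n$ only, and for an arbitrary sequence $\{f_k\}\subset\mathcal{F}$ the functions $f_k\circ\phi_{\alpha_n}$ need not lie in any family known to be normal. The missing idea is the passage to a limit with a \emph{varying} composition: extract $f_{n_k,\alpha_{n_k}}\to g$ spherically uniformly on compacta, note that $\phi_{\alpha_{n_k}}\to\phi_{z_0}$ uniformly on compact subsets of $\Delta$, and combine this with the uniform spherical continuity of $g$ (Lemma \ref{f uni sph cts}) to conclude $f_{n_k}=f_{n_k,\alpha_{n_k}}\circ\phi_{\alpha_{n_k}}\to g\circ\phi_{z_0}$ near $z_0$. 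Your displayed identity for $\sup_K\chi$ is correct but applies only when the composing map is the same for every term of the sequence, which is precisely what fails here.

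A secondary problem is the bookkeeping of neighbourhoods. Since $\phi_{\alpha_n}(0)=\alpha_n$ and $\phi_{\alpha_n}(\alpha_n)=0$, normality of $f\circ\phi_{\alpha_n}$ on an open set $V$ transfers to normality of $f$ on $\phi_{\alpha_n}(V)$, and your claim that this image contains $\alpha_n=\phi_{\alpha_n}(0)$ presupposes $0\in V$. But $V$ is by hypothesis a neighbourhood of $z_0$, and $0\in V$ is automatic only when $z_0=0$ --- which happens to be the only case the paper ever uses, and which strongly suggests the hypothesis is really meant to concern a neighbourhood of the origin, where $f_{n,\alpha_n}$ records the behaviour of $f_n$ near $\alpha_n\approx z_0$. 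For general $z_0$ you should either restate the hypothesis accordingly or observe explicitly that $\phi_{\alpha_n}(V)$ is a neighbourhood of $\phi_{\alpha_n}(z_0)\to 0$ rather than of $z_0$, and then arrange the sets so that a fixed disc about $z_0$ is eventually covered.
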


Actually Lemma (\ref{lemma1`}) $\left(\mbox{Lemma }(4.1.4)\right)$ give necessary and sufficient condition for normality (local normality) of $\mathcal{F}$. Since for sufficient condition to be true let $\mathcal{F}$ is normal then $\{\, \phi_{\alpha}, \mid \alpha \mid \leq r_0 \, \} $ is normal then $\mathcal{F_{\alpha}}= \{\, f o \phi_{\alpha}, f \in \mathcal{F} \, \}$ is normal as composition of two normal families is normal.

Next we prove a lemma which will be used many times in our further work
\begin{lem} \label{lemma2}
Let $U(r)$ and $\gamma(r)$ be continuous non decreasing function of $r$, $r_1<r< 1$. If there is an $r_0$, $r_1<r_0<1$ and $b>1$ with
\begin{equation}\label{eq2}
U(r)<M+b \log \frac{1}{R-r}+c \log^{+} U(R) + \gamma(r), r_0<r<R<1
\end{equation}
then for $r \geq r_0$ we have $$U(r)< M_1 +4\gamma(R) + 2b \log \frac{1}{R-r},$$ $r_0 <r<R<1,$ where $M_1$ depends on $M, b$ and $c.$
\end{lem}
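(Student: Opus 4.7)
The plan is to argue by contradiction. Choose a constant $M_1$ depending only on $M$, $b$, $c$ (whose required size is specified in the course of the argument), and suppose the conclusion fails, so that there exist $r_0 \leq r_* < R_* < 1$ with
\[
U(r_*) \geq M_1 + 4\gamma(R_*) + 2b\log\frac{1}{R_* - r_*}.
\]
Set up a halving iteration inside $[r_*, R_*]$: put $s_0 = r_*$ and $s_{n+1} = (s_n + R_*)/2$, so that $R_* - s_n = (R_* - r_*)/2^n$ and $s_n \nearrow R_*$.

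I would then prove by induction on $n$ that for every $n \geq 0$,
\[
U(s_n) > M_1 + 4\gamma(R_*) + 2b\log\frac{1}{R_* - s_n}.
\]
The base case $n = 0$ is the assumed counterexample. For the inductive step, apply the hypothesis (\ref{eq2}) with $r \leftarrow s_n$ and $R \leftarrow s_{n+1}$; using $s_{n+1} - s_n = (R_* - s_n)/2$ and $\gamma(s_n) \leq \gamma(R_*)$, the inductive hypothesis rearranges to
\[
c\log^{+} U(s_{n+1}) > (M_1 - M - b\log 2) + 3\gamma(R_*) + b\log\frac{1}{R_* - s_n},
\]
whose right-hand side is positive once $M_1 > M + b\log 2$ (in the intended application $\gamma \geq 0$). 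Exponentiating yields a lower bound on $U(s_{n+1})$.

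The heart of the matter is to verify that this exponential lower bound exceeds the target bound $M_1 + 4\gamma(R_*) + 2b\log 2 + 2b\log\frac{1}{R_* - s_n}$ required at $s_{n+1}$. Writing $L = \log\frac{1}{R_* - s_n} \geq 0$ and $G = \gamma(R_*) \geq 0$, this reduces to the analytic inequality
\[
\exp\!\left(\frac{M_1 - M - b\log 2 + 3G + bL}{c}\right) > M_1 + 4G + 2b\log 2 + 2bL,
\]
which is the principal obstacle. I would handle it by a monotonicity argument: once $M_1$ is so large that $e^{(M_1 - M - b\log 2)/c} \geq \max(2c, 4c/3)$, the partial derivatives in $L$ and in $G$ of the difference between the two sides are both non-negative on $\{L,G \geq 0\}$, so it suffices to verify the inequality at $L = G = 0$. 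That case reduces to $e^{(M_1 - M - b\log 2)/c} > M_1 + 2b\log 2$, which holds once $M_1$ is taken sufficiently large in terms of $M$, $b$, $c$ (the exponential eventually beats the linear).

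With the induction established, $U(s_n) \to +\infty$ as $s_n \nearrow R_*$. But $U$ is continuous and non-decreasing on $[r_*, R_*] \subset (r_1, 1)$, so $U(s_n) \leq U(R_*) < \infty$ for every $n$, a contradiction. Hence the choice of $M_1$ above validates the conclusion for all $r_0 \leq r < R < 1$.
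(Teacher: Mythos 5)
Your proof is correct, but it follows a genuinely different route from the paper. The paper invokes Borel's growth lemma: outside an exceptional set of small total length one has $U(\rho')<U(\rho)+k\log 2$ with $\rho'=\rho+\exp\{-U(\rho)/k\}$; choosing $k=2b$ makes the term $b\log\frac{1}{\rho'-\rho}$ equal to $\tfrac12 U(\rho)$ so it can be absorbed, the term $c\log^{+}U(\rho)$ is absorbed via $\log^{+}U<\sqrt{U}<U/(4c)$ unless $U\le 16c^2$, and a case split on whether the exceptional intervals cover $(r,R)$ produces the final $2b\log\frac{1}{R-r}$. You instead run a self-contained contradiction argument: a putative counterexample at $(r_*,R_*)$ propagates along the halving sequence $s_{n+1}=(s_n+R_*)/2$, because the hypothesis converts a large value of $U(s_n)$ into an exponentially large lower bound on $U(s_{n+1})$, and your monotonicity-in-$(L,G)$ check correctly reduces the needed domination of $M_1+4G+2b\log 2+2bL$ by $\exp\bigl((M_1-M-b\log 2+3G+bL)/c\bigr)$ to the single point $L=G=0$, where it holds for $M_1$ large in terms of $M,b,c$; then $U(s_n)\to\infty$ while $U(s_n)\le U(R_*)<\infty$ gives the contradiction. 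What each approach buys: yours avoids the external Borel lemma and exceptional sets entirely and uses only that $U$ and $\gamma$ are non-decreasing and finite on $(r_1,1)$ (continuity is not even needed), at the cost of the somewhat fiddly uniform analytic verification; the paper's is shorter once Borel's lemma is granted and makes the origin of the coefficient $2b$ more transparent. Note that both arguments tacitly use $\gamma\ge 0$ and $c>0$ (you flag the former, and the paper needs both as well, e.g.\ when merging the case $U(\rho)\le 16c^2$ into its final bound); these hold in every application in the text, so this is not a defect.
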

\begin{proof}
An argument by E. Borel (see ~\cite{hay}) yields that if $k \geq1$ for each fixed $r>r_0$ 
\begin{equation} \label{eq3}
U(\rho\textquoteright)< U(\rho)+k\log 2,\rho\textquoteright=\rho+\exp\left\lbrace-U(\rho/k)\right\rbrace,
\end{equation}
where $\rho > r$ belongs to a set of values of $\rho$ which can be enclosed in a finite or infinite number of intervals of length at most \begin{equation}\label{eq4}
2 / \exp \left\lbrace \frac{U(r)}{k} \right\rbrace.
\end{equation}
Let $ r_0 \leq r < R $ if 
\begin{equation}\label{eq5}
2 / \exp \left\lbrace \frac{U(r)}{k} \right\rbrace < R-r
\end{equation} 
then for $\rho, \quad r< \rho < R$ for which (\ref{eq3}) holds and from (\ref{eq3}) also we have
\begin{equation}\label{eq6}
\log \frac{1}{\rho\textquoteright-\rho}= \frac{1}{k} U(\rho).
\end{equation}
Let $k=2b$ so
\begin{align}
c \log^{+}U(\rho\textquoteright) &\leq c \log^{+}U(\rho)+ \log^{+} k\log 2 + \log2 \notag \\
&\leq c \log ^{+}U(\rho)+c \log^{+}k+c \log^{+} \log 2+c \log 2 \notag \\
&< c \log^{+}U(\rho)+c \log 2b+ 2c \log 2 \notag \\
&= c \log^{+}U(\rho)+c \left( \log b+ \log 2 \right) +2c \log2 \notag \\
&= c\log^{+}U(\rho)+c \left( \log b + 3 \log 2 \right)\label{eq11}
\end{align}
taking $r$ as $\rho$ and $R$ as $\rho\textquoteright$ in (\ref{eq2}) and using (\ref{eq6}) and (\ref{eq11}) we get 
\begin{align*}
U(\rho) &< M+b \log \frac{1}{\rho\textquoteright-\rho}+ c \log^{+} U(\rho\textquoteright)+\gamma(\rho) \\
&< M+\frac{1}{2}U(\rho)+c \log^{+} U(\rho) + \left( \log b +3 \log 2 \right) + \gamma(\rho)
\end{align*}
this implies 
\begin{equation}\label{eq12}
U(\rho)< 2M +2 \log^{+} U(\rho)+2c \left( \log b +3 \log 2 \right) + 2 \gamma(\rho)
\end{equation}
we have that $$\log^{+}U(\rho)< \sqrt{U(\rho)}$$ and $$U(\rho)> 16 c^2, \quad r_0< \rho <1$$ implies $$\frac{1}{U(\rho)}< \frac{1}{16c^2} $$ thus $$\frac{1}{\sqrt{U(\rho)}}< \frac{1}{4c}$$ we get then $$ \sqrt{U(\rho)}< \frac{1}{4c} U(\rho).$$ Thus unless $$U(\rho) \leq 16 c^2, \quad r_0<\rho<1$$ we have $$\log^{+}U(\rho)< \sqrt{U(\rho)}< \frac{1}{4c} U(\rho).$$
In (\ref{eq12}) using both cases simultaneously we get
\begin{align*}
U(\rho) < 16c^2+4M+4c \left( \log b +3 \log 2\right) +4 \gamma(\rho) 
\end{align*} 
since $r< \rho<R$ this implies $U(r)<U(\rho)$ and $\gamma(\rho)<\gamma(R)$ as $U$ and $\gamma$ increases so we an find an absolute constant $A$ with
\begin{equation}\label{eq13}
U(r)<A+4\gamma(R).
\end{equation}All that we have obtained is due to equation (\ref{eq5}) as if (\ref{eq5}) does not hold then
 \begin{equation}\label{eq14}
 U(r)< 2b \log \frac{2}{R-r}
\end{equation}
instead of (\ref{eq13}). Considering all possiblities that led (\ref{eq13}) and (\ref{eq14}) together we get \begin{align}
U(r)&< 16c^2+4M+4c\left\lbrace\log b+ 3 \log 2 \right\rbrace +4 \gamma(R)+2b \log \frac{1}{R-r} \notag \\
&=M_1+4\gamma(R)+2b \log \frac{1}{R-r},\label{eq16}
\end{align} 
where $r_0<r<R<1, \quad M_1=16c^2+4M+4c\left\lbrace\log b+ 3 \log 2 \right\rbrace.$
\end{proof}
\begin{rem}  Consider $R= \frac{r+1}{2}$ in above lemma then we get conclusion of lemma independent of $R$.
\end{rem}
\begin{rem} We can easily replace $4 \gamma(R)$ in conclusion of above lemma by 
\\
$k\gamma(R), \quad k>1$ by suitably changing constant $M_1$ and coefficient of $\log \frac{1}{R-r}$ in (\ref{eq16}).
\end{rem}
\begin{thm}\label{montel}
\textbf{Montel\textquoteright s Theorem}
\\
$\mathcal{F}$ is family of meromorphic functions in $\Delta $. If every $f \in \mathcal{F}$ omits the value 0,1 and $\infty$ then $\mathcal{F}$ is normal.
\end{thm}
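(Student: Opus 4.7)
The plan is to apply the second fundamental theorem \ref{sft} to each $f \in \mathcal{F}$, use Nevanlinna's Estimate to control the error term, feed the result into Lemma \ref{lemma2} to obtain a growth bound on $T(r, f)$, and then, by a Möbius change of variable $\phi_\alpha$, arrange this bound to be uniform in $f$; normality of $\mathcal{F}$ will then follow from Lemma \ref{lemmaB} (together with the note following it) and Lemma \ref{lemma1}.

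First, since every $f \in \mathcal{F}$ omits $\infty$, each such $f$ is in fact holomorphic on $\Delta$ and avoids the finite values $0$ and $1$. Applying Theorem \ref{sft} with $a_1 = 0$, $a_2 = 1$, and using that $f$ omits these values together with $\infty$, the counting functions $N(r, 0)$, $N(r, 1)$ and $N(r, f)$ all vanish; Theorem \ref{fft} then converts every proximity function on the left into a characteristic:
$$3 T(r, f) + O(1) = m(r, f) + m(r, 0) + m(r, 1) \leq 2 T(r, f) - N_1(r) + S(r).$$
Since $N_1(r) \geq 0$, this yields $T(r, f) \leq S(r) + O(1)$.

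Next I would expand $S(r)$ using Nevanlinna's Estimate for $m(r, f'/f)$ and $m(r, f'/(f - 1))$, obtaining, for $0 < r < R < 1$,
$$T(r, f) \leq A + B \log^+ T(R, f) + C \log^+ \frac{1}{R - r} + D(f(0), f'(0))$$
with universal constants $A, B, C$ and a term $D$ depending only on the initial data $f(0)$ and $f'(0)$. This is exactly the form required by Lemma \ref{lemma2} with $U(r) = T(r, f)$ and $\gamma(r) \equiv D$; combined with the choice $R = (r + 1)/2$ from the remark following that lemma, the conclusion is
$$T(r, f) \leq M_1 + 4 D(f(0), f'(0)) + 2b \log \frac{2}{1 - r}.$$

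The main obstacle is that this bound still depends on the individual $f$ through $D$, so it is not yet uniform. To remove this dependence, for each $f \in \mathcal{F}$ I would select $\alpha = \alpha(f) \in \Delta$ with $|\alpha| \leq r_0 < 1$ such that $f(\alpha)$ lies in a prescribed compact subset of $\mathbb{C} \setminus \{0, 1\}$ and $|f'(\alpha)|$ is bounded below by a positive absolute constant. The function $f_\alpha = f \circ \phi_\alpha$ still omits $0, 1, \infty$, and re-running the whole argument for $f_\alpha$ produces $T(r, f_\alpha) \leq \Sigma(r)$ with $\Sigma$ independent of $f$. Since $f_\alpha$ is holomorphic, $m(r, f_\alpha) = T(r, f_\alpha) \leq \Sigma(r)$, so by the note after Lemma \ref{lemmaB} the family $\{f_\alpha : f \in \mathcal{F}\}$ is normal, and Lemma \ref{lemma1} transfers normality back to $\mathcal{F}$. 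The delicate step is organizing the choice of $\alpha(f)$ uniformly: this is handled by a case split on whether $f(0)$ is close to $\{0, 1\}$, replacing $f$ in the bad case by $1/f$ or $1/(f - 1)$ (each of which again omits $0, 1, \infty$ after relabeling) and invoking a pigeonhole/continuity argument to locate a suitable $\alpha$ inside $D(0, r_0)$.
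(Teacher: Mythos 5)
Your overall skeleton matches the paper's: apply the second fundamental theorem with $a_1=0$, $a_2=1$, kill the counting functions using the omitted values, control $S(r)$ via Nevanlinna's Estimate, feed the resulting inequality into Lemma \ref{lemma2}, and transfer a uniform bound on $m(r,f_\alpha)$ back to $\mathcal{F}$ via Lemma \ref{lemmaB} and Lemma \ref{lemma1}. Up to inequality (\ref{eq36}) your outline is essentially the paper's deduction.

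The gap is in your final uniformization step. You propose to choose, for each $f$, a point $\alpha=\alpha(f)$ with $|\alpha|\leq r_0$ such that $f(\alpha)$ lies in a fixed compact subset of $\mathbb{C}\setminus\{0,1\}$ \emph{and} $|f'(\alpha)|$ is bounded below by a positive absolute constant. No such $\alpha$ exists in general: take $f_n(z)=2+z/n$, which omits $0,1,\infty$ on $\Delta$ but has $|f_n'|\equiv 1/n\to 0$, so the term $\log\bigl|1/f'(\alpha)\bigr|$ in (\ref{eq36}) cannot be kept bounded by any choice of $\alpha$. Replacing $f$ by $1/f$ or $1/(f-1)$ does not rescue this, since $(1/f)'=-f'/f^2$ is just as small when $f$ stays away from $0$; no pigeonhole or continuity argument can manufacture a lower bound on $|f'(\alpha)|$ for nearly constant members of the family. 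The paper resolves this with a dichotomy you are missing: either the quantity $2\log^{+}|f(\alpha)|+\log\bigl|1/f'(\alpha)\bigr|$ stays below some $M$ for suitable $\alpha(f)$ and infinitely many $f$ (Case (i), where your machinery applies and yields a uniform $\Sigma(r)$), or it exceeds every $M$ for all but finitely many $f$ (Case (ii)), in which case one deduces $|f'(z)|/(1+|f(z)|^{2})<1$ on $|z|\leq r_0$ uniformly in $f$, and normality follows directly from Marty's theorem (Theorem \ref{lemmaA}). Without this second branch — which is exactly the case your "delicate step" cannot handle — the proof is incomplete.
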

Note here that Montel\textquoteright s theorem give an illustration to Bloch\textquoteright s principle.
\\
\textbf{Deduction.}

Here we deduce some inequalities for a meromorphic function from some well known theorems and inequalities :   
consider an arbitrary meromorphic function $f$ in $\Delta$ choose $ r_0 < 1 \quad \mbox{and} \quad \mid \alpha \mid < r_0$ subject to the restriction $f(\alpha) \neq 0, \infty$ and $ f^{\textquoteright} (\alpha) \neq 0 $.
\\
Nevanlinna fundamental inequality $\left( \mbox{Theorem } (\ref{fft}) \right)$ applying for $f_\alpha $, taking $q=2$, $a_1 =0 $ and $a_2 = 1 $ gives that
\begin{equation}\label{eq17}
m(r,f_\alpha)+m\left(r,\frac{1}{f_\alpha}\right)+m\left(r,\frac{1}{f_\alpha - 1}\right) \leq 2 T(r,f_\alpha) - N_1(r,f_\alpha) +S(r,f_\alpha)
\end{equation}
where $N_1(r,f_\alpha) = N\left(r,\frac{1}{f \textquoteright_{\alpha}}\right)+2N(r,f_\alpha)-N(r,f\textquoteright_{\alpha})$
\\
and $S(r,f_\alpha)= 2m\left(r,\frac{f\textquoteright_{\alpha}}{f_\alpha}\right)+m\left(r,\frac{f\textquoteright_{\alpha}}{f_\alpha-1}\right)+\log \big | \frac{1}{f\textquoteright_{\alpha}(0)} \big | + C. $
\\
Here C = absolute constant.
\\
Nevanlinna\textquoteright s first fundamental theorem $\left(\mbox{Theorem } (\ref{sft}) \right)$ for $f_\alpha $ gives that if $ f(\alpha) \neq 0,1 $ 
$$ m\left(r,\frac{1}{f_{\alpha}-1}\right)+ N\left(r,\frac{1}{f_{\alpha}-1}\right)= T(r,f_\alpha)-\log \mid f_\alpha(0)-1 \mid + \epsilon(a,R),$$ $a=1$ here
and $ \mid \epsilon(a,R) \mid < \log 2 $ since here  $\log^{+} \mid a \mid = 0 $ .
\begin{align*}
 T\left(r,\frac{1}{f_\alpha}\right) + \log \mid f_\alpha (0) \mid &= T(r,f_\alpha) \\
 &=m\left(r,\frac{1}{f_\alpha - 1}\right)+ N\left(r,\frac{1}{f_\alpha - 1 }\right) + \log \mid f_\alpha (0) -1 \mid + \epsilon^{*}. 
 \end{align*}
 \begin{align}
 m\left(r,\frac{1}{f_\alpha}\right)+N\left(r,\frac{1}{f_\alpha }\right)+\log \mid f_\alpha(0) \mid &= m\left(r,\frac{1}{f_\alpha -1}\right) \notag \\
 &+ N\left(r,\frac{1}{f_\alpha -1}\right) +\log \mid f_\alpha(0)-1 \mid +\epsilon^{*}. \label{eq19}
\end{align}
Now adding $N\left(r,\frac{1}{f_\alpha}\right)+N(r,f_\alpha) +N\left(r,\frac{1}{f_\alpha -1 }\right) + \log \mid f_\alpha(0) \lbrace f_\alpha(0)-1 \rbrace \mid $ on both side of (\ref{eq17}) we get 
\begin{align}
 m(r,f_\alpha) &+m\left(r, \frac{1}{f_\alpha}\right)+ m\left(r,\frac{1}{f_\alpha-1}\right) +N\left(r,\frac{1}{f_\alpha}\right) +N(r,f_\alpha) +N\left(r,\frac{1}{f_\alpha -1}\right)\notag  \\
 &+\log \mid f_\alpha(0)\lbrace f_\alpha(0) -1 \rbrace \mid \leq 2 T(r,f_\alpha) - N_1 (r,f_\alpha) + S(r,f_\alpha) +N\left(r,\frac{1}{f_\alpha}\right) \notag \\
 &+N(r,f_\alpha)+N\left(r,\frac{1}{f_\alpha -1 }\right)+ \log \mid f_\alpha(0)\lbrace f_\alpha (0) -1 \rbrace \mid. \label{eq22}
\end{align}
 Consider L.H.S. of (\ref{eq22}) 
\begin{align*}
m(r,f_\alpha)&+m\left(r,\frac{1}{f_\alpha}\right)+m\left(r,\frac{1}{f_\alpha - 1}\right) +N\left(r,\frac{1}{f_\alpha}\right) +N(r,f_\alpha)    \\
&+N\left(r,\frac{1}{f_\alpha-1}\right)+ \log \mid f_\alpha(0) \lbrace f_\alpha (0) -1 \rbrace  \mid \\
&= T(r,f_\alpha)+T\left(r,\frac{1}{f_\alpha}\right) + T\left(r,\frac{1}{f_\alpha - 1}\right) +\log \mid f_\alpha(0) \mid + \log \mid f_\alpha(0) -1 \mid \\
&= T(r,f_\alpha)+ \left[T\left(r,\frac{1}{f_\alpha}\right)+\log \mid f_\alpha(0) \mid \right] + \left[ T\left(r,\frac{1}{f_\alpha -1}\right) + \log \mid f_\alpha(0) -1 \mid \right] \\
&=T(r,f_\alpha)+ T(r,f_\alpha) + T(r,f_\alpha) - \epsilon ^{*}. 
\end{align*}  
Now consider R.H.S. of (\ref{eq22})
\begin{align*}
2T(r,f_\alpha)&-N\left(r,\frac{1}{f_\alpha}\right)-2N(r,f_\alpha) + N(r,f\textquoteright _\alpha) +2m\left(r,\frac{f\textquoteright _\alpha}{f_\alpha}\right) \\
&+m\left(r,\frac{f\textquoteright _\alpha}{f_\alpha -1}\right) +\log \big | \frac{1}{f\textquoteright _\alpha(0)} \big |  + C+N(r,\frac{1}{f_\alpha}) \\
&+N(r,\frac{1}{f_\alpha-1}) + N(r,f_\alpha) +\log \mid f_\alpha(0) \lbrace f_\alpha(0) -1 \rbrace \mid \\
&= 2T(r,f_\alpha) + \bar{N}(r,f_\alpha)+N\left(r,\frac{1}{f_\alpha}\right)+N\left(r,\frac{1}{f_\alpha-1}\right) -N\left(r,\frac{1}{f \textquoteright _\alpha}\right) \\
& + 2m\left(r,\frac{f\textquoteright _\alpha}{f_\alpha}\right)+m\left(r,\frac{f\textquoteright _\alpha}{f_\alpha-1}\right) + \log \big | \frac{f(\alpha) \lbrace f_\alpha(0)-1 \rbrace} {f\textquoteright_\alpha(0)} \big | + C,
\end{align*}
where $\bar{N}(r,f_\alpha) =N(r,f\textquoteright_\alpha)-N(r,f_\alpha) $ thus (\ref{eq22}) becomes
 \begin{align}
T(r,f_\alpha) &< \bar{N}(r,f_\alpha) +N\left(r,\frac{1}{f_\alpha}\right)+N\left(r,\frac{1}{f_\alpha-1}\right) - N\left(r,\frac{1}{f\textquoteright _\alpha}\right) \\
&+ 2m\left(r,\frac{f\textquoteright _\alpha}{f_\alpha}\right) + m\left(r,\frac{f\textquoteright _\alpha}{f_\alpha -1}\right) + \log \big | \frac{f(\alpha) \lbrace f(\alpha) -1 \rbrace}{f\textquoteright _\alpha(0)} \big | +C. \label{eq24}
\end{align}
\begin{proof}
\textbf{Proof of Montel\textquoteright s Theorem.}

As all $f \in \mathcal{F} $ omits 0, 1,$\infty$ that means $f(z) \neq 0,1,\infty$ forall $z \in \Delta $ therefore 
\\
$f_\alpha(z)= f(\phi_\alpha(z)) \neq 0,1,\infty \quad \forall z \in \Delta $ implies $ N(r,f_\alpha)=N\left(r,\frac{1}{f_\alpha}\right)=N\left(r,\frac{1}{f_\alpha-1}\right) = 0 $.
Thus (\ref{eq24}) gives 
\begin{equation}\label{eq25}
m(r,f_\alpha) < 2m\left(r,\frac{f\textquoteright _\alpha}{f_\alpha}\right)+ m\left(r,\frac{f\textquoteright _\alpha}{f_\alpha-1}\right) + \log \big | \frac{f(\alpha)\lbrace f(\alpha)-1}{f\textquoteright _\alpha (0)} \big | +C. 
\end{equation}
Next we use 
\begin{lem} \label{lemmaC}
$g$ is meromorphic in $\mid z \mid < 1$ and $\delta < r < R<1 $ and $g(0) \neq 0, \infty $ then
$$m\left(r,\frac{g\textquoteright}{g}\right) < 4 \log^{+}T(R,g)+ 6 \log \frac{1}{R-r}+4 \log^{+} \log^{+} \frac{1}{\mid g(0) \mid} +C_1, $$
where $C_1 = C_1(\delta) $ if $\delta \textquoteright > \delta $ we may take $C_1(\delta\textquoteright ) = C_1(\delta)$.
\\
This Lemma is nothing but the Nevanlinna \textquoteright s Estimate.
\end{lem}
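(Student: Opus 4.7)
The plan is to derive the estimate from the Poisson--Jensen formula (proved in Section 2.1) by logarithmic differentiation, following the classical Nevanlinna approach. First I would apply Poisson--Jensen to $g$ on $\mid z \mid \leq R$ to write $\log \mid g(z) \mid$ as the Poisson integral of its boundary values plus the standard sums over zeros $a_\mu$ and poles $b_\nu$ of $g$ in $\mid z \mid < R$. Reading this as the real part of a holomorphic function of $z$ and differentiating, one obtains the explicit representation
\begin{equation*}
\frac{g'(z)}{g(z)} \;=\; \frac{1}{2\pi}\int_{0}^{2\pi}\log\mid g(Re^{\iota\theta})\mid\,\frac{2Re^{\iota\theta}}{(Re^{\iota\theta}-z)^{2}}\,d\theta
+ \sum_{\mu}\!\left(\frac{1}{z-a_\mu}+\frac{\bar{a}_\mu}{R^{2}-\bar{a}_\mu z}\right)
- \sum_{\nu}\!\left(\frac{1}{z-b_\nu}+\frac{\bar{b}_\nu}{R^{2}-\bar{b}_\nu z}\right).
\end{equation*}

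Next I would take $\log^{+}$ of the modulus and integrate over $\mid z \mid = r$. For the boundary term, the Poisson-type kernel is at most $\frac{2R}{(R-r)^{2}}$ uniformly in $\theta$, so Theorem \ref{ch2,sec2,t1}(iii) applied to $\log\mid g\mid = \log^{+}\mid g\mid -\log^{+}\mid 1/g\mid$ bounds this contribution to $m(r,g'/g)$ by a multiple of $\frac{R}{R-r}\,[\,m(R,g)+m(R,1/g)\,]$. The first fundamental theorem (Theorem \ref{fft}) converts $m(R,1/g)$ into $T(R,g)+\log^{+}\!\frac{1}{\mid g(0)\mid}+O(1)$, so this piece is controlled by $T(R,g)$ with the desired dependence on $R-r$. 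For the rational sums I would use the elementary identity $\frac{1}{2\pi}\int_{0}^{2\pi}\log^{+}\!\frac{1}{\mid re^{\iota\phi}-a\mid}\,d\phi \leq \log^{+}\!\frac{1}{r-\mid a\mid}+\log 2$ for $\mid a\mid < r$, together with the trivial bound on $\mid \bar a_\mu/(R^{2}-\bar a_\mu z)\mid$, to see that each zero or pole contributes $O(\log\frac{1}{R-r})$. Summing gives a factor $n(R,g)+n(R,1/g)$, and this counting quantity is absorbed using $n(R,g)\log\frac{R'}{R}\leq N(R',g)$ (as in Remark \ref{rem1}) for a slightly larger radius $R'$, followed once more by Theorem \ref{fft} to express $N(R',1/g)$ through $T(R',g)$.

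Collecting everything at this stage produces an inequality of the form
\begin{equation*}
m\!\left(r,\frac{g'}{g}\right)\; <\; A\log^{+}T(R',g)+B\log\frac{1}{R-r}+D\log^{+}\log^{+}\!\frac{1}{\mid g(0)\mid}+C_{0},
\end{equation*}
but with non-sharp constants and with $R'$ slightly larger than $R$. To pass from this intermediate inequality to the clean statement with constants $4,6,4$ on the right-hand side and only $R$ appearing, I would invoke the Borel-type growth lemma (Lemma \ref{lemma2}) with $U(r)=T(r,g)$ to absorb the $\log^{+}T(R',g)$ into $\log^{+}T(R,g)$ at the cost of modifying the coefficient of $\log\frac{1}{R-r}$. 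Finally, the $\log^{+}\log^{+}\frac{1}{\mid g(0)\mid}$ term arises precisely from applying $\log^{+}$ to the Jensen correction $\log\mid g(0)\mid$ that appears when converting $m(R,1/g)$ via the first fundamental theorem.

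The hard part will be the bookkeeping of constants. The raw Poisson--Jensen calculation gives the correct \emph{shape} of the estimate easily, but obtaining the specific coefficients $4,6,4$ requires iterating a Borel-style lemma and carefully tracking how the $\log^{+}$ inequalities of Theorem \ref{ch2,sec2,t1} compound through the chain of substitutions. The constant $C_{1}=C_{1}(\delta)$ depending only on a lower cut-off for $r$ appears because the bound $\log^{+}\!\frac{1}{r-\mid a\mid}$ must be localized away from $r=0$, so one fixes any $\delta>0$ and restricts to $r>\delta$; enlarging $\delta$ only makes the relevant lower bounds tighter, which is why $C_{1}(\delta')\leq C_{1}(\delta)$ for $\delta'>\delta$.
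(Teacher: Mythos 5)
You should first be aware that the paper does not actually prove this lemma: it simply remarks that the statement \emph{is} the Nevanlinna Estimate quoted (without proof, from Hayman) in Section 2.5, specialized to $R<1$ (so the term $5\log^{+}R$ vanishes) and to $r>\delta$ (so $\log^{+}\frac{1}{r}<\log\frac{1}{\delta}$ is absorbed into $C_{1}(\delta)$, which also explains why $C_{1}(\delta')$ may be taken equal to $C_{1}(\delta)$ for $\delta'>\delta$). Your attempt to prove the estimate from scratch via the differentiated Poisson--Jensen formula is the classically correct strategy, and your representation of $g'/g$ is right, but the outline has a genuine gap precisely at the hard point. You propose to bound the contribution of each zero and pole to $m\left(r,\frac{g'}{g}\right)$ separately by $O\!\left(\log\frac{1}{R-r}\right)$ and then multiply by the number of terms, obtaining a factor $n(R,g)+n\left(R,\frac{1}{g}\right)$ which you convert to $T(R',g)$ through $N$. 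That chain produces an error term of size roughly $T(R',g)\log\frac{1}{R-r}$, i.e.\ \emph{linear} in the characteristic, and your claimed intermediate inequality with $A\log^{+}T(R',g)$ does not follow from it. The entire content of the lemma on the logarithmic derivative is that the error is $O(\log^{+}T)$; getting this requires applying $\log^{+}$ to the whole sum over zeros and poles and exploiting concavity of the logarithm (Jensen's inequality over the circle, or Hayman's device of integrating a fractional power of $\left|\frac{g'}{g}\right|$), so that only $\log^{+}n(R)$ rather than $n(R)$ itself multiplies the singular contributions. Term-by-term estimation cannot recover this, and with an error linear in $T$ the lemma would be useless in every application made of it in Chapter 4.

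Two further slips. Your inequality $\frac{1}{2\pi}\int_{0}^{2\pi}\log^{+}\frac{1}{\mid re^{\iota\phi}-a\mid}\,d\phi\leq\log^{+}\frac{1}{r-\mid a\mid}+\log 2$ degenerates as $\mid a\mid\to r$, while zeros and poles of $g$ may lie arbitrarily close to the circle $\mid z\mid=r$; the correct and essential fact (from Jensen's formula applied to $z-a$) is that this mean is at most $\log^{+}\frac{1}{r}+\log 2$ \emph{uniformly in} $a$, and it is exactly here that the restriction $r>\delta$ and the constant $C_{1}(\delta)$ enter. Finally, Lemma \ref{lemma2} cannot be used to ``absorb $\log^{+}T(R',g)$ into $\log^{+}T(R,g)$'': that lemma turns a self-referential inequality for $U$ into an absolute bound on $U$; it does not shift radii inside a $\log^{+}T$ term. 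One instead fixes the intermediate radius, e.g.\ $R'=\frac{r+R}{2}$, from the outset and uses monotonicity of $T$.
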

Using Lemma (\ref{lemmaC}) for $ g=f_\alpha $ and $g = f_\alpha-1 $ also $ T(R,f_\alpha)=m(R,f_\alpha)$ and $T(r,f_\alpha-1) =m(r,f_\alpha - 1)$ since $ f_\alpha \neq 0, \infty $
$$m\left(r,\frac{f\textquoteright _\alpha}{f\alpha}\right) < 4 \log ^{+} T(R,f_\alpha) + 6 \log \frac{1}{R-r}+ 4 \log^{+} \log^{+} \frac{1}{\mid f_\alpha(0) \mid }+C_1$$ 
$$m\left(r,\frac{f\textquoteright _\alpha}{f_\alpha-1}\right)< 4 \log^{+}T(R,f_\alpha-1)+6 \log \frac{1}{R-r}+4 \log^{+} \log^{+} \frac{1}{\mid f_\alpha(0)-1 \mid}+C_1$$
using in (\ref{eq25})
\begin{align}
m(r,f_\alpha)=T(r,f_\alpha)&< 8 \log^{+}T(R,f_\alpha)+12 \log \frac{1}{f_\alpha}+8 \log^{+} \log^{+} \frac{1}{\mid f_\alpha(0) \mid } \notag \\
&+ 4 \log^{+}T(R,f_\alpha-1)+6 \log \frac{1}{R-r} + 4 \log^{+} \log^{+} \big | \frac{1}{f_\alpha(0)-1} \big | \notag \\
&+ \log \big | \frac{f(\alpha) \lbrace f(\alpha)-1 \rbrace 	}{f \textquoteright _\alpha (0)} \big | +C \notag \\
&< 12 \log^{+}T(R,f_\alpha)+18 \log \frac{1}{R-r} +8 \log^{+} \log^{+} \frac{1}{\mid f_\alpha(0) \mid}+\log \mid f(\alpha) \mid \notag \\
&+\log \mid f(\alpha)-1 \mid +4 \log^{+} log^{+} \big | \frac{1}{f(\alpha)-1} \big | + \log \big | \frac{1}{f \textquoteright _\alpha (0)} \big | +C, \label{eq30}
\end{align}
for $\frac{1}{2} < r < R < 1$. Since 
\begin{align*}
\log \mid f_\alpha(0) \lbrace f_\alpha(0)-1 \rbrace \mid &= \log \mid f_\alpha (0) \mid + \log \mid f_\alpha (0) -1 \mid \\
&=\log \mid f(\alpha) \mid + \log \mid f(\alpha)-1 \mid.
\end{align*} 
Also 
$$T(R,f_\alpha-1) \leq T(R,f_\alpha) + T(R,g) +\log 2,$$ where $g \equiv 1 $ on $\Delta$ and clearly $T(R,1) = 0 $.
\\
\textbf{To show}
\\
If $A> e $ and $u \in \mathbb{C}$ then 
$$\log \mid u \mid + A \log^{+} \log^{+} \mid \frac{1}{u} \mid \leq \log^{+} \mid u \mid + A \log A .$$
For this let $x= \mid u \mid >0 $ and $A>e$. Let $\phi (x)= x-A \log x $
$$\phi \textquoteright (x)=1-\frac{A}{x} =0 \Rightarrow x=A.$$
$$\phi {\textquoteright} {\textquoteright} (x) \mid _ {x=A} = \frac{A}{A^2} = \frac{1}{A} > 0.$$
Which gives $x=A$ is point of minima for $\phi (x)$
$\Rightarrow A-A \log A \leq x-A \log x, $ for all $x>0$. Now let $x=\log^{+} 1/x$ implies $$ A(1-\log A) \leq \log^{+} 	1/x- A \log^{+} \log^{+} 1/x$$
$$\Rightarrow A(\log A -1) + \log^{+}1/x \geq A \log^{+} \log^{+} 1/x.$$
Now since $\log x = \log^{+} x - \log ^{+} 1/x$ so
$\log^{+} x - \log x = \log ^{+} 1/x$ which gives 
\begin{align*}
 A \log^{+} \log^{+} \frac{1}{x} + \log x &\leq  \log^{+} x + A \log A - A \\ 
& \leq \log^{+} x + A \log A.
\end{align*}
( because $A>e$ therefore $ A $ is positive) therefore  we get the required result.
\\
Using in (\ref{eq30})
\begin{align}
T(r,f_\alpha)&< 12 \log^{+}T(R,f_\alpha) + 18 \log \frac{1}{R-r} + \log^{+} \mid f(\alpha) \mid + \log^{+} \mid f(\alpha)-1 \mid \notag \\
&+ \log \mid \frac{1}{f \textquoteright _ \alpha (0)} \mid + C, \label{eq32}
\end{align} for $\frac{1}{2} < r < R < 1 $.
\\
If $\mid \alpha \mid < r_0$ 
$$\phi_\alpha (z) = \frac{z-\alpha}{\bar{\alpha}z-1}$$
$$\phi \textquoteright_\alpha(z)=\frac{(\bar{\alpha}z-1)-(z-\alpha)\bar{\alpha}}{(\bar{\alpha}z-1)^2}= \frac{\mid \alpha \mid^2 -1}{(\bar{\alpha}z-1)^2}$$
$$\phi \textquoteright _\alpha(0) = \mid \alpha\mid ^2 - 1 < 0$$ implying
$$ \mid \frac{1}{\phi \textquoteright_\alpha(0)} \mid < S, $$ where $S= S(r_0)$ and $f_\alpha(z)= f(\phi _\alpha(z));$ $ f \textquoteright _\alpha(z)= f \textquoteright(\phi_\alpha(z)) \phi \textquoteright _\alpha(z)$ which gives
$$\big | \frac{1}{f\textquoteright_\alpha(0)} \big |= \big | \frac{1}{f \textquoteright (\alpha) \phi \textquoteright_\alpha(0)} \big | < \frac{1}{\mid f \textquoteright (\alpha) \mid}S$$ 
$$\Rightarrow \log \mid \frac{1}{f \textquoteright_\alpha(0)} \mid < \log \mid \frac{1}{f \textquoteright (\alpha)} \mid + \log S $$ because  $\log $ is increasing function. Thus we get in (\ref{eq32})
\begin{align}
T(r,f_\alpha)&< 12 \log^{+} T(R,f_\alpha) +18 \log \frac{1}{R-r}+ \log^{+} \mid f(\alpha) \mid\notag  \\
& + \log^{+} \mid f(\alpha)-1 \mid + \log \mid \frac{1}{f \textquoteright(\alpha)} \mid + \log S +C. \label{eq34}
\end{align}
Also 
\\
$\mid f(\alpha)-1 \mid \leq \mid f(\alpha) \mid +1$ since $\log^{+}$ is an increasing function thus we get \begin{align*}
\log^{+} \mid f(\alpha)-1 \mid &\leq \log^{+} \left[\mid f(\alpha) \mid +1 \right] \notag \\
&< \log^{+} \mid f(\alpha) \mid +\log^{+} 1 + \log 2.
\end{align*} 
Now (\ref{eq34}) becomes 
\begin{align}
T(r,f_\alpha)&< 12 \log^{+} T(R,f_\alpha) +18 \log \frac{1}{R-r} +2 \log^{+} \mid f(\alpha)\mid \\ 
& + \log \big | \frac{1}{f \textquoteright (\alpha)} \big |+C_1+ S_1,\label{eq36}
\end{align} here $C_1=C+ \log2 , S_1 =\log S $ for $\frac{1}{2}<r<R<1 , \mid \alpha \mid < r_0 < 1$ and $S_1=S_1(r_0)$.
\\
We have two cases
\\
\textbf{Case(i)}
Let $\mathcal{F}_1$ be an infinite denumerable subcollection of the family $\mathcal{F}$. Now we have inequality (\ref{eq36}) for each $f \in \mathcal{F}_1$.
\\
Let there are constants $M< \infty$, $r_0 < 1$ and an infinite subsets $\mathcal{F}_2$ of $\mathcal{F_1}$ with the property that if $f \in \mathcal{F}_2$ there is an $\alpha = \alpha(f) , \mid \alpha \mid < r_0$ with 
\begin{equation}\label{eq37}
2 \log^{+} \mid f(\alpha) \mid + \log \mid \frac{1}{f \textquoteright(\alpha)} \mid < M. 
\end{equation}
Using (\ref{eq37}) in (\ref{eq36}) we have 
$$T(r,f_\alpha)< M +18 \log \frac{1}{R-r} + 12\log^{+}T(R,f_\alpha)+ \gamma (r),$$ 
for $\frac{1}{2}<r<R<1 , \mid \alpha \mid < r_0 < 1 $ here $\gamma (r) =0, $ for each $r<1$.
Then by Lemma (\ref{lemma2}), for $T(r,f_\alpha)=U(r) , \gamma(r) =0 $ for each $r<1$ also $U(r)=T(r,f_\alpha)=m(r,f_\alpha)$ is independent of $f \in \mathcal{F}_2$ therefore above is true for all $f \in \mathcal{F}_2$. This gives that 
$$T(r,f_\alpha)=m(r,f_\alpha)< M_1+4 \gamma (R)+2 .18 \log \frac{1}{R-r},$$ $ \frac{1}{2}<r<R<1,$ for all $f \in \mathcal{F}_2 \subset \mathcal{F}_1 \subseteq \mathcal{F}$ this implies $$ m(r,f_\alpha) < M_1 +36 \log \frac{1}{R-r} = \sum(r).$$
Thus by Lemma (\ref{lemma2}), $\lbrace  f_\alpha , f \in \mathcal{F} \rbrace $ is normal and by Lemma (\ref{lemma1}) $\mathcal{F}$ is normal. 
\\
\textbf{Case(ii)}
If condition (\ref{eq37}) does not hold then for given $M< \infty , r_0 < 1$  
$$2 \log^{+} \mid f(\alpha) \mid + \log \mid \frac{1}{f \textquoteright (\alpha)} \mid > M , \mid \alpha \mid \leq r_0, $$
for all but a finite number of $f  \in \mathcal{F}_1$. Consider 
\begin{align*}
\mid 2 \log^{+} \mid f(\alpha) \mid &+ \log \big| \frac{1}{f \textquoteright (\alpha)} \mid - \log \left\lbrace \frac{1+ \mid f(\alpha) \mid^2}{\mid f \textquoteright (\alpha) \mid} \right\rbrace \big| \\
& = \mid 2 \log^{+} \mid f(\alpha) \mid - \log (1+ \mid f(\alpha) \mid ^2  \\
&= \big| \log^{+} \frac{\mid f( \alpha) \mid ^2}{1+\mid f(\alpha \mid ^2)} \big| = 0 
\end{align*}
because  $$\frac{\mid f( \alpha) \mid ^2}{1+\mid f(\alpha) \mid ^2}<1$$ implying $$ -\log2 \leq 2 \log^{+} \mid f(\alpha) \mid + \log \mid \frac{1}{f \textquoteright (\alpha)} \mid - \log \left\lbrace \frac{1+ \mid f(\alpha) \mid ^2}{\mid f \textquoteright (\alpha ) \mid } \right\rbrace \leq \log 2$$
which again implies
 $$ -\log 2 +A \leq 2 \log^{+} \mid f(\alpha) \mid + \log \mid \frac{1}{f \textquoteright (\alpha)} \mid \leq \log 2 + A, $$
 where $A=\log \lbrace \frac{1+ \mid f(\alpha) \mid ^2}{\mid f \textquoteright (\alpha ) \mid } \rbrace$. In (\ref{eq36})
\begin{align*}
 0&< 2\log^{+} \mid f(\alpha) \mid + \log \mid \frac{1}{f \textquoteright (\alpha)} \mid \\
&< \log2 +A \\
&= A+ C {\textquoteright}{\textquoteright}
\end{align*}
$$\Rightarrow 0< \log \left\lbrace \frac{1+ \mid f(\alpha) \mid ^2}{\mid f \textquoteright (\alpha ) \mid } \right\rbrace $$
which gives
$$\log1=0 > \log \frac{\mid f \textquoteright (\alpha) \mid}{1+ \mid f(\alpha) \mid^2}$$ 
this gives $$\frac{\mid f \textquoteright (\alpha) \mid}{1+\mid f(\alpha) \mid ^2} < 1,$$ for all $\alpha ; \mid \alpha \mid < r_0<1$. As $f$ is arbitrary here so $$\frac{\mid f \textquoteright (z) \mid}{1+\mid f(z) \mid ^2} < 1, $$ for $\mid z \mid \leq r_0<1$, independent of $f \in \mathcal{F}$. Then by Lemma (\ref{lemmaA}) we have that $\mathcal{F}$ is normal.
\end{proof}
\begin{thm} 
\textbf{Generalization of Montel\textquoteright s theorem }

$\mathcal{F} $ be a family of meromorphic functions in $\Delta$ and suppose all poles of multiplicities $\geq h$, all zeros of multiplicities $\geq k$ and all zeros of $f(z)-1$  of multiplicities $\geq l $ with $$\frac{1}{h}+ \frac{1}{k}+\frac{1}{l} =\mu < 1.$$ 
\\
Then $\mathcal{F}$ is normal.
\end{thm}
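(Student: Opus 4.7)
The plan is to follow the same architecture as the proof of Theorem \ref{montel}, starting from inequality (\ref{eq24}), but now using the multiplicity hypotheses to absorb the counting functions that no longer vanish. For any $f \in \mathcal{F}$ and any $\alpha$ with $|\alpha| < r_0 < 1$ such that $f(\alpha) \notin \{0,1,\infty\}$ and $f'(\alpha) \neq 0$, I would retain
\[
T(r,f_\alpha) < \bar N(r,f_\alpha) + N\!\left(r,\tfrac{1}{f_\alpha}\right) + N\!\left(r,\tfrac{1}{f_\alpha-1}\right) - N\!\left(r,\tfrac{1}{f'_\alpha}\right) + (\text{proximity and log terms}),
\]
unlike the original Montel argument where all of these $N$-terms were zero.

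The crucial step, and the only one that uses the hypothesis, is to show that the combination of $N$-terms on the right is at most $\mu\, T(r,f_\alpha) + O(1)$. Since every pole of $f_\alpha$ has order $\geq h$, one gets $\bar N(r,f_\alpha) \leq \tfrac{1}{h} N(r,f_\alpha) \leq \tfrac{1}{h} T(r,f_\alpha)$. Because the zeros of $f_\alpha$ and the zeros of $f_\alpha-1$ are disjoint, and each zero of $f_\alpha$ of order $m \geq k$ is a zero of $f'_\alpha$ of order $m-1 \geq (1-\tfrac{1}{k})m$, and similarly for $f_\alpha-1$ with $l$ in place of $k$,
\[
N\!\left(r,\tfrac{1}{f'_\alpha}\right) \geq \left(1 - \tfrac{1}{k}\right) N\!\left(r,\tfrac{1}{f_\alpha}\right) + \left(1 - \tfrac{1}{l}\right) N\!\left(r,\tfrac{1}{f_\alpha - 1}\right).
\]
Combining these and applying the first fundamental theorem (Theorem \ref{fft}) makes the difference collapse to $\tfrac{1}{k} T(r,f_\alpha) + \tfrac{1}{l} T(r,f_\alpha) + O(1)$, so that altogether $(1-\mu) T(r,f_\alpha)$ is bounded by the proximity terms plus a controllable remainder.

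I would then apply the Nevanlinna Estimate (Lemma \ref{lemmaC}) to the two $m(r,f'_\alpha/\cdot)$ terms to obtain an inequality of the exact form required by Lemma \ref{lemma2}, namely
\[
U(r) < M + b \log \tfrac{1}{R-r} + c \log^+ U(R) + \gamma(r),
\]
with $U(r) = T(r,f_\alpha)$, constants $b, c$ depending only on $\mu$ and $r_0$, and $\gamma(r)$ absorbing $\log^+|f(\alpha)| + \log|1/f'(\alpha)|$. The rest is the dichotomy already used in Theorem \ref{montel}: either an infinite subcollection of $\mathcal{F}$ admits a choice $\alpha = \alpha(f)$ with $2\log^+|f(\alpha)| + \log|1/f'(\alpha)|$ uniformly bounded, in which case Lemma \ref{lemma2} makes $m(r,f_\alpha)$ majorised by a finite function $\Sigma(r)$ on compact subsets and Lemmas \ref{lemmaB} and \ref{lemma1} yield the normality of $\mathcal{F}$; or no such choice exists, in which case the spherical derivative $f^\sharp$ is uniformly bounded on $|z| \leq r_0$ and Marty's Theorem (Theorem \ref{lemmaA}) applies.

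The main obstacle is the bookkeeping that isolates the factor $(1-\mu) > 0$ cleanly from inequality (\ref{eq24}); the hypothesis $\mu < 1$ is sharp here, for otherwise the $T(r,f_\alpha)$ contribution from the $N$-terms could not be moved to the left-hand side and Lemma \ref{lemma2} would fail to apply. Everything else is a careful but routine replay of the original Montel argument.
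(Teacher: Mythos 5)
Your outline follows the paper's proof almost step for step: the same starting inequality (\ref{eq24}), the same conversion of the multiplicity hypotheses into the bounds $\bar N \leq \tfrac{1}{h}N$, $\tfrac{1}{k}N$, $\tfrac{1}{l}N$ (your lower bound on $N(r,1/f'_\alpha)$ is just an equivalent way of writing $N-\bar N \geq (1-\tfrac1k)N$), the same use of the Nevanlinna Estimate and of Lemma \ref{lemma2}, and the same closing dichotomy ending in Marty's theorem.

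There is, however, one genuine gap in your Case (i). For a family of \emph{meromorphic} functions, the conclusion $T(r,f_\alpha) \leq \Sigma(r)$ of Lemma \ref{lemma2} does not feed directly into Lemma \ref{lemmaB}: that lemma (and the Nevanlinna inequality $T \leq \log^+ M \leq \tfrac{R+r}{R-r}\,T$ behind it) requires the functions to be holomorphic, whereas here poles are permitted, only with multiplicity $\geq h$. In the original Montel theorem this issue does not arise because every $f$ omits $\infty$; here you must first show that the point under consideration is not a limit point of poles of the subfamily. The paper does exactly this: if poles $x_n \to 0$ existed, then $N(r,f) \geq \log\tfrac{r}{|x_n-\alpha_n|} \to \infty$, contradicting $N(r,f) \leq T(r,f_\alpha') \leq \Sigma(r)$; only after that can one restrict to a pole-free neighbourhood of the origin, where $T = m$ and Lemma \ref{lemmaB} applies. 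Without this step the chain "$T(r,f_\alpha) \leq \Sigma(r)$ hence locally bounded hence normal" is broken. The ingredients to fill the gap are already in your inequality (the bound controls $N(r,f_\alpha)$ as well as $m(r,f_\alpha)$), but the argument needs to be made; note also that this forces the whole proof to be run locally, point by point, rather than globally on $\Delta$ as in Theorem \ref{montel}.
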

\begin{proof}
In inequalities (\ref{eq24}) we proceed as follows 
\begin{align}
T(r,f_\alpha)&< \bar{N}(r,f_\alpha)+N\left(r,\frac{1}{f_\alpha}\right)+N\left(r,\frac{1}{f_\alpha-1}\right)-N\left(r,\frac{1}{f \textquoteright _\alpha}\right) +2m\left(r,\frac{f\textquoteright_\alpha}{f_\alpha}\right) \notag \\
&+m\left(r,\frac{f\textquoteright_\alpha}{f_\alpha-1}\right) +\log \big | \frac{f(\alpha) \lbrace f(\alpha)-1 \rbrace}{f\textquoteright_\alpha(0)} \big |+C,\label{eq39}
\end{align}
where $f(\alpha) \neq 0,1,\infty$ and $f \textquoteright(\alpha) \neq 0 $. We know
\\
$\bar{N}(r,f_\alpha)= N(r,f \textquoteright_\alpha)-N(r,f_\alpha)$ and $\bar{N}\left(r,\frac{1}{f_\alpha}\right)=N\left(r,\frac{1}{f_\alpha}\right)-N\left(r,\frac{1}{f\textquoteright_\alpha}\right).$
\\
Similarly $\bar{N}\left(r,\frac{1}{f_\alpha-1}\right)=N\left(r,\frac{1}{f_\alpha-1}\right)-N \left(r,\frac{1}{f\textquoteright_\alpha}\right)$ since $(f_\alpha-1) \textquoteright = f\textquoteright_{\alpha}$.
\\
So in (\ref{eq39}) we have
\begin{align*}
T(r,f_\alpha)&<\bar{N}(r,f_\alpha)+N \left(r,\frac{1}{f_\alpha}\right)-N \left(r,\frac{1}{f\textquoteright_\alpha}\right)+N \left(r,\frac{1}{f_\alpha-1}\right) \\
&-N \left(r,\frac{1}{f\textquoteright_\alpha}\right)+N \left(r,\frac{1}{f\textquoteright_\alpha}\right) +2m \left(r,\frac{f\textquoteright_\alpha}{f_\alpha}\right)+m \left(r,\frac{f\textquoteright_\alpha}{f_\alpha-1}\right) \\
&+ \log \big | \frac{f(\alpha) \lbrace f(\alpha)-1 \rbrace}{f\textquoteright_\alpha (0)} \big | +C \\
&= \bar{N}(r,f_\alpha)+\bar{N}\left(r,\frac{1}{f_\alpha}\right)+\bar{N} \left(r,\frac{1}{f_\alpha-1}\right)+2m \left(r,\frac{f \textquoteright_\alpha}{f_\alpha}\right) \\
&+m \left(r,\frac{f\textquoteright_\alpha}{f_\alpha-1}\right) +\log \big | \frac{f(\alpha) \lbrace f(\alpha)-1 \rbrace}{f\textquoteright_\alpha(0)} \big | +C.
\end{align*}
Now we can see that $$\bar{N}(r,f_\alpha) \leq \frac{1}{h} N(r,f_\alpha) \leq \frac{1}{h}T(r,f_\alpha )$$
$$\bar{N} \left(r,\frac{1}{f_\alpha} \right) \leq \frac{1}{k} N \left(r,\frac{1}{f_\alpha} \right) \leq T \left(r,\frac{1}{f_\alpha} \right) =\frac{1}{k} \left\lbrace T(r,f_\alpha) - \log \mid f_\alpha(0) \mid \right\rbrace $$ 
\begin{align*}
\bar{N}\left(r,\frac{1}{f_\alpha-1}\right) &\leq \frac{1}{l} N \left(r,\frac{1}{f_\alpha-1}\right) \leq \frac{1}{l} T\left(r,\frac{1}{f_\alpha-1}\right) \\
&= \frac{1}{l} \left\lbrace T(r,f_\alpha-1) - \log \mid f(\alpha)-1 \mid \right\rbrace \\
& \leq \frac{1}{l} \left\lbrace T(r,f_\alpha)+T(r,-1) +\log 2 -\log \mid f(\alpha) -1 \mid \right\rbrace .
\end{align*}
Because $$T(r,f_\alpha-1)=T\left(r,\frac{1}{f_\alpha-1}\right) +\log \mid f(\alpha)-1\mid $$
and $$T(r,f_\alpha-1) \leq T(r,f_\alpha)+ T(r,-1)+ \log 2 $$ 
\\
$N(r,-1)=0 $ because no poles for constant functions and\begin{align*}
m(r,-1) &= \frac{1}{2\pi} \int_{0}^{2\pi} \log^{+} \mid g(re^{\iota \theta}) \mid \, d\theta \\
& =\frac{1}{2\pi} \int_{0}^{2\pi} \log^{+} \mid -1 \mid \, d\theta =0
\end{align*} where $g \equiv -1 $ on $\Delta$. This implies $$T(r,-1) =0.$$ Using this we have 
\begin{align*}
T(r,f_\alpha)&< \mu T(r,f_\alpha) +2m\left(r,\frac{f \textquoteright_\alpha}{f_\alpha}\right)+m\left(r,\frac{f \textquoteright_\alpha}{f_\alpha-1}\right)+ \left( 1-\frac{1}{k} \right) \log \mid f(\alpha) \mid \\
&+ \left( 1-\frac{1}{l} \right) \log \mid f(\alpha)-1 \mid + \log \big | \frac{1}{f \textquoteright_\alpha(0)} \big | +C
\end{align*}
using Lemma (\ref{lemmaC}) (Nevanlinna\textquoteright s  Estimate)
\begin{align*}
 \left( 1-\mu \right) T(r,f_\alpha)& < 2 \left[ 4 \log^{+} \log^{+} \big | \frac{1}{f(\alpha)} \big | +4 \log^{+} T(R,f_\alpha) +6 \log \frac{1}{R-r} \right]  \\
 &+ 4 \log^{+}T(R,f_\alpha-1)+ 6 \log \frac{1}{R-r} + 4 \log^{+} \log^{+} \big | \frac{1}{f(\alpha)-1} \big | \\
 &+ \left( 1-\frac{1}{k} \right) \log \mid f(\alpha) \mid + \left( 1-\frac{1}{l} \right) \log \mid f(\alpha)-1 \mid \\
 &+ \log \big | \frac{1}{f \textquoteright_\alpha(0)} \big | + C  \\
 &< 12 \log^{+} T(R,f_\alpha) +18 \log \frac{1}{R-r} +8 \log^{+} \log^{+} \big | \frac{1}{f(\alpha)} \big | \\
 &+ \left( 1-\frac{1}{k} \right) \log \mid f(\alpha) \mid +4 \log^{+} \log^{+} \big |\frac{1}{f(\alpha)-1} \big | \\
 &+ \left( 1-\frac{1}{l} \right) \log \mid f(\alpha)-1 \mid + \log \big | \frac{1}{f \textquoteright_\alpha(0)} \big | +C \\
& \leq  12 \log^{+} T(R,f_\alpha) +18 \log \frac{1}{R-r} +8 \log^{+} \log^{+} \big | \frac{1}{f(\alpha)} \big | \\
&+ \left( 1-\frac{1}{k} \right) \log^{+} \mid f(\alpha)\mid  +4 \log^{+} \log^{+} \mid \frac{1}{f(\alpha)-1} \mid \\
&+ \left( 1-\frac{1}{l} \right) \log^{+} \big | f(\alpha) -1 \big |+ \log \big | \frac{1}{f \textquoteright_\alpha(0)} \big |+C
\end{align*} since $\log x \leq \log^{+} x$.
\\
Now as $$\mid f(\alpha) -1 \mid \leq \mid f(\alpha) \mid +1 $$
so
\begin{align*}
\log^{+} \mid f(\alpha)-1 \mid &\leq \log^{+} \left( \mid f(\alpha) \mid + 1 \right) \\
&\leq \log^{+} \mid f(\alpha) \mid + \log^{+}1+\log 2 
\end{align*}
and $$f \textquoteright_\alpha(0)= f \textquoteright(\alpha) \phi \textquoteright_\alpha(0)$$
\\
$$\Rightarrow \log \mid \frac{1}{f \textquoteright_\alpha(0)} \mid= \log \mid \frac{1}{f \textquoteright(\alpha)} \mid + \log \mid \frac{1}{\phi \textquoteright_\alpha(0)} \mid.$$
\\
Thus we get
\begin{align}
\left( 1-\mu \right)T(r,f_\alpha)& < 12 \log^{+} T(R,f_\alpha) +18\log \frac{1}{R-r} + \left( 1- \frac{1}{k} \right) \log^{+} \mid f(\alpha) \mid \notag \\
&+ \left( 1-\frac{1}{l} \right)\log^{+} \mid f(\alpha) \mid  + \log \frac{1}{\mid f \textquoteright(\alpha) \mid} +C+S \notag \\
&= 12 \log^{+} T(R,f_\alpha) +18 \log \frac{1}{R-r} + \left( 2-\frac{1}{k} -\frac{1}{l} \right) \log^{+} \mid f(\alpha) \mid \notag \\
&+ \log \mid \frac{1}{f \textquoteright(\alpha)} \mid +C+S,\label{eq43}
\end{align} where $S=S(r_0) \mid \alpha \mid <r_0 , r_0 <r<R<1$.
\\
To show that $\mathcal{F}$ is normal it will be shown that $\mathcal{F}$ is normal at each point $z_0 \in \Delta$. In particular take $z_0 = 0 \in \Delta$. Let $\mathcal{F}_1$ be a denumerable infinite subcollection of $\mathcal{F}$. Now two cases arises:
\\
\textbf{Case(i)}
\\
There exists functions $f_n \in \mathcal{F}_1 , n=1,2,3,\ldots $ and $\alpha_n = \alpha_n(f_n) \neq 0,$
\\
$ \alpha_n \rightarrow 0 , f_n(\alpha_n) \neq 0,1,\infty;$ $f_n \textquoteright(\alpha_n)\neq 0 $ and $ M< \infty$ such that
$$\left(2-\frac{1}{k} - \frac{1}{l}\right) \log^{+} \mid f_n(\alpha_n) \mid + \log \big | \frac{1}{f_n \textquoteright(\alpha_n)} \big |< M. $$ 
Thus (\ref{eq43}) becomes
$$(1- \mu)T(r,f_{n,\alpha_n})< 12 \log^{+}T(R,f_{n, \alpha_n})+ 18 \log \frac{1}{R-r} + M_1,$$ $ r_0< r<R<1$ 
\begin{align*}
T(r,f_{n,\alpha_n}) &< \frac{12}{1- \mu} \log^{+}T(R,f_{n,\alpha_n} )+ \frac{18}{1- \mu} \log \frac{1}{R-r} +M_2 \\
&=M_2 + b \log \frac{1}{R-r}+ c \log^{+} T(R,f_{n,\alpha_n}),
\end{align*}
$r_0<r<R<1 , \mid \alpha_n \mid <r_0$.
\\
Let $U(r)= T(r,f_{n,\alpha_n}) ; \gamma(r) \equiv 0 $ for each $r<1$
\\
then by Lemma (\ref{lemma2})
\begin{equation} \label{eq44}
T(r,f_{n,\alpha_n})< M_3 +2b \log \frac{1}{R-r} = \sum(r),\quad r_0 <r<R<1
\end{equation}

Next to show that origin can not be a limit point of poles. For this let if $x_{n_k}$ be poles of $f_{n_k}$ and $x_{n_k} \rightarrow 0 $ for some subsequence $n_k$; then $ x_{n_k}- \alpha_{n_k} \rightarrow 0 $ (write $ x_{n_k} , n_k \quad \mbox{and} \quad f_{n_k}$ as $x,n$ and $f$).
\\
For $r> r_0$ $\left( \mbox{from }
(\ref{eq44})\right)$
\begin{align*}
\sum(r) &> N(r,f) = \int_{0}^{r} \frac{n(t,f)-n(0,f)}{t} \, dt + n(0,f) \log r \\
&> \int_{0}^{r} \frac{n(t,f) - n(0,f)}{t} \, dt > \int_{x-\alpha}^{r} \frac{n(t,f) - n(0,f)}{t} \, dt \\
& > \int_{x-\alpha}^{r} \frac{1}{t} \, dt = \log \frac{r}{x-\alpha}.
\end{align*}
Now
$$\log \frac{ x- \alpha}{r} \leq \log^{+} \frac{x-\alpha}{r} \leq \log ^{+} (x-\alpha) + \log ^{+} \frac{1}{r} + \log 2 $$ 
so
$$\log \frac{r}{x-\alpha} \geq  - \log ^{+}( x-\alpha)- \log^{+} \frac{1}{r} - \log 2 $$
thus
$$\log \frac{r}{x-\alpha} \geq  \log ^{+} \frac{1}{x -\alpha} + \log ^{+} r - \log 2= \log^{+} \frac{1}{x-\alpha} -\log 2,$$ $ r<1$ therefore $\log^{+} r = 0 $ so $$\sum(r) > \log \frac{r}{x- \alpha} \geq \log^{+} \frac{1}{x-\alpha} - \log 2 = \log \frac{1}{x-\alpha} - \log 2$$
but $x-\alpha \rightarrow 0 \Rightarrow \frac{1}{x-\alpha} \rightarrow \infty $ this implies $\log \frac{1}{x-\alpha}$ gives  large enough value. So this gives a contradiction to (\ref{eq44}). Thus there is a neighbourhood $\{\,\ z ; \mid z \mid < \delta \,\}$ of origin in which functions $f$ are holomorphic and thus (\ref{eq44}) reduces to 
$$m(r,f_ \alpha)< \sum(r),\quad 0 \leq r < \delta $$ 
for $f \in \mathcal{F}_1$ which gives normality of $\mathcal{F}_1$ in this neighbourhood of origin and thus $\mathcal{F}$ at origin.
\\
\textbf{Case(ii)}
\\
There is an $\delta> 0$ with the property that if $\mid \alpha \mid < \delta , f(\alpha) \neq 0,1,\infty;\quad f \textquoteright (\alpha) \neq 0$ 
$$\left(2- \frac{1}{k}- \frac{1}{l}\right) \log^{+} \mid f(\alpha) \mid + \log \mid \frac{1}{f(\alpha)} \mid > M, $$ for all but finitely many $f \in \mathcal{F} _1 $. Since $ \frac{1}{k} + \frac{1}{l} $ is positive quantity thus $\left(2- \frac{1}{k} -\frac{1}{l}\right) < 2 $
implies $$ \left(2-\frac{1}{k} -\frac{1}{l}\right) \log^{+} \mid f(\alpha) \mid < 2 \log ^{+} \mid f(\alpha) \mid$$
\\
which gives 
$$ M< \left(2-\frac{1}{k} - \frac{1}{l}\right) \log^{+} \mid f(\alpha)\mid + \log \big |\frac{1}{f \textquoteright (\alpha)} \big | < 2 \log^{+} \mid f(\alpha) \mid + \log \big |\frac{1}{f \textquoteright (\alpha)} \big |, $$ 
for all but finitely many $f \in \mathcal{F}_1$. Now as done in the case (ii) of Montel\textquoteright s theorem (\ref{montel}) we get $$\frac{\mid f \textquoteright (\alpha)\mid}{1+\mid f(\alpha) \mid ^2} <1,$$ 
for $\mid \alpha \mid < \delta , f(\alpha) \neq 0,1,\infty ; f \textquoteright (\alpha) \neq 0 \quad f \in \mathcal{F}_1$. Now by continuity of $f$ $$\frac{\mid f \textquoteright (z) \mid}{1+\mid f(z) \mid ^2} <1,$$ for $\mid z \mid < \delta$. Thus by Marty\textquoteright s theorem (\ref{lemmaA}) $\mathcal{F}_1$ is normal in $\mid z \mid < \delta $ thus $\mathcal{F}$ is normal in neighbourhood of origin.   
\end{proof} 
\section{A New Composition}
\quad We next require a new composition like $f_\alpha$ in order to establish more results. The Lemmas here require that the family $\mathcal{F}$ is not normal.  
\begin{defn} 
For fix $r_0 <1$ and let $r_0 <r<1$ if $\mid \alpha \mid \leq r_0 $ define
\\
$$ \psi_\alpha(z)= r^2 \frac{(z-\alpha)}{r^2-\bar{\alpha}z} \qquad \forall \quad z \in \Delta .$$.
\end{defn}
Here also we use composition of $f$, from the family, with $\psi _\alpha$ written as $fo \psi _\alpha$.  
\begin{lem} 
\textbf{Properties of $\psi_\alpha$} 
\\
$\psi_\alpha$ is a conformal map of $\mid z \mid \leq r $ onto  $\mid z \mid \leq r $. Also $f o \psi_\alpha$ is defined if $\mid z \mid < r^2 \frac{(1+r_0)}{r_0+r^2}$.
\end{lem}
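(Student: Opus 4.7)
My plan is to handle the two assertions separately. The map $\psi_\alpha(z)=r^{2}(z-\alpha)/(r^{2}-\bar{\alpha}z)$ is a M\"obius transformation, so it is conformal wherever its denominator is nonzero. Its only pole is at $z_{0}=r^{2}/\bar{\alpha}$, and since $|\alpha|\le r_{0}<r$ we have $|z_{0}|=r^{2}/|\alpha|\ge r^{2}/r_{0}>r$, so $\psi_\alpha$ is holomorphic (hence conformal) on $|z|\le r$. To prove that $|z|\le r$ is mapped onto $|z|\le r$, I would verify the boundary-to-boundary property by direct computation: writing
\[
|\psi_\alpha(z)|^{2}=\frac{r^{4}(z-\alpha)(\bar z-\bar\alpha)}{(r^{2}-\bar\alpha z)(r^{2}-\alpha\bar z)},
\]
substituting $z\bar z=r^{2}$ in both numerator and denominator and simplifying shows the denominator equals $r^{2}$ times the numerator, giving $|\psi_\alpha(z)|=r$ whenever $|z|=r$. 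Since $\psi_\alpha(\alpha)=0$ lies inside the disk and a M\"obius map is a bijection of the Riemann sphere, it follows that $\psi_\alpha$ is a conformal bijection of $|z|\le r$ onto itself.

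For the second claim, $f\circ\psi_\alpha$ is well defined (meromorphic) at every $z$ for which $\psi_\alpha(z)\in\Delta$, i.e.\ $|\psi_\alpha(z)|<1$. On $|z|\le r$ this is automatic from the first part, so I only need to extend the domain to the larger disk. The strategy is to find, for each fixed $s>r$, the maximum of $|\psi_\alpha(z)|$ on the circle $|z|=s$ and then to solve for the value of $s$ at which this maximum equals $1$. Writing $z=se^{\iota\theta}$ and $u=\cos(\theta-\arg\alpha)$, a short computation of the $u$-derivative of
\[
|\psi_\alpha|^{2}=\frac{r^{4}\bigl(s^{2}-2s|\alpha|u+|\alpha|^{2}\bigr)}{r^{4}-2r^{2}s|\alpha|u+s^{2}|\alpha|^{2}}
\]
shows that its sign is the sign of $(r^{2}-s^{2})(r^{2}-|\alpha|^{2})$, so for $s>r$ the function is increasing in $u$ and its maximum is attained at $u=1$, i.e.\ at $z=s\alpha/|\alpha|$. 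At that point
\[
|\psi_\alpha(z)|=\frac{r^{2}(s-|\alpha|)}{r^{2}-|\alpha|s},
\]
and setting this equal to $1$ yields the critical radius $s_{\ast}(|\alpha|)=r^{2}(1+|\alpha|)/(r^{2}+|\alpha|)$.

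Finally, to convert this pointwise statement into a uniform one over $|\alpha|\le r_{0}$, I would observe that the derivative $\partial s_{\ast}/\partial|\alpha|=r^{2}(r^{2}-1)/(r^{2}+|\alpha|)^{2}<0$, so $s_{\ast}$ is strictly decreasing in $|\alpha|$ and is minimized on $|\alpha|\le r_{0}$ at $|\alpha|=r_{0}$, giving the stated bound $r^{2}(1+r_{0})/(r_{0}+r^{2})$. (Note that this value exceeds $r$, as expected: plugging $r_{0}=r$ gives exactly $r$, and $s_{\ast}$ is decreasing in $r_{0}$.) The main obstacle is the identification of the direction in which $|\psi_\alpha|$ attains its maximum on circles $|z|=s>r$; once that reduction is in place, the remaining algebra is routine.
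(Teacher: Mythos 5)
Your proof is correct, but it takes a genuinely different route from the paper's on both halves. For the conformal self-map claim, the paper attempts a direct triangle-inequality bound on $|\psi_\alpha(z)|$ (an estimate that is in fact garbled as printed), then exhibits $\psi_\alpha^{-1}=\psi_{-\alpha}$ and checks $\psi_\alpha'\neq 0$; you instead verify the boundary identity $|\psi_\alpha(z)|=r$ on $|z|=r$ and invoke the M\"obius bijection of the sphere together with the interior point $\psi_\alpha(\alpha)=0$, which is cleaner and avoids the paper's sign troubles. For the second claim the difference is more substantive: the paper only locates the pole of $\psi_\alpha$ at $|z|=r^{2}/|\bar\alpha|\ge r^{2}/r_{0}$ and declares $f\circ\psi_\alpha$ defined wherever $\psi_\alpha$ is finite --- but $f$ lives on $\Delta$, so what is actually needed is $|\psi_\alpha(z)|<1$, and the pole argument alone would justify the larger radius $r^{2}/r_{0}$. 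Your maximization of $|\psi_\alpha|$ over circles $|z|=s$ shows that $r^{2}(1+r_{0})/(r_{0}+r^{2})$ is exactly the radius up to which $\psi_\alpha$ maps into $\Delta$ uniformly in $|\alpha|\le r_{0}$, which both explains where the constant comes from and closes the gap the paper leaves open. One small slip: the sign of $\partial(|\psi_\alpha|^{2})/\partial u$ is that of $(s^{2}-r^{2})(r^{2}-|\alpha|^{2})$, not $(r^{2}-s^{2})(r^{2}-|\alpha|^{2})$; your stated conclusion (increasing in $u$ for $s>r$, maximum at $u=1$) is the correct one, so nothing downstream is affected.
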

\begin{proof}
We will first show that $ \mid \psi_\alpha(z) \mid \leq r,$ forall $z \in \Delta$.
\\
Consider 
\begin{align*}
\mid \psi_\alpha(z) \mid &\leq r^2 \frac{(\mid z \mid + \mid \alpha \mid) }{r^2 -\mid \bar{\alpha}z \mid} \leq r^2 \frac{(r+\mid \alpha \mid)}{r^2-\mid \bar{\alpha}z \mid}, \qquad \because \quad \mid z \mid \leq r<1 \\
&\leq  \frac{r(r^2-\mid \bar{\alpha }z \mid)+r \mid \bar{\alpha}z \mid +r^2 \mid \alpha \mid}{r^2-\mid \bar{\alpha}z \mid} \\
&= r+\frac{r \mid \bar{\alpha}z \mid +r^2 \mid \alpha \mid}{r^2-\mid \bar{\alpha}z \mid} \leq r,
\end{align*}
because $\mid z \mid \leq r$ and $\mid \bar{\alpha} \mid = \mid \alpha \mid \leq r_0<1 \Rightarrow \mid \bar{\alpha}\mid \mid z \mid <r $ which gives
$ r^2 -\mid \bar{\alpha}z \mid<r^2-r=r(r-1) < 0 $
and so 
$\frac{r \mid \bar{\alpha}z \mid +r^2 \mid \alpha \mid}{r^2-\mid \bar{\alpha}z \mid} < 0$.
Thus $\psi_\alpha$ maps $\mid z \mid \leq r $ to $\mid z \mid \leq r <1 $.
\\
Now $\psi_\alpha(z) =w $ gives $$ r^2 \frac{(z-\alpha)}{r^2 -\bar{\alpha}z} =w, \quad z \in \mid z \mid \leq r $$
so
$$z = r^2 \frac{w+\alpha}{r^2 + \bar{\alpha}w}.$$
Thus $$\psi^{-1}_\alpha(z)=\psi_-\alpha(z)=r^2 \frac{z+\alpha}{r^2+\bar{\alpha}z}, \quad z \in \mid z \mid \leq r.$$
As $$\psi^{-1}_\alpha o \psi_\alpha(z)= I \quad \mbox{on} \quad \mid z \mid \leq r $$
which gives $ \psi_\alpha $ is onto on $\mid z \mid \leq r $ so $ \psi_\alpha$ maps $\mid z \mid \leq r $ onto $\mid z \mid \leq r $. Similarly we can show that $\psi _\alpha$ is one to one on $\mid z \mid \leq r$.	
\\
\textbf{Claim}:
\\
$\psi_\alpha$ is conformal on $\mid z \mid \leq r $ i.e. $\psi \textquoteright _\alpha(z) \neq 0 $ for $\mid z \mid \leq r $.
$$\psi \textquoteright _\alpha(z)=r^2 \frac{r^2 -\mid \alpha \mid^2}{(r^2-\bar{\alpha}z)^2} =0,$$ if $ \mid \alpha \mid ^2-r^2=0 $ $\Rightarrow \mid \alpha \mid ^2 =r^2$ but $\mid \alpha \mid < r_0 <r \Rightarrow \mid \alpha \mid^2< r^2$.
Thus $$\psi \textquoteright _\alpha(z) \neq 0 \quad \mbox{on} \quad \mid z \mid \leq r.$$ 
\\
\textbf{To Show}:
$\psi_\alpha(z)$ is defined for $\mid z \mid < r^2 \frac{(1+r_0)}{r_0+r^2}$.

We know that $\psi _\alpha(z)$ is not defined for $z$ for which $$r^2-\bar{\alpha}z=0 $$ which gives
 \begin{align*}
  \mid z \mid &= \frac{r^2}{\mid \bar{\alpha} \mid } \geq \frac{r^2}{r_0} \quad \because \quad \mid \bar{\alpha} \mid = \mid \alpha \mid \leq r_0 \\
&= \frac{r^2}{r_0} \frac{(1+r_0)}{(1+r_0)} \\
&> \frac{r^2(1+r_0)}{r_0+r^2}, \quad \mbox{since } r_0^2< r^2 \Rightarrow r_0+r_0^2<r_0+r^ 2. 
  \end{align*}
Thus $\psi _\alpha(z)$ is defined for $\mid z \mid \leq \frac{r^2(1+r_0)}{r_0+r^2} $. And $f o \psi_{\alpha}$ is defined where $\psi_\alpha $ is defined that means in $\mid z \mid \leq \frac{r^2(1+r_0)}{r_0+r^2}$.
\end{proof}
\begin{lem} \label{lemma5}
Let $\tau$ and $\theta$ be related as $$re^{\iota \tau}= \psi_\alpha(re^{\iota \theta}), \quad 0\leq \theta \leq 2\pi, 0 \leq \tau \leq 2\pi $$
where $ \mid \alpha \mid \leq r_0 <r$. Then there exist an constant $k=k(r,r_0)$ with the property that 
$$\frac{1}{k} < \big| \frac{d \tau }{d \theta} \big|	< k.$$
\end{lem}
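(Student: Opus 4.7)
The plan is to show directly that $|d\tau/d\theta| = |\psi'_\alpha(re^{\iota\theta})|$ and then bound this derivative on the circle $|z|=r$ in terms of $r$ and $r_0$ alone.

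First I would differentiate the relation $re^{\iota\tau} = \psi_\alpha(re^{\iota\theta})$ implicitly with respect to $\theta$, obtaining
\begin{equation*}
\iota r e^{\iota\tau}\,\frac{d\tau}{d\theta} = \psi'_\alpha(re^{\iota\theta})\cdot \iota r e^{\iota\theta}.
\end{equation*}
Taking moduli and using $|e^{\iota\tau}| = |e^{\iota\theta}| = 1$ gives $|d\tau/d\theta| = |\psi'_\alpha(re^{\iota\theta})|$. This reduces the problem to estimating $|\psi'_\alpha|$ on $|z|=r$, and in particular shows that $d\tau/d\theta$ is real-valued (consistent with $\tau$ being a real parameter on the image circle, which $\psi_\alpha$ preserves since it is a conformal self-map of $|z|\le r$).

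Next I would use the formula for $\psi'_\alpha$ already computed in the preceding lemma, namely
\begin{equation*}
\psi'_\alpha(z) = r^2\,\frac{r^2 - |\alpha|^2}{(r^2-\bar{\alpha}z)^2},
\end{equation*}
and specialize to $z = re^{\iota\theta}$. Factoring an $r$ out of the denominator yields
\begin{equation*}
|\psi'_\alpha(re^{\iota\theta})| = \frac{r^2 - |\alpha|^2}{|r - \bar{\alpha}e^{\iota\theta}|^2}.
\end{equation*}
The triangle inequality applied to the denominator gives $r - r_0 \le |r - \bar{\alpha}e^{\iota\theta}| \le r + r_0$, and combined with $0 \le |\alpha|^2 \le r_0^2 < r^2$ this yields both
\begin{equation*}
|\psi'_\alpha(re^{\iota\theta})| \le \frac{r^2}{(r-r_0)^2}
\qquad\text{and}\qquad
|\psi'_\alpha(re^{\iota\theta})| \ge \frac{r^2 - r_0^2}{(r+r_0)^2} = \frac{r - r_0}{r + r_0}.
\end{equation*}
Setting
\begin{equation*}
k = k(r,r_0) = \max\!\left\{\frac{r^2}{(r-r_0)^2},\ \frac{r+r_0}{r-r_0}\right\}
\end{equation*}
we obtain $1/k < |d\tau/d\theta| < k$, which is the desired inequality.

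There is no real obstacle here: once the implicit-differentiation step identifies $|d\tau/d\theta|$ with $|\psi'_\alpha|$ on $|z|=r$, the rest is a uniform two-sided estimate that uses only the hypothesis $|\alpha|\le r_0 < r$ to keep the denominator $|r - \bar{\alpha}e^{\iota\theta}|$ away from both $0$ and $\infty$. The constant $k$ depends only on $r$ and $r_0$ as required, and is uniform in $\theta$ and in $\alpha$ with $|\alpha|\le r_0$.
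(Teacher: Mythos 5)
Your proposal is correct and follows essentially the same route as the paper's (first, computational) proof: implicitly differentiate the defining relation to identify $\bigl|\tfrac{d\tau}{d\theta}\bigr|$ with $\tfrac{r^2-|\alpha|^2}{|r-\bar{\alpha}e^{\iota\theta}|^2}$, then apply two-sided triangle-inequality estimates using only $|\alpha|\le r_0<r$. The only cosmetic difference is your choice of constant (a maximum of two expressions rather than the paper's single $\tfrac{r+r_0}{r-r_0}$, which actually serves as both bounds since $\tfrac{r^2-|\alpha|^2}{(r-|\alpha|)^2}=\tfrac{r+|\alpha|}{r-|\alpha|}$).
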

\begin{proof}
We will proof this first by computation. We have $$re^{\iota \tau}= \psi_\alpha(re^{\iota \theta}) = r^2 \left( \frac{re^{\iota \theta}- \alpha}{r^2-\bar{\alpha}re^{\iota \theta}} \right)$$ $$ \Rightarrow e^{\iota \tau} = \frac{re^{\iota \theta}-\alpha}{r-\bar{\alpha}e^{\iota \theta}}.$$
Differentating on both side with respect to $\theta$
$$i e^{i \tau} \frac{d\tau}{d \theta} = i \frac{(r^2- \mid \alpha \mid ^2)}{\mid r-\bar{\alpha}e^{\iota \theta} \mid^2} e^{\iota \theta}$$
\begin{align*}
\big| \frac{d \tau}{d \theta} \big| &= \big| \frac{(r^2-\mid \alpha \mid^2)}{\mid r-\bar{\alpha}e^{\iota \theta} \mid^2} \frac{e^{\iota \theta}}{e^{\iota \tau}} \big|  = \frac{r^2-\mid \alpha \mid^2}{\mid r-\bar{\alpha}e^{\iota \theta} \mid^2} \\
& \leq \frac{r^2-\mid \alpha \mid ^2}{(r-\mid \bar{\alpha} \mid)^2} =\frac{r+\mid \alpha \mid}{r-\mid \alpha \mid} \\
&< \frac{r+r_0}{r-r_0} =k ,\quad k=k(r,r_0) \quad \mbox{constant}. 
\end{align*} 
As $\mid \alpha \mid \leq r_0 \Rightarrow r+ \mid \alpha \mid \leq r+ r_0$ and $\mid \alpha \mid \leq r_0 <r \Rightarrow r-\mid \alpha \mid \geq r-r_0$.
Also 
\begin{align*}
\big| \frac{d \tau}{d\theta} \big| &= \frac{r^2 -\mid \alpha \mid^2}{\mid r-\bar{\alpha}e^{\iota \theta}\mid^2} = \frac{r^2-\mid \alpha \mid^2}{\mid r-\bar{\alpha}e^{\iota \theta}\mid^2} \\
& \geq \frac{r^2-\mid \alpha \mid^2}{(r+\mid \bar{\alpha} \mid)^2} = \frac{r-\mid \alpha \mid}{ r +\mid \alpha \mid} =\frac{1}{\left(\frac{r+\mid \alpha \mid}{r-\mid \alpha \mid}\right)} \\
&\geq \frac{1}{\left(\frac{r+r_0}{r-r_0}\right)} > \frac{1}{k}.
\end{align*}
Thus $$\frac{1}{k} < \big| \frac{d \tau}{d \theta} \big| < k.$$ 

Next we prove same lemma theortically. For this we proceed as follows:
\\
$\psi_\alpha$ is not defined for $r^2=\bar{\alpha}z \Rightarrow \mid z \mid = \frac{r^2}{\mid \bar{\alpha} \mid}> \frac{r^2}{r_0}$. And if $\mid z \mid =\frac{r^2}{r_0}$ then $$r^2=\mid \bar{\alpha}z \mid =\mid \bar{\alpha}\mid \frac{r^2}{r_0} \Rightarrow \mid \alpha \mid =r_0 $$ which is not true. Therefore, $\psi_\alpha$ is not defined for $\mid z \mid \geq \frac{r^2}{r_0}$. Therefore $\psi_\alpha$is defined only on $\mid z \mid < \frac{r^2}{r_0} ,\quad \mid \alpha \mid <r_0.$

Now any compact subset of $\{\, z ; \mid z \mid < \frac{r^2}{r_0} \, \}$ is given by $\{\, z ; \mid z \mid \leq r_1< \frac{r^2}{r_0} \, \}$.
\\
Since
\begin{align*}
\mid \psi_\alpha (z) \mid &\leq r^2 \frac{\mid z \mid + \mid \alpha \mid}{r^2 -\mid \bar{\alpha}z \mid} \leq r^2 \frac{r_1+ \mid \alpha \mid}{r^2-\mid \bar{\alpha}\mid r_1} \\
&< r^2 \frac{\frac{r^2}{r_0}+ \mid \alpha \mid}{r^2-\mid \bar{\alpha}\mid \frac{r^2}{r_0}} = \frac{r^2+\mid \alpha \mid r_0}{r_0-\mid \bar{\alpha} \mid} \\
&<\frac{r^2+r_0^2}{r_0-\mid \bar{\alpha} \mid} \leq \frac{r^2+r_0^2}{r_0}\quad (=c \quad \mbox{say}).
\end{align*}
This gives $\psi_\alpha$ is uniformly bounded on compact subsets of $\{\, z ; \mid z \mid < \frac{r^2}{r_0} \, \}$ and since $\frac{r^2}{r_0} > r \Rightarrow \psi_\alpha $ is uniformly bounded on $ \{\, z ; \mid z \mid \leq r \, \}$.
\\
Next 
\begin{align*}
\psi \textquoteright_\alpha(z)&=r^2 \frac{r^2-\mid \alpha \mid^2}{(r^2-\bar{\alpha}z)^2} =r^2 \frac{r^2-\mid \alpha \mid^2}{(r^2-\bar{\alpha}z)(r^2-\alpha \bar{z})} \\
&=\left( r^2 \frac{z-\alpha}{r^2-\bar{\alpha}z} \right) \left(\frac{r^2-\mid \alpha \mid^2}{(r^2-\alpha\bar{z})(z-\alpha)}\right) \\
&=\psi_\alpha(z) \frac{r^2-\mid \alpha \mid^2}{(r^2-\alpha\bar{z})(z-\alpha)}
\end{align*}  
Consider
\begin{align*}
\frac {r^2-\mid \alpha \mid^2}{(r^2-\alpha\bar{z})(z-\alpha)} &\leq  \frac{r^2-\mid \alpha \mid^2}{\left(\mid z\mid -\mid \alpha \mid \right)\left(r^2-\mid \alpha\mid \mid \bar{z} \mid\right)} = - \frac{r^2-\mid \alpha \mid^2}{\left(-\mid z \mid + \mid \alpha \mid \right)\left(r^2-\mid \alpha\mid \mid \bar{z} \mid\right)} \\
&\leq -\frac{r^2-\mid \alpha \mid^2}{\left(\mid \alpha \mid -r\right)\left(r^2-\mid \bar{\alpha}r\right)} \quad \because \mid z \mid \leq r \\
&= - \frac{r^2-\mid \alpha \mid^2}{\left(\mid \alpha \mid -r\right)\left(r^2-\mid \bar{\alpha}\mid r\right)} \\
&=\frac{1}{r} \left( \frac{r+\mid \alpha \mid }{r-\mid \alpha \mid} \right) < \frac{1}{r} \left( \frac{r+r_0}{r-r_0} \right)\\
&<\frac{r+r_0}{r-r_0}\quad (= C \quad \mbox{say}), \quad \because \frac{1}{r} >1. 
\end{align*}
This implies that $\psi \textquoteright_\alpha$ is uniformly bounded on $\{\, z ; \mid z \mid \leq r \, \} $. So $$\max\{\, \mid \psi  \textquoteright_\alpha(z) \mid ; \mid z \mid \leq r , \mid \alpha \mid \leq r_0 < r \,\} <k,  \quad k=k(r,r_0).$$
Since $\psi_\alpha(re^{\iota \theta}) =re^{\iota \tau}$ gives $\mid \frac{d \tau}{d \theta} \mid = \mid \psi \textquoteright_\alpha \mid <k$
\\
We know that $\psi^{-1}_\alpha = \psi_{-\alpha} $ and $\psi \textquoteright_{-\alpha}(z)=r^2 \frac{r^2-\mid \alpha \mid^2}{(r+ \bar{\alpha}z)^2}$ thus we have
$$\psi_{-\alpha}(re^{\iota \theta}) =re^{\iota \tau }$$
$$\psi  \textquoteright_{-\alpha}(re^{\iota \theta}) (rie^{\iota \theta}) d \theta = (r i e^{\iota \tau}) d \tau$$
$$\Rightarrow \frac{d \tau}{d \theta} =\frac{r^2-\mid \alpha \mid^2}{(r+\bar{\alpha}e^{\iota \theta})^2} \frac{e^{\iota \theta}}{e^{\iota \tau}} = \psi  \textquoteright_{-\alpha}(re^{\iota \theta}) \frac{e^{\iota \theta}}{e^{\iota \tau}}$$
$$\Rightarrow \mid \frac{d\tau}{d\theta} \mid = \mid \psi  \textquoteright_{-\alpha} \mid$$
and $$\mid \psi \textquoteright_\alpha \mid < k \Rightarrow  \mid \psi  \textquoteright_{\alpha} \mid^{-1} > \frac{1}{k}$$ 
$$\mid (\psi  \textquoteright_\alpha )^{-1}\mid > \frac{1}{k} $$
$$\Rightarrow \mid \left[(\psi _\alpha)^{-1}\right]^{\textquoteright} \mid > \frac{1}{k}$$ 
$$\Rightarrow \mid (\psi_{-\alpha})^{\textquoteright} \mid >\frac{1}{k} $$
$$\Rightarrow \mid \frac{d\tau}{ d\theta} \mid > \frac{1}{k}.$$
Thus $$\frac{1}{k}< \mid \frac{d\tau}{d\theta}\mid<k.$$
\end{proof}
\begin{lem} \label{cor5}
Let $\mid \alpha \mid \leq r_0$. Then if $g$ is meromorphic in $\mid z \mid \leq r; \quad r> r_0$ we have
$$\frac{1}{k}m(r,g)< m(r,g o \psi_\alpha)< km(r,g),$$
$\theta , \tau $ are related to each other like in above lemma.
\end{lem}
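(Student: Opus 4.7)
The plan is to prove the bounds by a direct change of variables in the integral defining $m(r, g\circ\psi_\alpha)$. Starting from the definition
\[ m(r, g\circ\psi_\alpha) = \frac{1}{2\pi}\int_0^{2\pi}\log^{+}|g(\psi_\alpha(re^{\iota\theta}))|\,d\theta, \]
I would use the fact established in the preceding lemma that $\psi_\alpha$ is a conformal bijection of $\{|z|\leq r\}$ onto itself, hence maps the circle $|z|=r$ onto itself. Writing $\psi_\alpha(re^{\iota\theta}) = re^{\iota\tau}$ with $\tau=\tau(\theta)$, the integrand becomes $\log^{+}|g(re^{\iota\tau})|$.

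Next I would perform the substitution $\theta \mapsto \tau$. Because $\psi_\alpha$ is conformal (orientation preserving) and the parametrization traces $|z|=r$ once as $\theta$ ranges over an interval of length $2\pi$, the new variable $\tau$ also ranges monotonically over an interval of length $2\pi$, so the substitution converts the integral over $[0,2\pi]$ in $\theta$ into an integral over an interval of length $2\pi$ in $\tau$ (and by periodicity, into $[0,2\pi]$ in $\tau$). This gives
\[ m(r, g\circ\psi_\alpha) = \frac{1}{2\pi}\int_0^{2\pi}\log^{+}|g(re^{\iota\tau})|\,\Bigl|\frac{d\theta}{d\tau}\Bigr|\,d\tau. \]

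Now I would invoke Lemma \ref{lemma5}, which supplies the two-sided bound $\tfrac{1}{k} < |d\tau/d\theta| < k$, equivalently $\tfrac{1}{k} < |d\theta/d\tau| < k$. Since $\log^{+}|g(re^{\iota\tau})| \geq 0$, inserting these bounds into the previous display yields
\[ \frac{1}{k}\cdot\frac{1}{2\pi}\int_0^{2\pi}\log^{+}|g(re^{\iota\tau})|\,d\tau < m(r, g\circ\psi_\alpha) < k\cdot\frac{1}{2\pi}\int_0^{2\pi}\log^{+}|g(re^{\iota\tau})|\,d\tau, \]
and the inner integrals equal $m(r,g)$ by definition, giving the conclusion $\tfrac{1}{k}m(r,g) < m(r, g\circ\psi_\alpha) < k\, m(r,g)$.

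There is no real obstacle beyond bookkeeping: the only subtle point is making sure the change of variables is globally well defined (i.e.\ $\tau$ is a smooth, monotone function of $\theta$ sweeping out a full period), and this is guaranteed by the conformality of $\psi_\alpha$ on $|z|\leq r$ together with the two-sided derivative bound from Lemma \ref{lemma5}, which in particular shows $d\tau/d\theta$ never vanishes. One should remark that $m(r,g)$ is well defined because $g$ is meromorphic in $|z|\leq r$ (so $\log^{+}|g|$ is integrable on $|z|=r$ up to isolated polar singularities), and similarly for $g\circ\psi_\alpha$ since $\psi_\alpha$ is a conformal self-map of $|z|\leq r$.
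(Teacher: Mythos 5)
Your proof is correct and follows essentially the same route as the paper: a change of variables $\theta\leftrightarrow\tau$ on the circle $|z|=r$ combined with the two-sided Jacobian bound $\tfrac{1}{k}<|d\tau/d\theta|<k$ from Lemma \ref{lemma5} and the nonnegativity of $\log^{+}$. If anything, your single substitution applied to $m(r,g\circ\psi_\alpha)$ delivers both inequalities at once more cleanly than the paper's two separate computations.
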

\begin{proof} 
We have $$\frac{d\tau}{d\theta}= \frac{r^2-\mid \alpha \mid^2}{(r-\bar{\alpha}r^{\iota \theta})^2} \frac{e^{\iota \theta}}{e^{\iota \tau}}.$$
Since $$re^{\iota \tau} =\psi_\alpha(re^{\iota \theta})$$
$$\Rightarrow r \iota e^{\iota \tau} d\tau =\frac{r^2-\mid \alpha \mid^2}{r-\bar{\alpha}e^{\iota \theta}} d(re^{\iota \theta}) = \frac{r^2-\mid \alpha \mid^2}{r-\bar{\alpha}e^{\iota \theta}} r \iota e^{\iota \theta} d \theta.$$
We get
\begin{equation}\label{eq45}
 d \tau= \frac{d\tau}{d\theta} d \theta.
\end{equation}
Now consider
\begin{align*}
m(r,g)&= \frac{1}{2\pi} \int_{0}^{2\pi} \log^{+} \mid g(re^{\iota \tau}) \mid \, d \tau\quad( \mbox{put} \quad re^{\iota \tau} = \psi_\alpha(\iota \theta) \quad \mbox{and }  \quad \mbox{by} \quad (\ref{eq45})) \\
&=\frac{1}{2\pi} \int_{0}^{2\pi} \log^{+}\mid g(\psi_\alpha(re^{\iota \theta})) \mid \frac{d\tau}{d\theta} \, d\theta \\
&=\frac{1}{2\pi} \int_{0}^{2\pi} \log^{+} \mid  g o \psi_\alpha(re^{\iota \theta}) \mid \left( \frac{r^2-\mid \alpha \mid^2}{\mid r-\bar{\alpha}e^{\iota \theta} \mid^2} \right) \, d\theta \\
&\leq \frac{1}{2\pi} \int_{0}^{2\pi} \log^{+} \mid g o \psi_\alpha(re^{\iota \theta})\mid \left( \frac{r^2-\mid \alpha \mid^2}{(r-\mid \bar{\alpha}e^{\iota \theta}\mid)^2} \right) \, d \theta \\
&= \frac{r^2-\mid \alpha \mid^2}{(r-\mid \bar{ \alpha} \mid)^2} \left[ \frac{1}{2\pi} \int_{0}^{2\pi} \log^{+} \mid g o \psi_\alpha(re^{\iota \theta}) \mid \, d\theta \right] \\
 &= \frac{r+\mid \alpha \mid}{r-\mid \alpha\mid} m(r,g o \psi_\alpha) \\
 &< \frac{r+r_0}{r-r_0}m(r,g o \psi_\alpha) = km(r,g o \psi_\alpha).
\end{align*}
Similarly 
\begin{align*}
m(r,go\psi_\alpha) &=\frac{1}{2\pi} \int_{0}^{2\pi} \log^{+} \mid g o \psi_\alpha(re^{\iota \theta}) \mid \, d\theta \\
&= \frac{1}{2\pi} \int_{0}^{2\pi} \log^{+} \mid g(re^{\iota \tau}) \mid \frac{d\theta}{d\tau} \, d\tau \quad \mbox{by}(4.22) \\
&= \frac{1}{2\pi} \int_{0}^{2\pi} \log^{+} \mid g(re^{\iota \tau}) \mid \left( \frac{\mid r-\bar{\alpha} e^{\iota \theta} \mid^2}{r^2-\mid \alpha \mid^2} \right) \, d\tau \\
&= \frac{\mid r-\bar{\alpha}e^{\iota \theta} \mid^2}{r^2-\mid \alpha\mid^2} m(r,g) \\
&\geq \frac{(r-\mid \bar{\alpha}e^{\iota \theta})\mid^2}{r^2-\mid \alpha \mid^2} m(r,g) = \left( \frac{r-\mid \alpha \mid}{r+\mid \alpha\mid} \right) m(r,g) \\
&= \frac{1}{\frac{r+\mid \alpha \mid}{r-\mid \alpha \mid}} m(r,g) > \frac{1}{k} m(r,g).
\end{align*}
Thus $$\frac{1}{k} m(r,g)< m(r,g o \psi_\alpha)< k m(r,g).$$
\end{proof}

Further we proceed to proof some lemmas only for family of holomorphic functions. These lemmas unfortunately fails for family of meromorphic functions.
\begin{note}
If a family $\mathcal{F}$ is not normal in $\Delta$ then there is an $r_0<1 $ and an infinite subfamily $\mathcal{F}_1 \subseteq \mathcal{F}$ such that for each $f \in \mathcal{F}_1$ there corresponds some $\alpha=\alpha(f) , \mid \alpha\mid \leq r_0$ with 
$\mid f(\alpha) \mid \geq 1$.

If this not happened then for each $r<1,$ $\mid f(\alpha) \mid \geq 1$ holds for finitely many $f \in \mathcal{F}$  that mean $\mid f(z) \mid <1 $ for $\mid z \mid <r \quad \forall f \in \mathcal{F}$. Which give that $\mathcal{F}$ is normal in $\mid z \mid < r $ for each $r<1$ thus $\mathcal{F}$ is normal in $\Delta$ which is a contradiction.

Now if $\mathcal{F}$ is not normal on $z=0$ then there is a sequence $\{\,\ f_n \,\} \subseteq \mathcal{F} $ and $\alpha_n=\alpha_n(f_n)$ with $\alpha_n \rightarrow 0 \quad n \rightarrow \infty$ such that $\mid f_n(\alpha_n) \mid \geq 1$.
\end{note}
 \begin{lem} \label{lemma6}
If $\mathcal{F}$ is a family of holomorphic functions in $\Delta$ , and $\mathcal{F}$ is not normal then there is an $r_0<1 $ with
$$m\left(r,\frac{f \textquoteright}{f}\right) < A+B \log^{+}T(R,f)+C \log \frac{1}{R-r},$$ $\frac{1}{2}(1+r_0)<r<R<1 $,
for $f$ in an infinite subfamily $\mathcal{F}_1 $ of $\mathcal{F}$.
\end{lem}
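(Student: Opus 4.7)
The strategy is to exploit non-normality of $\mathcal{F}$ to eliminate the $\log^{+}\log^{+}|1/f(0)|$ term from the general Nevanlinna Estimate (Lemma \ref{lemmaC}). The note preceding the statement extracts $r_0<1$ and an infinite subfamily $\mathcal{F}_1\subseteq\mathcal{F}$ with the property that for every $f\in\mathcal{F}_1$ there exists $\alpha=\alpha(f)$ satisfying $|\alpha|\leq r_0$ and $|f(\alpha)|\geq 1$. My plan is to conjugate $f$ by a Möbius transformation carrying $0$ to $\alpha$ and apply the Nevanlinna Estimate to the conjugated function, where $\log^{+}|1/g(0)|=0$, then transfer the bound back to $m(r,f'/f)$ via Corollary \ref{cor5} and the boundedness of the Möbius derivative.

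Fix $r,R$ with $\tfrac{1}{2}(1+r_0)<r<R<1$ and let $\rho=\tfrac{1}{2}(r+R)$, so $r<\rho<R$. Use $\psi_{-\alpha}$ with parameter $\rho$, a conformal self-map of $|z|\leq\rho$ sending $0$ to $\alpha$, and set $g=f\circ\psi_{-\alpha}$, which is holomorphic on $|z|\leq\rho$ with $|g(0)|=|f(\alpha)|\geq 1$. The Nevanlinna Estimate applied to $g$ at radii $r<\rho$ then yields
$$m\Bigl(r,\tfrac{g'}{g}\Bigr)<4\log^{+}T(\rho,g)+6\log\tfrac{1}{\rho-r}+C_{2},$$
with $C_2$ depending only on $r_0$. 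Since $g$ is holomorphic, $T(\rho,g)=m(\rho,g)$, and Corollary \ref{cor5} applied at the radius $\rho$ (which matches the parameter of $\psi_{-\alpha}$) gives $m(\rho,g)\leq k'\,m(\rho,f)\leq k'\,T(R,f)$. Combining these, $m(r,g'/g)$ is dominated by the right-hand side of the claimed inequality. To return to $f'/f$, write $g'/g=\bigl((f'/f)\circ\psi_{-\alpha}\bigr)\cdot\psi_{-\alpha}'$; since $|\alpha|\leq r_0<\rho$, both $\psi_{-\alpha}'$ and $1/\psi_{-\alpha}'$ are bounded on $|z|\leq r$ by constants depending only on $r_0$, so $m(r,(f'/f)\circ\psi_{-\alpha})\leq m(r,g'/g)+O(1)$, and a further comparison between $m(r,f'/f)$ and $m(r,(f'/f)\circ\psi_{-\alpha})$ in the spirit of Corollary \ref{cor5} completes the argument.

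The main technical obstacle is the bookkeeping of parameters in Corollary \ref{cor5}: the corollary compares $m(r,g)$ with $m(r,g\circ\psi_\alpha)$ only when the radius $r$ equals the parameter appearing in the definition of $\psi_\alpha$, yet the Nevanlinna Estimate inherently requires two distinct radii $r<\rho$. Choosing $\rho=\tfrac{1}{2}(r+R)$ as the parameter of $\psi_{-\alpha}$ permits Corollary \ref{cor5} to be used at the correct radius $\rho$ for controlling $T(\rho,g)$, while the gap $\rho-r=\tfrac{1}{2}(R-r)$ provides the room needed by the Nevanlinna Estimate; the comparison at radius $r$ proceeds from the change-of-variables identity underlying Lemma \ref{lemma5}, which continues to yield bounded ratios $|d\tau/d\theta|$ as long as $r$ stays in a compact subinterval of $(r_0,1)$, which is precisely why the lemma restricts to $r>\tfrac{1}{2}(1+r_0)$. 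Holomorphy of $f$ is used decisively to identify $T(\rho,g)$ with $m(\rho,g)$; if $f$ were only meromorphic the counting function $N(\rho,g)$ would need separate control not available from the hypotheses, which is the reason this lemma does not extend to the meromorphic setting.
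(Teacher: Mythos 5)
Your overall strategy is the paper's: use non-normality to extract $r_0$ and an infinite subfamily with $|f(\alpha)|\geq 1$ for some $|\alpha|\leq r_0$, conjugate by $\psi_{-\alpha}$ so that $g=f\circ\psi_{-\alpha}$ has $|g(0)|\geq 1$ and the term $\log^{+}\log^{+}|1/g(0)|$ in the Nevanlinna Estimate vanishes, exploit $g'/g=\bigl((f'/f)\circ\psi_{-\alpha}\bigr)\psi_{-\alpha}'$, and finish with Lemma \ref{cor5}. However, your resolution of the radius/parameter bookkeeping — which you rightly flag as the main obstacle — does not go through. By building $\psi_{-\alpha}$ with parameter $\rho=\tfrac12(r+R)$ you make the estimate $T(\rho,g)=m(\rho,g)<k\,m(\rho,f)\leq k\,T(R,f)$ legitimate, but you lose the step $m(r,f'/f)<k\,m\bigl(r,(f'/f)\circ\psi_{-\alpha}\bigr)$ at the smaller radius $r$. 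Lemma \ref{cor5} and the change-of-variables identity of Lemma \ref{lemma5} rest precisely on the fact that $\psi_{\alpha}$ maps the circle $|z|=r$ onto itself when $r$ equals the parameter; with parameter $\rho>r$, the image of $|z|=r$ under $\psi_{-\alpha}$ is a circle not centred at the origin, so $m\bigl(r,(f'/f)\circ\psi_{-\alpha}\bigr)$ is an average of $\log^{+}|f'/f|$ over a non-centred circle and cannot be compared with $m(r,f'/f)$ by that identity. Nor can you pass through $m(\rho,f'/f)$ instead, since $f'/f$ has poles at the zeros of $f$ and $m(r,\cdot)$ is not monotone in $r$ for meromorphic functions (the paper's own example $z/(1-z^2)$ makes this point).

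The paper avoids this by keeping the parameter of $\psi_{-\alpha}$ equal to $r$ throughout, so that the comparison $m(r,f'/f)<k\,m\bigl(r,(f'/f)\circ\psi_{-\alpha}\bigr)$ is the one Lemma \ref{cor5} actually provides, and then pays the price at the other end: the Nevanlinna Estimate is applied at radii $r<r'$ with $r'<r^2\tfrac{1+r_0}{r_0+r^2}$ (the domain on which $f\circ\psi_{-\alpha}$ is defined), and $T(r',f\circ\psi_{-\alpha})$ is controlled not by Lemma \ref{cor5} but by observing that $\psi_{-\alpha}$ carries $|z|=r'$ into a disk $|z|\leq r_1<1$, whence $T(r',f\circ\psi_{-\alpha})\leq\log^{+}M(r_1,f)\leq\tfrac{R+r_1}{R-r_1}\,T(R,f)$ by the Nevanlinna inequality for holomorphic functions, followed by a choice of $r'$ making $\tfrac{1}{r'-r}$ comparable to $\tfrac{1}{R-r}$. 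To repair your argument you should adopt this maximum-modulus route for the characteristic at the larger radius and revert the parameter of $\psi_{-\alpha}$ to $r$; the rest of your outline, including the correct identification of where holomorphy and the condition $|f(\alpha)|\geq 1$ are used, then matches the paper's proof.
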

\begin{proof}
Since $\mathcal{F}$ is not normal in $\Delta$ this implies that there exist $r_0<1$ such that for infinite number of $f \in \mathcal{F} \quad \exists \quad \alpha= \alpha(f), \mid \alpha \mid \leq r_0$ with $\mid f(\alpha) \mid \geq 1.$ Let $\mathcal{F}_1$  denote subfamily of $\mathcal{F}$. Consider $$g=f o \psi_{-\alpha}, f \in \mathcal{F}_1$$  we have
$$g(0)=fo\psi_{-\alpha}(0)=f(\alpha).$$

If $g(0)=0 \Rightarrow f(\alpha)=0 $ which is not true as this give $\mid f(\alpha) \mid =0<1$ and if $g(0)=\infty \Rightarrow f(\alpha) =\infty  \Rightarrow f$ has pole at $\alpha$ but $f$ is holomorphic on $\Delta$ so this is not true also. Thus $g(0) \neq 0, \infty$ so we can apply Lemma (\ref{lemmaC}) for $g.$ By Lemma (\ref{cor5}) applied for $f \in \mathcal{F}_1$ we get
\begin{equation}\label{eq46}
 m\left(r,\frac{f \textquoteright}{f}\right)< km \left(r,\frac{f \textquoteright}{f} o \psi_{-\alpha}\right).
\end{equation}
Now
$$g=fo\psi_{-\alpha} \Rightarrow f=go\psi_\alpha $$
\begin{equation}\label{eq47}
\Rightarrow f \textquoteright =g \textquoteright(\psi_\alpha) \psi \textquoteright_\alpha \Rightarrow \frac{f \textquoteright }{f} = \left( \frac{g \textquoteright}{g} o \psi_\alpha \right) \psi \textquoteright_\alpha. 
\end{equation}
This gives 
\begin{equation}\label{eq48}
m\left(r,\frac{f \textquoteright}{f} o \psi_{-\alpha}\right) \leq m\left(r,\frac{g \textquoteright }{g}\right)+ m(r,\psi \textquoteright_\alpha o \psi_{-\alpha}) +\log2.
\end{equation}
By (\ref{eq46}), (\ref{eq48}) and Lemma (\ref{lemmaC}) for $g$ we get 
\begin{equation}\label{eq50}
m\left(r,\frac{f\textquoteright}{f}\right) < k m\left(r,\frac{f\textquoteright}{f} o \psi_{-\alpha}\right) <k \left[ 4 \log^{+}T(r\textquoteright ,f o \psi_{-\alpha})+6 \log \frac{1}{r\textquoteright -r} +D \right],
\end{equation}
 where $ \frac{1}{2}(1+r_0)<r<r\textquoteright<r^2 \frac{1+r_0}{r_0+r^2}$.
 \\
\textbf{ To show} : 
 $$r^2 \frac{r\textquoteright-r_0}{r^2-r_0r\textquoteright} =r_1 >r \textquoteright.$$
\\
If $r_1<r\textquoteright$ that means $r^2 \frac{r\textquoteright-r_0}{r^2-r_0 r\textquoteright} <r\textquoteright$ then $r^{2} r\textquoteright-r^2 r_0  <r\textquoteright r^2-r_0 r\textquoteright^{2}$
$$\Rightarrow r^2> r\textquoteright ^2 \quad \mbox{i.e.} \quad r> r\textquoteright $$ which is not true. Thus we have that the circle $\mid \psi_{-\alpha}(z) \mid =r \textquoteright$ is contained in 
$$\mid z \mid = r^2 \frac{r\textquoteright -r_0 }{r^2 -r_0 r \textquoteright } =r_1.$$
Next we use the inequality $$T(r,f) \leq \log^{+}M(r,f) \leq \frac{R+r}{R-r}T(R,f),$$ $r<R<1,$ for $f$ holomorphic. We get 
\begin{align*}
T(r\textquoteright, f o \psi_{-\alpha}) & \leq \log^{+}M(r\textquoteright,f o \psi_{-\alpha}) \\
& \leq \log^{+} M(r_1,f) \quad \mbox{since } r\textquoteright < r_1 \\
&\leq \frac{R+r_1}{R-r_1} T(R,f), 
\end{align*}
where $r_1<R<1$. Applying $ \log^{+} $ on both side 
\begin{equation}\label{eq51}
\log^{+}T(r \textquoteright, f o \psi_{-\alpha}) \leq \log^{+} \frac{1}{R-r_1} + \log^{+}T(R,f) +A \quad(=\log^{+}(R+r_1)),
\end{equation}
here $A$ is a constant independent of $f$. Now for $R>r$ choose $r\textquoteright,$ so that 
$$\frac{1}{r\textquoteright-r} =\frac{1}{R-r_1}=\frac{1}{R-r_1} \frac{R-r}{R-r}= E (R-r),$$
where $E>0$ and depends on $r_0$.So we get
$$\frac{1}{r\textquoteright-r}=\frac{1}{R-r_1}\leq E(R-r)$$ which gives $$\log^{+} \frac{1}{r\textquoteright-r} =\log^{+} \frac{1}{R-r_1} < \log E + \log^{+} (R-r).$$ 
In (\ref{eq51})
$$\log^{+}T(r\textquoteright,fo\psi _{-\alpha}) <\log^{+}( R-r) + \log^{+} T(R,f) + A. $$
In (\ref{eq50})
\begin{align*}
m\left(r,\frac{f\textquoteright}{f}\right)& < k \left\lbrace 4( \log^{+}(R-r)+ \log^{+}T(R,f)+A)+6 \log^{+} (R-r) +D \right\rbrace \\
&= A+B \log^{+}T(R,f) +C \log \frac{1}{R-r},
\end{align*}where $\frac{1}{2}(1+r_0)<r<R< 1$.
\end{proof}
\begin{rem}
Lemma (\ref{lemmac}) and Lemma (\ref{lemma6}) differs only by the terms $\log^{+} \log^{+} \mid \frac{1}{g(0)} \mid$.

The condition for $\mathcal{F}$ to be not normal is essential for this we consider the family $\mathcal{F} = \{\,\ e^{n(z-1)} ,n=1,2,3\ldots \,\}$. Since any sequence in $ \mathcal{F}$ converges to $\infty$ forall $z \in \Delta$ so $\mathcal{F}$ is normal in $\Delta$.
\\
Now here $f_n(z)= e^{n(z-1)}$ so $\frac{f \textquoteright_n}{f_n} = n,$ forall $ n$ this gives $m\left(r,\frac{f \textquoteright_n}{f_n}\right) = \log^{+} n $ and $N(r,f_n) =0 \quad 0<r<1$ 
\begin{align*}
m(r,f_n) &= \frac{1}{2\pi} \int_{0}^{2\pi} \log^{+} \big| e^{n(re^{\iota \theta}-1)} \big| \, d \theta \\
&= \frac{1}{2 \pi} \int_{0}^{2 \pi} \log^{+} e ^{n(r \cos \theta -1)} \, d \theta.
\end{align*}
\\
Now $\log^{+} e^{n(r \cos \theta -1)} $ has a non zero value when 
$$ e^{n(r \cos \theta -1)} > 1 \Rightarrow n(r \cos \theta -1) > 0 $$
Or $r \cos \theta > 1 $ which is not true as $-1 < r \cos \theta < 1 $ here therefore $\log^{+} e^{n(r \cos \theta -1)} $ has value zero. Thus $m(r,f_n) =0; 0<r<1$  gives that $\log^{+} T(R,f_n) =0, $ 
\\
where $\frac{1}{2}(1+r_0)<r<R<1$.

Therefore right hand side of inequality in Lemma (\ref{lemma6}) is a finite value but left hand side is an increasing function $\log^{+} n $ of $n$. Thus we get a contradiction of Lemma (\ref{lemma6}).
\end{rem}
 \begin{lem} \label{lemma6`}
Let $\mathcal{F}$ be a family of holomorphic functions in $\mid z \mid < \delta , \delta >0$ which is not normal in any neighbourhood of $z_0=0$. Then there are constants $A=A(\delta), B=B(\delta),  C=C(\delta)$ with
$$m\left(r,\frac{f\textquoteright}{f}\right) < A+BT(R,f)+C \log \frac{1}{R-r},$$
\\
when $\frac{1}{2} \delta < r <R< \delta, $ for $f$ in an infinite subfamily of $\mathcal{F}$.
\end{lem}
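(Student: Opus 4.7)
The plan is to reduce Lemma \ref{lemma6`} to Lemma \ref{lemma6} via the rescaling $F(z) = f(\delta z)$, which carries holomorphic functions on $\{|z| < \delta\}$ to holomorphic functions on $\Delta$. Since $z \mapsto \delta z$ is a biholomorphism of neighbourhoods of $0$, the rescaled family $\tilde{\mathcal{F}} = \{F : f \in \mathcal{F}\}$ inherits from $\mathcal{F}$ the failure of normality at $z = 0$ in every neighbourhood.

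Next, I would apply the second paragraph of the Note preceding Lemma \ref{lemma6}: non-normality of $\tilde{\mathcal{F}}$ at $0$ yields a sequence $\{F_n\} \subseteq \tilde{\mathcal{F}}$ and points $\beta_n = \alpha_n/\delta \to 0$ with $|F_n(\beta_n)| \geq 1$. Passing to a tail, one may assume $|\beta_n| \leq r_0$ for any preselected $r_0 > 0$, and then Lemma \ref{lemma6} produces an infinite subfamily $\tilde{\mathcal{F}}_1$ and constants $A_0, B_0, C_0$ with
$$m\!\left(r', \frac{F'}{F}\right) < A_0 + B_0 \log^+ T(R', F) + C_0 \log \frac{1}{R' - r'}$$
for $\tfrac{1}{2}(1 + r_0) < r' < R' < 1$. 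I would then substitute $r' = r/\delta$, $R' = R/\delta$ and invoke the scaling identities $(F'/F)(z) = \delta\,(f'/f)(\delta z)$, which gives $m(r', F'/F) \geq m(r, f'/f) - \log^+\delta$; $T(R', F) = T(R, f)$, since both are proximity functions of holomorphic functions transforming under the change of variable; and $\log(1/(R' - r')) = \log(1/(R - r)) + \log\delta$. Using the trivial bound $\log^+ T \leq T$ and absorbing the $\log \delta$ and $\log^+ \delta$ contributions into new $\delta$-dependent constants $A(\delta), B(\delta), C(\delta)$ produces the claimed inequality on the range $\tfrac{\delta}{2}(1 + r_0) < r < R < \delta$.

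The main obstacle will be to close the gap between the rescaled lower bound $r > \tfrac{\delta}{2}(1 + r_0)$ and the statement's $r > \delta/2$. I would exploit the stronger fact that $\beta_n \to 0$ (rather than merely $|\beta_n| \leq r_0$ for a fixed $r_0 < 1$): for any target $r > \delta/2$ one picks $r_0 < 2r/\delta - 1$ and passes to the corresponding tail of $\{F_n\}$, which is still infinite and thus consistent with the conclusion's quantifier ``for $f$ in an infinite subfamily of $\mathcal{F}$''. The remaining bookkeeping is routine: once $r_0$ is fixed at any small value, the $r_0$-dependent constants inherited from Lemma \ref{lemma6} (notably the factor $k = (r + r_0)/(r - r_0)$ from Lemma \ref{cor5} and the constant $E$ controlling $1/(R - r_1)$) stay bounded, so that the final $A, B, C$ genuinely depend only on $\delta$.
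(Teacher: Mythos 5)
Your reduction-by-rescaling is correct in substance, but it is organized differently from the paper. The paper does not change variables at all: its (very terse) proof extracts, from non-normality at the origin, a sequence $f_n$ and points $\alpha_n\to 0$ with $\mid f_n(\alpha_n)\mid\geq 1$, and then re-runs the entire argument of Lemma \ref{lemma6} --- the composition with $\psi_{-\alpha}$, Lemma \ref{cor5}, and Nevanlinna's estimate (Lemma \ref{lemmaC}) --- inside the disk $\mid z\mid<\delta$ with $1$ replaced by $\delta$ throughout. Your version instead treats Lemma \ref{lemma6} as a black box and conjugates by $z\mapsto\delta z$; the scaling identities you invoke are right ($T(R',F)=T(R,f)$ for holomorphic $f$ since both characteristics reduce to proximity functions and the circle average is invariant under the dilation, exactly as in the paper's Example on $f(az)$; and $\log\frac{1}{R'-r'}=\log\delta+\log\frac{1}{R-r}$). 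What your route buys is modularity and a cleaner audit of where the $\delta$-dependence of $A,B,C$ comes from; what the paper's route buys is that it never has to worry about how the constants of Lemma \ref{lemma6} transform, because it produces them directly at scale $\delta$.

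Two loose ends. First, a small sign slip: from $\log^{+}\mid w\mid\leq\log^{+}\mid\delta w\mid+\log^{+}(1/\delta)$ the correct inequality is $m(r,f\textquoteright/f)\leq m(r',F\textquoteright/F)+\log^{+}(1/\delta)$, not the $\log^{+}\delta$ you wrote (for $\delta<1$ your version would assert $m(r',F\textquoteright/F)\geq m(r,f\textquoteright/f)$, which is false); this is harmless since the term is a $\delta$-dependent constant either way. Second, your handling of the range is not quite right as stated: choosing $r_0<2r/\delta-1$ for each target $r$ produces a tail of the sequence depending on $r$, and the intersection of those tails as $r\downarrow\delta/2$ need not be infinite, so you do not get one subfamily with one set of constants valid on all of $(\delta/2,\delta)$. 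The clean fix is to fix a single small $r_0$ (say $r_0=1/4$ after rescaling, which is possible precisely because $\beta_n\to 0$) and observe that in the proof of Lemma \ref{lemma6} the threshold $\tfrac{1}{2}(1+r_0)$ is only a convenient sufficient condition for $r>r_0$ (it is what guarantees $\psi_\alpha$ maps $\mid z\mid\leq r$ to itself and that $r^2\tfrac{1+r_0}{r_0+r^2}>r$), so the estimate actually holds for all $r'>r_0$; with $r_0\leq 1/4$ this covers $r'>1/2$, i.e.\ $r>\delta/2$. To be fair, the paper's own one-line proof glosses over exactly the same point.
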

\begin{proof}
Since $\mathcal{F}$ is not normal at $z_0=0$. Thus there exists a sequence $\{f_n \}$ in $\mathcal{F}$ and $\alpha_n=\alpha_n(f_n)$ with $\alpha _n \rightarrow 0 $ as $n \rightarrow \infty$ such that $\mid f_n(\alpha_n) \mid \geq 1$.

Let $\{f_n :n=1,2,3\ldots \} =\mathcal{F}_1$ subfamily of $\mathcal{F}$ and above $\alpha_n $ as $\alpha = \alpha(f), f \in \mathcal{F}_1$ such that $\mid f(\alpha)\mid \geq 1 \quad \forall \quad f \in \mathcal{F}_1$.
\\
Now as Lemma (\ref{lemma6}) done we can complete our proof.
\end{proof} 
Here we present Zalcman lemma for not normal family of meromorphic functions.
\begin{lem}\label{zal}
\textbf{Zalcman Lemma}(for the proof see ~\cite{sch})
\\
Let $\mathcal{F}$ be a family of holomorphic (meromorphic) functions in $\Delta$ then $\mathcal{F}$ is not normal in $\Delta$ if and only if there exists 
\\
\item[(i)] a number $r$ with $0< r< 1$; \item[(ii)] points $z_n$ satisfying $\mid z_n \mid < r$; \item[(iii)] functions $f_n \in \mathcal{F}$ and \item[(iv)] positive numbers $\rho_n \rightarrow 0 $ as $n \rightarrow \infty $ such that
$$f_n(z_n+ \rho_n w) \rightarrow g(w)$$ as $n\rightarrow \infty $
uniformly (spherically uniformly ) on compact subsets of $\mathbb{C}$ where $g$ is a nonconstant entire (meromorphic ) function in $\mathbb{C}$.
\end{lem}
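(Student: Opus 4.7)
The plan is to prove both implications using Marty's theorem (Theorem \ref{lemmaA}) together with a rescaling argument; I will work in the meromorphic case and note the holomorphic adjustment at the end.

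For the easy direction ($\Leftarrow$), I would assume rescalings $f_n(z_n+\rho_n w)\to g(w)$ exist for a nonconstant meromorphic $g$, and argue by contradiction. If $\mathcal{F}$ were normal, Marty's theorem would give a uniform bound $f^{\sharp}\le M$ on a neighborhood of any accumulation point $z_0$ of $\{z_n\}$ in $\Delta$. For fixed $w$ and large $n$,
\[
g_n^{\sharp}(w)=\rho_n f_n^{\sharp}(z_n+\rho_n w)\le \rho_n M\longrightarrow 0,
\]
where $g_n(w):=f_n(z_n+\rho_n w)$. Since spherical derivatives pass to the limit under spherical uniform convergence on compacta (via the Weierstrass theorem \ref{weierstrass} applied to $g_n$ or $1/g_n$ depending on whether $g(w)$ is finite or infinite), this forces $g^{\sharp}\equiv 0$, contradicting the nonconstancy of $g$.

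For the hard direction ($\Rightarrow$), I would apply the classical Zalcman rescaling. Since $\mathcal{F}$ is not normal, Marty's theorem supplies some $r<1$ on which $\{f^{\sharp}\}$ fails to be uniformly bounded, together with $f_n\in\mathcal{F}$ and $w_n\in\overline{D(0,r_0)}$ for some $r_0<r<1$ with $f_n^{\sharp}(w_n)\to\infty$. Following Zalcman, I set
\[
M_n=\max_{|z|\le r}(r^2-|z|^2)f_n^{\sharp}(z),
\]
which is attained at some $z_n$ with $|z_n|<r$ since the factor $r^2-|z|^2$ vanishes on the boundary. Because $M_n\ge (r^2-r_0^2)f_n^{\sharp}(w_n)\to\infty$, the choice $\rho_n:=1/f_n^{\sharp}(z_n)=(r^2-|z_n|^2)/M_n$ tends to $0$, and I define the rescaled functions $g_n(w):=f_n(z_n+\rho_n w)$ on $\{w:|z_n+\rho_n w|<r\}$, a set that exhausts $\mathbb{C}$ as $n\to\infty$.

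The main obstacle, and heart of the proof, will be showing that $\{g_n^{\sharp}\}$ is uniformly bounded on every compact subset of $\mathbb{C}$. For $|w|\le R$ and $n$ large enough that $z_n+\rho_n w\in D(0,r)$, the maximality of $M_n$ yields
\[
g_n^{\sharp}(w)=\rho_n f_n^{\sharp}(z_n+\rho_n w)\le\frac{r^2-|z_n|^2}{r^2-|z_n+\rho_n w|^2};
\]
using $|z_n+\rho_n w|\le |z_n|+\rho_n R$ together with $\rho_n R/(r-|z_n|)\le 2rR/M_n\to 0$, the right-hand side tends to $1$ uniformly in $|w|\le R$, so $g_n^{\sharp}\le 2$ there eventually. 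Marty's theorem applied on $\mathbb{C}$ then makes $\{g_n\}$ normal, and extracting a subsequence gives $g_n\to g$ spherically uniformly on compacta, with $g$ either meromorphic on $\mathbb{C}$ or identically $\infty$ by Corollary \ref{f mero}. The identity $g_n^{\sharp}(0)=\rho_n f_n^{\sharp}(z_n)=1$ passes to the limit to give $g^{\sharp}(0)=1$, which simultaneously rules out $g\equiv\infty$ and forces $g$ to be nonconstant. In the holomorphic case, Hurwitz's theorem \ref{hurwitz} applied to $1/g_n$ (which is zero-free since $g_n$ has no poles) additionally excludes poles of $g$, making $g$ a nonconstant entire function and promoting the spherical convergence to uniform convergence on compacta.
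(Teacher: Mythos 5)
The paper does not actually prove this lemma --- it is stated with only a pointer to Schiff~\cite{sch} --- so there is no in-paper argument to compare against; what you have written is the standard Zalcman rescaling proof, and it is correct. Both directions check out: the chain rule $g_n^{\sharp}(w)=\rho_n f_n^{\sharp}(z_n+\rho_n w)$ is right, the choice of $M_n=\max_{|z|\le r}(r^2-|z|^2)f_n^{\sharp}(z)$ with the boundary-vanishing weight correctly forces the maximizer $z_n$ into the open disk, and the key estimate bounding $g_n^{\sharp}(w)$ by $(r^2-|z_n|^2)/(r^2-|z_n+\rho_n w|^2)\to 1$ uniformly on $|w|\le R$ is the heart of the matter and is carried out correctly (the factorization gives exactly $\rho_n R/(r-|z_n|)=(r+|z_n|)R/M_n\le 2rR/M_n$). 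Two small points you gloss over but which are easily repaired: first, Marty's theorem (\ref{lemmaA}) as stated applies to a family on a fixed domain, whereas your $g_n$ are defined only on an expanding exhaustion of $\mathbb{C}$, so one should extract the convergent subsequence by a diagonal argument over the disks $|w|<R_k$ with $R_k\uparrow\infty$; second, the passage from $g_n^{\sharp}(0)=1$ to $g^{\sharp}(0)=1$ uses the fact that spherical derivatives converge locally uniformly under spherical uniform convergence to a meromorphic limit --- you invoke this correctly in the easy direction, and the same justification (Weierstrass for $g_n$ or $1/g_n$ near the point) applies here, including the case $g\equiv\infty$, which is then ruled out because it would force $g_n^{\sharp}(0)\to 0$. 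The Hurwitz step excluding poles in the holomorphic case, and the upgrade from spherical to ordinary uniform convergence via Lemma (\ref{f bdd}), are both correct.
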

\begin{rem}
Lemma (\ref{lemma6`}) is a local version of Lemma (\ref{lemma6}).
\end{rem}
\begin{rem}
For $\mathcal{F}$ family of meromorphic functions these lemmas are not true. For this  consider the family $$\mathcal{F} = \{\,\ \frac{e^{n(z-1)}}{z+\frac{1}{n}} ;n \in \mathbb{N}, z \in \Delta \,\}.$$ It is to show that $\mathcal{F}$ is not normal in $\Delta$ .
\\
For this let $z_n = - \frac{1}{n}$ and $\rho_n=e^{-n} $
\begin{align*}
f_n(z_n+ \rho_n z) &= \frac{e^{n (- \frac{1}{n}+e^{-n} z -1)}}{e^{-n} z} = \frac{e^{-1+ne^{-n} z -n}}{e^{-n}z} \\
& =\frac{e^{n e^{-n}z}}{ez} \rightarrow \frac{1}{ez} 
\end{align*}
spherically uniformly on compact subsets of $\mathbb{C}$
where $g(z) = 1/ez$ is meromorphic with pole at zero in $\mathbb{C}$. Thus by Zalcman lemma $\mathcal{F}$ is not normal in $\Delta$.
\\
Here $$\frac{f \textquoteright_n }{f_n} = \frac{n^2z}{nz +1}.$$
\begin{align*}
 m\left(r,\frac{f_n\textquoteright}{f_n} \right)&=\frac{1}{2\pi} \int_{0}^{2\pi} \log^{+}\big|\frac{n^2re^{\iota \theta}}{nre^{\iota \theta+1}} \big| \, d \theta \\
 &= \frac{1}{2\pi} \int_{0}^{2\pi} \log^{+} n^2  \,d\theta + \frac{1}{2\pi} \int_{0}^{2\pi} \log^{+}r -\frac{1}{2\pi} \int_{0}^{2\pi} \log^{+} \big | nre^{\iota \theta}+1 \big | \\
 & \geq \log^{+}n^2-\log^{+}(nr+1) \\
 &\geq \log^{+}n^2-\log^{+}n -\log 2 \\
 &= \log^{+}n -\log 2
 \end{align*}
 that means $m\left(r,\frac{f_n \textquoteright}{f_n} \right)$ increase as $\log^{+} n$. In a similar way we can show that right hand side of Lemma (\ref{lemma6`}) is $O(\log^{+} \log^{+} n)$. This proves the assertion. 
\end{rem}
\begin{lem} \label{lemma7}
$\mathcal{F}$ be a family of holomorphic functions in $\Delta$ (respectively in $\mid z \mid < \eta$) which is not normal in $\Delta$ (respectively in any neighbourhood of the origin). Then there is an $r_1<1$ and a constant $k \textquoteright $  with the property that if $r> r_1$ (respectively $r> \frac{\eta}{2}$) such that
$$m\left(r,\frac{1}{f}\right) \leq k  \textquoteright m(r,f), $$
for infinite number of $f \in \mathcal{F}$.
\end{lem}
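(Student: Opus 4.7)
The strategy is to exploit non-normality to locate a point $\alpha$ where $|f(\alpha)|\geq 1$, transport this information to the origin by composing with $\psi_{-\alpha}$, apply Jensen's formula (which simplifies drastically in the holomorphic case because $N(r,g)=0$), and finally pull the resulting estimate back to $f$ using Lemma~\ref{cor5}.

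First, invoke the note preceding Lemma~\ref{lemma6}: since $\mathcal{F}$ is not normal in $\Delta$, there exist $r_0<1$ and an infinite subfamily $\mathcal{F}_1\subseteq\mathcal{F}$ such that each $f\in\mathcal{F}_1$ admits some $\alpha=\alpha(f)$ with $|\alpha|\leq r_0$ and $|f(\alpha)|\geq 1$. Fix any $r$ with $r_0<r<1$, set $r_1=r_0$, and put
\[
g \;:=\; f\circ \psi_{-\alpha},
\]
which is holomorphic on $|z|\leq r$ because $\psi_{-\alpha}$ maps $\{|z|\leq r\}$ onto itself and $f$ is holomorphic on $\Delta$. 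In particular $g(0)=f(\alpha)$, so $|g(0)|\geq 1$ and $g\not\equiv 0$.

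Next, apply Jensen's formula (the $z=0$ case of Poisson--Jensen) to $g$ on the circle $|z|=r$:
\[
\log|g(0)| \;=\; m(r,g) - m\!\left(r,\tfrac{1}{g}\right) + N(r,g) - N\!\left(r,\tfrac{1}{g}\right).
\]
Since $g$ is holomorphic, $N(r,g)=0$; since $|g(0)|\geq 1$, $\log|g(0)|\geq 0$; and $N(r,1/g)\geq 0$ is automatic. Rearranging yields the key one-sided estimate
\[
m\!\left(r,\tfrac{1}{g}\right) \;\leq\; m(r,g).
\]

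Finally, transfer back to $f$. Since $|-\alpha|=|\alpha|\leq r_0$, Lemma~\ref{cor5} applies with $\psi_{-\alpha}$ in place of $\psi_{\alpha}$. Using it once on the holomorphic function $f$ and once on the meromorphic function $1/f$ gives
\[
m(r,g)\;=\;m(r,f\circ\psi_{-\alpha})\;<\;k\, m(r,f),
\]
\[
\tfrac{1}{k}\,m\!\left(r,\tfrac{1}{f}\right)\;<\;m\!\left(r,\tfrac{1}{f}\circ\psi_{-\alpha}\right)\;=\;m\!\left(r,\tfrac{1}{g}\right),
\]
with $k=k(r,r_0)$ the constant from Lemma~\ref{cor5}. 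Chaining these two bounds with the Jensen estimate yields $m(r,1/f)<k^{2}\,m(r,f)$, so the lemma holds with $k'=k^{2}$ for every $f\in\mathcal{F}_1$ and every $r\in(r_1,1)$. The local version is identical: non-normality at the origin supplies $f_n\in\mathcal{F}$ and $\alpha_n\to 0$ with $|f_n(\alpha_n)|\geq 1$, so for $n$ large $|\alpha_n|\leq \eta/2$, and the entire argument runs verbatim on the range $\eta/2<r<\eta$.

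The one place to be watchful is the holomorphy hypothesis: it enters exactly through $N(r,g)=0$, which is what collapses Jensen's formula to the clean one-sided bound $m(r,1/g)\leq m(r,g)$. If $f$ were merely meromorphic, $g$ would inherit poles and an additional $N(r,g)$ term would appear on the wrong side of that inequality, which is precisely the obstruction flagged in the remark preceding this lemma. No other step in the plan should cause difficulty.
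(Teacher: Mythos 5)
Your proposal is correct and follows essentially the same route as the paper: locate $\alpha$ with $\mid f(\alpha)\mid \geq 1$ from non-normality, compose with $\psi_{-\alpha}$, use Jensen's formula at the origin together with $\log\mid f(\alpha)\mid\geq 0$ to get $m\left(r,\frac{1}{g}\right)\leq m(r,g)$, and apply Lemma~\ref{cor5} twice to obtain $k'=k^{2}$. The paper phrases the middle step via $m\leq T$ and $T\left(r,\frac{1}{g}\right)=T(r,g)-\log\mid g(0)\mid$ rather than writing Jensen's formula explicitly, but this is the identical computation.
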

\begin{proof}
Since $\mathcal{F }$ is not normal (not normal at $ z_0 =0$) then there is an $r_0<1$  such that to each $f$ of a subfamily $\mathcal{F}_1$ of $\mathcal{F}$ there corresponds 
$\alpha=\alpha(f),\quad \mid \alpha \mid < r_0$ such that $\mid f(\alpha) \mid \geq 1$. 

If $f(\alpha)=0 \Rightarrow \mid f(\alpha) \mid =0$ which is not true and if $f(\alpha) =\infty\Rightarrow \alpha$ is a pole of $f$ in $\Delta$ but $f$ is holomorphic in  $\Delta$. Therefore $f(\alpha) \neq 0,\infty$.(respectively there is a sequence $\{\,\ f_n \,\}$ in $\mathcal{F}$ and $\alpha_n(f_n)= \alpha_n, \quad \alpha_n \rightarrow 0$ with $\mid \alpha_n \mid< \frac{\eta}{3}, n=1,2,3 \ldots $ such that $\mid f_n (\alpha_n) \mid \geq 1$ and $f_n(\alpha_n) \neq 0, \infty $ as done earlier).

Let $r_1 =\frac{1}{2} (1+r_0)$. If $r> r_1 \quad (r> \frac{\eta}{2})$ then by Lemma (\ref{cor5}) we have 
\begin{align*}
m\left(r,\frac{1}{f}\right)&< k m\left(r,\frac{1}{f} o \psi_{-\alpha}\right) \leq k T\left(r,\frac{1}{f} o \psi_{-\alpha}\right) \\
&= k\left\lbrace T(r,fo \psi_{-\alpha}) -\log \mid f(\alpha) \mid \right\rbrace
 \\
&< kT(r,f o \psi_{-\alpha}) =k m(r,f o \psi_{-\alpha})\quad \mbox{since } \log \mid f(\alpha) \mid \geq 0 \quad \mbox{as } \mid f(\alpha) \mid \geq 1 \\
& < k^2 m(r,f) \quad \mbox{by \quad Lemma \quad(\ref{cor5}) }\\
&=k \textquoteright m(r,f)
\end{align*} where $k^2= k \textquoteright $.
\end{proof}
\begin{lem} \label{lemma8}
Let $\mathcal{F}$ be a family of functions holomorphic in $\Delta$ which is not normal and $k \geq 1$ be fixed integer then there exists $r_0<1 , r = r_0(k)$ such that to each fixed  $\rho , r_0< \rho<1$ corresponds constants $A(\rho) , B(\rho) $ and $C(\rho) $ and infinite subfamily $\mathcal{F}_1$ of $\mathcal{F}$ with 
$$m\left(r,\frac{f^{(k)}}{f}\right)< A+B \log^{+} T(R,f)+C \log \frac{1}{R-r},$$  $f \in \mathcal{F}_1,$ for $r_0<r<R<\rho$.
\end{lem}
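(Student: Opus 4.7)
The plan is to parallel the proof of Lemma \ref{lemma6} (the $k=1$ case), substituting Hiong's Estimate for Nevanlinna's Estimate and handling higher derivatives via a chain-rule expansion. Since $\mathcal{F}$ is not normal in $\Delta$, by the note preceding Lemma \ref{lemma6} there is an $r_0<1$ and an infinite subfamily $\mathcal{F}_1 \subseteq \mathcal{F}$ such that to each $f \in \mathcal{F}_1$ corresponds some $\alpha=\alpha(f)$ with $\mid \alpha \mid \leq r_0$ and $\mid f(\alpha) \mid \geq 1$. Set $g = f \circ \psi_{-\alpha}$, so that $f = g \circ \psi_\alpha$, and observe $g(0) = f(\alpha) \neq 0,\infty$ (since $f$ is holomorphic and $\mid g(0) \mid \geq 1$), which makes the $\log^{+}\log^{+}\mid 1/g(0)\mid$ term in Hiong's Estimate vanish.

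Next, I would apply the Fa\`a di Bruno formula to $f = g \circ \psi_\alpha$ to obtain a representation
\[
\frac{f^{(k)}(z)}{f(z)} = \sum_{j=1}^{k} \frac{g^{(j)}(\psi_\alpha(z))}{g(\psi_\alpha(z))}\, P_{k,j}\bigl(\psi_\alpha'(z),\ldots,\psi_\alpha^{(k-j+1)}(z)\bigr),
\]
where each $P_{k,j}$ is a universal polynomial (a Bell polynomial) in the derivatives of $\psi_\alpha$. Since $\mid \alpha \mid \leq r_0 < \rho$, the functions $\psi_\alpha^{(m)}$ for $1 \leq m \leq k$ are uniformly bounded on $\mid z \mid \leq \rho$ by constants depending only on $\rho$, $r_0$, $k$. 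Using Remark \ref{rem4},
\[
m\left(r,\frac{f^{(k)}}{f}\right) \leq \sum_{j=1}^{k} m\left(r,\frac{g^{(j)}}{g}\circ \psi_\alpha\right) + D_0(\rho,k),
\]
and Lemma \ref{cor5} (applied in the form $m(r, h \circ \psi_\alpha) \leq k_1 \, m(r,h)$ with $h = g^{(j)}/g$) converts each composed term into a bound involving $m(r, g^{(j)}/g)$ directly.

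Finally, I would apply Hiong's Estimate to $g$ to bound each $m(r,g^{(j)}/g)$, $1 \leq j \leq k$, by a constant multiple of $\log^{+} T(r',g) + \log^{+}\frac{1}{r'-r} + \log^{+}r + \log^{+}r' + 1$. One then bounds $T(r',g) = T(r', f\circ \psi_{-\alpha})$ by $T(R,f)$ exactly as in the proof of Lemma \ref{lemma6}: the circle $\mid \psi_{-\alpha}(z)\mid = r'$ lies inside $\mid z\mid = r_1 := r^2(r'-r_0)/(r^2 - r_0 r')$, so by Theorem \ref{nifpf},
\[
T(r', f\circ \psi_{-\alpha}) \leq \log^{+} M(r_1,f) \leq \frac{R+r_1}{R-r_1}\, T(R,f),
\]
and one chooses $r'$ with $1/(r'-r)$ comparable to $1/(R-r)$. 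Combining these estimates gives the desired inequality with constants $A(\rho), B(\rho), C(\rho)$ depending on $\rho$, $r_0$, and $k$. The main obstacle is the bookkeeping of the Fa\`a di Bruno expansion: one must verify that the Bell-polynomial factors are genuinely uniformly bounded as $\alpha$ varies over $\mid \alpha \mid \leq r_0$, which is a routine but tedious computation; the rest is a near-verbatim adaptation of the $k=1$ case.
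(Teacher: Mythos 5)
Your proof is correct, but it is not the route the paper takes, so a comparison is in order. The paper follows Schwick and argues by induction on $k$: it invokes Zalcman's lemma (Lemma \ref{zal}) to produce points $z_n+\rho_n z_0$ at which $f_n$ tends to $g(z_0)\neq 0,\infty$, centres the M\"obius map $\psi_n$ there, writes the chain rule in the reverse direction, $(f_n\circ\psi_n)^{(k)}=(f_n^{(k)}\circ\psi_n)(\psi_n')^k+\sum_{i=1}^{k-1}(f_n^{(i)}\circ\psi_n)P_i(\psi_n)$, solves for the top term, applies Hiong's estimate only to $f_n\circ\psi_n$, and disposes of the lower-order sum $\sum_{i<k}m\left(r,f_n^{(i)}/f_n\right)$ by the induction hypothesis. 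You instead dispense with both the induction and Zalcman's lemma: you reuse the elementary observation preceding Lemma \ref{lemma6} (non-normality yields $\alpha$ with $\mid f(\alpha)\mid\geq 1$, hence $g(0)=f(\alpha)$ kills the $\log^{+}\log^{+}$ term), expand $f=g\circ\psi_\alpha$ by Fa\`a di Bruno in the forward direction, and apply Hiong's estimate to $g$ for every order $j\leq k$ at once. Your version is the more self-contained one — it is literally Lemma \ref{lemma6} with Nevanlinna's estimate replaced by Hiong's and one extra layer of bookkeeping — while the paper's inductive scheme only ever needs the single coefficient $(\psi_n')^k$ plus an unspecified lower-order remainder, so it avoids tracking the Bell polynomials explicitly. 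The one point you should make explicit is that the uniform bounds on $P_{k,j}$ (whose denominators are powers of $r^2-\bar{\alpha}z$) degenerate as $r\downarrow r_0$, so, exactly as in Lemma \ref{lemma6}, the inequality is obtained for $r\geq\frac{1}{2}(1+r_0)$ and one relabels $r_0$ accordingly; this is cosmetic and does not affect the statement.
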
  	
\begin{proof}
The proof of this lemma given here is by Schwick ~\cite{schw}, using induction on $k \in \mathbb{N}$.
For $k=1$, this lemma is nothing but Lemma (\ref{lemma6}) Let us assume that the result is true for $k-1$.
\\
Since $\mathcal{F}$ is not normal in $\mid z \mid < \rho$ therefore by Zalcman lemma \ref{zal} there exists $z_n\rightarrow 0$, $\rho_n \rightarrow0$, functions $f_n \in \mathcal{F}$ and a non constant entire function $g(z)$ such that $$f_n(z_n+\rho_n z) \rightarrow g(z)\quad \mbox{normally in } \mathbb{C}.$$
Assume $z_0\in \mathbb{C}$ satisfying $g(z_0) \neq 0, \infty$ and for $r_0<r<\rho<1$.
Let $$\psi_n(z)= r^2 \frac{z+(z_n+\rho_nz_0)}{r^2+ \bar{(z_n+\rho_nz_0)}z}.$$
By Lemma \ref{cor5} there is constant $k$ such that for any meromorphic function $h$ and for any $n$ sufficiently large (without loss of generality for all $n \in \mathbb{N}$) we have $$\frac{1}{K}m(r,h)\leq m(r,ho\psi_n) \leq Km(r,h)$$ for $r_0<r<\rho<1.$ Also 
\\
$$(f_no\psi_n)^{(k)}=(f_n^{(k)}o\psi_n)(\psi_n\textquoteright)^k+ \sum_{i=1}^{k-1}(f_n^{(i)}o\psi_n)P_i(\psi_n)$$ where $P_i$ is a differential polynomial.
\\
So
\begin{align*}
m\left(r,\frac{f_n^{(k)}}{f_n}\right) & \leq K m\left(r,\frac{f_n^{(k)}}{f_n}o\psi_n\right) \\
& \leq K \left\lbrace m\left(r,\left(\frac{f_n^{(k)}}{f_n}o\psi_n\right)(\psi_n \textquoteright)^k \right) + m \left(r,\frac{1}{(\psi_n\textquoteright)^k} \right)\right\rbrace \\
& \leq K\left\lbrace m\left(r,\frac{(f_no\psi_n)^{(k)}}{f_no\psi_n} \right) +\sum_{i=1}^{k-1} m \left(r,\frac{f_n^{(i)}}{f_n}o\psi_n \right) +\sum_{i=1}^{k-1} m (r,P_i(\psi_n))+A \right\rbrace \\
& \leq K \left\lbrace m\left(r,\frac{(f_no\psi_n)^{(k)}}{f_no\psi_n} \right)+\sum_{i=1}^{k-1} m\left(r,\frac{f_n^{(i)}}{f_n} \right)+A \right\rbrace,
\end{align*} $r_0<r<\rho<1.$
The induction hypothesis is now applicable to the summation on the right. By Hiong\textquoteright s estimate (\ref{hio est}) we have
\begin{align*}
m\left(r,\frac{(f_n o \psi_n)^{(k)}}{f_n o \psi_n}\right) &< C\left\lbrace\log^{+}T(R,f_no\psi_n)+ \log \frac{1}{R-r} +\log^{+} \log^{+} \frac{1}{\mid f_n(z_n+\rho_n z_0) \mid}+A \right\rbrace \\
& < C \left\lbrace \log^{+}T(R,f_no\psi_n) + \log \frac{1}{R-r} +A \right\rbrace,
\end{align*} for $r_0<r<R<\rho<1.$ Now we can proceed same as in Lemma (\ref{lemma6}) to complete the proof. 
\end{proof}
From above lemma we get a standard comparison between $T(r,f^{(k)}) $ and $T(r,f)$ for a family of holomorphic functions which is not normal.
\begin{cor} \label{cor8}
Let $\mathcal{F}$ be a family of functions holomorphic in $\Delta$ which is not normal, and let $k$ be a fixed integer $\geq 1$. Then there exists $r_0 < 1 $ such that to each fixed $\rho, \quad r_0< \rho <1$ corresponds $A(\rho) , B(\rho) ,C(\rho)$ and an infinite subfamily $\mathcal{F}_1$ of $\mathcal{F}$ with 
$$T(r,f^{(k)}) \leq A+BT(R,f) +C \log \frac{1}{R-r} , f \in \mathcal{F}_1,$$ if $r_0<r<R< \rho.$
\end{cor}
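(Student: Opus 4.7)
The plan is to reduce the statement to Lemma~\ref{lemma8} via the standard decomposition $f^{(k)} = (f^{(k)}/f)\cdot f$. Since each $f\in\mathcal{F}$ is holomorphic in $\Delta$, so is every derivative $f^{(k)}$; consequently $f^{(k)}$ has no poles in $\mid z\mid <1$, so $N(r,f^{(k)})=0$ and hence $T(r,f^{(k)}) = m(r,f^{(k)})$. This eliminates the counting part of the characteristic function and reduces the problem to controlling the proximity function of $f^{(k)}$.

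Next I would exploit the multiplicative decomposition together with Remark~\ref{rem4} (the basic inequality $m(r,g_1 g_2)\le m(r,g_1)+m(r,g_2)$) to write
\[
m(r,f^{(k)}) \;\le\; m\!\left(r,\tfrac{f^{(k)}}{f}\right) + m(r,f).
\]
Because $f$ is holomorphic we also have $m(r,f)=T(r,f)$, and since $T(\cdot,f)$ is nondecreasing in its first argument (Cartan's identity / convexity), $m(r,f)\le T(R,f)$ whenever $r<R$.

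Now I would apply Lemma~\ref{lemma8}: since $\mathcal{F}$ is not normal, there exist $r_0<1$ and an infinite subfamily $\mathcal{F}_1\subseteq\mathcal{F}$ with
\[
m\!\left(r,\tfrac{f^{(k)}}{f}\right) \;<\; A + B\,\log^{+}T(R,f) + C\,\log\tfrac{1}{R-r}, \qquad r_0<r<R<\rho,
\]
for every $f\in\mathcal{F}_1$. Using the trivial bound $\log^{+}T(R,f)\le T(R,f)$ (valid since $T(R,f)\ge 0$) and combining with the estimate for $m(r,f)$ above, I obtain
\[
T(r,f^{(k)}) \;\le\; A + (B+1)\,T(R,f) + C\,\log\tfrac{1}{R-r},
\]
which, after absorbing the coefficient into a new $B(\rho)$, is precisely the claimed inequality.

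There is really no substantive obstacle here: the argument is a direct bookkeeping exercise built on Lemma~\ref{lemma8}. The only point requiring any care is keeping the dependence of the constants $A,B,C$ on $\rho$ (and the choice of the infinite subfamily $\mathcal{F}_1$) consistent with what Lemma~\ref{lemma8} already furnishes, and making sure the step $\log^{+}T(R,f)\le T(R,f)$ is used rather than the stronger $\log^{+}$ form, since the corollary is stated with $T(R,f)$ on the right-hand side.
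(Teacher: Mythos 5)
Your proposal is correct and follows essentially the same route as the paper: write $T(r,f^{(k)})=m(r,f^{(k)})\leq m\left(r,\frac{f^{(k)}}{f}\right)+m(r,f)$, invoke Lemma \ref{lemma8} for the logarithmic-derivative term, and finish with $\log^{+}x\leq x$ and the monotonicity of $T$. The only difference is cosmetic bookkeeping in how the constant multiplying $T(R,f)$ is absorbed.
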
 
\begin{proof}
Since $f$ holomorphic give $f^{(k)}$ is holomorphic so 
$$T(r,f^{(k)}) =m(r,f^{(k)}) \leq m\left(r,\frac{f^{(k)}}{f}\right)+m(r,f)= m\left(r,\frac{f^{(k)}}{f}\right) +T(r,f)$$
By  Lemma \ref{lemma8} we get $$T(r,f^{(k)}) \leq A+B \log^{+} T(R,f)+C \log \frac{1}{R-r} + T(r,f),$$ 
$ r_0<r<R<\rho, \quad f \in \mathcal{F}_1$. And since $\log^{+} x < x $ we get $$T(r,f^{(k)}) \leq A+BT(R,f) +C \log \frac{1}{R-r},$$
\\
$ r_0<r<R<\rho, \quad f \in \mathcal{F}_1$.
\end{proof}
\chapter{Nonvanishing Families of Holomorphic Functions }
\quad By non-vanishing family of holomorphic functions we mean family of holomorphic functions non-vanishing holomorphic functions. Here we give normality criterion for this family and an extension of the result is supposed.
\section{A New Family }

\quad In this section normality of a family $\mathcal{F}$ of non-vanishing holomorphic functions is compared with the normality of a new family $\mathcal{G}$ each function of which is  obtained by $\mathcal{F}$ as follows:   

$\mathcal{F}$ is family of holomorphic functions on $\Delta$ and $a_0(z),a_1(z),\ldots,a_{k-1}(z) $ are fixed holomorphic functions then $\mathcal{G}$ consists of the functions 
\begin{equation}\label{eq5.1}
g(z)= f^{(k)}(z) + a_{k-1}(z) f^{(k-1)}(z)+\ldots+ a_0(z)f(z), 
\end{equation}
where $f \in \mathcal{F}$ and $z \in \Delta$. In the same way we give $g_\alpha $ as $$ g_\alpha(z) = f^{(k)}_\alpha(z)+ a_{k-1,\alpha}(z) f^{(k-1)}_{\alpha}(z) + a_{k-2, \alpha} f^{(k-2)}_{\alpha} (z)+ \ldots + a_{0, \alpha}(z) f_\alpha(z),$$ 
for $z \in \Delta , f \in \mathcal{F}$.

Now normality of $\mathcal{F}$ relates with normality of $\mathcal{G}$ which we give in next lemma.
\begin{lem} \label{lemma9}
If $\mathcal{G}$ is normal then $\mathcal{F}$ is normal.
\end{lem}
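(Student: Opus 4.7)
My plan is to prove the contrapositive: assume $\mathcal{F}$ is not normal in $\Delta$, and deduce that $\mathcal{G}$ cannot be normal either. First I would invoke Zalcman's lemma (Lemma~\ref{zal}) for the non-normal holomorphic family $\mathcal{F}$, producing a subsequence $\{f_n\}\subset\mathcal{F}$, points $z_n\to z_0\in\Delta$, and positive scalars $\rho_n\to 0$ such that $F_n(w):=f_n(z_n+\rho_n w)$ converges locally uniformly on $\mathbb{C}$ to a non-constant entire function $h$. Since every $f_n$ is zero-free on $\Delta$, Hurwitz's theorem (Theorem~\ref{hurwitz}) forces $h$ to be zero-free as well; consequently $h=e^{\psi}$ for some non-constant entire $\psi$, so $h$ is transcendental, and a short check shows that each derivative $h^{(k)}$ is also non-constant (if $h^{(k)}$ were constant, $h$ would be a polynomial of degree at most $k$, which is incompatible with being both non-vanishing and non-constant).

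Next I would rescale $g_n$ accordingly. By the Weierstrass theorem (Theorem~\ref{weierstrass}), $F_n^{(j)}\to h^{(j)}$ locally uniformly on $\mathbb{C}$ for every $j\geq 0$, and each coefficient $a_j(z_n+\rho_n w)$ stays uniformly bounded on compact $w$-sets by continuity of $a_j$ near $z_0$. Multiplying the defining relation of $g_n$ by $\rho_n^k$ and evaluating at $z_n+\rho_n w$ therefore gives
\[
G_n(w):=\rho_n^k\,g_n(z_n+\rho_n w) = F_n^{(k)}(w)+\sum_{j=0}^{k-1}\rho_n^{k-j}\,a_j(z_n+\rho_n w)\,F_n^{(j)}(w) \longrightarrow h^{(k)}(w)
\]
locally uniformly on $\mathbb{C}$, since every term carrying a positive power of $\rho_n$ vanishes in the limit.

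From the supposed normality of $\mathcal{G}$ I would then extract a sub-subsequence of $\{g_n\}$ converging on a closed neighborhood $K$ of $z_0$ either uniformly to a holomorphic limit $\phi$ or uniformly to $\infty$. In the first case $\{g_n\}$ is uniformly bounded by some $M$ on $K$; picking any $w_0\in\mathbb{C}$ with $h^{(k)}(w_0)\neq 0$ (such $w_0$ exists because $h^{(k)}$ is non-constant), eventually $z_n+\rho_n w_0\in K$ and $|g_n(z_n+\rho_n w_0)|=\rho_n^{-k}|G_n(w_0)|\sim\rho_n^{-k}|h^{(k)}(w_0)|\to\infty$, a direct contradiction to the bound $M$. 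The case $g_n\to\infty$ uniformly on $K$ is the main obstacle I anticipate: to rule it out I plan to exploit the non-vanishing hypothesis via the factorization $g_n=f_n L_n$ with $L_n=\sum_{j=0}^{k} a_j f_n^{(j)}/f_n$ (where $a_k\equiv 1$), rewrite $1/g_n=(1/f_n)(1/L_n)$ with $1/f_n$ holomorphic on $\Delta$, and combine the uniform convergence $G_n\to h^{(k)}$ on ever larger compact sets with the behaviour of $h^{(k)}$ at points where it is small or vanishes to contradict the supposed uniform blow-up of $|g_n|$ on $K$. In either case we reach a contradiction, so $\mathcal{F}$ must be normal.
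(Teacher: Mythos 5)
Your setup and your Case A are sound: the rescaling identity $\rho_n^k g_n(z_n+\rho_n w)=F_n^{(k)}(w)+\sum_{j<k}\rho_n^{k-j}a_j(z_n+\rho_n w)F_n^{(j)}(w)\to h^{(k)}(w)$ is correct, Hurwitz does force the Zalcman limit $h$ to be zero-free, hence transcendental with $h^{(k)}\not\equiv 0$, and the resulting blow-up $|g_n(z_n+\rho_n w_0)|\sim\rho_n^{-k}|h^{(k)}(w_0)|\to\infty$ genuinely contradicts local boundedness of a convergent subsequence of $\{g_n\}$. That part is an attractive alternative to the paper's handling of the situation $g_0(z_0)=0$, which goes through variation of parameters and Lemma \ref{lemma10}.

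The gap is your Case B, and it is not a loose end but the heart of the lemma: ruling out ``$g_n\to\infty$ uniformly near $z_0$ while $\{f_n\}$ is not normal at $z_0$'' is precisely what the paper's Case (i) proves, and the mechanism you sketch does not do it. First, once $|g_n|\to\infty$ uniformly on $K$, the rescaled functions $G_n$ are zero-free on disks exhausting $\mathbb{C}$, so Hurwitz forces $h^{(k)}$ to be zero-free in exactly this case --- there are no zeros of $h^{(k)}$ to exploit (think of $h=e^{cw}$, all of whose derivatives are zero-free and bounded away from $0$ on every fixed compact set). Second, although $\inf_{\mathbb{C}}|h^{(k)}|=0$ for such $h$, the points where $|h^{(k)}|$ is small escape to infinity; the locally uniform convergence $G_n\to h^{(k)}$ gives no control at $w$-points of modulus comparable to $1/\rho_n$, which is where you would need it, and even where it does apply, $|g_n(z_n+\rho_n w)|\approx\rho_n^{-k}|h^{(k)}(w)|$ is small only if $|h^{(k)}(w)|=o(\rho_n^k)$, which you have not arranged. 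Your factorization $1/g_n=(1/f_n)(1/L_n)$ is the right starting point, but converting it into a contradiction is exactly what the paper's quantitative chain accomplishes: $m(r,1/g)=O(1)$ on the neighbourhood, $m(r,1/f)\le m(r,g/f)+m(r,1/g)$, the logarithmic-derivative estimate for non-normal families (Lemma \ref{lemma8}), the comparison $m(r,f)\le k'\,m(r,1/f)$ (Lemma \ref{lemma7}), and the Borel-type Lemma \ref{lemma2}, ending with Lemma \ref{lemmaB}. Without that machinery, or an equivalent, your argument does not close.
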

\begin{proof}
Suppose $\mathcal{G}$ is normal. Let $ \{\, g_n \,\} \subset \mathcal{G}$ such that $g_n \rightarrow g_0 $ normally. To show that the corresponding family $ \{\, f_n \,\} \subset \mathcal{F}$ is normally convergent in some neighbourhood of each point $z_0 \in \Delta $ that mean normal at each point $z_0 \in \Delta $.
\\
We have two cases
\\
\textbf{Case(i)}
\\
$g_0(z_0) \neq 0 $ without loss of generality let $z_0 = 0$ then there is a neighbourhood of $z_0 =0; \mid z \mid  \leq 2 \delta < \frac{1}{2} $ for which $\mid g(z) \mid > \epsilon > 0 $ for an infinite subfamily $\mathcal{G}_1 $ of $\{\, g_n \,\}$. In general 
$$m\left(r,\frac{g}{f}\right) \leq \sum_{j=0}^{k-1} m(r,a_j)+ \sum_{j=0}^{k} m\left(r,\frac{f^{(j)}}{f}\right) +C $$
$$H_0(r) = \sum_{j=0}^{k-1} m(r,a_j) $$ \textquoteleft say \textquoteright. Suppose $\mathcal{F}_1$ is not normal in neighbourhood $\mid z \mid 2 < \delta$ of $z_0 =0$. Then by the local version of Lemma (\ref{lemma8}); for $\delta<r< 2 \delta$ we have
\begin{align*}
m\left(r,\frac{1}{f}\right) &\leq m\left( r,\frac{g}{f}\right)+ m\left(r,\frac{1}{g}\right) \leq H_0(r) + A \log^{+} T( R,f) + B \log \frac{1}{R-r}  \\
&+ m\left(r,\frac{1}{g}\right) +C,
\end{align*}for infinite number of $f \in \mathcal{F}_2 \subset \mathcal{F}_1 $. Now $r < \frac{1}{2}$ and $H_0(r) = \sum_{j=0}^{k-1} m(r,a_j) $ is an increasing function of $r$ therefore $H_0(r) < H_0 (\frac{1}{2} )$ and $$m\left(r,\frac{1}{g}\right) = \frac{1}{2 \pi} \int_{0}^{2 \pi} \log^{+} \big | \frac{1}{g(re^{\iota \theta})} \big | \, d \theta  = O(1),$$ 
as $m\left(r,\frac{1}{g}\right) < \log^{+}\frac{1}{\epsilon} < \infty $.

Since $f$ is non vanishing holomorphic function therefore $1/f$ is holomorphic on $\Delta$ that gives $$ m\left(r,\frac{1}{f}\right)=T\left(r,\frac{1}{f}\right).$$
Summing up all this
$$T\left(r, \frac{1}{f}\right) = m\left(r,\frac{1}{f}\right) \leq A \log^{+} T(R,f) +B \log \frac{1}{R-r} + C + H_0\left(\frac{1}{2}\right) $$
$$T\left(r,\frac{1}{f}\right) \leq A \log^{+} T(R,f) + B \log \frac{1}{R-r} +C, \textquoteright$$
forall $f \in \mathcal{F}_2 $  where $C \textquoteright= C+H_0\left(\frac{1}{2}\right)$ . Again if $\mathcal{F}_2$ is not normal on the neighbourhood $\mid z \mid < 2 \delta $ of $z_0=0 $ then by Lemma (\ref{lemma7}) for $\frac{\delta}{2} <r < 2 \delta $ and $k$ we have
$$m(r,f) \leq k m\left(r,\frac{1}{f}\right),$$ for $f \in \mathcal{F}_3 \subset \mathcal{F}_2$ or	
$$T(r,f) \leq T\left(r,\frac{1}{f}\right)$$ because $f$ is non vanishing holomorphic. Thus 
$$T(r,f) \leq A\textquoteright \log^{+} T(R,f) +B \textquoteright \log \frac{1}{R-r} + C {\textquoteright} { \textquoteright}, $$ for $\frac{\delta}{2} < r< R < 2 \delta, \quad f \in \mathcal{F}_3.$  Now let $U(r) = T(2 \delta r,f) ; \gamma(r) \equiv 0 $ then by Lemma (\ref{lemma2}) we get
$$m(2 \delta r ,f)= T(2 \delta r,f) \leq M+ 2 B \textquoteright \log \frac{1}{R-r},$$ for $f \in \mathcal{F}_3 \quad \frac{\delta}{2}< r <R< 2 \delta$.

By Lemma (\ref{lemmaB}) $\mathcal{F}_3$ is normal in the neighbourhood of origin and hence $\mathcal{F}$ is normal at $z_0 =0$.
\\
\textbf{Case(ii)}
\\
$g_0 (z_0) =0.$ First we recall that
\\
if $h_1(z), h_2(z),\ldots,h_k(z)$ form a  linearly independent set of solution of homogeneous equation 
$$L(h) =a_0(z) h(z) +a_1(z) h \textquoteright (z)+\ldots+h^{(k)} (z)=0 $$ 
then solution set of $L(h)=0 $ is given by linear span of $\{\, h_1(z),h_2(z),...,h_k(z) \,\}$ and solution of the system $L(h) = g(z)$ ~\cite{cod} is given by 
\begin{equation}\label{eq5.2}
h(z) = \sum_{m=1}^{k} \left( \alpha_m + \beta _m(z)\right) h_m (z)
\end{equation}where $\alpha_m$ are constants and 
\begin{equation}\label{eq5.3}
\beta_m(z) = \int_{o}^{z} \left[ \frac{W_m(h_1,h_2,\ldots,h_k)(t)}{W(h_1,h_2,\ldots,h_k)(t)} \right] g(t) \, dt, 
\end{equation}
where $W(h_1,h_2,\ldots,h_k)(t)$ is wronskian of $h_1,h_2,\ldots, h_k$ and $W_m(t)$ is the wronskian of $ h_1,h_2,\ldots,h_{m-1},h_{m+1},\ldots,h_k$.
\\
From derivation of (\ref{eq5.1}) we get that $\beta_m(z)$ satisfies 
\\
$\beta\textquoteright_1(z) h_1(z) + \beta \textquoteright _2 (z) h_2(z) +\ldots+ \beta \textquoteright _k(z) h_k(z)=0$
\\
$\beta \textquoteright _1(z) h \textquoteright _1(z) + \beta \textquoteright _2 (z) h \textquoteright _2 (z) +...+ \beta \textquoteright _k(z) h \textquoteright _k(z) =0 $
.
\\
.
\\
.
\\
$\beta \textquoteright _(z) h^{(k-2)} _1(z) + \beta \textquoteright_2(z) h ^{(k-2)}_2 (z) +...+ \beta \textquoteright _k(z) h^{(k-2)} _k(z) =0$
\\
$\beta\textquoteright_1(z) h^{(k-1)}_1(z) + \beta \textquoteright_2(z) h^{(k-1)}_2(z)+...+ \beta \textquoteright _k (z) h^{(k-1)}_k(z)=g(z).$

Now from the system $L(f_n)=g_n $ and (\ref{eq5.2}) we can say that there are constants $\alpha_{n,m} , m=1,2,3,\ldots,k $ and functions $\beta_{n,m}(z), m=1,2,3,\ldots,k $ determined by (\ref{eq5.2}) with $g=g_n$ for which $$f_n(z) = \sum\left( \alpha_{n,m} + \beta_{n,m}(z) \right) h_m(z)$$ 
since $g_0$ is cluster point of $\{\,g_n \,\}$ and $g_0(0)=0$ therefore for $\epsilon > 0 $ there exists $\delta> 0 $ and $M< \infty$ constant such that $$\big| \frac{w_m(t)}{w(t)} \big| < M , \mid g_n(t) \mid < \epsilon,$$ for $\mid t \mid < \delta $ and $n> n_0$. Thus $$\mid \beta _{n,m}(z) \mid \leq M \epsilon = \epsilon^{*}, \mid z \mid < \delta , n > n_0.$$
Let $ \mathcal{F}^{*}$ be the class of corresponding combinations $\left\lbrace\sum \alpha_{n,m} h_m \right\rbrace$ finding from (\ref{eq5.2}), corresponding to each $F \in \mathcal{F}^{*}$ there is an $f_n \in \mathcal{F}$ such that
$$\mid F(z)-f_n(z) \mid = \big|- \sum_{m=1}^{k} \beta_{n,m}(z) h_m(z) \big| \leq \epsilon^{*} k \max_{\mid z \mid < \delta} \{\, \mid h_m(z) \mid \,\} \leq \epsilon_1,$$
\\
for $\mid  z \mid < \delta_1(\epsilon_1). $ This gives $F(z) \leq f_n(z) + \epsilon_1 $.

From here we have that $\{\, f_n \,\} $ is normal if the corresponding family in $\mathcal{F}^{*}$ is normal. Therefore we have to show that $\mathcal{F}^{*}$ is normal which is next lemma and our present lemma is a local version of next lemma.
\end{proof} 
\begin{lem} \label{lemma10}
Let $h_1(z),h_2(z),\ldots,h_k(z)$ be fixed linearly independent functions holomorphic in $\Delta$. Let $\mathcal{F}^{*}$ be a family of function holomorphic in $\Delta$ such that if $F \in \mathcal{F}^{*} $
$$F(z)=\sum_{i=1}^{k} \alpha_i h_i(z),$$
for suitable constants $\alpha_i= \alpha_i(f)$.
\\
Suppose that there exists an M with the property that for each $F \in f^{*}$ corresponds $ g; g(F)$ holomorphic in $\Delta$ $\mid g(z) \mid < M $ such that the equation 
$$F(z)+g(z) =0 $$
has no solution in $\Delta$. Then $\mathcal{F}^{*}$ is normal. 
\end{lem}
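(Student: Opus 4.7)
The plan is to exploit the finite-dimensionality of the span of $h_1,\ldots,h_k$: each $F \in \mathcal{F}^{*}$ is encoded by its coefficient vector $(\alpha_1,\ldots,\alpha_k) \in \mathbb{C}^k$, and linear independence makes the map from coefficients to functions a linear homeomorphism onto the span (with the topology of locally uniform convergence). So to test normality of $\mathcal{F}^{*}$, I would take an arbitrary sequence $F_n = \sum_{i=1}^{k} \alpha_{n,i} h_i \in \mathcal{F}^{*}$ and try to produce a subsequence that either converges locally uniformly in $\Delta$ or diverges locally uniformly to $\infty$.

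Set $M_n = \max_{1 \le i \le k} |\alpha_{n,i}|$. If some subsequence of $\{M_n\}$ is bounded, then by compactness in $\mathbb{C}^k$ I can pass to a further subsequence along which $\alpha_{n,i} \to \alpha_i$ for every $i$; then $F_n \to \sum_i \alpha_i h_i$ locally uniformly in $\Delta$, which is a holomorphic limit. So the only interesting case is $M_n \to \infty$, and this is where the hypothesis on $g = g(F)$ will have to be used.

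In that case, consider $\widetilde{F}_n(z) := F_n(z)/M_n = \sum_i (\alpha_{n,i}/M_n)\, h_i(z)$. The new coefficients lie in the unit ball of $\mathbb{C}^k$ and at least one of them has modulus $1$, so after passing to a subsequence they converge to some $(\beta_1,\ldots,\beta_k)$ with $\max_i |\beta_i| = 1$. Hence $\widetilde{F}_n \to H := \sum_i \beta_i h_i$ locally uniformly in $\Delta$, and by the linear independence of $h_1,\ldots,h_k$, the limit $H$ is not identically zero. Now bring in the associated functions $g_n = g(F_n)$: since $|g_n| < M$ on $\Delta$ and $M_n \to \infty$, we have $g_n/M_n \to 0$ locally uniformly, so $(F_n + g_n)/M_n \to H$ locally uniformly as well.

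The decisive step, and what I expect to be the main obstacle to phrase cleanly, is the next one: by hypothesis $F_n + g_n$ has no zero in $\Delta$, and $M_n \neq 0$, so each function $(F_n + g_n)/M_n$ is zero-free in $\Delta$. The Hurwitz theorem (Theorem~\ref{hurwitz}) applied to this locally uniformly convergent sequence of non-vanishing holomorphic functions forces the limit $H$ to be either identically zero or zero-free in $\Delta$; the first option is excluded by linear independence, so $H$ is zero-free. Consequently, for every compact $K \subset \Delta$ there is $c_K > 0$ with $|H| \geq c_K$ on $K$, and therefore $|F_n(z)| = M_n |\widetilde{F}_n(z)| \geq \tfrac{1}{2} c_K M_n \to \infty$ uniformly on $K$ for $n$ large. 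Thus $F_n \to \infty$ locally uniformly on $\Delta$, which combined with the earlier case shows that every sequence in $\mathcal{F}^{*}$ has a normally convergent subsequence, proving normality.
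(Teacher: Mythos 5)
Your proof is correct and follows essentially the same route as the paper: both arguments normalize by the dominant coefficient, extract a convergent subsequence of the normalized coefficient vectors, use linear independence to rule out a vanishing limit, and apply Hurwitz's theorem to $(F_n+g_n)/M_n$ to conclude the limit is zero-free and hence $F_n\to\infty$ locally uniformly. The only cosmetic difference is that you divide by $\max_i|\alpha_{n,i}|$ while the paper divides by the largest coefficient itself; the paper likewise disposes of the bounded-coefficient case by local boundedness.
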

\begin{note}
Lemma (\ref{lemma9}) is a particular case of Lemma (\ref{lemma10}) as if $F(z)= \sum \alpha_i h_i(z)$ then $g(z)= \sum \beta _i h_i(z)$ will give $F(z)+g(z) =f(z)=0 $ has no solution in $\Delta$.
\end{note}
\begin{proof}
Let $F \in \mathcal{F}^{*}$ and $j=j(F)$ be the least integer such that $\mid \alpha_j \mid \geq \mid \alpha_i \mid$ forall $ i \neq j$.
\\
Write $$F(z)= \alpha_j \sum _{i=1}^{k} \frac{\alpha_i}{\alpha_j} h_i(z); j=j(F).$$

Unless $\alpha_j\rightarrow \infty$ for an infinite subfamily $\mathcal{F^{*}}_1 $ of $\mathcal{F^{*}}$ and some fixed
\\
$j \in \{1,2,3,\ldots , k\,\}, \mathcal{F^{*}}$ will consists of uniformly bounded holomorphic functions and thus would be normal by Theorem (\ref{sch montel}).

With no loss of generality assume that $j=1$ then 
\begin{align}
 F&=\alpha_1 h_1+\alpha_1 \sum_{i=2}^{k} \frac{\alpha_i}{\alpha_1} h_i \ notag \\
 &= \alpha_1 \left( h_1+ \sum_{i=2}^{k} \beta_i h_i \right)\label{eq5.5}
\end{align} 
$$\beta_i=\frac{\alpha_i}{\alpha_1}; i=1,2,3,\ldots,k$$ also $\mid \beta_i \mid \leq 1 \quad \forall i=2,3,\ldots,k$.

Let K be compact subset of $\Delta$. Since $(\beta_2, \beta_3, \ldots, \beta_k)$ belong to a compact subset of $\mathbb{C}^{k-1}$ , therefore we can chose a convergent subsequence  $(\beta_{2_l}, \beta_{3_l},\ldots, \beta_{k_l})$. Let $$ h_1+ \sum_{i=2}^{k} \beta_i^{*} h_i=F^{*}$$ be the limit of corresponding subfamily of $\mathcal{F^{*}}_1$ and convergence is normal in $\Delta$. If $F^{*} \equiv 0$ this gives a contradiction as $h_i, i=1,2,3, \ldots, k$ are linearly independent.

\textbf{To show} 
$F^{*}$ never vanishes on the interior of K.
\\
For this proceed as follows: from (\ref{eq5.5}) 
$$\frac{F}{\alpha_1}=h_1+ \sum_{i=2}^{k} \beta_i h_i.$$
Now $$F=F+g-g$$ and $$ F+g $$ never vanishing on $\Delta$ since $\mid g(z) \mid <M$ and $\alpha_1 \rightarrow \infty $; $\frac{g}{\alpha_1}$ tends uniformly to 0. Therefore $\frac{F+g}{\alpha_1}$ tends normally to $F^{*}, F+g \not \equiv 0$. By Hurwitz\textquoteright s theorem $F^{*}$ is never vanishing on the interior of K. Thus for compact subset $K_1$ of $K$, there exists $\eta >0 $ such that $$ \big | h_1 + \sum_{i=2}^{k} \beta_i h_i \big| > \eta; z \in K_1 $$
for an infinite subfamily of $\mathcal{F^{*}}_1$.

Since $\alpha_1 \rightarrow \infty$ from (\ref{eq5.5}) we can say that $\infty$ is a cluster point of $\mathcal{F^{*}}$ hence the result of the Lemma obtained.  
\end{proof}    

Here is meromorphic version of Hurwitz\textquoteright s theorem for the proof refer ~\cite{sch}
\begin{thm}
\textbf{Hurwitz\textquoteright s theorem}

Let $\{ \,f_n \,\}$ be a sequence of meromorphic functions on a domain $\Delta$ which converges spherically uniformly on compact subsets to a function $f$ which may be $\equiv \infty$. If each $f_n$ is zero free in $\Delta$, then $f$ is either zero free in $\Delta$ or $f \equiv 0$.
\end{thm}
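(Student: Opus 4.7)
The plan is to reduce the meromorphic version to the classical Hurwitz theorem (Theorem \ref{hurwitz}) by passing to a small disk on which everything is analytic. First I would dispose of the degenerate case: by Corollary \ref{f mero}, the limit $f$ is either meromorphic in $\Delta$ or identically $\infty$; in the latter case $f$ is trivially zero-free, so there is nothing to prove. Hence I may assume $f$ is meromorphic and, arguing by contradiction, that $f\not\equiv 0$ yet $f(z_0)=0$ for some $z_0\in\Delta$.

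Next I would localize around $z_0$. Since $f$ is meromorphic and not identically zero, its zeros and poles are isolated, so I can pick $r>0$ small enough that $\overline{D(z_0,r)}\subset\Delta$, $f$ has no poles on $\overline{D(z_0,r)}$, and $z_0$ is the only zero of $f$ in $\overline{D(z_0,r)}$. In particular $f$ is analytic and bounded on this closed disk. Because $f(z_0)\neq\infty$, Lemma \ref{np} (together with Lemma \ref{f bdd}) upgrades the spherical uniform convergence of $\{f_n\}$ to ordinary uniform convergence $f_n\to f$ on $\overline{D(z_0,r)}$. For all $n$ sufficiently large the functions $f_n$ are therefore uniformly bounded on $\overline{D(z_0,r)}$; since each $f_n$ is meromorphic, a bounded meromorphic function has no poles, so $f_n$ is actually analytic on an open disk $D(z_0,r')$ with $r'$ slightly less than $r$.

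Now I am in position to invoke Theorem \ref{hurwitz} (classical Hurwitz). On $D(z_0,r')$ the limit $f$ is analytic and nonconstant (it vanishes at $z_0$ but is not identically zero), and $f_n\to f$ uniformly on compact subsets of $D(z_0,r')$. Hence for each sufficiently small $\rho>0$ there exists $N$ such that for $n\geq N$ the function $f_n$ has the same number of zeros in $D(z_0,\rho)$ as $f$, counted with multiplicity. Since $f(z_0)=0$, this number is at least one, contradicting the hypothesis that every $f_n$ is zero-free in $\Delta$.

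The only delicate step is the transition from spherical convergence to uniform convergence, which is precisely where the hypothesis $f(z_0)=0\neq\infty$ is used through Lemma \ref{np}. Once that is in place the remainder is a routine application of the classical Hurwitz theorem together with the observation that bounded meromorphic functions are analytic, which removes the potential poles of the $f_n$ near $z_0$.
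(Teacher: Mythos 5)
Your argument is correct. Note that the paper itself does not prove this statement; it is quoted from Schiff \cite{sch} without proof, so there is no internal argument to compare against. Your reduction is the standard one and is sound at every step: the case $f\equiv\infty$ is trivial; Corollary \ref{f mero} and Lemma \ref{np} let you localize to a closed disk about a zero $z_0$ of $f$ on which $f$ is analytic and bounded and on which the convergence is genuinely uniform; boundedness forces the $f_n$ to be pole-free (hence analytic) there for large $n$; and since your choice of radius makes $z_0$ the only zero of $f$ in the disk, $f$ is nonconstant there, so the analytic Hurwitz theorem (Theorem \ref{hurwitz}) applies and yields the contradiction with the $f_n$ being zero-free.
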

It is well known that if $f$ is entire, $f(z) \neq 0$ and $f \textquoteright(z)\neq 1$ then $f$ reduces to constant. In 1934, Montel conjectured that a family of analytic function in a domain satisfying $f(z) \neq 0, \quad f\textquoteright(z) \neq 1 $ would constitute a normal family whose proof was given by Miranda[1935], using Nevanlinna theory and he was able to replace derivative condition by $f^k(z)\neq 1$ for given $k \in \mathbb{N}$. 
We have proved a generalization of Montel-Miranda theorem due to Chuang[1940].  
\begin{thm} \label{th3}
Let $\mathcal{F}$ be a family of non-vanishing holomorphic functions in $\Delta$ and $a_0(z),a_1(z),\ldots,a_k(z)$ fixed holomorphic functions. Let $\mathcal{G}$ consists of the functions 
$$g(z) = f^{(k)}(z) + a_{k-1}(z) f^{(k-1)} (z)+\ldots+ a_0(z) f(z), \quad z \in \Delta , f \in \mathcal{F}$$
and assume that the equation $g(z)=1$ has no solutions for $z \in \Delta$ then $\mathcal{F}$ is normal.
\end{thm}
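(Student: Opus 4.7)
The plan is to argue by contradiction using the Nevanlinna machinery of Chapter~4. Suppose $\mathcal{F}$ is not normal; by Theorem~\ref{aepn} it fails to be normal at some point of $\Delta$, which we may take to be the origin. Then there exist a subfamily $\mathcal{F}_1 \subset \mathcal{F}$ and points $\alpha_n \to 0$ with $|f_n(\alpha_n)| \geq 1$, and we pass to the composed family $F_n := f_n \circ \phi_{\alpha_n}$: each $F_n$ is non-vanishing holomorphic with $F_n(0) \neq 0$, and the corresponding operator image $G_n(z) := g_n(\phi_{\alpha_n}(z))$ satisfies $G_n \neq 1$ on $\Delta$. Normality of $\{F_n\}$ in a neighbourhood of $0$ will then imply normality of $\{f_n\}$ at $0$ by the local version of Lemma~\ref{lemma1}.

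The heart of the proof is a uniform growth bound for $T(r, F_n)$ on a fixed subdisk. Non-vanishing of $F_n$ gives $N(r, 1/F_n) = 0$, so by the First Fundamental Theorem (Theorem~\ref{fft}), $T(r, F_n) = m(r, 1/F_n) + O(1)$; the hypothesis $G_n \neq 1$ analogously yields $T(r, G_n-1) = m(r, 1/(G_n-1)) + O(1)$. Starting from the identity
\[
\frac{1}{F_n} \;=\; \frac{G_n}{F_n(G_n-1)} \;-\; \frac{1}{F_n(G_n-1)},
\]
in which $1/[F_n(G_n-1)]$ is holomorphic, and using $G_n/F_n = F_n^{(k)}/F_n + a_{k-1}(\phi_{\alpha_n})F_n^{(k-1)}/F_n + \cdots + a_0(\phi_{\alpha_n})$, I would express $m(r, 1/F_n)$ as a combination of terms $m(r, F_n^{(j)}/F_n)$, $m(r, 1/(G_n-1))$, and $H_0(r) := \sum_j m(r, a_j \circ \phi_{\alpha_n})$, the last being bounded on compact subsets of $\Delta$ since the $a_j$ are fixed. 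Applying Hiong's Estimate to each logarithmic-derivative term should then yield an inequality
\[
T(r, F_n) \;<\; A + B \log^{+} T(R, F_n) + C \log \frac{1}{R-r},
\]
valid for $\tfrac{1}{2}(1+r_0) < r < R < 1$ with constants independent of $n$.

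From this, Lemma~\ref{lemma2} converts the bound into $T(r, F_n) < \Sigma(r)$ on a fixed subdisk uniformly in $n$. Since each $F_n$ is holomorphic, $T(r,F_n) = m(r, F_n) \leq \log^{+} M(r, F_n)$ via Theorem~\ref{nifpf}, and Lemma~\ref{lemmaB} delivers normality of $\{F_n\}$ in a neighbourhood of $0$, contradicting the choice of $\{f_n\}$.

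The principal obstacle is the derivation of the growth inequality above. The two hypotheses---$f \neq 0$ and $g \neq 1$---each eliminate exactly one counting function through the first fundamental theorem, but the remaining logarithmic-derivative terms must be assembled so that no $T(r, F_n)$ appears with coefficient $\geq 1$ on the right-hand side. The identity displayed is chosen precisely so that every reappearance of $T$ can be routed, via Hiong's Estimate, into the harmless $\log^{+} T(R, F_n)$ form required for the Borel-type Lemma~\ref{lemma2} to close the loop. This is the point at which the non-vanishing of $f$ and the linear-operator structure of $g = L[f]$ must work in tandem, echoing the interplay already used in the proof of Montel's theorem earlier in Chapter~4.
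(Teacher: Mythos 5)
Your scaffolding (argue at a point of non-normality, compose with $\phi_{\alpha_n}$, derive a uniform inequality $T(r,F_n) < A + B\log^{+}T(R,F_n) + C\log\frac{1}{R-r}$, then close with Lemma~\ref{lemma2} and Lemma~\ref{lemmaB}) matches the paper, but the step you yourself call ``the principal obstacle'' is exactly where the proposal fails, and the identity you display does not repair it. Writing $\frac{1}{F_n} = \frac{G_n}{F_n(G_n-1)} - \frac{1}{F_n(G_n-1)}$ and taking proximity functions either reproduces $m(r,1/F_n)$ on the right, since $m\bigl(r,\tfrac{1}{F_n(G_n-1)}\bigr) \le m(r,1/F_n) + m\bigl(r,\tfrac{1}{G_n-1}\bigr)$ and holomorphy of that quotient says nothing about the size of its proximity function, or else, if you estimate $m\bigl(r,\tfrac{1}{G_n-1}\bigr)$ by the First Fundamental Theorem alone, it returns $T(r,G_n-1)\approx T(r,G_n)$ with coefficient $1$; since $T(r,G_n)$ is comparable to $T(r,F_n)$ up to logarithmic-derivative terms, a coefficient-one copy of $T(r,F_n)$ reappears on the right and cannot be absorbed into $\log^{+}T$. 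The missing ingredient is the Second Fundamental Theorem applied to $g_\alpha$ with the values $0,1,\infty$ --- inequality (\ref{eq24}) of Chapter 4 --- which bounds $T(r,g_\alpha)$ by \emph{counting} functions $N(r,1/g_\alpha)+N(r,1/(g_\alpha-1))-N(r,1/g\textquoteright_\alpha)$ plus error terms; the hypothesis $g\neq 1$ kills $N(r,1/(g_\alpha-1))$, Jensen\textquoteright s formula trades $N(r,1/g_\alpha)$ against $m(r,1/g_\alpha)$, and only then does the factorization $1/f_\alpha=(g_\alpha/f_\alpha)\cdot(1/g_\alpha)$, together with Lemma~\ref{lemma8} for the terms $m(r,f^{(j)}_\alpha/f_\alpha)$, give a bound on $T(r,1/f_\alpha)$ in which $T(r,f_\alpha)$ appears only inside $\log^{+}$. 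An omitted value cannot be exploited through the first fundamental theorem alone.

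Two further points are glossed over. First, the error term of the second fundamental theorem contains $\log\bigl|\frac{g(\alpha)-1}{g\textquoteright(\alpha)}\bigr|$, which your choice of $\alpha_n$ (made so that $|f_n(\alpha_n)|\geq 1$) does not control; the paper instead selects $\alpha$ by applying Marty\textquoteright s theorem to the assumed non-normal family $\mathcal{G}$, which forces $|g(\alpha)-1|/|g\textquoteright(\alpha)|<1$ and makes this term negative. Second, your claim that $T(r,F_n)=m(r,1/F_n)+O(1)$ is not uniform in $n$: Jensen gives $T(r,F_n)=m(r,1/F_n)+\log|F_n(0)|$, and $\log|f_n(\alpha_n)|$ is bounded below by $0$ but not above, so passing from the bound on $T(r,1/f_\alpha)$ back to $T(r,f_\alpha)$ requires Lemma~\ref{lemma7} (which rests on the $\psi_\alpha$ distortion estimates of Lemma~\ref{cor5}), not merely the first fundamental theorem.
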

\begin{proof}
Let $\mathcal{F}_1$ be an arbitrary denumberable subcollection from $\mathcal{F}$; we will show that $\mathcal{F}_1$ contains an infinite subfamily which is normally convergent in $\mid z \mid < \rho ; \quad \rho $ is fixed number $<1$. For convenience let $$\rho \geq \frac{1}{2} \max \{\, 1+r_0,\ldots,1+r_3 \,\},$$ 
 where $r_0,\ldots,r_3 $ are determined below depending on $\mathcal{F}_1$. Suppose that $\mathcal{F}_1 $ has no normally convergent subfamily. $\mathcal{G}_1$ denote the corresponding infinite subfamily of $\mathcal{G}$ and $g_\alpha $ related to $f_\alpha$. Now let $\mathcal{G}_1$ is not normal then
 \\
(by Lemma \ref{lemmaA}) there exists $ r_0 <1 $ and infinite subfamily $\mathcal{F}_2 \subset \mathcal{G}_1$ such that for all number $M < \infty$  and for $g \in \mathcal{G}_2$ there is $\alpha(g), \mid \alpha \mid < r_0 $ for which
$$\frac{\mid g \textquoteright(\alpha) \mid}{1+ \mid g(\alpha) \mid^2} > M,\quad \mid \alpha \mid < r_0.$$
In particular let $M=1$
$$\frac{\mid g \textquoteright(\alpha) \mid}{1+ \mid g(\alpha) \mid^2} >1, \quad \mid \alpha \mid < r_0 $$ 
this implies $$g \textquoteright(\alpha) \neq 0.$$
and $$\frac{1+\mid g(\alpha) \mid ^2}{\mid g \textquoteright (\alpha)} <1.$$
Now $$\mid g(\alpha) -1 \mid \leq \mid g(\alpha) \mid + 1 \leq \mid g(\alpha) \mid^2 +1$$
this implies  $$ \frac{\mid g(\alpha) -1 \mid}{ \mid g \textquoteright (\alpha) \mid} \leq \frac{\mid g (\alpha) \mid^2+1}{\mid g \textquoteright (\alpha) \mid } < 1.$$
Thus we have
\begin{equation}\label{eq5.6}
\frac{\mid g(\alpha)-1 \mid}{ \mid g \textquoteright (\alpha) \mid} < 1 
\end{equation}
we may assume that $g(\alpha) \neq 0,1,\infty, g \textquoteright (\alpha) \neq 0$ for all $g \in \mathcal{G}_2$. We have \begin{align*}
T(R \textquoteright ,g_\alpha) &\leq T(R\textquoteright,f^{(k)}_\alpha(z)) +T(R\textquoteright , a _{k-1,\alpha}(z) f^{(k-1)}_\alpha(z)) +\ldots \\\\
&+T(R \textquoteright ,a_{0,\alpha}z)f_\alpha(z)) + \log (k+1) \\
& \leq T(R \textquoteright,f^(k)_\alpha(z))+T(R\textquoteright,a_{k-1,\alpha}(z)+T(R \textquoteright,f^{(k-1)}_\alpha(z)+\ldots  \\
&+T(R  \textquoteright, a_{0,\alpha}(z)) +T(R \textquoteright ,f_\alpha(z))+ k \log 2 +\log (k+1) \\
&=\sum_{j=0}^{k-1} T(R \textquoteright,a_{j,\alpha}(z)) + \sum_{j=0}^{k} T(R \textquoteright,f^{(j)}_ \alpha(z)) +D.
\end{align*}
This implies that
\begin{align}
\log^{+}T(R \textquoteright,g_\alpha)&\leq \sum_{j=0}^{k-1} \log^{+} T\left(R \textquoteright,a_{j,\alpha} (z)\right)+ \sum_{j=0}^{k} \log^{+} T\left(R \textquoteright,f^{(j)}_\alpha(z)\right)+ \log^{+} D \notag \\
& + \log k+ \log (k+1) \notag \\
&= L_\alpha (R \textquoteright) + \sum_{j=0}^{k} \log^{+} T\left(R \textquoteright, f^{(j)}_\alpha(z)\right)+A,\label{eq5.9}
\end{align}
where $L_\alpha(R \textquoteright)=\sum_{j=0}^{k-1} \log^{+} T\left(R \textquoteright,a_{j,\alpha} (z)\right)$ and $A=\log^{+} D + \log k+ \log (k+1).$ Since $\mathcal{G}_2$ may be assumed to have no convergent subfamily then by Lemma (\ref{lemma8}) and its corollary applying to (\ref{eq5.6}) we have that there is infinite subfamily $\mathcal{G}_3$ of $\mathcal{G}_2$ and $\mathcal{F}_3$ of $\mathcal{F}_2$ such that there exists $r_1 <1 $ having that corresponding to each fixed $\rho,\quad r_1 < \rho<1$ (assuming that $r_1>r_0 $) we have in (\ref{eq5.6}).
\begin{align}
\log^{+}T(R \textquoteright,g_ \alpha) &\leq L_\alpha(R\textquoteright)+ \sum_{j=0}^{k} \log^{+} \left(A\textquoteright+ BT(R,f_\alpha)+C \log \frac{1}{R-R\textquoteright}\right) +A\notag \\
& \leq L _ \alpha(R\textquoteright)+ \sum_{j=0}^{k} \log^{+}{A\textquoteright}+ \sum_{j=0}^{k} \log^{+} \left(C \log \frac{1}{R-R\textquoteright} \right) \notag\\
&+ \sum_{j=0}^{k} \log^{+} BT(R,f_\alpha) +A \notag \\
& \leq L_\alpha(R\textquoteright) + (k+1) \log^{+}{A\textquoteright} \notag  \\
&+ (k+1) \log^{+}C + (k+1) \log 2 +(k+1) \log \frac{1}{R-R\textquoteright} \notag \\
&+(k+1) \log^{+}B+(k+1) \log^{+} T(R,f_\alpha) + (k+1) \log2 \quad \because \log^{+} x \leq x\notag  \\
& = L_\alpha(R\textquoteright) +A \log \frac{1}{R-R\textquoteright} +B \log^{+}T(R,f_\alpha) +C,
\end{align}
for $r_1<R\textquoteright<R<\rho<1$. Now we use equation (\ref{eq24}) of chapter 4 to all $g \in \mathcal{G}_3$ which is as follows: 
\begin{align*}
T(r,g_\alpha) &\leq \bar{N}(r,g_\alpha) +N\left(r,\frac{1}{g_\alpha}\right) +N\left(r,\frac{1}{g_\alpha-1}\right) -N\left(r,\frac{1}{g\textquoteright_\alpha}\right) \\
&+ 2 m\left(r,\frac{g\textquoteright_\alpha}{g_\alpha}\right)+m\left(r,\frac{g\textquoteright_\alpha}{g_\alpha-1}\right) + \log \big | \frac{g(\alpha)\{\,g(\alpha)-1 \,\}}{g\textquoteright_\alpha(0)} \big | +C.\label{eq5.16}
\end{align*}
Here we apply Lemma (\ref{lemma8}) for $g_\alpha$ and $R \textquoteright = \frac{1}{2} (r+R)$ then for each fixed $\rho<1$ there is an $r_1< \rho$ such that
\begin{align*}
T(r,g_\alpha)&\leq N\left(r,\frac{1}{g_\alpha}\right)+N\left(r,\frac{1}{g_\alpha-1}\right)-N\left(r,\frac{1}{g\textquoteright_\alpha}\right) \notag \\
&+2 \left[A+B \log^{+} T(R\textquoteright,g_\alpha) +C \log \frac{1}{R\textquoteright-r})\right] +A \textquoteright+B\textquoteright \log^{+} T(R\textquoteright,g_\alpha-1) \notag \\
& +C \textquoteright \log \frac{1}{R\textquoteright-r} +\log \big | \frac{g(\alpha) \{\,g(\alpha)-1 \,\}}{g\textquoteright(\alpha)} \big |+C \notag \\
& = N\left(r,\frac{1}{g_\alpha}\right)+N\left(r,\frac{1}{g_\alpha-1}\right)-N\left(r,\frac{1}{g\textquoteright_\alpha}\right) +B\textquoteright{\textquoteright} \log^{+} T(R\textquoteright,g_\alpha) \notag \\
&+C_1 \log \frac{1}{R\textquoteright-r} + \log \big | \frac{g(\alpha)\{\,g(\alpha)-1 \,\}}{g\textquoteright(\alpha)} \big |+C\textquoteright_1,
\end{align*}
for $ r_1<r < R \textquoteright<\rho<1 ; \quad g \in \mathcal{G}_3$.
By equation (\ref{eq5.16})
\begin{align}
T(r,g_\alpha)&\leq N\left(r,\frac{1}{g_\alpha}\right)+N\left(r,\frac{1}{g_\alpha-1}\right) -N\left(r,\frac{1}{g\textquoteright_\alpha}\right) \\
&+ B\textquoteright{\textquoteright} \left[L_\alpha(R\textquoteright)+A \log \frac{1}{R-R\textquoteright}+ B \log ^{+} T(R,f_\alpha) + C\right] + C_1 \log \frac{1}{R\textquoteright-r} \\
& + \log \big | \frac{g(\alpha)\{\,g(\alpha)-1\,\}}{g\textquoteright(\alpha)} \big | +C\textquoteright_1, \label{eq5.19}
\end{align}
$ r_1<r<R\textquoteright<R<\rho<1.$
Since $L_\alpha(R\textquoteright) =\sum \log^{+}T(R\textquoteright,a_{j,\alpha})$ and $T(r,f)$ is increasing function of $\log r$, hence for $r<R \textquoteright<R$ we have $L_\alpha(R\textquoteright)< L_\alpha(R)$ and using $R\textquoteright=\frac{1}{2} (R+r)$ ($r< \frac{1}{2}(R+r) <R)$) we get $$\log \frac{1}{R\textquoteright-r}= \log \frac{2}{R-r} =\log2 + \log \frac{1}{R-r}$$ and $$\log \frac{1}{R-R\textquoteright} =\log \frac{2}{R-r}=\log 2 + \log \frac{1}{R-r}.$$
Using all these in (\ref{eq5.19}) we get
\begin{align}
T(r,g_\alpha) &\leq N\left(r,\frac{1}{g_\alpha}\right) +N\left(r,\frac{1}{g_\alpha-1}\right)-N\left(r,\frac{1}{g\textquoteright_\alpha}\right)+B\textquoteright{\textquoteright} L_\alpha(R) +E \log \frac{1}{R-r}\notag  \\
& +B \log^{+}T(R,f_\alpha) + \log \big | \frac{g(\alpha)\{\,g(\alpha)-1 \,\}}{g\textquoteright(\alpha)} \big |+C,\label{eq5.21}
\end{align}
 where $r_1<r<R<\rho , \quad g \in \mathcal{G}_3.$
Since $\mid \alpha\mid < r_0,\quad R< \rho$ and $L_\alpha(R) = \sum \log^{+}T(R,a_{j,\alpha})$. $T(r,f)$ strictly increasing function of $\log r$ (hence of $r$) we have 
$$L_\alpha(R)< A(r_0,\rho).$$ 
Jensen\textquoteright s formula gives $$m\left(r,\frac{1}{g_\alpha}\right)+N\left(r,\frac{1}{g_\alpha}\right)=T(r,g_\alpha)- \log \mid g(\alpha) \mid$$ 
using this in equation (\ref{eq5.21}) 
\begin{align}
T(r,g_\alpha)& = m\left(r,\frac{1}{g_\alpha}\right)+N\left(r,\frac{1}{g_\alpha}\right) + \log \mid g(\alpha) \mid  \\
&\leq N\left(r,\frac{1}{g_\alpha}\right) +N\left(r,\frac{1}{g_\alpha-1}\right)-N\left(r,\frac{1}{g\textquoteright_\alpha}\right)+E \log \frac{1}{R-r} \notag \\
&+ B \log^{+} T(R,f_\alpha) + \log \big | \frac{g(\alpha)-1}{g\textquoteright(\alpha)} \big |+ \log \mid g(\alpha) \mid+C \label{eq5.24}
\end{align}
we get
\begin{align}
m\left(r,\frac{1}{g_\alpha}\right)&< N\left(r,\frac{1}{g_\alpha-1}\right)-N\left(r,\frac{1}{g\textquoteright_\alpha}\right) +E \log \frac{1}{R-r} + B \log^{+} T(R,f_\alpha) \\
&+ \log \big | \frac{g(\alpha)-1}{g\textquoteright(\alpha)}\big | +C,\label{eq5.26}
\end{align}
 if $ r_1<r<R< \rho , \quad g \in \mathcal{G}_3$.
\\
Also 
\begin{align}
T\left(r,\frac{1}{f_\alpha}\right)&= m\left(r,\frac{1}{f_\alpha}\right)+N\left(r,\frac{1}{f_\alpha}\right)=m\left(r,\frac{g_\alpha}{f_\alpha} \frac{1}{g_\alpha}\right)+N\left(r,\frac{1}{f_\alpha}\right)\notag \\
& \leq m\left(r,\frac{g_\alpha}{f_\alpha}\right)+ m\left(r,\frac{1}{g_\alpha}\right) +N\left(r,\frac{1}{f_\alpha}\right).\label{eq5.28}
\end{align}
Now $$g_\alpha=f^{(k)}_\alpha+a_{k-1,\alpha}f^{(k-1)}_\alpha+\ldots+a_{0,\alpha}f_\alpha $$
$$\frac{g_\alpha}{f_\alpha}=\frac{f^{(k)}_\alpha}{f_\alpha}+ a_{k-1,\alpha} \frac{f^{(k-1)}_\alpha}{f_\alpha}+\ldots+a_{0,\alpha}\frac{f_\alpha}{f_\alpha}$$
this implies
\begin{align}
m\left(r,\frac{g_\alpha}{f_\alpha}\right) &\leq m\left(r,\frac{f^{(k)}_\alpha}{f_\alpha}\right)+m\left(r,a_{k-1,\alpha}\frac{f^{(k-1)}_\alpha}{f_\alpha}\right)+\ldots+ m(r,a_{0,\alpha})+ \log (k+1) \notag \\
&=m\left(r,\frac{f^{(k)}_\alpha}{f_\alpha}\right)+m(r,a_{k-1,\alpha}) + m\left(r,\frac{f^{(k-1)}_\alpha}{f_\alpha}\right)+\ldots+ m(r,a_{1,\alpha}) \notag \\
&+m\left(r,\frac{f \textquoteright_ \alpha}{f_\alpha}\right)+ m(r,a_{0,\alpha})+ \log(k+1) \notag \\
&= \sum_{j=0}^{k-1} m(r,a_{j,\alpha})+ \sum_{j=1}^{k} m \left(r,\frac{f^{(j)}_\alpha}{f_\alpha}\right)+C.
\end{align}
Again unless $\mathcal{G}_3$ has no normally convergent subfamily we have by Lemma (\ref{lemma8}) (fixing $\rho<1$) $\exists \quad 0<r_2 < \rho <1$ and an infinite subfamily $\mathcal{G}_4$ and $\mathcal{F}_4$ of $\mathcal{G}_3$ and $\mathcal{F}_3$ respectively such that in (\ref{eq5.24}) we get
\begin{align}
m\left(r,\frac{g_\alpha}{f_\alpha}\right) & \leq \sum_{j=0}^{k-1} m(r,a_{j,\alpha}) +A \log T(R,f_\alpha) +B \log \frac{1}{R-r} +C \notag \\
&= H_\alpha(r) + A\log^{+}T(R,f_\alpha) +B \log \frac{1}{R-r} +C, \label{eq5.34}
\end{align} whenever $r_2<r<R<\rho<1 , \quad g \in \mathcal{G}_4$ and $f \in \mathcal{F}_4$ assume $r_2>r_1$. Since $H_\alpha(r)$ is an increasing function of $\log r $ ($\therefore$ of $r$) and  $\mid \alpha \mid < r_0 , r< \rho < R $, we have that there is a constant $B(r_0,\rho)$ such that 
$H_\alpha(r)< B(r_0,\rho)$ for  $r_2<r<R<\rho<1$ where $B(r_0,\rho)$ is constant depending over $r_0 \quad \mbox{and} \quad \rho$. Inequality (\ref{eq5.34}) becomes
\begin{equation}\label{eq5.34}
m\left(r,\frac{g_ \alpha}{f_\alpha}\right) \leq A \log^{+}T(R,f_\alpha)+B \log\frac{1}{R-r}+C.
\end{equation}
Using (\ref{eq5.26})and (\ref{eq5.35}) in (\ref{5.28}) 
\begin{align}
T\left(r,\frac{1}{f_\alpha}\right) &\leq N\left(r,\frac{1}{f_\alpha}\right)+ N\left(r,\frac{1}{g_\alpha-1}\right)-N\left(r,\frac{1}{g\textquoteright_\alpha}\right)+A \log ^{+}T(R,f_\alpha) \notag \\
&+B \log \frac{1}{R-r}+C + \log \big | \frac{g(\alpha)-1}{g\textquoteright_\alpha} \big |,\label{eq5.37}
\end{align}   for $r_2<r<R<\rho<1 \quad f \in \mathcal{F}_4 ,\quad g \in \mathcal{G}_4$ since $f(z) \neq 0 \quad g(z) \neq 1 \quad \mbox{and} \quad g\textquoteright(z) \neq 0$ for any $z$ in $\Delta$ we get in (\ref{eq5.37}):
\begin{align}
T\left(r,\frac{1}{f_\alpha}\right) \leq A \log^{+} T(R,f_\alpha)+B\log \frac{1}{R-r}+C+ \log \big | \frac{g(\alpha) -1}{g\textquoteright(\alpha)}\big |,\label{eq5.38}
\end{align}
for $r_2<r<R<\rho<1 \quad f \in \mathcal{F}_4 ,\quad g \in \mathcal{G}_4$. By Lemma (\ref{lemma7}) unless $\mathcal{F}_4$ is not normal $\exists \quad r_3 < \rho \quad \mbox{assume} \quad r_3> r_2$ and constant $k$ such that
$$m(r,f_\alpha) < k m\left(r,\frac{1}{f_\alpha}\right),$$ 
for $f \in \mathcal{F}_5$; infinite subset of $\mathcal{F}_4$, $r_3<r<\rho$. Since $\mathcal{F}$ is family of non vanishing holomorphic functions therefore $$m(r,f_\alpha)=T(r,f_\alpha)$$ and $$m\left(r,\frac{1}{f_\alpha}\right)=T\left(r,\frac{1}{f_\alpha}\right).$$ This implies 
\begin{align}
T(r,f_\alpha) & \leq kT\left(r,\frac{1}{f_\alpha}\right) \notag \\
& \leq A \log^{+} T(R,f_\alpha) +B \log \frac{1}{R-r} +C + k \log \mid \frac{g(\alpha)-1}{g\textquoteright(\alpha)}\mid \quad [\mbox{by} \quad (\ref{eq5.38})] \notag \\
& = A \log^{+} T(R,f_\alpha) +B \log \frac{1}{R-r} +C \textquoteright \label{eq5.41}
\end{align} whenever $0<r_0<r_1<r_2<r_3<r<R<\rho <1 $ and $f \in \mathcal{F}_5$.
\\
Now $r \rho< \rho^2 <\rho$ , $r \rho  < R \rho < R $ and $ r_3 <r \rho< R \rho<R $ this implies
$ r_3<r \rho< R< \rho. $ This gives that equation (\ref{eq5.32}) can be written for $r \rho >0 $ thus 
\begin{align*}
T(\rho r,f_\alpha) \leq A \log^{+} T(R,f_\alpha) +B \log \frac{1}{R-\rho r}+C \textquoteright, r_3< \rho r <R , f \in \mathcal{F}_5.
\end{align*}
Let $U(r) = T(\rho r,f_\alpha) $ and $\gamma(r) \equiv 0 $ 
by Lemma (\ref{lemma2}) we have
\begin{align*}
T(\rho r,f_\alpha) &=m(\rho r,f_\alpha) \\
&\leq M_1 +2B \log \frac{1}{R-\rho r} = \sum(\rho r), r_3<\rho r<R< \rho , f \in \mathcal{F}_5,
\end{align*}
 where $\sum(\rho r) $ is an increasing function of $r$  thus by Lemma (\ref{lemmaA}) $\mathcal{F}_5$ is normal in $\mid z \mid < \rho$. Thus $\mathcal{F}_1$ has a normally convergent subfamily in $\mid z\mid < \rho$.
\end{proof}
\begin{note}
The condition that $\mathcal{F}$ is a family of non-vanishing holomorphic functions can not be relaxed. To see this let $$\mathcal{F} =\{\, n,n=2,3,4 \ldots \,\}$$ For $k=1, {f_n}={nz}$ and $a_0\equiv 0$ $$ f\textquoteright_n(z)=n$$ so $$ g_n(z) =f\textquoteright_n(z)+ a_0(z)f_n(z)=0$$
$g_n=1$ has no solution in $\Delta$ but $\mathcal{F}$ is not normal in $\Delta$.
\end{note}
\section{Extension }

\quad In this section it has been try to drop the non vanishing assumption for family of holomrphic functions in previous section in special cases.  

For $\mathcal{F}$ to be a family of holomorphic functions a new family $\mathcal{G}$ of holomorphic functions is defined as follows:
$$g(z)= f^{(k)}(z)+a_{k-1}(z)f^{(k-1)}(z)+\ldots+a_1(z)f \textquoteright(z)+a_0(z) f(z),$$
where $a_0,a_1,\ldots,a_{k-1}$ are fixed holomorphic functions over $\Delta$ , $f \in \mathcal{F}$ and $z \in \Delta$. Like wise we define $g_\alpha $ by $f_\alpha$ as:
$$g_\alpha(z)= f^{(k)}_\alpha(z)+a_{k-1,\alpha}(z)f^{(k-1)}_\alpha(z)+\ldots+a_{1,\alpha}(z)f \textquoteright_\alpha(z)+a_{0,\alpha}(z)+f_\alpha(z), $$ 
$ z \in \Delta , f \in \mathcal{F}$. 
\\
\begin{thm} \label{th4}
Let $\mathcal{F}$ be a family of functions holomorphic in $\Delta$ and $\mathcal{G}$ be the family related to $\mathcal{F}$ as above. Suppose the zeros of each $f$ in $\mathcal{F}$ are of multiplicities $\geq m$ and the zeros of $g(z)-1$ are of multiplicities $\geq p$ and 
\begin{align}\label{eq5.42}
\frac{k+1}{m}+ \frac{k+1}{p} =\tau <1.
\end{align}
 Then $\mathcal{F}$ is normal.
\end{thm}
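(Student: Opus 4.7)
The plan is to argue by contradiction along the lines of the proof of Theorem \ref{th3}. I would assume $\mathcal{F}$ is not normal at some point of $\Delta$, pass to an infinite subfamily $\mathcal{F}_1\subset\mathcal{F}$ and, as in the opening of that proof, work with the compositions $f_\alpha=f\circ\phi_\alpha$ and $g_\alpha=g\circ\phi_\alpha$ for suitably chosen $\alpha=\alpha(f)$ with $|\alpha|\le r_0$, $|f(\alpha)|\ge 1$, and with $f(\alpha),\,g(\alpha),\,g(\alpha)-1,\,g_\alpha'(0)$ all nonzero. The heart of the proof is to apply the derivation of inequality (\ref{eq24}) to $g_\alpha$ at the values $0$ and $1$. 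Since $g_\alpha$ is holomorphic, $\bar{N}(r,g_\alpha)=0$; the logarithmic derivative terms $m(r,g_\alpha'/g_\alpha)$ and $m(r,g_\alpha'/(g_\alpha-1))$ are controlled by Lemma \ref{lemma8} applied to $g_\alpha$; and the size estimate $T(r,g_\alpha)\le(k+1)T(r,f_\alpha)+S(r)$ follows from $g_\alpha=f_\alpha^{(k)}+\sum_j a_{j,\alpha}f_\alpha^{(j)}$ together with Lemma \ref{lemma8} applied to $f_\alpha$. Here and below $S(r)$ abbreviates the customary error $A+B\log^+ T(R,f_\alpha)+C\log\tfrac{1}{R-r}$. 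This gives
\begin{equation*}
T(r,g_\alpha)\le\bar{N}(r,1/g_\alpha)+\bar{N}(r,1/(g_\alpha-1))+S(r).
\end{equation*}

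Next I would convert each counting-function term into a multiple of $T(r,f_\alpha)$ using the multiplicity hypotheses. The bound on zeros of $g_\alpha-1$ gives
\begin{equation*}
\bar{N}(r,1/(g_\alpha-1))\le\tfrac{1}{p}N(r,1/(g_\alpha-1))\le\tfrac{1}{p}T(r,g_\alpha)\le\tfrac{k+1}{p}T(r,f_\alpha)+S(r).
\end{equation*}
For $\bar{N}(r,1/g_\alpha)$, I would split the zeros of $g_\alpha$ into those on the zero set of $f_\alpha$ and those off it. At a zero of $f_\alpha$ of multiplicity $\mu\ge m$ each $f_\alpha^{(j)}$ vanishes to order $\ge m-j$, so $g_\alpha$ vanishes there to order $\ge m-k$, and the number of distinct such points is at most $\bar{N}(r,1/f_\alpha)\le\tfrac{1}{m}T(r,f_\alpha)$. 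The zeros of $g_\alpha$ off the zero set of $f_\alpha$ are precisely the zeros of $g_\alpha/f_\alpha$; since every pole of $g_\alpha/f_\alpha$ sits at a zero of $f_\alpha$ with order at most $k$, one has $N(r,g_\alpha/f_\alpha)\le k\bar{N}(r,1/f_\alpha)\le\tfrac{k}{m}T(r,f_\alpha)$, and together with $m(r,g_\alpha/f_\alpha)=S(r)$ from Lemma \ref{lemma8} this yields $T(r,g_\alpha/f_\alpha)\le\tfrac{k}{m}T(r,f_\alpha)+S(r)$ and hence
\begin{equation*}
\bar{N}(r,1/g_\alpha)\le\tfrac{k+1}{m}T(r,f_\alpha)+S(r).
\end{equation*}

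Combining the two bounds with the reverse comparison $T(r,f_\alpha)\le T(r,g_\alpha)+S(r)$ (obtained by writing $f_\alpha=(f_\alpha/g_\alpha)g_\alpha$ and invoking Jensen together with the estimate on $T(r,g_\alpha/f_\alpha)$ just derived) gives
\begin{equation*}
T(r,f_\alpha)\le\Bigl(\tfrac{k+1}{m}+\tfrac{k+1}{p}\Bigr)T(r,f_\alpha)+S(r)=\tau\,T(r,f_\alpha)+S(r),
\end{equation*}
so $(1-\tau)T(r,f_\alpha)\le S(r)$. Because $\tau<1$, Lemma \ref{lemma2} applied with $U(r)=T(\rho r,f_\alpha)$ and $\gamma\equiv 0$ majorises $T(r,f_\alpha)$ by a finite-valued increasing function $\Sigma(r)$ independent of $f\in\mathcal{F}_1$; Lemma \ref{lemmaB} then delivers normality of $\{f_\alpha:f\in\mathcal{F}_1\}$ in $|z|<\rho$, and Lemma \ref{lemma1} lifts this to normality of $\mathcal{F}_1$, contradicting the non-normality of $\mathcal{F}$.

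The main obstacle is the extraction of the coefficient $\tfrac{k+1}{m}$ on $\bar{N}(r,1/g_\alpha)$. The naive bound $\tfrac{1}{m-k}$ coming from looking only at zeros common to $f_\alpha$ and $g_\alpha$ does not suffice, and one must split $\bar{N}(r,1/g_\alpha)$ into a contribution controlled by $\bar{N}(r,1/f_\alpha)$ and a contribution from zeros of $g_\alpha/f_\alpha$, with the order-$\le k$ bound on poles of $g_\alpha/f_\alpha$ turning the second piece into an additional $\tfrac{k}{m}T(r,f_\alpha)$ and Lemma \ref{lemma8} keeping $m(r,g_\alpha/f_\alpha)$ as a small-function term. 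The delicate bookkeeping of these two contributions, together with the reverse comparison between $T(r,g_\alpha)$ and $T(r,f_\alpha)$, is what yields $\tau<1$ as the effective normality threshold.
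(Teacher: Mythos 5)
Your overall architecture --- argue by contradiction, pass to $f_\alpha$, apply the fundamental inequality to $g_\alpha$ at the values $0$ and $1$, convert the counting functions via the multiplicity hypotheses, and close with Lemma \ref{lemma2} and Lemma \ref{lemmaB} --- is the right one, and your decomposition of $\bar{N}(r,1/g_\alpha)$ into zeros on the zero set of $f_\alpha$ and zeros of $g_\alpha/f_\alpha$ does produce the correct coefficient $\tfrac{k+1}{m}$ for that single term. But the surrounding bookkeeping does not close. Your ``reverse comparison'' $T(r,f_\alpha)\le T(r,g_\alpha)+S(r)$ is false as stated: writing $f_\alpha=(f_\alpha/g_\alpha)g_\alpha$ gives $T(r,f_\alpha)\le T(r,g_\alpha)+T(r,g_\alpha/f_\alpha)+O(1)$, and $T(r,g_\alpha/f_\alpha)$ contains $N(r,g_\alpha/f_\alpha)\le\tfrac{k}{m}T(r,f_\alpha)$, which is not an $S(r)$ term. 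Carrying it along, your chain yields only $\bigl(1-\tfrac{k}{m}\bigr)T(r,f_\alpha)\le\tau\,T(r,f_\alpha)+S(r)$, which is useless unless $\tau<1-\tfrac{k}{m}$; for $k=1$, $m=3$ and $p$ large one has $\tau\approx\tfrac{2}{3}<1$ but $1-\tfrac{k}{m}=\tfrac{2}{3}$, so your inequality degenerates while the theorem's hypothesis holds. The paper avoids this loss by never forming $T(r,g_\alpha/f_\alpha)$: it bounds $m(r,1/f_\alpha)\le m(r,g_\alpha/f_\alpha)+m(r,1/g_\alpha)$, where only the proximity function $m(r,g_\alpha/f_\alpha)$ (a genuine $S(r)$ term by Lemma \ref{lemma8}) enters, keeps $N(r,1/f_\alpha)$ inside $T(r,1/f_\alpha)$ on the left, and extracts $\tfrac{k+1}{m}$ from the cancellation $N(r,1/f_\alpha)-N^{*}(r,1/g'_\alpha)\le(k+1)\bar{N}(r,1/f_\alpha)$, valid because at a zero of $f_\alpha$ of order $\mu\ge m$ the derivative $g'_\alpha$ vanishes to order at least $\mu-k-1$; Jensen's formula $T(r,1/f_\alpha)=T(r,f_\alpha)-\log|f_\alpha(0)|$ then replaces your reverse comparison at no multiplicative cost.

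The second, larger omission is the treatment of the ``constant'' terms. The error term in the fundamental inequality for $g_\alpha$ contains $\log|1/g'_\alpha(0)|$, and the conversions introduce $\log|f(\alpha)|$ and $\log|g(\alpha)-1|$; none of these is uniformly bounded over the family, so they cannot be silently absorbed into $S(r)$. What the argument actually yields is $(1-\tau)T(r,f_\alpha)\le S(r)+\log\bigl|(f(\alpha))^{1-\frac{k+1}{m}}(g(\alpha)-1)^{1-\frac{1}{p}}(g'(\alpha))^{-1}\bigr|$, and the entire second half of the paper's proof is devoted to producing a sequence $\alpha_n\to 0$ along which this logarithm stays bounded: one takes zeros $z_n$ of $f_n$ accumulating at the origin (the only case not already settled by Theorem \ref{th3}), chooses $y_n$ minimizing $|z_n-y_n|$ subject to $|g'_n(y_n)|=1$, and analyses separately the cases where this problem has no solution, where $\delta_n=|z_n-y_n|$ stays bounded away from $0$, and where $\delta_n\to 0$. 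The first two cases force normality of $\mathcal{G}$, which transfers to $\mathcal{F}$ only through Lemma \ref{lemma11} --- itself a nontrivial argument using the variation-of-parameters representation of $f_n$ in terms of $g_n$ and the vanishing of the homogeneous part. Your proposal contains none of this, and without it the contradiction via Lemma \ref{lemma2} cannot be reached.
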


Before proving the theorem we need a preliminary result which states 
\\
\begin{lem}\label{lemma11}
$\mathcal{F}$ be the family of holomorphic functions over $\Delta$ an d $\mathcal{G}$ be the family consisting of functions given by $$ g(z)=f^{(k)}(z)+a_{k-1}(z) f^{(k-1)}(z)+\ldots+ a_1(z) f\textquoteright(z)+ a_0(z) f(z),$$ 
where $ a_0, a_1,\ldots, a_{k-1}$ are fixed holomorphic functions and satisfy (\ref{eq5.42}) then if $\mathcal{G}$ is normal then $\mathcal{F}$ is normal.
\end{lem}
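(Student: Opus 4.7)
The plan is to adapt the proof of Lemma~\ref{lemma9} (the non-vanishing version), replacing the hypothesis $f \neq 0$ by the multiplicity condition in~(\ref{eq5.42}). As in that argument, it suffices to verify normality of $\mathcal{F}$ at each $z_0 \in \Delta$; by translation I take $z_0 = 0$. Fix a sequence $\{f_n\} \subset \mathcal{F}$ with corresponding $\{g_n\} \subset \mathcal{G}$, and use normality of $\mathcal{G}$ to extract a spherically convergent subsequence $g_n \to g_0$ on some closed disk about $0$. Two cases arise according as $g_0(0) \neq 0$ (including $g_0 \equiv \infty$) or $g_0(0) = 0$.

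In the first case, $|g_n(z)| \geq \varepsilon > 0$ holds on a fixed neighbourhood of $0$ for $n$ large, so that $m(r, 1/g_n) = O(1)$ on that neighbourhood. Arguing by contradiction, assume $\{f_n\}$ admits no normally convergent subsequence. Then Lemma~\ref{lemma8} yields, for some $r_0 < 1$ and each fixed $\rho$ with $r_0 < \rho < 1$, a bound $m(r, f_\alpha^{(j)}/f_\alpha) \leq A + B \log^+ T(R, f_\alpha) + C \log \frac{1}{R-r}$ for each $j \leq k$ and suitable $\alpha = \alpha(f)$. Combining the identity $g_\alpha/f_\alpha = \sum_{j=0}^{k} a_{j,\alpha}\, f_\alpha^{(j)}/f_\alpha$ with $m(r, 1/f_\alpha) \leq m(r, g_\alpha/f_\alpha) + m(r, 1/g_\alpha)$ produces a bound of the same form for $m(r, 1/f_\alpha)$. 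I next apply inequality~(\ref{eq24}) to $f_\alpha$ (with auxiliary value $0$) and to $g_\alpha$ (with auxiliary value $1$); the holomorphy of $f$ and $g$ kills $\bar N(r, f_\alpha)$ and $\bar N(r, g_\alpha)$, while (\ref{eq5.42}) enters through $\bar N(r, 1/f_\alpha) \leq (1/m)\, T(r, 1/f_\alpha) + O(1)$ and $\bar N(r, 1/(g_\alpha - 1)) \leq (1/p)\, T(r, g_\alpha - 1) + O(1)$. Corollary~\ref{cor8} converts $T(r, g_\alpha)$ into $A + B\, T(R, f_\alpha) + C \log \frac{1}{R-r}$ plus lower-order terms, and collecting coefficients gives
\[
(1 - \tau)\, T(r, f_\alpha) \leq A + B \log^+ T(R, f_\alpha) + C \log \frac{1}{R-r}.
\]
Lemma~\ref{lemma2} then produces $T(\rho r, f_\alpha) \leq M_1 + 2B\log \frac{1}{R - \rho r}$, and Lemma~\ref{lemmaB} delivers normality of the chosen subfamily, contradicting the assumption.

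The case $g_0(0) = 0$ is handled exactly as in Lemma~\ref{lemma9}: choose a fundamental system $h_1, \ldots, h_k$ for $L(h) = 0$ and write each $f_n$ via variation of parameters as $f_n = \sum_m (\alpha_{n,m} + \beta_{n,m}) h_m$, where the $\beta_{n,m}$ are uniformly small near $0$ because $g_n(0) \to 0$. Normality of $\{f_n\}$ then reduces to that of the auxiliary family $\mathcal{F}^* = \{\sum_m \alpha_{n,m} h_m\}$, which is the content of Lemma~\ref{lemma10}; that lemma uses only linear independence of the $h_m$ together with Hurwitz's theorem and requires no vanishing hypothesis on $\mathcal{F}$.

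The main obstacle will be the bookkeeping in the first case: applying~(\ref{eq24}) to both $f_\alpha$ and $g_\alpha$ generates a proliferation of $N$ and $\bar N$ terms, and one must verify that the $\bar N$ contributions coming from zeros of $f$ and of $g - 1$ are exactly absorbable into $\tau \cdot T(r, f_\alpha)$ via the multiplicity estimates. The factor $k + 1$ appearing in~(\ref{eq5.42}) reflects the loss of one order of vanishing at each differentiation from $f$ up to $f^{(k)}$ inside $g$, and checking that this arithmetic produces a strictly subunital coefficient on $T(r, f_\alpha)$ is the technical heart of the proof.
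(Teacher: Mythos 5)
There is a genuine gap, and it sits exactly where the non-vanishing hypothesis of Lemma \ref{lemma9} used to do the work. In your second case ($g_0(0)=0$) you reduce normality of $\{f_n\}$ to normality of the auxiliary family $\mathcal{F}^{*}=\{\sum_m \alpha_{n,m}h_m\}$ and invoke Lemma \ref{lemma10}, asserting that it ``requires no vanishing hypothesis on $\mathcal{F}$.'' That is not so: the hypothesis of Lemma \ref{lemma10} is precisely that for each $F=\sum\alpha_i h_i$ there is a bounded holomorphic $g=\sum\beta_i h_i$ with $F+g$ zero-free in $\Delta$ --- and $F+g=f$, so the hypothesis \emph{is} the non-vanishing of $f$. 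Once $\mathcal{F}$ is allowed to vanish, that reduction collapses whenever the $f_n$ have zeros accumulating at the origin. The paper therefore splits $g_0(0)=0$ into two subcases: if the $f_n$ are zero-free near $0$ for infinitely many $n$, the Lemma \ref{lemma9} argument applies; if instead there are zeros $z_n\to 0$ of $f_n$, an entirely different argument is needed. There one bases the variation-of-parameters formula at $z_n$ (so $\beta_m(z_n)=0$), and uses the multiplicity condition $m>k+1$ from (\ref{eq5.42}) --- which forces $f_n^{(p)}(z_n)=0$ for $p=0,\dots,k-1$ --- together with the standard relations among the $\beta_m'$ to conclude that the homogeneous component $f_{n,h}$ satisfies $f_{n,h}^{(p)}(z_n)=0$ for $p=0,\dots,k-1$, hence $f_{n,h}\equiv 0$ by uniqueness for the $k$-th order linear ODE. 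Then $f_n=\sum_m\beta_{n,m}h_m$ is uniformly bounded near $0$ and normality follows. This third case is the actual content of Lemma \ref{lemma11} beyond Lemma \ref{lemma9}, and your proposal omits it.

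Your first case also takes a wrong turn, though a less fatal one. Since (\ref{eq5.42}) forces $m>k+1$, every zero of $f$ is a zero of $f, f',\dots,f^{(k)}$ and hence of $g$; so $g_0(0)\neq 0$ already implies the $f_n$ are zero-free in a fixed neighbourhood of $0$ for large $n$, and Case (i) of Lemma \ref{lemma9} applies verbatim --- no multiplicity bookkeeping is needed. The bookkeeping you propose instead is problematic: inequality (\ref{eq24}) is the second fundamental theorem for the pair of values $0$ and $1$ simultaneously, so ``applying it to $f_\alpha$ with auxiliary value $0$'' still produces an $N\left(r,\frac{1}{f_\alpha-1}\right)$ term that nothing in the hypotheses controls (there is no condition on the $1$-points of $f$), and the coefficient count you are aiming for is the one belonging to the proof of Theorem \ref{th4}, not to this lemma, whose whole point is to exploit the assumed normality of $\mathcal{G}$ rather than rerun the Nevanlinna estimates.
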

\begin{proof}
We will show the normality of $\mathcal{F}$ locally. For this let $g_n\rightarrow g_0$ normally in a neighbourhood of $z_0$ say $z_0=0$.
\\
\textbf{Case (i)}$$g_0(z_0)\neq 0$$
since (\ref{eq5.42}) gives $$ \frac{k+1}{m}<1 \Rightarrow m> k+1$$
so $f$ has zeros of multiplicities atleast $k$ which gives that zeros of $f$ are also zeros of $g$.
And because $g_0(z_0) \neq 0$ this gives that in a neighbourhood of $z_0$ no zeros of the corresponding $f_n \in \mathcal{F}$ lies. Therefore case(i) of the Lemma (\ref{lemma9}) of previous section can be applies in this case.
\\
\textbf{Case(ii)} $$g_0(z_0)=0$$ and there is a neighbourhood of $z_0$ such that $f_n \in \mathcal{F}$ has no zero in that neighbourhood of $z_0$ for infinitely many $f_n$. Then case(ii) of Lemma (\ref{lemma9}) is applicable.
\\
\textbf{Case(iii)} $$g_0(z_0) =0$$ and there exist $z_n \in \Delta$ such that $z_n \rightarrow 0 \quad \mbox{as } n \rightarrow \infty, f_n(z_n)=0$ and thus $g_n(z_n)=0$. Again let $h_1(z), h_2(z), \ldots, h_k(z)$ be linearly independent solution of homogeneous equation $g(z)=0$. As obtained before 
\begin{align}\label{eq5.43}
f_n(z)= f_{n,h}(z)+ \sum_{m=1}^{k} \left\lbrace \int_{z_n}^{z} \frac{W_m(t)}{W(t)} g_n(t) \, dt \right\rbrace h_m(z),
\end{align} 
where $f_{n,h}(z)= \sum \alpha_{n,m} h_m(z)$ is a solution of the homogeneous equation $g(z)=0$.
\\
\textbf{Claim} $$ f_{n,h} \equiv 0; n> n_0 $$
so that (\ref{eq5.43}) becomes 
$$ f_n(z) = \sum_{m=1}^{k}\left\lbrace \int_{z_n}^{z} \frac{W_m(t)}{W(t)}g_n(t) \, dt \right\rbrace h_m(z)$$
let us complete the proof of Lemma (\ref{lemma11}) chose $\delta>0$ and then determine constant $M< \infty$ such that
\\
$$\mid g_n(z) \mid <M,  \big| \frac{W_m(t)}{W(t)} \big| <M,  \mid h_m(z) \mid <M$$
whenever $\mid z \mid < \delta; m=1,2,3, \ldots,k\quad n>n_0$.
Then $$ \mid f_n(z) \mid <M^{*}, $$ 
for $\mid z \mid < \delta$ and $n=1,2,3,\ldots$ this concludes that $\{\,f_n\,\} $ is normal in a neighbourhood of origin. Implies $\mathcal{F}$ is normal at each point of $\Delta$.
In order to prove claim write $f_n=f, f_{n,h}=f_h, g_n=g$ then rewrite (\ref{eq5.44}) as
\begin{equation}\label{eq5.44}
f(z)=f_h(z)+ \sum_{m=1}^{k} \beta_m(z) h_m(z),
\end{equation}
where $$ \beta_m(z)= \int_{z_n}^{z} \frac{W_m(t)}{W(t)} g_n(t) \, dt; m=1,2,3,\ldots,k$$
and $\beta_m(z_n)=0, m=1,2,3,\ldots,k$.
Differentating (\ref{eq5.44}) leads to 
\begin{equation}\label{eq5.45}
f\textquoteright(z)=f\textquoteright_h(z)+ \sum_{m=1}^{k} \left(\beta_m(z)h\textquoteright_m(z)+ \beta\textquoteright_m(z)h_m(z) \right)
\end{equation}using
\\
$\beta\textquoteright_1(z) h_1(z) + \beta \textquoteright _2 (z) h_2(z) +\ldots+ \beta \textquoteright _k(z) h_k(z)=0$
\\
$\beta \textquoteright _1(z) h \textquoteright _1(z) + \beta \textquoteright _2 (z) h \textquoteright _2 (z) +\ldots+ \beta \textquoteright _k(z) h \textquoteright _k(z) =0 $
\\
.
\\
.
\\
.
\\
$\beta \textquoteright _1(z) h^{(k-2)} _1(z) + \beta \textquoteright_2(z) h ^{(k-2)}_2 (z) +\ldots+ \beta \textquoteright _k(z) h^{(k-2)} _k(z) =0$
\\
$\beta\textquoteright_1(z) h^{(k-1)}_1(z) + \beta \textquoteright_2(z) h^{(k-1)}_2(z)+\ldots+ \beta \textquoteright _k (z) h^{(k-1)}_k(z)=g(z)$
\\ 
of previous section, $\beta_m(z_n)=0, m=1,2,3,\ldots,k$ and multiplicity of zero of $f\geq m>k+1$ we obtain from (\ref{eq5.45}) that $$ f\textquoteright_h(z_n)=0$$
continuning sucessive derivatives together with above conditions with $\beta_m(z_n)=0$ and $f^{(p)}(z_n)=0; p=1,2,3,\ldots, k-1 $ we deduce that
$$ f^{(2)}_h(z_n)=f^{(3)}(z_n)= \ldots = f^{(k-1)}_h(z_n)=0.$$
Also $f_h(z_n)=0$ so claim follows from uniqueness of the solution to the homogeneous equation $g(z)=0$ in a neighbourhood of $z_0$.
\end{proof}
\begin{proof}
\textbf{Proof of the Theorem (\ref{th4})} 
Let $\mathcal{F}_1$ be denumerable infinite subcollection from $\mathcal{F}$. To show that $\mathcal{F}_1$ is normal in a neighbourhood of $z_0=0.$ Unless $z_0$ is a limit point of zeros of $\mathcal{F}_1$ this theorem is a consequence of Theorem (\ref{th3}). If $f_\alpha(0)=f(\alpha)\neq0$
\begin{align}
N\left(r,\frac{1}{f_\alpha}\right)-N^{*}\left( r,\frac{1}{g\textquoteright_\alpha}\right)& \leq (k+1) \bar{N} \left(r,\frac{1}{f_\alpha}\right) \notag \\
&\leq \frac{(k+1)}{m} N\left(r,\frac{1}{f_\alpha}\right) \leq \frac{(k+1)}{m} T\left(r,\frac{1}{f_\alpha}\right) \notag \\
&= \frac{(k+1)}{m} \left( T(r,f_\alpha)-\log \mid f_\alpha(0) \mid \right),\label{eq5.48}
\end{align} 
where $ N^{*}\left(r,\frac{1}{g\textquoteright_\alpha}\right)$ counts the zeros which are common zeros of $ g \textquoteright_\alpha$ and $f_\alpha$ and
\begin{align}
N\left(r,\frac{1}{g_\alpha-1}\right)-N^{**}\left(r,\frac{1}{g\textquoteright_ \alpha}\right) &\leq \frac{1}{p}N\left(r,\frac{1}{g_\alpha-1}\right)\notag \\
& \leq \frac{1}{p} T\left(r,\frac{1}{g_\alpha-1}\right) \notag \\
& =\frac{1}{p} \left( T(r,g_\alpha)-\log \mid g_\alpha(0)-1 \mid +A \right),\label{eq5.51}
\end{align}
where $N^{**}\left(r,\frac{1}{g\textquoteright_\alpha}\right)$ counts the zeros of $g\textquoteright_\alpha$ which arises from zeros of $g_\alpha-1.$

Analysis carried out in the proof of Theorem (5.1.3) required here. Since only behaviour of $\mathcal{F}_1$ in a neighbourhood of origin is relevant, take $\rho =\frac{1}{2} $ and $\mid \alpha \mid < \frac{1}{4}$ in Theorem (\ref{th3}). Put (\ref{eq5.48}) and (\ref{eq5.51}) in (\ref{eq5.37})
\begin{align}
t\left(r,\frac{1}{f_\alpha}\right) & \leq \frac{(k+1)}{m} \left(T(r,f_\alpha)- \log \mid f(\alpha) \mid \right) \\
&+ \frac{1}{p} \left(T(r,f_\alpha)-\log \mid g(\alpha)-1 \mid+D \right) +A \log^{+} T(R,f_\alpha)  \\
&+B \log \frac{1}{R-r} + \log \big | \frac{g(\alpha)-1}{g\textquoteright(\alpha)} \big | +C.\label{eq5.54}
\end{align}   
\\
By Jensen\textquoteright s formula 
$$T(r,f_\alpha)= T\left(r,\frac{1}{f_\alpha}\right)+ \log \mid f_\alpha(0) \mid$$ 
we find (\ref{eq5.54}) as
\begin{align}
T(r,f_\alpha) & \leq \frac{(k+1)}{m}\left (T(r,f_\alpha) -\log \mid f(\alpha) \mid \right) + \log \mid f(\alpha) \mid\notag \\
& + \frac{1}{p} \left(T(r,g_\alpha) -\log \mid g(\alpha)-1 \mid+ D \right) +A \log^{+}T(R,f_\alpha) \notag \\
& +B \log \frac{1}{R-r} +\log \big | \frac{g(\alpha)-1}{g\textquoteright(\alpha)}\big | +C,\label{eq5.57}
\end{align}
where $r< R<\rho$. To deal with term $T(r,g_\alpha)$ by using Lemma (\ref{lemma8}) and $$T(r,f^{(k)}_\alpha) \leq m\left(r,\frac{f_\alpha^{(k)}}{f_\alpha}\right) +T(r,f_\alpha)$$
we get
\begin{align}
T(r,g_\alpha) & \leq (k+1) T(r,f_\alpha)+A \log^{+}T(R,f_\alpha) +B \log \frac{1}{R-r} +C,\label{eq5.58}
\end{align}
$r<R< \rho.$ Putting this estimate of $T(r,g_\alpha)$ in (\ref{eq5.57}) 
\begin{align}
(1-\tau)T(r,f_\alpha) &\leq A \log^{+}T(R,f_\alpha) +B \log \frac{1}{R-r} \notag 
\\
&+ \log \big| (f(\alpha))^{1-\frac{k+1}{m}}(g(\alpha)-1)^{1-\frac{1}{p}}(g\textquoteright(\alpha)^{-1})) \big| +C,
\end{align} 
$\delta<r<R<\rho$ for $f \in \mathcal{F}_2 \subseteq \mathcal{F}_1$.
It is clear from Lemma (\ref{lemma2}) family $\mathcal{F}$ is normal if there exists sequence $\{ \, \alpha_n \,\}$ such that $\alpha_n \rightarrow0$ and 
\begin{equation}\label{eq5.61}
 \big| (f(\alpha))^{1-\frac{k+1}{m}} (g(\alpha)-1)^{1-\frac{1}{p}} g\textquoteright(\alpha)^{-1} \big| <M,
 \end{equation}
where $\alpha=\alpha_n, f=f_n \in \mathcal{F}_2$ corresponding to $g=g_n \in \mathcal{G}_2$. For proving this we proceed: let $f_n$ has a zero $z_n$ with $z_n \rightarrow 0$ and since $m>k+1$ this gives that $z_n$ is also zero of $g_n$ and $g\textquoteright_n$.
\\
For given $z_n$ determine $y_n$ by
$$ \min \mid z_n-y_n\mid$$
subject to $$\mid g\textquoteright_n(y_n) \mid=1,$$
If this optimization problem has no solution then $g\textquoteright_n$ would be uniformly bounded in $\Delta$ and $g_n(z_n)=0, \quad z_n \in \Delta$ by Lemma (\ref{con f`lcb})  $g_n$ would have a convergent subfamily. So let this optimization problem has solution, then let 
$$\mid z_n-y_n \mid=\delta_n$$ and if $ \mid z-z_n\mid <\delta_n$ by maximum modulus principle $\mid g\textquoteright_n(z) \mid<1$ therefore
\begin{equation}\label{eq5.62}
\mid g_n(z) \mid < \int_{z_n}^{z} \mid g\textquoteright_n(z) \mid \mid dz \mid \, <\delta_n
\end{equation}  
Suppose that there were an $\eta >0 $ with the property that $\delta_{n_k}> \eta > 0$ for a subsequence $\delta_{n_k}$ then from the definition of $\delta_{n_k}, \quad z_{n_k} \rightarrow 0$ and (\ref{eq5.62}) it follows that $$\mid g_{n_k} \mid \leq 1, \quad
 \mid z \mid < \frac{\eta}{2} $$
 which by Lemma (\ref{sch montel}) gives that $\mathcal{G}$ is normal and thus by Lemma (\ref{lemma11}), $\mathcal{F}$ is normal.
 \\
Therefore let $\delta_n \rightarrow 0$ then by $$f_n(z)= \sum \left\lbrace \int_{z_n}^{z} \frac{W_m(t)}{W(t)}g_n(t)dt \, \right\rbrace h_m(t)$$ we have $f_n(y_n) \rightarrow 0$. Also  $\mid g\textquoteright(y_n) \mid =1$ and $$\big | \frac{g(y_n)}{g\textquoteright(y_n)} \big| \leq \int_{z_n}^{y_n} \mid g\textquoteright (z) \mid dt \, \leq \mid z_n-y_n\mid =\delta_n .$$
Thus (\ref{eq5.61}) follows actually expression (\ref{eq5.61}) tends to zero. This proof the Theorem (\ref{th4}) completely
\end{proof}
It is well known result due to Hayman ~\cite{haym} for $ n\geq 2 $ and Cluine ~\cite{clu} for $ n \geq 1$ that if $f$ is entire function satisfying $f\textquoteright(z)f^n(z) \neq 1$ forall $z \in \mathbb{C}$ then $f \equiv 
$ constant.
\\
This result and next corollary permits a further illustration of Bloch\textquoteright s principle.
\begin{cor} 
Let $\mathcal{H}$ be a family of holomorphic functions in $\Delta$ such that the equation
$$h \textquoteright(z) h(z)^n =1,$$ where $n \geq 2 $ is fixed integer,
has no solution in $\Delta$ then $\mathcal{H}$ is normal. 
\end{cor}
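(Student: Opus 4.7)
The plan is to apply Theorem~\ref{th4} to the auxiliary family
$$\mathcal{F}=\left\{\frac{h^{n+1}}{n+1}:h\in\mathcal{H}\right\}$$
and then transfer normality from $\mathcal{F}$ back to $\mathcal{H}$.

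First I would introduce the substitution $f(z)=h(z)^{n+1}/(n+1)$, so that $f'(z)=h(z)^{n}h'(z)$. By hypothesis $h'(z)h(z)^{n}\neq 1$ for every $z\in\Delta$, hence $f'(z)\neq 1$ throughout $\Delta$ for every $f\in\mathcal{F}$. Moreover, a zero of $h$ of multiplicity $\ell$ produces a zero of $f$ of multiplicity $(n+1)\ell$, so every zero of each $f\in\mathcal{F}$ has multiplicity at least $m:=n+1$.

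Next I apply Theorem~\ref{th4} with $k=1$ and $a_{0}\equiv 0$, in which case the associated family $\mathcal{G}$ consists simply of the derivatives $g=f'$. Since $g(z)-1=f'(z)-1$ has no zero in $\Delta$, the multiplicity condition on zeros of $g-1$ is vacuous and we may take $p$ to be any integer we please. The balance inequality
$$\frac{k+1}{m}+\frac{k+1}{p}=\frac{2}{n+1}+\frac{2}{p}<1$$
is then satisfied for $n\geq 2$ by choosing $p>2(n+1)/(n-1)$. Theorem~\ref{th4} therefore yields that $\mathcal{F}$ is normal in $\Delta$.

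Finally I transfer normality back to $\mathcal{H}$. Given any sequence $\{h_{j}\}\subset\mathcal{H}$, the corresponding $f_{j}=h_{j}^{n+1}/(n+1)\in\mathcal{F}$ admits a subsequence converging locally uniformly in $\Delta$ either to a holomorphic function $F$ or to $\infty$. If the limit is holomorphic, then $\{f_{j}\}$ is uniformly bounded on every compact $K\subset\Delta$, and since $|h_{j}|=((n+1)|f_{j}|)^{1/(n+1)}$, the sequence $\{h_{j}\}$ is locally bounded on $\Delta$, hence normal by Theorem~\ref{sch montel}. If instead $|f_{j}|\to\infty$ uniformly on compacta, then $|h_{j}|^{n+1}=(n+1)|f_{j}|\to\infty$ uniformly, so $h_{j}\to\infty$ locally uniformly. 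In either case $\{h_{j}\}$ has a subsequence with the required convergence, confirming normality of $\mathcal{H}$.

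The main obstacle is really the choice of auxiliary family: Theorem~\ref{th3} cannot be used directly because $f=h^{n+1}/(n+1)$ is in general not non-vanishing (it vanishes wherever $h$ does). The generalization in Theorem~\ref{th4} is exactly the right tool, since it accommodates zeros of $f$ through the multiplicity parameter $m$, and the standing hypothesis $n\geq 2$ is what guarantees $\frac{2}{n+1}<1$ with enough slack to absorb the $\frac{2}{p}$ term. The final transfer step is then essentially automatic because raising to the power $n+1$ and extracting $(n+1)$-th roots of moduli respects local boundedness and the "escape to $\infty$" behaviour.
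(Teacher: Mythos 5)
Your proposal is correct and follows essentially the same route as the paper: form $\mathcal{F}=\{h^{n+1}/(n+1)\}$, note $f'=h^{n}h'$ omits $1$ and all zeros of $f$ have multiplicity $\geq n+1$, apply Theorem~\ref{th4} with $k=1$, $a_{0}\equiv 0$, and transfer normality back to $\mathcal{H}$. Your treatment of the vacuous $p$-condition and the modulus-based transfer step is in fact slightly more careful than the paper's, but it is the same argument.
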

\begin{proof}
Consider the family 
$$\mathcal{F}= \left\lbrace f(z)= \frac{1}{n+1} h(z)^{n+1} ,h \in \mathcal{H} \right\rbrace$$ over $\Delta$.
Let $k=1$ and $a_0=0$ for $\mathcal{F}$ to give new family $\mathcal{G}$ which consists of the functions $$g(z)=f\textquoteright(z) \quad \forall z \in \Delta, f \in \mathcal{F}.$$ This implies 
$$g(z)= h'(z)h(z)^n$$
which by the given condition implies that $g(z)-1$ has no solution in $\Delta$. Also consider
$$f(z)=0 \Rightarrow h(z)^{n+1}=0.$$
If $z_0 \in \Delta$ is simple zero of $h(z)$ then
$$h(z)=(z-z_0)t(z) ; \quad t(z_0) \neq 0,$$ where \textquotedblleft $t$ \textquotedblright is holomorphic in $\Delta$ implies
$$ f(z)=\frac{1}{n+1}(z-z_0)^{n+1}t(z)^{n+1}=(z-z_0)^{n+1}q(z),$$
where $q(z) = \frac{1}{n+1} t(z)^{n+1}$ is holomorphic on $\Delta$ and $q(z_0) \neq 0$ that means $f$ has zero at $z_0$ of multiplicity $n+1$. If $h(z)$ at $z_0$ has multiplicity 2 then $f$ has at $z_0$ multiplicity $2(n+1)$ and so on. Thus $f$ has zeros of multiplicities $\geq n+1 =m $ therefore in (\ref{eq5.42})  $$\frac{k+1}{m}= \frac{2}{n+1}$$  as $g(z)-1$ has no zero in $\Delta $.
Also $n\geq 2 \Rightarrow n+1 \geq 3 \Rightarrow 1/ {n+1} \leq 1/3$ 
\\
which gives 
$$\frac{k+1}{m}=\frac{2}{n+1} \leq \frac{2}{3}< 1$$
\\
this gives that $\mathcal{F}$ is normal in $\Delta$. Also $$h(z) =\left( (n+1) f(z) \right)^{\frac{1}{n+1}},$$ $f \in \mathcal{F}, h \in \mathcal{H}$ and if $\{\,f_n \,\}$ is sequence in $\mathcal{F}$ then 
$f_{n_k} \rightarrow f$ uniformly on compact subset of $\Delta$ which implies
$$(n+1) f_{n_k} \rightarrow (n+1)f$$ uniformly on compact subset of $\Delta$
implies
$$ \left((n+1)f_{n_k}\right)^{\frac{1}{n+1}} \rightarrow \left( (n+1) f \right)^{\frac{1}{n+1}}$$ uniformly on compact subset of $\Delta$
which again implies
$$h_{n_k} \rightarrow h $$ uniformly on compact subset of $\Delta$.
\\
Thus $\mathcal{H}$ is normal in $\Delta$.
\end{proof}
The case $n=1$ has been given by Oshkin in 1982.

\chapter{A New Normality Criterion}

\quad We have go through many normality criterion in our previous chapters. Now here is a normality criterion for a family which confirms a conjecture of Hayman.
\section{Result due to Hayman}

Hayman ~\cite{haym} gave a result which is as following: If $f(z)$ is entire  and $n \geq 3$ is fixed integer, $a\neq 0$ and $$ f\textquoteright(z)-af(z)^{n}\neq b$$ for some $b \in \mathbb{C}$, then $f \equiv \mbox{constant }$.   
Mues in 1979 gave counterexamples to show that if $f$ is meromorphic function and $n=3,4$ then this result is no longer valid.

In terms of normal families we have:
\begin{thm}\label{hay con}

Let $\mathcal{F}$ be a family of holomorphic (meromorphic) function in $\Delta$ and for a fixed $n\geq 3 $ ($n\geq 5$) and $a\neq 0$ suppose that $$ f\textquoteright-a f^{n}=b; \quad f \in \mathcal{F}$$ has no solution in $\Delta$, then $\mathcal{F}$ is normal. 
\end{thm}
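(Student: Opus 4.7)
The plan is to derive a contradiction by supposing $\mathcal{F}$ is not normal at some point of $\Delta$ and producing, via a Zalcman-type rescaling, a nonconstant limit function that would violate Hayman's theorem (or Mues' meromorphic refinement) quoted just before the statement. Without loss of generality assume $\mathcal{F}$ fails to be normal at $0$. Because the identity $f' - af^n = b$ involves one term of degree $1$ and another of degree $n$ in $f$, the standard Zalcman Lemma \ref{zal} must be applied with a scaling exponent that balances these two terms. First I would invoke the Pang--Zalcman refinement of Lemma \ref{zal} with exponent $\alpha = 1/(n-1)$ to extract sequences $z_n \to 0$, $\rho_n \to 0^+$, $f_n \in \mathcal{F}$ such that
$$F_n(w) \;:=\; \rho_n^{1/(n-1)} f_n(z_n + \rho_n w) \;\longrightarrow\; F(w)$$
spherically uniformly on compact subsets of $\mathbb{C}$, where $F$ is a nonconstant entire (respectively meromorphic) function. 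The choice $\alpha = 1/(n-1)$ is the unique exponent making the rescaled derivative and the rescaled $n$-th power homogeneous of the same weight.

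Next I would transfer the hypothesis through the rescaling. A direct computation gives
$$F_n'(w) = \rho_n^{n/(n-1)} f_n'(z_n + \rho_n w), \qquad F_n^{\,n}(w) = \rho_n^{n/(n-1)} f_n^{\,n}(z_n + \rho_n w),$$
hence
$$F_n'(w) - a F_n^{\,n}(w) \;=\; \rho_n^{n/(n-1)} \bigl[\,f_n'(z_n+\rho_n w) - a f_n^{\,n}(z_n+\rho_n w)\,\bigr] \;\neq\; b\,\rho_n^{n/(n-1)}.$$
Writing $c_n := b\,\rho_n^{n/(n-1)}$, we have $c_n \to 0$ and each function $F_n' - aF_n^{\,n} - c_n$ is nonvanishing on its domain. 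Passing to the spherical limit, and using that $F_n \to F$ spherically uniformly on compact subsets implies (away from poles of $F$) that $F_n' - aF_n^{\,n} \to F' - aF^n$ locally uniformly, Hurwitz's theorem for meromorphic functions forces $F' - aF^n$ either to vanish identically on $\mathbb{C}$ or to have no zero in $\mathbb{C}$.

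Finally I would exclude both alternatives. If $F' - aF^n \equiv 0$, integrating the separable ODE yields $F(z) = [(1-n)(az+c_0)]^{-1/(n-1)}$ (or $F \equiv 0$); for $n \geq 3$ the fractional exponent $1/(n-1)$ prevents this function from being single-valued on $\mathbb{C}$, so it is not meromorphic there, leaving only $F \equiv 0$, which contradicts nonconstancy. If instead $F' - aF^n$ is nowhere zero on $\mathbb{C}$, then Hayman's theorem (for $n \geq 3$ in the entire case) or Mues' extension (for $n \geq 5$ in the meromorphic case) forces $F$ to be constant, again contradicting the nonconstancy provided by the Zalcman rescaling. Either way we reach a contradiction, so $\mathcal{F}$ must be normal.

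The main obstacle is the first step: Lemma \ref{zal} as stated in the excerpt uses the trivial scaling $f_n(z_n + \rho_n w)$, whereas the homogeneity of $f' - af^n$ demands the nontrivial rescaling $\rho_n^{1/(n-1)} f_n(z_n + \rho_n w)$. Establishing that such a rescaling still produces a nonconstant meromorphic limit requires the Pang--Zalcman refinement, and one must verify that $\alpha = 1/(n-1)$ lies in the admissible range of exponents (which it does since $0 < 1/(n-1) \le 1/2$ for $n \geq 3$). A secondary technical point is controlling the spherical convergence of $F_n'$ near poles of the limit $F$ in the meromorphic case, which is handled by passing to $1/F_n$ on neighborhoods of poles exactly as in Lemma \ref{np} and Corollary \ref{f mero}.
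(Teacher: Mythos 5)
Your proposal is correct in outline, but it takes a genuinely different route from the paper. You argue by renormalization: assuming non-normality at a point, you apply the Pang--Zalcman rescaling $F_n(w)=\rho_n^{1/(n-1)}f_n(z_n+\rho_n w)$, chosen so that $F_n'-aF_n^n$ is a constant multiple of $f_n'-af_n^n$, and then use Hurwitz plus the Picard-type theorem of Hayman (entire, $n\geq 3$; meromorphic, $n\geq 5$) to kill the nonconstant limit $F$. This is essentially Pang's proof, which the paper itself only mentions in passing. The paper instead follows Hayman and Drasin: it introduces the auxiliary family $\mathcal{H}=\{\,af^n/(f'-b-af^n)\,\}$ of holomorphic functions, proves in Lemma \ref{lemma12} that normality of $\mathcal{H}$ implies normality of $\mathcal{F}$, and then grinds out normality of $\mathcal{H}$ with the second fundamental theorem, the estimates for $m(r,f'/f)$ from Lemmas \ref{lemma6} and \ref{lemma6`}, the compositions $f_\alpha$, $\psi_\beta$, and the Borel-type growth Lemma \ref{lemma2}. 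Your approach is much shorter and conceptually cleaner, and it makes the Bloch-principle heuristic literal; its cost is that it rests entirely on the weighted Zalcman lemma with exponent $\alpha=1/(n-1)\in(0,1)$, which is \emph{not} the Lemma \ref{zal} stated in this paper (that one has no $\rho_n^{\alpha}$ factor) and would have to be imported and proved separately, whereas the paper's argument stays within the Nevanlinna machinery it has already built. Two small points to tighten: in the case $F'\equiv aF^n$ the clean argument is to note that $F$ can have neither zeros nor (for $n\geq 3$) poles by comparing orders on both sides, so $G=F^{1-n}$ is entire and satisfies $G'=(1-n)a$, forcing $G$ to be a nonconstant polynomial with a zero, contradicting $G=1/F^{n-1}$ being zero-free; and the meromorphic case $n\geq 5$ of the value-omission theorem is due to Hayman himself (Mues supplied the counterexamples for $n=3,4$, not the positive result).
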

The analytic case is due to Drasin ~\cite{dra} and meromorphic case is due to Langley ~\cite{lan}, S. Li[1984] and X. Li{1985]. Despite the counterexamples of Mues, Pang ~\cite{pan} has demonstrated the validity of Theorem (\ref{hay con}) for family of meromorphic functions and for $n\geq4$. This was achieved using Zalcman lemma (\ref{zal}). With the additional condition that zeros of every $f\in \mathcal{F}$ have multiplicity not less than 2, Pang proved the case $n=3.$
We follow Hayman ~\cite{haym} and for $f \in \mathcal{F}$ determine $$ h= a \frac{f^n}{f\textquoteright-b-af^n}$$ $\mathcal{H}$ denotes the family of these functions and hypothesis of the above theorem implies that $\mathcal{H}$ is a family of holomorphic functions.

Next, we require a preliminary result given as follows:
\begin{lem}\label{lemma12}
If $\mathcal{H}$ is normal so is $\mathcal{F}$.
\end{lem}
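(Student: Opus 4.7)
The plan is to argue by contradiction, converting a hypothetical failure of normality of $\mathcal{F}$ into a nonconstant entire (or meromorphic) function that is then excluded by the normality of $\mathcal{H}$. Suppose toward contradiction that $\mathcal{F}$ is not normal at some $z_0\in\Delta$. Applying the Pang rescaling version of Zalcman's lemma (the natural strengthening of Lemma \ref{zal} allowing a scaling exponent $\alpha$) with the balanced choice $\alpha=1/(n-1)$, one extracts $f_k\in\mathcal{F}$, points $z_k\to z_0$, and positive scales $\rho_k\to 0^+$ such that
\[
g_k(\zeta)\;:=\;\rho_k^{1/(n-1)}\,f_k(z_k+\rho_k\zeta)\ \longrightarrow\ g(\zeta)
\]
spherically uniformly on compact subsets of $\mathbb{C}$, with $g$ a nonconstant entire (respectively meromorphic) function. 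The exponent $1/(n-1)$ is dictated by the requirement that $g_k'$ and $a\,g_k^n$ scale with the same power of $\rho_k$, so that the crucial algebraic cancellation occurs when we re-express $h_k$ on the rescaled domain.

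Substituting $f_k(z_k+\rho_k\zeta)=\rho_k^{-1/(n-1)}g_k(\zeta)$ and $f_k'(z_k+\rho_k\zeta)=\rho_k^{-n/(n-1)}g_k'(\zeta)$ into the defining expression for $h_k$, the common factor $\rho_k^{-n/(n-1)}$ cancels from numerator and denominator and one obtains
\[
h_k(z_k+\rho_k\zeta)\;=\;\frac{a\,g_k(\zeta)^n}{g_k'(\zeta)-a\,g_k(\zeta)^n-b\,\rho_k^{n/(n-1)}}.
\]
Since $\mathcal{H}$ is a normal family of holomorphic functions, passing to a further subsequence gives $h_k\to h_0$ locally uniformly on $\Delta$, with $h_0$ either holomorphic on $\Delta$ or $\equiv\infty$. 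For every fixed $\zeta$, since $z_k+\rho_k\zeta\to z_0$ and $h_k\to h_0$ uniformly near $z_0$, one has $h_k(z_k+\rho_k\zeta)\to h_0(z_0)$. Combined with the displayed identity and the convergence $g_k'\to g'$ off the poles of $g$ (Weierstrass, Theorem \ref{weierstrass}), the right-hand side converges pointwise to $a\,g(\zeta)^n/(g'(\zeta)-a\,g(\zeta)^n)$ at every $\zeta$ where $g$ is finite and the denominator is nonzero, and this function of $\zeta$ must therefore be identically equal to the constant $h_0(z_0)$.

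Writing $c:=h_0(z_0)$, the constancy forces $g$ to satisfy a first-order ODE: the degenerate values $c=0$ and $c=-1$ yield $g\equiv 0$ and $g$ constant respectively (both contradicting nonconstancy of $g$), while the case $h_0\equiv\infty$ is handled analogously by reciprocating to obtain $g'=a\,g^n$; the remaining case produces $g'=A\,g^n$ with $A=a(c+1)/c\neq 0$. In every surviving case, integration yields $g^{n-1}=-1/((n-1)(Az+C))$; for $n\geq 3$ the $(n-1)$-th root of a function with a simple pole at $-C/A$ is not single-valued on any punctured neighbourhood of that point, so no nonconstant entire (respectively meromorphic) $g$ can have this form. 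This contradicts the nonconstancy of $g$ and completes the proof. The main obstacle is the case analysis of the limit $h_0$ (in particular the case $h_0\equiv\infty$, where one works with the reciprocal $1/h_k$), together with the purely technical matter of legitimising $g_k'\to g'$ at zeros and poles of $g$ — a routine local argument, using $1/g_k$ where appropriate, that follows from the spherical uniform convergence $g_k\to g$.
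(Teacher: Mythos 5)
Your argument is correct in outline, but it takes a genuinely different route from the one in the text. The proof given here is the Drasin-style Nevanlinna argument: one first derives the inequality $T(r,f_\alpha)\le\frac{1}{n-1}\bigl[m(r,f_\alpha'/f_\alpha)+T(r,h_\alpha)-\log|h(\alpha)+1|+K(a,b)\bigr]$ and then splits into three cases according to whether a cluster function $h^{*}$ of $\mathcal{H}_1$ is $\not\equiv -1,\infty$, is $\equiv -1$, or is $\equiv\infty$; the first case is closed with Lemmas \ref{lemma6}, \ref{lemma2} and \ref{lemmaB}, and the latter two by the elementary gradient estimate showing that points where $|f|\le 1$ and where $|f|\ge 2$ must stay a definite distance $\eta(r_0)$ apart. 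Your proof instead runs the rescaling machine: non-normality of $\mathcal{F}$ plus the Pang refinement of Zalcman's lemma with exponent $-1/(n-1)$ produces a nonconstant limit $g$; the collapse of the arguments $z_k+\rho_k\zeta$ to the single point $z_0$ converts normality of $\mathcal{H}$ into the statement that $a g^n/(g'-ag^n)$ is a constant $c$; and the resulting equation $g'=Ag^n$ has no nonconstant meromorphic solution for $n\ge 3$ because $g^{n-1}$ would have to be the reciprocal of a nonconstant linear function. What your approach buys is brevity and uniformity: the three cases of the text collapse into the dichotomy $c\in\{0,-1,\infty\}$ versus $c\notin\{0,-1,\infty\}$, and the holomorphic and meromorphic cases are handled identically. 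What it costs is that Lemma \ref{zal} as stated in this dissertation is the plain Zalcman lemma with exponent $0$, which does not produce the cancellation of $\rho_k^{-n/(n-1)}$ your displayed identity requires; you must import Pang's version (valid for exponents in $(-1,1)$, hence for $-1/(n-1)$) as an external result. Two small points to tighten: dispose explicitly of the degenerate possibility $g'-ag^n\equiv 0$ (it is simply the ODE with $A=a$), and note that the identity $ag^n/(g'-ag^n)\equiv c$ is first obtained only off a discrete exceptional set and is then propagated everywhere by the identity theorem.
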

\begin{proof}

$\mathcal{H}$ is a family of holomorphic functions and for $\mid \alpha \mid < 1$ we have $$ h_{\alpha} =a \frac{f_\alpha^n}{f_\alpha \textquoteright-b-af_\alpha ^n}.$$
If $h_\alpha(0) \neq -1$ then 
\begin{align}
nT(r,f_\alpha) &= T(r,f_\alpha^n) = T\left(r,\frac{f_\alpha \textquoteright-b}{a} \frac{h_\alpha}{h_\alpha+1}\right)\notag \\
&= T\left(r,\frac{f_\alpha \textquoteright -b}{a} \left(1-\frac{1}{h_\alpha+1}\right)\right) \notag \\
& \leq T(r,f_\alpha \textquoteright)+T\left(r,\frac{1}{h_\alpha+1}\right)+K(a,b) \notag \\
&= m(r,f_\alpha \textquoteright)+T\left(r,\frac{1}{h_\alpha+1}\right)+K(a,b)\notag  \\
& \leq m\left(r,\frac{f-\alpha \textquoteright}{f_\alpha}\right)+T(r,f_\alpha)+T(r,h_\alpha)-\log \mid h(\alpha)+1\mid+K(a,b)\label{eq6.5}
\end{align} 
\begin{align}
T(r,f_\alpha)\leq \frac{1}{n-1} \left[ \, m\left(r,\frac{f_\alpha \textquoteright}{f_\alpha}\right)+T(r,h_\alpha)-\log \mid h(\alpha)+1 \mid + K(a,b) \right].\label{eq6.6}
\end{align}

Let $\mathcal{F}_1$ be a denumerable subfamily of $\mathcal{F}$ and $\mathcal{H}_1$ be corresponding subfamily of $\mathcal{H}$. Let $h^{*}$ be cluster function of $\mathcal{H}_1$. Three cases arises:
\\
\textbf{Case(i)}

$h^{*}$ not identically equal to $ -1, \infty$. Let $\{ \, h_n \, \} = \mathcal{H}_2 \subseteq \mathcal{H}_1$ tends to $ h^{*}$ normally
$$\Rightarrow h_n+1  \rightarrow h^{*} +1\quad \mbox{normally}$$
$$\Rightarrow \frac{1}{h_n+1} \rightarrow \frac{1}{h^{*}+1} \quad \mbox{normally}.$$ 
Since $\{\, \frac{1}{h_n+1} \, \}$ tends to a holomorphic function $\frac{1}{h^{*}+1}$ normally therefore by converse of Theorem (\ref{con montel}) $ \{\,\frac{1}{h_n+1} \, \} $ is locally bounded on $\Delta$. Therefore there exists $M< \infty, r_0<1, \alpha_n=\alpha_n(h_n) \mid \alpha_n \mid <r_0$ with $$ \big | \frac{1}{h_n(\alpha_n)+1} \big |<M$$ this gives $$ - \log \mid h_n(\alpha_n) +1 \mid< M, $$ for an infinite family $\mathcal{H}$.
Further unless $\mathcal{F}_3$ normal, by Lemma (\ref{lemma6}) inequality (\ref{6.6}) becomes $$ T(r,f_{m,\alpha_m}) \leq \frac{1}{n-1} \left[ \,  A \log^{+}T(R,f_{m,\alpha_m}) +B \log \frac{1}{R-r}+T(r,h_{m,\alpha_m}) +M \right]$$ $\frac{1}{2} (1+r_1) <r<R<1$, for infinite subfamily $\mathcal{F}_4 \subseteq \mathcal{F}_3$.
\\
Let $U(r) =T(r,f_{m,\alpha_m})$ and $\gamma(r) = \frac{1}{n-1} T(r,h_{m,\alpha_m})$ by Lemma (\ref{lemma2}) 
\begin{equation}\label{eq6.7}
T(r,f_{m,\alpha_m}) \leq A + B T(R,h_{m,\alpha_m})+C \log \frac{1}{R-r},
\end{equation}
$\frac{1}{2}(1+r_1)<r<R<1$. Also for $r<1$ $$ T(r,h_{m,\alpha_m}) \rightarrow T(r,h^{*}_\alpha) \quad \mbox{as} \quad m \rightarrow \infty $$ $$ \Rightarrow \big| T(r,h_{m,\alpha_m})-T(r,h^{*}_\alpha) \big| < \epsilon \quad \forall m>m_0.$$ 
\\
So $$T(r,h_{m,\alpha_m}) < \epsilon+T(r,h_\alpha^{*}) \quad \forall m>m_0$$
\\
(\ref{eq6.7}) implies that $$T(r,f_{m,\alpha_m})\leq \sum(r),$$ $\frac{1}{2}(1+r_1)<r<1$, Lemma (\ref{lemmaB}) implies normality of $\mathcal{F}$.
\\
\textbf{Case(ii)}

$h^{*} \equiv -1$ gives $$ a \frac{f_m^{n}}{f_m\textquoteright-b-af_m^{n}} \rightarrow -1$$
$$ \frac{f\textquoteright _m-b-af_m^{n}}{af_m^{n}} \rightarrow-1$$
$$ \frac{f\textquoteright_m-b}{af_m^{n}} \rightarrow0.$$
Then for each $r_0<1$ and $\epsilon>0$ $$\big | \frac{f\textquoteright(z)-b}{af(z)^{n}} \big |<\epsilon, \quad \mid z \mid <r_0,$$ for infinitely many $f \in \mathcal{F}$.

Let us now show that for each $r_0<1 \quad \exists \quad \eta(r_0)$ such that if $\mid z_1\mid <r_0,$
\\
$ \mid z_2 \mid < r_0$ with 
\begin{equation}\label{eq6.8}
 \mid f(z_1) \mid \leq 1 
\end{equation} and 
\begin{equation}\label{eq6.9}
\mid f(z_2) \mid \geq 2,
\end{equation} for $f \in \mathcal{F}$ then 
\begin{equation}\label{eq6.10}
\mid z_1-z_2 \mid \geq \eta.
\end{equation}

Let $z_1 $ and $z_2$ be chosen according to (\ref{6.8}) and (\ref{eq6.9}). Suppose that
\\
$\mid z_1-z_2 \mid $ is minimized subject to these conditions. Let $\gamma$ denotes the line segment joining $z_1$ and $ z_2$ then $\gamma$ will lie in $\mid z \mid< r_0$, for $z$ on $\gamma$ we have $$\mid f\textquoteright (z) \mid \leq \epsilon \mid a f(z) ^{n}\mid +B \leq K \epsilon +B \quad \because \mid f(z) \mid \leq 2$$ $ K=K(a,n), B= \mid b \mid $.
\\
Then 
\begin{align*} 
1 & \leq \mid f(z_2)-f(z_1) \mid \\
& \leq \int_{\gamma} \mid f\textquoteright(z) \mid \, \mid dz \mid \\ 
& \leq (K\epsilon+B) \mid z_1-z_2 \mid
\end{align*} which gives (\ref{eq6.10}).

Thus in a prescribed neighbourhood of any point in $\mid z \mid <r_0$ every $f \in \mathcal{F}$ is either uniformly bounded above or below so that $\mathcal{F}$ is normal in $\Delta$.
\\
\textbf{Case(iii)}

$h^{*} \equiv \infty$ then $$\frac{af_m^{n}}{f\textquoteright_m-b-af_m^{n}} \rightarrow \infty$$
$$\frac{f\textquoteright_m-b-af_m^{n}}{af_m^{n}} \rightarrow 0$$
$$ \frac{f\textquoteright_m-b}{af_m^{n}} -1 \rightarrow 0$$ then for each $r_0<1, \epsilon>0$ 
$$ \mid \frac{f\textquoteright(z) -b}{af(z)^{n}} \mid < 1+ \epsilon, \quad
 \mid z \mid r_0$$ for infinitely many $f \in \mathcal{F}$. 
 
 Now same arguments of case (ii) works here, gives that $\mathcal{F}$ is normal in $\Delta$. 
\end{proof}
\begin{proof}
\textbf{Proof of the Theorem (\ref{hay con}) }
Let $\mathcal{F}_1$ be denumerable subfamily of $\mathcal{F}$ and $\mathcal{H}_1 $ be corresponding subfamily of $\mathcal{H}$. It is to show that $\mathcal{H}_1$ is normal in a neighbourhood of the origin. We choose $\alpha, \beta$ such that $\mid \alpha \mid + \mid \beta \mid $ is small. Initially let $\mid \alpha \mid < \frac{1}{4}$ and $\beta$ is small, their precise choices will be made later. 

Applying the fundamental inequality(\ref{sft}) for $h_\alpha$ we get
$$m(r,h_\alpha)+m\left(r,\frac{1}{h_\alpha}\right) +m\left(r,\frac{1}{h_\alpha-1}\right) \leq 2T(r,h_\alpha)-N_1(r,h_\alpha)+S(r,h_\alpha),$$ 
where 
\begin{align*}
N_1(r,h_\alpha)& =N\left(r,\frac{1}{h\textquoteright_\alpha}\right)+2N(r,h_\alpha)-N(r,h\textquoteright_\alpha)\\
&=N\left(r,\frac{1}{h\textquoteright_\alpha}\right) \quad \mbox{because } h \mbox{ is holomorphic} 
\end{align*}
$$S(r,h_\alpha) = 2m\left(r,\frac{h\textquoteright_\alpha}{h_\alpha}\right)+m\left(r,\frac{h\textquoteright_\alpha}{h_\alpha-1}\right)+ \log \big | \frac{1}{h\textquoteright_\alpha(0)} \big | +C$$ and $h(\alpha) \neq 0, \infty; h\textquoteright(\alpha) \neq 0$.
If $ \mathcal{H}_1$ not normal then by Lemma (\ref{lemma6})  $\exists \delta_1$ and infinite subfamily $\mathcal{H}_2$ of $\mathcal{H}_1$ we get
\begin{align}
m(r,h_\alpha)+m\left(r,\frac{1}{h_\alpha}\right)+m\left(r,\frac{1}{h_\alpha-1}\right) &\leq 2 T(r,h_\alpha)-N\left(r,\frac{1}{h\textquoteright_\alpha}\right) + A \log^{+} T(R,h_\alpha) \notag \\ &+B \log \frac{1}{R-r}+C+ \log \big | \frac{1}{h\textquoteright(\alpha)} \big |,\label{eq6.12}
\end{align}
where $\delta_1<r<R<1$ and $A,B$ and $C$ is independent of $r$.
Lemma (\ref{cor5}) implies that given $\epsilon>0$ chose $\delta_2< \frac{\delta_1}{2}$ with the property that 
$$ (1-\epsilon)m\left(r,\frac{1}{h_\alpha+1} o \psi_\beta\right) \leq m\left(r,\frac{1}{h_\alpha+1}\right),$$
where $\mid \beta \mid < \delta_2$and $\delta_1<r<R<1$.
In similar way as $h$ is holomorphic 
$$T(r,h_\alpha o \psi_\beta)> (1-\epsilon)T(r,h_\alpha),$$ $\mid \beta \mid < \delta_2, r> \delta_1$.
Now on both side of (\ref{eq6.12}) we add
$$N\left(r,\frac{1}{h_\alpha}\right)+\log \mid h(\alpha)\mid+(1-\epsilon) \left[N(r,\frac{1}{h_\alpha+1} o \psi_\beta)+ \log \mid (h_\alpha+1) o \psi_\beta(0) \mid \right]$$
to get
\begin{align}
(1-\epsilon)^2T(r,h_\alpha) &\leq \bar{N}\left(r,\frac{1}{h_\alpha}\right)+(1-\epsilon)N\left(r,\frac{1}{h_\alpha+1} o \psi_\beta\right)+ A \log^{+} T(R,h_\alpha)\notag  \\
&+B \log \frac{1}{R-r}+C+ (1-\epsilon) \log \mid (h_\alpha+1)(-\beta)\mid+ \log \big | \frac{h(\alpha)}{h\textquoteright(\alpha)} \big |,\label{eq6.14}
\end{align}
$\mid \beta \mid < \delta_2, \delta_1<r<R<1$. 

By the given hypothesis we ensures that the zeros of $h_\alpha$ have multiplicity atleast $n$ that means
\begin{align}
\bar{N}\left(r,\frac{1}{h_\alpha}\right)& \leq \frac{1}{n}N\left(r,\frac{1}{h_\alpha}\right) \notag \\
& \leq \frac{1}{n}T\left(r,\frac{1}{h_\alpha}\right)\notag \\
& \leq \frac{1}{n} \left\lbrace T(r,h_\alpha)-\log \mid h(\alpha) \mid \right\rbrace.\label{eq6.17}
\end{align}

Similarly consider
\begin{align}
N\left(r,\frac{1}{h_\alpha+1} o \psi_\beta\right) &= N\left(r,\left\lbrace \frac{a f^n}{h_\alpha} \frac{1}{f\textquoteright_\alpha}-b \right\rbrace o \psi_ \beta\right) \notag \\
& \leq N\left(r,\frac{1}{f\textquoteright_\alpha-b} o \psi_\beta\right) \leq T\left(r,\frac{1}{f\textquoteright_\alpha-b} o \psi_\beta\right) \notag \\
&\leq T(r,(f\textquoteright_\alpha-b) o \psi_\beta) -\log \mid (f\textquoteright_\alpha-b)(-\beta) \mid \notag \\ &=T(r,(f\textquoteright_\alpha-b) o \psi_\beta \frac{f_\alpha}{f_\alpha}) -\log \mid (f\textquoteright_\alpha-b)(-\beta) \mid \notag \\
&=T\left(r,\frac{f_\alpha f\textquoteright_\alpha-bf_\alpha}{f_\alpha} o \psi_\beta\right) -\log \mid (f\textquoteright_\alpha-b)(-\beta) \mid \notag \\
&\leq T\left(r,f_\alpha o \psi_\beta\right)+m\left(r,\frac{f\textquoteright_\alpha}{f_\alpha} o \psi_\beta\right) -\log \mid (f\textquoteright_\alpha-b)(-\beta) \mid \notag \\
&+A.\label{eq6.24}
\end{align}

Again by Lemma (\ref{cor5}) we get 
\begin{equation}\label{eq6.25}
T(r,f_\alpha o \psi_\beta) \leq (1+\epsilon)T(r,f_\alpha)
\end{equation}
\begin{equation}\label{eq6.26}
m\left(r,\frac{f\textquoteright_\alpha}{f_\alpha} o \psi_\beta\right) \leq (1+\epsilon)m\left(r,\frac{f\textquoteright_\alpha}{f_\alpha}\right).
\end{equation}

So (\ref{eq6.24}) becomes
\begin{align}
N\left(r,\frac{1}{h_\alpha+1} o \psi_\beta\right) & \leq (1+\epsilon)T(r,f_\alpha)+ (1+\epsilon)m\left(r,\frac{f\textquoteright_\alpha}{f_\alpha}\right) \notag \\
& -\log \mid (f\textquoteright_\alpha-b)(-\beta) \mid+A.\label{eq6.28}
\end{align}

Unless $\mathcal{F}_2$ normal in neighbourhood of origin by Lemma (\ref{lemma6`}) we get
\begin{align}
N\left(r,\frac{1}{h_\alpha+1} o \psi_\beta\right) &\leq (1+ \epsilon)\left[ T(r,f_\alpha)+A \log^{+}T(\rho,f_\alpha)+ B\log \big | \frac{1}{\rho-r}\big | \right] \notag \\
&-\log \mid (f\textquoteright_\alpha-b)(-\beta)\mid+A,\label{eq6.30}
\end{align} 
for $f \in \mathcal{F}_3 \subseteq \mathcal{F}_2(h \in \mathcal{H}_3) \subseteq \mathcal{H}_2)$, $\delta<r<\rho, \mid \beta \mid < \delta_2$.

Using all these inequalities in (\ref{eq6.14}) we get
\begin{align}
\left( (1-\epsilon)^2-\frac{1}{n} \right) T(r,h_\alpha)& \leq (1+\epsilon)\left[T(r,f_\alpha)+A \log^{+}T(\rho,f_\alpha)+B \log \frac{1}{\rho-r} \right] \notag \\
&+A \log^{+}T(R,h_\alpha)+B \log \frac{1}{R-r} \notag \\
& +(1-\epsilon) \log \big | \frac{(h_\alpha+1)(-\beta)}{(f\textquoteright_\alpha-b)(-\beta)} \big | + \log \big | \frac{h(\alpha) ^{1- \frac{1}{n}}}{h\textquoteright(\alpha)} \big | +C,\label{eq6.33}
\end{align} 
where $\delta_1<r<\rho; r<R<1 \quad \mbox{and } \mid \beta \mid <\delta_2 $

Again if $\mathcal{F}_3 \quad$ is not normal $\mbox{hence } \mathcal{H}_3$ is not normal at origin by Marty\textquoteright s theorem (\ref{lemmaA}) there exists a subfamily $ \{\, h_n \, \}= \mathcal{H}_4 \subseteq \mathcal{H}_3$ and points $\alpha_n= \alpha(h_n)$ with $\alpha \rightarrow 0$ such that
$$\frac{\mid h(\alpha) \mid^{1-\frac{1}{n}}}{\mid h\textquoteright(\alpha) \mid} <1, \quad h(\alpha) \neq 0,-1 \quad \mbox{and } h\textquoteright(\alpha) \neq 0, $$ where $h=h_n,\alpha=\alpha_n \quad n=1,2,3, \ldots $

Suppose $\{ \, f\textquoteright-b-af^n; f \in \mathcal{F}_4 \, \} $ is normally convergent to 0 in a neighbourhood of origin. Then for every $r_0<1$ and $\epsilon>0$ 
$$ \big | \frac{f\textquoteright-b}{af^n} \big | < \epsilon; \quad \mid z \mid <r_0 \quad \mbox{and } f \in \mathcal{F}_4 $$
then as done earlier from equation (\ref{eq6.8}) and (\ref{eq6.9}) we get equation (\ref{eq6.10}) which gives that $\mathcal{F}_4$ is normal thus our assumption was wrong and consequently there exists a subsequence $\{ \, f_n \,\}= \mathcal{F}_5 \subseteq \mathcal{F}_4$ and a sequence of points $\{ \, \beta_n \,\}$ with $\beta_n \rightarrow 0$ corresponding to $f_{n,\alpha_n}$ such that 
$$ \big| \frac{1}{f\textquoteright_{n,\alpha_n}-b-af^n_{n,\alpha_n}} (-\beta_n) \big|<M; \quad n=1,2,3,\ldots$$
and $$ \frac{h_{m,\alpha_m}+1}{f\textquoteright_{m,\alpha_m}-b}= \frac{1}{f\textquoteright_{m,\alpha_m}-b-af^n_{m,\alpha_m}}$$
therefore
$$\big | \frac{h_{m,\alpha_m}+1}{f\textquoteright_{m,\alpha_m-b}}(-\beta_m) \big |= \big |\frac{1}{f\textquoteright_{m,\alpha_m}-b-af^n_{m,\alpha_m}}(-\beta_m) \big | <M. $$

Thus equation (\ref{eq6.32}) becomes 
\begin{align}
 \left( (1-\epsilon)^2-\frac{1}{n} \right) T(r,h_\alpha)& \leq (1+ \epsilon) \left[ T(r,f_\alpha)+A \log^{+}T(\rho, f_\alpha)+ B \log \frac{1}{\rho-r } \right]\notag \\ 
 & +A \log^{+}T(R,h_\alpha)+B \log \frac{1}{R-r}+C,\label{eq6.35}
\end{align}
$\delta_1<r< \rho; r<R<1$ for $f \in \mathcal{F}_5, h \in \mathcal{H}_5 $.

Next we wish to majorize $T(r,f_\alpha)$ in terms of $T(r,h_\alpha)$ for this we replace $f_\alpha$ by $f_\alpha o \psi_\beta$ in (\ref{eq6.5}) then we get 
\begin{align}
T(r,f_\alpha o \psi_\beta) & \leq  \notag \\
&\frac{1}{n-1} \left[ m\left(r,\frac{f\textquoteright_\alpha}{f_\alpha} o \psi_\beta\right) +T(r,h_\alpha o \psi_\beta)- \log \mid h_\alpha(-\beta)+1\mid +K(a,b) \right].\label{eq6.37}
\end{align}

It may be assumed that $\mathcal{H}_5$ do not tend normally to $-1 \mbox{or } \infty$ in a neighbourhood of origin, for otherwise $\mathcal{F}_5$ would be normal there. Then we can find $\{\, \beta_n \,\}$ with $ \beta_n \rightarrow 0$ associated with $h_{n,\alpha_n}$ where $\{\, h_n \,\}= \mathcal{H}_6 \subseteq \mathcal{H}_5$ such that 
$$ - \log \mid h_{n,\alpha_n}(-\beta_n)+1 \mid <M$$

Hence (\ref{eq6.35}) becomes using (\ref{eq6.24}) and (\ref{eq6.25})
\begin{align}
T(r,f_\alpha) & \leq (1+\epsilon)T(r,f_\alpha o \psi_{\beta_n}) \notag \\
& \leq \frac{(1+\epsilon)^2}{n-1} \left[ m\left(r,\frac{f\textquoteright_\alpha}{f_\alpha}\right)+T(r,h_\alpha)+M \right]; \delta_1<r<1, f \in \mathcal{F}_6.\label{eq6.39}
\end{align}

If $\mathcal{F}_6$ is not normal at origin then 
\begin{align}
m\left(r,\frac{f\textquoteright_\alpha}{f_\alpha}\right) & \leq A \log^{+} T(R,f_\alpha)+B \log \frac{1}{R-r} +C,\label{eq6.40}
\end{align}
$\delta_1<r<R<1; f \in \mathcal{F}_7 \subseteq \mathcal{F}_6$.

Thus (\ref{eq6.37}) becomes 
\begin{equation}
T(r,f_\alpha) \leq \frac{(1+\epsilon)^2}{n-1} \left[ A \log^{+}T(R,f_\alpha)+B \log \frac{1}{R-r} +T(r,h_\alpha)+M \right]\label{eq6.41}
\end{equation}
by Lemma (\ref{lemma2}) $[U(r)= T(r,f_\alpha), \gamma(r)= T(r,h_\alpha)]$ and replacing $r$ by $R$
\begin{equation}\label{eq6.42}
T(R,f_\alpha) \leq B \log \frac{1}{\rho\textquoteright-R} +AT(R,h_\alpha) +C,
\end{equation}
$\delta_1<R<\rho\textquoteright<1. $

Put (\ref{eq6.41}) in (\ref{eq6.39}) and let $R= \frac{\rho\textquoteright+r }{2}$ this gives $\frac{1}{\rho\textquoteright-R}= \frac{1}{R-r}$ 
\begin{equation}
m\left(r,\frac{f\textquoteright_\alpha}{f_\alpha}\right) \leq A\log^{+} T(R,h_\alpha)+B \log^{+} \log \frac{1}{\rho\textquoteright-R}+B \log \frac{1}{R-r}+C.
\end{equation}

Then (\ref{eq6.38}) becomes
\begin{align}
T(r,f_\alpha) & \leq \frac{(1+\epsilon)^2}{n-1}T(r,h_\alpha)+A \log^{+} T(R,h_\alpha) +B \log \frac{1}{R-r}+C,
\end{align} 
$\delta_1<r<R<1, f \in \mathcal{F}_7 $ using this in (6.34) and taking $\rho=R$ we obtain
\begin{align}
\left((1-\epsilon)^2-\frac{1}{n}\right)&\leq \frac{(1+\epsilon)^2}{n-1} \left[T(r,h_\alpha)+A \log^{+}T(R,h_\alpha)+B \log \frac{1}{R-r} +C \right].
\end{align}
So
\begin{align*}
\left((1-\epsilon)^2-\frac{1}{n} - \frac{(1+\epsilon)^3}{n-1} \right) \log T(r,h_\alpha) &\leq A \log^{+} T(R,f_\alpha)+B \log \frac{1}{R-r}+C,
\end{align*}
$\delta_1<r<R<1, h \in \mathcal{H}_7$ chose $\epsilon>0$ sufficiently small then by Lemma (\ref{lemma2}) $\mathcal{H}_7$ is normal hence $\mathcal{F}_7$ is normal implying $\mathcal{F}$ is normal at origin.
\end{proof} 
\begin{exmp}
$\mathcal{F}= \{\, z+m; m=1,2,3,\ldots \, \}$ is a normal family of holomorphic function on $\Delta$ and $f\textquoteright-af^{n}=b$ for $a \neq 0, b \in \mathbb{C}$ and $n \geq 3$ has no solution in $\Delta.$
\end{exmp}

\end{document}